\renewenvironment{abstract}
  {\quotation
  {\bfseries\noindent{\abstractname:}}}
  {\endquotation}
\theoremstyle{definition}                            % no italics
\newtheorem{thm}{Theorem}[section]                   % restarts numbering for each section
\newtheorem{defn}[thm]{Definition}
\newtheorem{lem}[thm]{Lemma}
\newtheorem{prop}[thm]{Proposition}
\newtheorem{cor}[thm]{Corollary}
\newtheorem*{rmk_temp}{Remark}                       % no numbers
\newtheorem*{nts_temp}{{\color{cyan}Note to self}}   % no numbers
\numberwithin{equation}{section} % reset numbering for eqns after the section ends
\newenvironment{rmk}
  {\pushQED{\qed}\begin{rmk_temp}}
  {\popQED\end{rmk_temp}}
\newenvironment{nts}
  {\pushQED{\qed}\begin{nts_temp}\color{cyan}}
  {\popQED\end{nts_temp}}
\newcommand\tikzmark[2]{\tikz[remember picture,baseline=(#1.base)]{\node[inner sep=0pt] (#1) {#2};}}
\title{\LARGE\MakeUppercase{\textbf{A high-contrast composite with \\ annular inclusions: \\Norm-resolvent asymptotics}}}
\author[1]{Yi-Sheng Lim}
\affil[1]{Department of Mathematical Sciences, University of Bath, Claverton Down, Bath BA2 7AY, United Kingdom (Email: ysl64@bath.ac.uk)}
\date{}
\begin{document}
\maketitle

\vspace{-0.8cm}

\begin{abstract}
    We investigate the operator-norm resolvent asymptotics of a high-contrast composite, consisting of a ``stiff" material, with annular ``soft" inclusions (a ``stiff-soft-stiff" setup). This setup is derived from two models with very different effective wave propagation behaviors. Our analysis is based on an operator-framework proposed by Cherednichenko, Ershova, and Kiselev in \cite{eff_behavior}. Then, as a first step towards studying wave propagation on the stiff-soft-stiff composite, we use the effective description to derive analogous ``dispersion functions".

    \vskip 0.5cm

    \noindent
    {\bf Keywords} Homogenization $\cdot$ Resonant composites $\cdot$ Resolvent asymptotics $\cdot$ Wave propagation

    \vskip 0.5cm
    
    \noindent
    {\bf Mathematics Subject Classification (2020):} 35P15, 35C20, 74B05, 74Q05.
\end{abstract}

%\tableofcontents

% ===================================================

\onehalfspacing
\section{Introduction}\label{sect:intro}

The theory of homogenization refers to the study of approximating a highly heterogeneous media with a homogeneous one. This has been a subject of intense study since the 1970s, and to date has amassed an extensive literature. We mention for instance, the books \cite{zhikov,bakhvalov_panasenko,bensoussan_lions_papanicolaou}. Recently, there has been a push towards making these approximations \textit{quantitative}. Aside from the clear benefits to numerical applications, obtaining convergence rates is closely linked to the large-scale regularity of solutions \cite{qshomo_book,shen_book}. In another direction, we could also ask for convergence rates \textit{in the operator norm}, as that will immediately imply the same error for functions of the operator, by the functional calculus. On the other hand, there is a development to extend the techniques of \cite{zhikov, bakhvalov_panasenko, bensoussan_lions_papanicolaou} to account for more ``degenerate" situations, for instance when there is a lack of uniform ellipticity \cite{zhikov2000}. This brings us to the setting of the present paper.

\subsection*{Problem outline}

In this paper, we will look at the problem of homogenization for a high-contrast $\eps \mathbb{Z}^d$-periodic composite on $\mathbb{R}^d$, $d\geq 2$. Our composite will consist of two materials which we will refer to as ``soft" and ``stiff", adopting the terminology of elasticity theory. We think of ``soft" having small material coefficients relative to ``stiff".

For $\eps>0$, consider the operator $A_\eps = -\Div(a_{\eps} \nabla \cdot)$, on $L^2(\mathbb{R}^d)$. The coefficient matrix $a_{\eps}$ is defined as $a_{\eps}(x) := \widetilde{a}_{\eps^2}(\frac{x}{\eps})$, where $\widetilde{a}_{\eps^2}$ is a $\mathbb{Z}^d$-periodic matrix with values given by
\begin{align}\label{eqn:intro_coeffmatrix}
    \widetilde{a}_{\eps^2}(y) = \begin{cases}
        \eps^2 I, \quad & y \in \cup_{n \in \mathbb{Z}^d} ( Q_\soft + n ),\\
        I, & y \in \cup_{n \in \mathbb{Z}^d} \left( (Q_\stin \cup Q_\stls) + n \right).
    \end{cases}
\end{align}
Here, the sets $Q_\soft$, $Q_\stin$, and $Q_\stls$ partition the reference period cell $Q = [0,1)^d$, and are arranged in a ``stiff-soft-stiff" setup as follows: We have a simply connected ``stiff-interior" region $Q_\stin$, surrounded by an annular shaped ``soft" region $Q_\soft$, with the remaining region filled by the ``stiff-landscape" part $Q_\stls$. See Figure \ref{fig:intro_toymodel} for a pictorial description of $\widetilde{a}_{\eps^2}$ when restricted to the period cell $Q$. The choice $\eps^2$ in (\ref{eqn:intro_coeffmatrix}) is referred to as the ``double porosity" scaling \cite{arbogast_douglas_hornung_1990}. We impose transmission boundary conditions on the soft-stiff interfaces $\Gamma_\interior$ and $\Gamma_\ls$. See Section \ref{sect:mainmodel} for the precise definition of $A_\eps$.

\begin{figure}[h]
  \centering
  \includegraphics[page=1, clip, trim=4cm 8cm 7cm 5cm, width=1.00\textwidth]{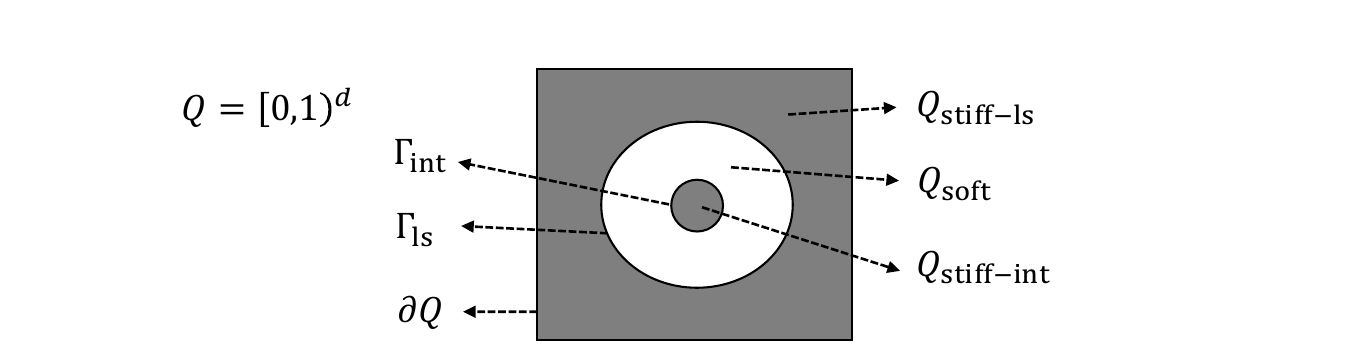}
  \caption{The period cell $Q=[0,1)^d$. The subscript ``stiff-int" stands for stiff interior, and ``stiff-ls" stands for stiff landscape.} \label{fig:intro_toymodel}
\end{figure}

We are interested in the limiting behavior as $\eps \downarrow 0$ of $A_\eps$, in the \textit{norm-resolvent} sense. The reason for this particular choice of (operator) topology is that it gives us direct access to the spectrum, in that it implies spectral convergence/asymptotics of $\sigma(A_\eps)$, in the sense of Hausdorff convergence/asymptotics on compact subsets of the real line $\mathbb{R}$ (see \cite[Section VIII.7]{reed_simon1} and \cite[Definition 4.4.13 and Proposition 4.4.14]{ambrosio_tilli_book}). We intend to use the spectral information to study the effective behavior of waves in high-contrast media.

We employ an operator framework recently proposed by Cherednichenko, Ershova, and Kiselev in \cite{eff_behavior}, which is based on the following key ingredients (see Section \ref{sect:mainmodel} for precise definitions):
\begin{enumerate}[label=(\Alph*)]
    \item The (rescaled) \textit{Gelfand/Floquet transform} $G_\eps$, which helps take the $\eps\mathbb{Z}^d$-periodic operator $A_\eps$ on $L^2(\mathbb{R}^d)$ to a family of operators $A_\eps^{(\tau)}$ on $L^2(Q)$, indexed by $\tau \in Q' = [-\pi,\pi)^d$.
    
    \item Boundary triples $(A_0,\Lambda,\Pi)$ in the sense of Ryzhov \cite{ryzhov2009}, to obtain norm-resolvent estimates for each $A_\eps^{(\tau)}$.
    
    \item Perturbation theory in the sense of Kato \cite{kato_book}, and Reed and Simon \cite[Chapter XII]{reed_simon4}, to ensure that the estimates in (2) are uniform in $\tau$.
    
    \item Generalised resolvents, such as the operator $R_\eps^{(\tau)}(z) = P_\soft (A_\eps^{(\tau)} - z)^{-1} P_\soft$, where $P_\soft$ is the projection of $L^2(Q)$ onto $L^2(Q_\soft)$ (see Section \ref{sect:identifying_homo_op}). Here, the norm-resolvent asymptotics of $R_\eps^{(\tau)}(z)$, which we denote as $R_{\eps, \text{hom}}^{(\tau)}(z)$, is identified with a compression of some $(\mathcal{A}_{\eps,\text{hom}}^{(\tau)} - z)^{-1}$ (Theorem \ref{thm:self_adjoint}).
\end{enumerate}

\subsection*{Existing literature}

Let us give an overview on the literature of operator norm estimates in homogenization. The first operator norm estimates were obtained by Birman and Suslina in \cite{birman_suslina_2004}, who employed the Gelfand transform to obtain the family $A_\eps^{(\tau)}$, and then proceeded with a spectral analysis of $A_\eps^{(\tau)}$ using perturbation theory, studying the behavior of the resolvent of $A_\eps$ near the bottom of the spectrum. Their approach was later extended to include other related setups, for instance, bounded domains \cite{suslina_2013_dirichlet, suslina_2013_neumann} and perforated domains \cite{suslina_2018_perforated}. Other methods that appeared thereafter include: the periodic unfolding method, introduced by Griso in \cite{griso_2004, griso_2006}; the shift method, introduced by Zhikov and Pastukhova in \cite{zhikov_pastukhova_2005_operator} (see also their survey paper \cite{zhikov_pastukhova_2016_opsurvey}); and a refinement of the two-scale expansion method by Kenig, Lin, and Shen \cite{kenig_lin_shen_2012}, which directly dealt with the case of bounded domains (see also the recent book by Shen \cite{shen_book}). Finally, we mention the recent work of Cooper and Waurick \cite{cooper_waurick_2019}, proposing an abstract framework under which uniform in $\tau$ norm-resolvent estimates for the family $A_\eps^{(\tau)}$ can be achieved.

Let us remark that this list is non-exhaustive, and is growing at the point of writing. These methods work well in the moderate-contrast setting (meaning that $\widetilde{a}(y)$ is positive definite and bounded, independently of $\eps$), but cannot be used in the high-contrast case $\widetilde{a}_\eps(y)$ (see (\ref{eqn:intro_coeffmatrix})), at least without serious modifications. This brings us to the approach of \cite{eff_behavior}. One of the goals of this paper is to demonstrate how the approach of \cite{eff_behavior} can be extended from simple geometries like those in Figure \ref{fig:intro_auxmodel} to a geometry like Figure \ref{fig:intro_toymodel}.

Let us make a few historical remarks on the ingredients (A) and (B). The use of Gelfand transform in the mathematical analysis of periodic homogenization problems can be traced back to Zhikov \cite{zhikov1989}, and Conca and Vanninathan \cite{conca_vanninathan1997}. However, they did not pursue the goal of obtaining operator norm estimates. As for ingredient (B), the Ryzhov boundary triple is a generalization of the (``classical") boundary triple introduced independently by Kochubei \cite{kochubei_bdrytriples} and Bruk \cite{bruk_bdrytriples} (see also \cite{behrndt_hassi_desnoo_book}, \cite[Chapter 3]{gorbachuk_gorbachuk_book}, and \cite[Chapter 14]{konrad_book}), and this generalization is more suited for the PDE setting, as it allows the trace operators to be defined on a smaller set than what is required of a classcal boundary triple. A simplified version of the operator framework of \cite{eff_behavior} was initially used in \cite{time_dispersive} (by the same authors) to study a high-contrast homogenization problem on a periodic quantum graph (ODE on the full space $\mathbb{R}$). In the quantum graph setting, the classical triple suffices as the new ingredient in (B). In \cite{split_second}, Cherednichenko, Kiselev and Silva demonstrate the use of Ryzhov triples in a PDE setting on a bounded domain. By combining the techniques of \cite{time_dispersive} and \cite{split_second}, one is able to treat the PDE setting on the full space $\R^d$. This is the content of \cite{eff_behavior}.

\begin{figure}[t]
  \centering
  \includegraphics[page=2, clip, trim=4cm 7cm 7cm 5cm, width=1.00\textwidth]{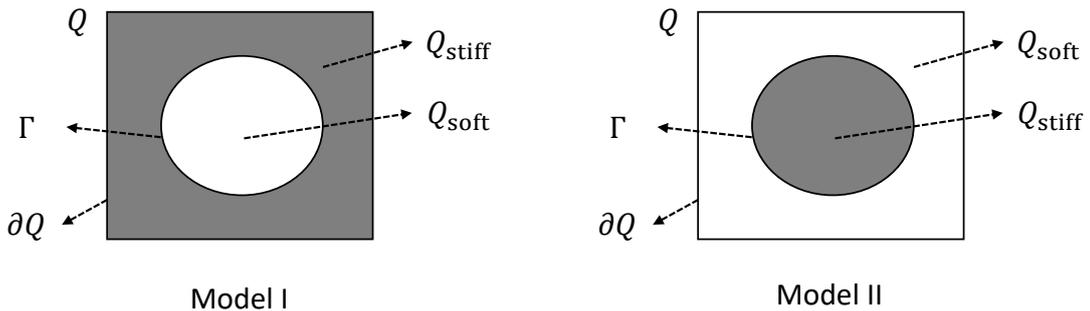}
  \caption{Auxiliary models from \cite{eff_behavior}.}\label{fig:intro_auxmodel}
\end{figure}

%\vspace{4em}
\subsection*{Motivation for the stiff-soft-stiff setup}

Next, let us comment on the choice of our setup. The stiff-soft-stiff model (Figure \ref{fig:intro_toymodel}) is derived from the two auxiliary models studied in \cite{eff_behavior}, referred to as Model I and Model II (Figure \ref{fig:intro_auxmodel}). As depicted in the figure, we see that the auxiliary models are geometrically identical, having only one inclusion inside the period cell $Q$, with smooth boundary $\Gamma$, and at a positive distance from the boundary of the cube $\partial Q$. Models I and II differ only in the choice of the soft and stiff components. 

Due to the similarity in geometry, one might naively guess that the homogenized descriptions of $A_\eps$ are almost identical. This is true to a certain extent. Indeed, it was shown in \cite[Section 4.2]{eff_behavior} that for both Models I and II, the fibres $A_\text{hom}^{(\tau)}$ of the homogenized operator $A_\text{hom}$ is one that, roughly speaking, stays unchanged as $-(\nabla + i\uptau)^2$ on $L^2(Q_\soft)$, and acts only on a $1D$ subspace of $L^2(Q_\st)$. A further study on how the constant of multiplication $c_\tau$ in $L^2(Q_\st)$ depends on $\tau$ reveals a non-local behavior of $A_\text{hom}$ on $L^2(\mathbb{R}^d)$, see \cite[Section 5.4]{eff_behavior}.

But $A_\text{hom}$ of Models I and II differ in many respects. For starters, $A_\text{hom}$ depends on $\eps$ for Model I, and does not for Model II. The fact that we only obtain \textit{asymptotics} for Model I is to be expected, because it is known from \cite[p.~1447]{hempellienau} that $A_\eps$ in Model I does not have a norm-resolvent limit. Model I does however possess a strong-resolvent limit $A_{\text{hom,sr}}$ (the Dirichlet Laplacian on the soft parts of $\mathbb{R}^d$ \cite[Proposition 2.2]{hempellienau}), and also a two-scale strong resolvent limit $A_{\text{hom,2sr}}$ \cite[Section 3]{zhikov2005}.

However, the operator $A_{\text{hom,sr}}$ does not capture the spectral information of $A_\eps$, since $\sigma(A_\eps) \nrightarrow \sigma(A_{\text{hom,sr}})$. Furthermore, the manner in which the limit $A_{\text{hom,sr}}$ was obtained in \cite{hempellienau} does not provide us with a rate of convergence. On the other hand, the operator $A_{\text{hom,2sr}}$ satisfies $\sigma(A_\eps) \rightarrow \sigma(A_{\text{hom,2sr}})$, and we even know the spectral decomposition of $\sigma(A_{\text{hom,2sr}})$. But the proof is again qualitative in nature, relying on an additional compactness argument to prove spectral convergence. An alternative route taken by \cite{hempellienau} is to study $\sigma(A_\eps)$ directly, without characterising the limiting behaviour as the spectrum of some $A_{\text{hom},\eps}$.

As for Model II, it was shown in \cite[formulae (5.7) and (7.1), and Theorem 5.1]{zhikov2000} that the two-scale strong resolvent limit $A_\text{hom,2sr}$ exists, using qualitative arguments. But in contrast to Model I, we do not know if there is spectral convergence of $A_\eps$ to $A_\text{hom,2sr}$, and we do not know the decomposition of $\sigma(A_{\text{hom,2sr}})$. The norm-resolvent limit $A_\text{hom}$ (which also the strong-resolvent limit, but different from $A_\text{hom,2sr}$) is obtained in \cite{eff_behavior}, together with a rate of convergence.

It is possible to apply the result of \cite{zhikov2000} to the stiff-soft-stiff model and obtain a two-scale strong resolvent limit $A_\text{hom,2sr}$, but there is work to be done. That includes: verifying if there is spectral convergence; finding the spectral decomposition of $A_\text{hom,2sr}$; and turning the qualitative arguments into quantitative ones. We will not pursue that route here.

Returning to norm-resolvent asymptotics, one might wonder the sense in which the norm resolvent asymptotics $A_\text{hom}$, obtained from \cite{eff_behavior}, provides a simplified description of the high-contrast composite. We attempt to provide an answer in the following context: just as how we may study the dispersion relation of a periodic operator, we could also ask for the limiting dispersion relation of $A_\eps$. We will do so by taking a closer look at the non-local part of $A_{\text{hom}}^{(\tau)}$, in particular at how the constant of multiplication $c_\tau$ in the $1D$ subspace of $L^2(Q_\st)$ depends on $\tau$. The key object that is extracted from this study is referred to as the ``dispersion function" $K(\tau,z)$. As shown in \cite[Section 5]{eff_behavior}, $K(\tau,z)$ are very different for Models I and II. In Section \ref{sect:closer_look}, we will derive $K(\tau,z)$ for the stiff-soft-stiff model, and compare it with $K(\tau,z)$ of Models I and II.

\subsection*{Main results}

The main results of this paper are collected in Sections \ref{sect:homo_result} and \ref{sect:closer_look}. These are as follows:
\begin{itemize}
    \item A homogenization result for the composite material in Figure \ref{fig:intro_toymodel}. This is Theorem \ref{thm:fibre_homo_result}, stated in terms of norm-resolvent asymptotics for the operator $A_\eps$.
    
    \item Derivation and justification of the existence of ``dispersion functions", analogous to that of \cite{eff_behavior}. These are the objects $K_\stin(\uptau,z)$ (Theorem \ref{thm:stiff_int_dispersion_reln}) and $K_\stls(\uptau,z)$ (Theorem \ref{thm:stiff_ls_dispersion_reln}).
\end{itemize}

\subsection*{Structure of paper}
This paper is structured as follows: In Section \ref{sect:preliminaries_main}, we fix some notations and introduce the notion of a periodic Sobolev space, following \cite{cioranescu_donato}. Sections \ref{sect:mainmodel} and \ref{sect:bdry_triple_setup} involve setting up the ``stiff-soft-stiff" problem. In Section \ref{sect:mainmodel}, we define the operator $A_\eps$ on $L^2(\mathbb{R}^d)$ and then explain why we can equivalently study the family of operators $\{ A_\eps^{(\tau)} \}_{\tau \in [-\pi,\pi)^d}$ on $L^2(Q)$, obtained by the Gelfand transform. Section \ref{sect:bdry_triple_setup} further casts the problem in the language of boundary triples. 

Section \ref{sect:norm_resolvent_asymp} studies the resolvent asymptotics of $A_\eps^{(\tau)}$, and is at the heart of the analysis. Section \ref{sect:identifying_homo_op} combines the result of Section \ref{sect:norm_resolvent_asymp} (Theorem \ref{thm:m_inverse_est}) with the boundary triple setup of Section \ref{sect:bdry_triple_setup} to give a self-adjoint operator $\mathcal{A}_{\eps,\text{hom}}^{(\tau)}$ that captures the norm-resolvent asymptotics of $A_\eps^{(\tau)}$. Section \ref{sect:homo_result} unpacks the notation and summarizes the boundary triple approach for homogenization, yielding the main result of the paper, Theorem \ref{thm:fibre_homo_result}.

Section \ref{sect:closer_look} places focus on the limiting operator $\mathcal{A}_{\eps,\text{hom}}^{(\tau)}$ itself. We derive the dispersion functions $K_\stin(\uptau,z)$ and $K_\stls(\uptau,z)$, as a first step towards studying wave propagation on the ``stiff-soft-stiff" composite.

\section{Preliminaries}\label{sect:preliminaries_main}
\subsection{Notation, assumptions, abbreviations}

Fix the dimension $d\geq 2$. $\mathbb{N} = \{1,2,3,\cdots \}$ and $\mathbb{N}_0 = \{ 0 \} \cup \mathbb{N}$. The indicator function of a set $U \subset \mathbb{R}^d$ is denoted by $\mathbf{1}_U$. $\oplus$ refers to an orthogonal sum of Hilbert spaces, or of operators on Hilbert spaces. $\dot{+}$ refers to a direct sum of vector spaces. For $a,b\in \R^d$, we write $a\cdot b = \sum_{i=1}^d a_ib_i$ for the inner product on $\R^d$, and $|a| = \sqrt{a\cdot a}$ for the corresponding norm. For $c \in \C$, the real and imaginary components of $a$ are denoted by $\text{Re}(a)$ and $\text{Im}(a)$ respectively.

\textit{Spaces.} We will assume that our Hilbert spaces $(\mathcal{H}, (\cdot,\cdot)_{\mathcal{H}})$ are complex, and write $\| \cdot \|_{\mathcal{H}}$ for its corresponding norm. Let $U \subset \mathbb{R}^d$ be open. Denote by $C^{\infty}(U)$ the space of smooth functions $f:U\rightarrow \mathbb{C}$, and $C_c^{\infty}(U)$ for the vector subspace of functions that have compact support in $U$. For $k\in\mathbb{N}_0$ and $p\in [1,\infty]$, we will need $L^p(U)$, $L^p_{loc}(U)$, $W^{k,p}(U)$, $W_0^{k,p}(U) := \overline{C_c^{\infty}(U)}^{W^{k,p}}$, with the important special cases being $H_0^1(U) := W_0^{1,2}(U)$, and $H^k(U) := W^{k,2}(U)$. We will also need \textit{periodic function spaces}, which are defined in Section \ref{sect:periodic_sobolev}.

\textit{Operators.} We will mainly follow the notations of \cite{konrad_book}. Let $\mathcal{H}_1$, $\mathcal{H}_1$ and $\mathcal{H}_2$ be Hilbert spaces. By an (unbounded) operator $T$ from $\mathcal{H}_1$ to $\mathcal{H}_2$, we mean a linear mapping $T:\mathcal{D}(T) \rightarrow \mathcal{H}_2$, where $\mathcal{D}(T)$ is a linear subspace of $\mathcal{H}_1$. The set $\mathcal{D}(T)$ is referred to as the domain of $T$, and we also write $(T,\mathcal{D}(T))$ to mean the operator $T$, whenever we would like to place an emphasis on the domain. $S \subset T$ means that $T$ is an extension of $S$. If $(T,\mathcal{D}(T))$ is an operator on $\mathcal{H}$ (i.e.~from $\mathcal{H}$ to $\mathcal{H}$), the spectrum of $T$ is denoted by $\sigma(T)$, and the resolvent set by $\rho(T)$. For $\lambda \in \rho(T)$, the resolvent $(T-\lambda I)^{-1}$ will be abbreviated as $(T-\lambda)^{-1}$. $\mathcal{L}(\mathcal{H}_1,\mathcal{H}_2)$ denotes the space of bounded linear operators from $\mathcal{H}_1$ and $\mathcal{H}_2$, and $\mathcal{L}(\mathcal{H}) := \mathcal{L}(\mathcal{H},\mathcal{H})$. The operator norm of $T \in \mathcal{L}(\mathcal{H}_1,\mathcal{H}_2)$ is denoted either by $\| T \|_{\mathcal{H}_1 \rightarrow \mathcal{H}_2} $, or by $\| T \|_{op}$ if the spaces are clear from the context.

\textit{Special families of operators.} Let $\uptau \in \mathbb{R}^d$. The operator $-(\nabla + i\tau)^2$ or $(\frac{1}{i}\nabla + \uptau)^2$ (with appropriately defined boundary conditions) are both shorthand for $-\Delta - 2i\tau \cdot \nabla + |\tau|^2$, as opposed to a composition of operators. The multiplication operator on $L^2(\R^d)$ by an almost everywhere finite function $f$ is denoted by $M_f$. Similarly, multiplication on $\C$ by a constant $c$ is denoted by $M_c$. Our operators of interest are typically defined through a sesquilinear form. If $(A,\mathcal{D}(A))$ is constructed from a form $(\mathfrak{t}, \mathcal{D}(\mathfrak{t}) )$, then we will write $\mathcal{D}[A] := \mathcal{D}(\mathfrak{t})$ to distinguish between the form domain and the operator domain. All our projections will be orthogonal. If $H$ is a subspace of $\mathcal{H}$, then the projection onto $H$ will be denoted either by $P_H$ or $\mathcal{P}_H$. 

\textit{Conventions.} We will be dealing with a multitude of projections on two Hilbert spaces, $\mathcal{H}$ and $\mathcal{E}$. The straight font (e.g.~$P_H$) is reserved for projections on $\mathcal{H}$, and the calligraphic font (e.g.~$\mathcal{P}_E$) is reserved for projections on $\mathcal{E}$. We omit the differential $``dx"$ in integrals, where it is understood.

\textit{Abbreviations.} We will be using the following abbreviations: LHS/RHS for left/right hand side (of an equation); BVP/BC for boundary value problem/boundary condition.

\subsection{Periodic Sobolev spaces}\label{sect:periodic_sobolev}
Fix a reference cell $Q = [0,1)^d$.
\begin{defn}
    A function $f$, defined a.e.~on $\mathbb{R}^d$ is called $\mathbb{Z}^d$-periodic if for all $k\in\mathbb{Z}$ and $i\in \{1,\cdots,d\}$, we have $f(x+k e_i) = f(x)$ a.e. Here $\{e_1,\cdots,e_d \}$ denotes the standard basis of $\mathbb{R}^d$.
\end{defn}

We will also require a notion of periodicity up to and including the boundary $\partial Q$. Since we want to talk about traces of measurable functions on $Q$, we need at least one weak derivative. This prompts us to make the following definition:
\begin{defn}
    $C^{\infty}_{per}(Q) := \{u\in C^{\infty}(\mathbb{R}^d) : u \text{ is $\Z^d$-periodic}  \}$. We will identify $u\in C^{\infty}_{per}(Q)$ with its restriction to $\overline{Q}$.
\end{defn}

The key definition of this section is the following Hilbert space:

\begin{defn} 
    $H^1_{per}(Q) := \overline{C^{\infty}_{per}(Q)}^{\| \cdot \|_{H^1(Q)} }$. We identify space as a subspace of $L^2(Q)$.
\end{defn}

We list here several equivalent characterizations of $H^1_{per}(Q)$:
\begin{alignat}{2}
    H^1_{per}(Q) &= \{ u \in H^1_{loc}(\mathbb{R}^d) : u \text{ is $\Z^d$-periodic} \} \span\span \\
    &= \{ u\in L^2(Q) :\, &&\partial_i u \in L^2(Q), \text{ and $u$, $\partial_i u$ have equal trace on} \nonumber \\
    & &&\text{opposite faces of $Q$, $1\leq i \leq d$.}\} \\
    &= \left\{ u \in L^2(Q) : \sum_{k\in\mathbb{Z}^d} (1+|k|^2) |\hat{u}(k)|^2 < \infty \right\}, \span\span
\end{alignat}
where $\hat{u}$ denotes the Fourier transform of $u$. For an explanation of the equalities, we refer to \cite[p.~6]{zhikov} and \cite[Proposition 3.50]{cioranescu_donato} for the first, \cite[Proposition 3.49]{cioranescu_donato} for the second, and \cite[p.~137]{david_borthwick} for the third expression. Note that $H^1_{per}(Q) = \overline{C^{\infty}_{per}(Q)}^{\| \cdot \|_{H^1(Q)} }$ is in general a subspace of $H^1(Q)$, and $\overline{C^{\infty}_{per}(Q)}^{\| \cdot \|_{L^2(Q)} } = L^2(Q)$.

\begin{rmk}
    We will sometimes rely on the compactness of $\overline{Q}$ to make our arguments. Notably, this is used in Proposition \ref{prop:m_components_unif_bound}, Theorem \ref{thm:m_inverse_est}, and Proposition \ref{prop:cts_dependence}.
\end{rmk}

\section{Problem formulation}\label{sect:mainmodel}

This section is structured as follows: In Section \ref{sect:mainmodel_aeps}, we define the operator $A_\eps$ on $L^2(\mathbb{R}^d)$. Next, we introduce the scaled Gelfand transform $G_\eps$ in Section \ref{sect:scalings}, and use it to obtain a family of operators $A_\eps^{(\tau)}$ on $L^2(Q)$ indexed by $\tau \in [-\pi,\pi)^d$. With the help of $G_\eps$, our study of the norm-resolvent asymptotics of $A_\eps$ can be restated in terms of the family $A_\eps^{(\tau)}$. This allows to reformulate our problem in terms of $A_\eps^{(\tau)}$, which we will do in Section \ref{sect:mainmodel_defn}. 

The operator $A_\eps^{(\tau)}$ will be the main object of study in the remainder of the text. We will refer to the setup in Section \ref{sect:mainmodel_defn} as the ``main model", and $A_\eps^{(\tau)}$ as the ``main model operator".

\subsection{Operator on the full space}\label{sect:mainmodel_aeps}

In this section we will define the operator $A_\eps$. On the reference cell $Q=[0,1)^d$, consider the setup as shown in Figure \ref{fig:intro_toymodel}. That is, $Q$ is split into three connected components: a simply connected ``stiff interior" part $Q_\stin$, surrounded by an annular ``soft" region $Q_\soft$, with the remaining region filled by the ``stiff landscape" part $Q_\stls$. We require that the boundaries $\Gamma_\interior$ and $\Gamma_\ls$ are smooth, and $\Gamma_\interior$, $\Gamma_\ls$, and $\partial Q$ are of positive distance from each other.

Recall from \eqref{eqn:intro_coeffmatrix} that our coefficient matrix $\widetilde{a}_{\eps^2}:\mathbb{R}^d \rightarrow \mathbb{R}^{d\times d}$ is given by
\begin{align}
    \widetilde{a}_{\eps^2}(y) = \begin{cases}
        \eps^2 I, \quad & y \in \cup_{n \in \mathbb{Z}^d} ( Q_\soft + n ),\\
        I, & y \in \cup_{n \in \mathbb{Z}^d} \left( (Q_\stin \cup Q_\stls) + n \right),
    \end{cases}
\end{align}
where $Q_\soft + n = \{ y + n : y \in Q_\soft \}$, and similarly for $(Q_\stin \cup Q_\stls) + n$. The matrix $\widetilde{a}_{\eps^2}$ is $\mathbb{Z}^d$-periodic, and thus the matrix $a_{\eps} := \widetilde{a}_{\eps^2}\left(\frac{\cdot}{\eps}\right)$ is $\eps\mathbb{Z}^d$-periodic. 

The operator $A_{\eps} \equiv -\text{div}(a_{\eps}\nabla\cdot)$ on $L^2(\mathbb{R}^d)$ is defined through its sesquilinear form:
\begin{equation}
    (u,v) \mapsto \int_{\mathbb{R}^d} \widetilde{a}_{\eps^2}\left(\tfrac{\tilde{x}}{\eps}\right) \nabla u(\tilde{x}) \cdot \overline{\nabla v(\tilde{x})} \text{ }d\tilde{x}, \quad u,v \in \mathcal{D}[A_\eps] :=  H^1(\mathbb{R}^d).
\end{equation}
Let us emphasize that $A_\eps$ is not uniformly strongly elliptic, in the sense that the coefficient matrices $\widetilde{a}_{\eps^2}(\frac{\cdot}{\eps})$ cannot be bounded away from zero, independently of $\eps$.

If $z \in \rho(A_\eps)$, then the resolvent equation
\begin{align}
    -\Div(\widetilde{a}_{\eps^2} \left(\tfrac{\cdot}{\eps}\right) \nabla u_\eps) - z u_\eps = f, \quad f \in L^2(\mathbb{R}^d), \quad z \in \mathbb{C},
\end{align}
has a unique solution $u_{\varepsilon}$, which can be written as $u_{\eps} = (A_{\eps}-z)^{-1} f$. In terms of the weak formulation, the resolvent equation is given by:
\begin{equation}
    \int_{\mathbb{R}^d} \bigg[ \widetilde{a}_{\eps^2} \left( \tfrac{\tilde{x}}{\eps} \right) \nabla u(\tilde{x}) \cdot \overline{\nabla v(\tilde{x})} - z u(\tilde{x}) \overline{v(\tilde{x})} \bigg] d\tilde{x} = \int_{\mathbb{R}^d} f(\tilde{x})\overline{v(\tilde{x})} \text{ }d\tilde{x}, \quad \text{ for all } v \in H^1(\mathbb{R}^d). 
\end{equation}

\subsection{Passing from the full space to the unit cell}\label{sect:scalings}

Let $Q' = [-\pi,\pi)^d$. It is customary in the study of periodic differential operators (\textit{Floquet Theory}, see e.g.~\cite[Section~XIII.16]{reed_simon4}) to begin the analysis of a $\mathbb{Z}^d$-periodic operator $T$ by applying the Gelfand transformation to $T$, giving us family of operators $T^{(\uptau)}$, $\uptau \in Q'$. Let us now summarize the necessary elements from Floquet theory that will be of use here.

First, it would be more convenient to introduce a scaled version of the Gelfand transform since $A_\varepsilon$ is $\eps \mathbb{Z}^d$-periodic rather than $\mathbb{Z}^d$-periodic:

\begin{defn} 
    The \textit{scaled Gelfand transform} is the operator $G_\varepsilon$ defined first for $u\in C^{\infty}_c(\mathbb{R}^d)$ by the formula
    \begin{align}
        (G_\varepsilon u) (\tilde{x},\theta) := \left( \frac{\varepsilon}{2\pi}\right)^{d/2} \sum_{n\in \mathbb{Z}^d} u(\tilde{x} + \varepsilon n) e^{-i\theta \cdot (\tilde{x} + \varepsilon n)}, \qquad \tilde{x} \in \varepsilon Q \text{, } \theta \in \varepsilon^{-1}Q',
    \end{align}
    and extended by continuity to an operator from $L^2(\mathbb{R}^d)$ to $L^2(\varepsilon Q \times \varepsilon^{-1} Q')$, which will we still denote by $G_\eps$.
\end{defn}

\begin{rmk}
    In fact, $G_\varepsilon$ is unitary, with the following inversion formula:
    \begin{align}
        u(\tilde{x}) = \left( \frac{\varepsilon}{2\pi}\right)^{d/2} \int_{\varepsilon^{-1} Q'} (G_\varepsilon u) (\tilde{x}, \theta) e^{i\theta \cdot \tilde{x}} d\theta, \qquad \tilde{x}\in \mathbb{R}^d,
    \end{align}
    where we have extended $G_\varepsilon u$ by $\varepsilon Q$-periodicity to a function on $\mathbb{R}^d\times \varepsilon^{-1} Q'$.
\end{rmk}

Now introduce a new notation for the Bochner spaces $L^2(M,\mu;\mathcal{H}')$, following \cite[Section VIII.16]{reed_simon4}:

\begin{defn}\label{defn:directintspace}
    Let $(M, \mu)$ be a $\sigma$-finite measure space, and $\mathcal{H}'$ a separable Hilbert space. We define the \textit{(constant fiber) direct integral space} $\mathcal{H} = \int_{M}^{\oplus} \mathcal{H}' d\mu(m)$ to be the Bochner space $L^2(M,\mu; \mathcal{H}')$. Recall that this is a Hilbert space, equipped with inner product
    \begin{align}
        ( s,t )_{\mathcal{H}} := \int_M \left( s(m), t(m) \right)_{\mathcal{H}'} d\mu(m).
    \end{align}
    Elements of this space $s\in \mathcal{H}$ are called \textit{(measurable cross-)sections}.
\end{defn}

This notation places emphasis on the fibers $\mathcal{H}'$, and therefore on operators on $\mathcal{H}'$ indexed by the set $(M,\mu)$, in a measurable way. This requires us to define a notion of measurability. We continue with the notation of Definition \ref{defn:directintspace} for the remaining definitions of this section:

\begin{defn}
    We say that $T(\cdot): M \rightarrow \mathcal{L}{(\mathcal{H'})}$ is measurable if for all $x,y\in \mathcal{H}'$, the mapping
    \begin{align}
        M \ni m \mapsto \left(x, T(m) y \right)_{\mathcal{H}'} \in \mathbb{C} 
    \end{align}
    is measurable.
\end{defn}

To work with unbounded operators, the following definition will be neessary:

\begin{defn}
    Let $\{ A(m) \}_{m \in M}$ be a collection of unbounded self-adjoint operators on $\mathcal{H}'$, we say that $A(\cdot)$ is measurable if the mapping
    \begin{align}
        M \ni m \mapsto (A(m) + i)^{-1} \in \mathcal{L}(\mathcal{H}')
    \end{align}
    is measurable.
\end{defn}

We are now ready to introduce the notion of a ``continuous direct sum of bounded operators". 

\begin{defn}[Decomposable operator] Let $T$ be a bounded operator on $\mathcal{H} = \int_M^{\oplus} \mathcal{H}' d\mu$. Suppose there exists a measurable family $T(\cdot) \in L^{\infty}(X,\mu;\mathcal{L}(\mathcal{H}'))$ such that for all sections $s\in \mathcal{H}$,
    \begin{align}
        (Ts)(x) = T(x) s(x).
    \end{align}
    Then we call $T$ \textit{decomposable}, write $T = \int_M^{\oplus} T(m)d\mu(m)$, and call $T(m)$ the \textit{fibers of $T$.}
\end{defn}

Similarly, we will need to extend this notion to the unbounded self-adjoint case:

\begin{defn}
    Suppose $A(\cdot)$ is a measurable family of unbounded self-adjoint operators on $\mathcal{H}'$. We define the operator $A \equiv \int_M^{\oplus} A(m) d\mu(m)$ by
    \begin{align}
        &\mathcal{D}(A) = \left\{ u\in\mathcal{H} ~\bigg|~ u(m)\in \mathcal{D}\left(A(m)\right) ~\text{$\mu$-a.e.,~with}~\int_M \| A(m)u(m) \|_{\mathcal{H}'}^2 d\mu(m) < \infty \right\},\\
        &(Au)(m) = A(m) u(m).
    \end{align}
    This is an unbounded self-adjoint operator, by \cite[Theorem XIII.85(a)]{reed_simon4}.
\end{defn}

\begin{rmk}
    With the notation of a direct integral, the Gelfand transform may now be written as
    \begin{align*}
        &G_\eps:L^2(\mathbb{R}^d) \longrightarrow L^2(\eps Q \times \eps^{-1}Q') \cong  \int_{\eps^{-1}Q'}^{\oplus} L^2(\eps Q) d\theta, \\
        &(G_\eps u)(\theta) = (G_\eps u)(\cdot,\theta) = G_\eps (\theta) u (\theta) \in L^2(\eps Q).
    \end{align*}
    Note also the special case $\int_{\varepsilon^{-1}Q'}^{\oplus} \mathbb{C} d\theta \cong L^2(\eps^{-1}Q')$.
\end{rmk}

\begin{defn}\label{defn:a_eps_theta}
    For $\theta \in \eps^{-1}Q'$, define $A^{(\theta)}_\eps$ to be the operator on $L^2(\eps Q)$ corresponding to the sesquilinear form
    \begin{align}
        (u,v) \mapsto \int_{\eps Q} \widetilde{a}_{\eps^2}\left( \tfrac{\tilde{x}}{\eps} \right) (\tfrac{1}{i} \nabla_{\tilde{x}} + \theta) u(\tilde{x}) \cdot \overline{ (\tfrac{1}{i} \nabla_{\tilde{x}} + \theta) v(\tilde{x})} d\tilde{x}, \quad u,v \in \mathcal{D}[A^{(\theta)}_\eps] := H_{\text{per}}^1(\eps Q).
    \end{align}
    That is, $A^{(\theta)}_\eps$ corresponds to the differential expression $(\frac{1}{i} \nabla_{\tilde{x}} + \theta) \widetilde{a}_{\eps^2}(\frac{\tilde{x}}{\varepsilon}) (\frac{1}{i} \nabla_{\tilde{x}} + \theta)$ with periodic BCs on $\eps Q$. 
\end{defn}

Since $A_\eps$ (from Section \ref{sect:mainmodel_aeps}) has $\eps \mathbb{Z}^d$-periodic coefficients, the scaled Gelfand transform sets up a unitary equivalence between $A_\eps$ and a family of operators $A^{(\theta)}_\eps$:

\begin{prop} \label{prop:a_eps_theta_defn}
    With $A_\eps$ as defined in Section \ref{sect:mainmodel_aeps} and $A_\eps^{(\theta)}$ as in Definition \ref{defn:a_eps_theta}, we have the following identity:
    \begin{align}
        A_\varepsilon = G_\varepsilon^{*} \left( \int_{\varepsilon^{-1}Q'}^{\oplus} A_\varepsilon^{(\theta)} d \theta \right) G_\varepsilon.
    \end{align}
\end{prop}

\begin{proof}
    This is a direct consequence of the product rule, see for example \cite[Theorem 2.5]{muthukumar} for the short computation. Periodic BCs follows from the fact that $G_\varepsilon u(\tilde{x},\theta)$ is $\eps\Z^d$-periodic in $\tilde{x}$.
\end{proof}

While we have shifted our perspective to consider a ($\theta$ dependent) operator on a bounded subset of $\mathbb{R}^d$, this is rather inconvenient as the space $L^2(\varepsilon Q)$ varies with $\varepsilon$. This motivates us to define:

\begin{defn}
    For each $\varepsilon>0$, define the unitary rescaling operators $\Phi_\eps$ and $\Psi_\eps$
    \begin{align}
        &\Phi_\eps: L^2(\eps Q) \rightarrow L^2(Q), 
        &&(\Phi_\eps u)(x) = \eps^{d/2} u(\eps x), \\
        &\Psi_\eps : L^2(\eps^{-1}Q') \rightarrow L^2(Q'), 
        &&(\Psi_\eps v) (\tau) = \left( \tfrac{2\pi}{\eps} \right)^{d/2} v(\tfrac{\tau}{\eps}).
    \end{align}
\end{defn}

\begin{defn}\label{defn:a_eps_tau}
    For $\tau \in Q'$, define $A_\eps^{(\tau)}$ to be the operator on $L^2(Q)$ corresponding to the sesquilinear form
    \begin{align}
        (u,v) \mapsto \frac{1}{\eps^2} \int_{Q} \widetilde{a}_{\eps^2}\left( x \right) (\tfrac{1}{i} \nabla_{x} + \theta) u(x) \cdot \overline{ (\tfrac{1}{i} \nabla_{x} + \theta) v(x)} dx, \quad u,v \in \mathcal{D}[A^{(\tau)}_\eps] := H_{\text{per}}^1(Q).
    \end{align}
    That is, $A^{(\tau)}_\eps$ corresponds to the differential expression $(\frac{1}{i} \nabla_x + \uptau) \frac{1}{\varepsilon^2} \widetilde{a}_{\eps^2}(x) (\frac{1}{i} \nabla_x + \uptau)$ with periodic BCs on $Q$. We will refer to $A_\eps^{(\tau)}$ as the \textit{main model operator}.
\end{defn}

\begin{lem} \label{lem:rescaling}
    Let $\tau = \eps \theta$. Then, $A^{(\uptau)}_{\varepsilon} = \Phi_\varepsilon A^{(\theta)}_\varepsilon \Phi_\varepsilon^*$.
\end{lem}

\begin{proof}
    Equivalently, we need to show that $A^{(\uptau)}_{\varepsilon} \Phi_\varepsilon = \Phi_\varepsilon A^{(\theta)}_\varepsilon$ as an operator from $L^2(\varepsilon Q)$ to $L^2(Q)$. It suffices to check this on a form core $C^{\infty}_\text{per}(\varepsilon Q)$. Let $u \in C^{\infty}_\text{per}(\varepsilon Q)$. We use $\tilde{x}$ for the variable on $\eps Q$, and $x$ for the variable on $Q$. First we see that
    \begin{align}
        \left( A^{(\theta)}_\varepsilon u \right) (\tilde{x}) &= \left( \frac{1}{i} \nabla_{\tilde{x}} + \theta \right) \cdot \left( \widetilde{a}_\varepsilon\left(\frac{\tilde{x}}{\varepsilon}\right) \left(\frac{1}{i} \nabla_{\tilde{x}} + \theta\right) u(\tilde{x}) \right) \nonumber \\
        &= \sum_{j,k=1}^d \left( \frac{1}{i}\frac{\partial}{\partial\tilde{x}_j} + \theta_j \right) \left[ \widetilde{a}_\varepsilon^{jk}\left( \frac{\tilde{x}}{\varepsilon}\right) \left( \frac{1}{i} \frac{\partial}{\partial \tilde{x}_k} + \theta_k \right) u(\tilde{x}) \right]
    \end{align}
    Therefore, if $\tau = \eps \theta$ and $x = \frac{\tilde{x}}{\eps}$, then
    \begin{align*}
        RHS &= \left( \Phi_\varepsilon A^{(\theta)}_\varepsilon u \right) (x) = \varepsilon^{d/2} \left(A^{(\theta)}_\varepsilon u \right) (\varepsilon x) \\
        &= \varepsilon^{d/2} \sum_{j,k=1}^d \left( \frac{1}{i} \frac{\partial}{\partial \tilde{x}_j} + \theta_j \right) \left[ \widetilde{a}_{\eps^2}^{jk}\left( \frac{\varepsilon x}{\varepsilon}\right) \left( \frac{1}{i} \frac{\partial}{\partial \tilde{x}_k} + \theta_k \right) u(\varepsilon x) \right] \\
        &= \varepsilon^{d/2} \sum_{j,k=1}^d \left( \frac{1}{i\varepsilon} \frac{\partial}{\partial x_j} + \frac{\uptau_j}{\varepsilon} \right) \left[ \widetilde{a}_{\eps^2}^{jk}\left( x \right) \left( \frac{1}{i\varepsilon} \frac{\partial}{\partial x_k} + \frac{\uptau_k}{\varepsilon} \right) u(\varepsilon x) \right] \\
        &= \varepsilon^{d/2} \sum_{j,k=1}^d \left( \frac{1}{i} \frac{\partial}{\partial x_j}  + \uptau_j \right) \left[ \frac{1}{\varepsilon^2} \widetilde{a}^{jk}_{\eps^2}(x) \left( \frac{1}{i} \frac{\partial}{\partial x_k} + \uptau_k \right) u(\varepsilon x)  \right] \\
        &= \varepsilon^{d/2} \left( \frac{1}{i} \nabla_x + \uptau  \right) \cdot \left( \frac{1}{\varepsilon^2} \widetilde{a}_{\eps^2}(x) \left( \frac{1}{i} \nabla_x + \uptau \right) u(\varepsilon x) \right) \\
        &= \left( \frac{1}{i} \nabla_x + \uptau  \right) \cdot \left( \frac{1}{\varepsilon^2} \widetilde{a}_{\eps^2}(x) \left( \frac{1}{i}\nabla_x + \uptau \right) (\Phi_\varepsilon u)(x) \right)
        = \left( A^{(\uptau)}_\varepsilon \Phi_\varepsilon u \right) (x) = LHS. \qedhere
    \end{align*}

\end{proof}

\begin{cor}\label{cor:unitary_equiv}
    $A_\eps$ is unitarily equivalent to $\int_{Q'}^{\oplus} A_\eps^{(\tau)}d\tau$.
\end{cor}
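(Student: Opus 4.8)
The plan is to chain together the three unitary equivalences already established in the excerpt. Starting from Proposition \ref{prop:a_eps_theta_defn}, we have
\[
    A_\eps = G_\eps^*\left( \int_{\eps^{-1}Q'}^{\oplus} A_\eps^{(\theta)}\, d\theta \right) G_\eps ,
\]
an operator acting on the direct integral $\int_{\eps^{-1}Q'}^{\oplus} L^2(\eps Q)\, d\theta$. By Lemma \ref{lem:rescaling}, each fibre satisfies $A_\eps^{(\uptau)} = \Phi_\eps A_\eps^{(\theta)} \Phi_\eps^*$ with $\uptau = \eps\theta$. So the remaining work is to bundle the fibrewise rescaling $\Phi_\eps$ together with the change of variables $\theta \mapsto \uptau = \eps\theta$ on the index set into a single unitary from $\int_{\eps^{-1}Q'}^{\oplus} L^2(\eps Q)\, d\theta$ onto $\int_{Q'}^{\oplus} L^2(Q)\, d\uptau$, and to check that conjugating the decomposable operator by it produces $\int_{Q'}^{\oplus} A_\eps^{(\uptau)}\, d\uptau$.

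Concretely, first I would define $\Psi_\eps : \int_{\eps^{-1}Q'}^{\oplus} L^2(\eps Q)\, d\theta \to \int_{Q'}^{\oplus} L^2(Q)\, d\uptau$ by $(\Psi_\eps s)(\uptau) := \eps^{-d/2}\, \Phi_\eps\big( s(\eps^{-1}\uptau) \big)$ for $\uptau \in Q'$ — that is, rescale the index variable by $\eps$ and apply $\Phi_\eps$ fibrewise, with the scalar prefactor $\eps^{-d/2}$ compensating for the Jacobian $d\theta = \eps^{-d}\, d\uptau$. Then I would verify $\Psi_\eps$ is unitary: for $s, t$ sections,
\[
    (\Psi_\eps s, \Psi_\eps t)_{\int_{Q'}^{\oplus} L^2(Q)} = \int_{Q'} \eps^{-d}\big( \Phi_\eps s(\eps^{-1}\uptau), \Phi_\eps t(\eps^{-1}\uptau)\big)_{L^2(Q)} d\uptau = \int_{\eps^{-1}Q'} \big( s(\theta), t(\theta)\big)_{L^2(\eps Q)} d\theta ,
\]
using the substitution $\uptau = \eps\theta$ and unitarity of $\Phi_\eps$ on each fibre. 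Next I would compute the conjugation: for $s$ in the domain of $\int_{\eps^{-1}Q'}^{\oplus} A_\eps^{(\theta)} d\theta$,
\[
    \Big( \Psi_\eps \big( \textstyle\int^{\oplus} A_\eps^{(\theta)} d\theta \big) \Psi_\eps^* \, u \Big)(\uptau) = \eps^{-d/2} \Phi_\eps\Big( A_\eps^{(\eps^{-1}\uptau)}\, (\Psi_\eps^* u)(\eps^{-1}\uptau) \Big) = \Phi_\eps A_\eps^{(\theta)} \Phi_\eps^*\, u(\uptau) = A_\eps^{(\uptau)} u(\uptau) ,
\]
where in the middle step $(\Psi_\eps^* u)(\theta) = \eps^{d/2}\Phi_\eps^* u(\eps\theta)$ and in the last step I invoke Lemma \ref{lem:rescaling} with $\uptau = \eps\theta$. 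A small auxiliary point to record along the way is that $\Psi_\eps$ maps the domain $\mathcal{D}\big( \int^{\oplus} A_\eps^{(\theta)} d\theta \big)$ onto $\mathcal{D}\big( \int^{\oplus} A_\eps^{(\uptau)} d\uptau \big)$, which follows from the same change of variables applied to the defining integrability condition $\int \| A_\eps^{(\theta)} s(\theta)\|^2 d\theta < \infty$. Combining, $U_\eps := \Psi_\eps G_\eps$ is unitary from $L^2(\mathbb{R}^d)$ onto $\int_{Q'}^{\oplus} L^2(Q)\, d\uptau$, and $A_\eps = U_\eps^* \big( \int_{Q'}^{\oplus} A_\eps^{(\uptau)} d\uptau \big) U_\eps$.

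The argument is essentially bookkeeping, so there is no serious obstacle; the one place to be careful is the interplay of the scalar normalization $\eps^{-d/2}$ with the Jacobian of the index-set rescaling — it is tempting to drop it, but it is exactly what makes $\Psi_\eps$ an isometry rather than merely a bijection — together with the purely formal check that $\Psi_\eps$ intertwines the two notions of measurability for the fibre families (immediate from composing with the bounded, fibrewise-measurable maps $\Phi_\eps$ and the Borel isomorphism $\theta \mapsto \eps\theta$), so that $\int_{Q'}^{\oplus} A_\eps^{(\uptau)} d\uptau$ is indeed a well-defined self-adjoint operator to which \cite[Theorem XIII.85(a)]{reed_simon4} applies.
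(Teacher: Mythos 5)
Your proposal is correct and follows essentially the same route as the paper: chain Proposition \ref{prop:a_eps_theta_defn} with Lemma \ref{lem:rescaling}, then absorb the fibrewise rescaling $\Phi_\eps$ together with the Jacobian factor $\eps^{-d/2}$ from the change of variables $\uptau=\eps\theta$ into a single unitary between the two direct integral spaces. The paper writes this more tersely as distributing $\eps^{-d}$ over $\int_{\eps^{-1}Q'}^{\oplus}\Phi_\eps\,d\theta$, whereas you define the combined unitary $\Psi_\eps$ explicitly and check domains and measurability, but the content is identical.
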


\begin{proof}
    Using expressions relating $A_\eps$, $A_\eps^{(\theta)}$, and $A_\eps^{(\tau)}$, $\tau = \eps\theta$, we have
    \begin{align}
        A_\varepsilon &\stackrel{\text{Prop \ref{prop:a_eps_theta_defn}}}{=} G_\varepsilon^{*} \left( \int_{\varepsilon^{-1}Q'}^{\oplus} A_\varepsilon^{(\theta)} d \theta \right) G_\varepsilon
        \stackrel{\text{Lemma \ref{lem:rescaling}}}{=} G_\varepsilon^{*} \left( \int_{\varepsilon^{-1}Q'}^{\oplus} \Phi_\varepsilon^{*} A_\varepsilon^{(\varepsilon\theta)} \Phi_\varepsilon d \theta \right) G_\varepsilon \nonumber\\
        &= G_\varepsilon^{*} \left( \int_{\varepsilon^{-1} Q'}^{\oplus} \Phi_\varepsilon^* d\theta \right)
        \left( \int_{\eps^{-1}Q'}^{\oplus} A^{(\eps\theta)}_\varepsilon d\theta \right)
        \left( \int_{\varepsilon^{-1} Q'}^{\oplus} \Phi_\varepsilon d\theta \right)G_\varepsilon \nonumber \\
        &= G_\varepsilon^{*} \left( \int_{\varepsilon^{-1} Q'}^{\oplus} \Phi_\varepsilon^* d\theta \right)
        \left( \int_{Q}^{\oplus} \Psi_\eps^* dx \right)
        \left( \int_{Q'}^{\oplus} A^{(\tau)}_\varepsilon d\tau \right)
        \left( \int_{Q}^{\oplus} \Psi_\eps dx \right)
        \left( \int_{\varepsilon^{-1} Q'}^{\oplus} \Phi_\varepsilon d\theta \right)G_\varepsilon,
    \end{align}
    where in the last equality, we have used $\int_{\eps^{-1}Q'}^{\oplus} L^2(Q) d\theta \cong L^2(Q\times \eps^{-1}Q') \cong \int_Q^{\oplus} L^2(\eps^{-1}Q') dx$. Note for instance, that $\| \int_{\eps^{-1}Q'}^{\oplus} \Phi_\eps d\theta \|_{op} = \text{esssup}_\theta \| \Phi_\eps \|_{L^2(\eps Q) \to L^2(Q)} = 1$ \cite[Theorem~XIII.83]{reed_simon4}.
\end{proof}

We will therefore turn our attention towards the operator $A_\varepsilon^{(\uptau)}$. In the following section, we recast our problem of studying the norm-resolvent asymptotics of $A_\eps$ in terms of $A_\varepsilon^{(\uptau)}$.

\subsection{Reformulation in terms of operators on the unit cell}\label{sect:mainmodel_defn}

With Corollary \ref{cor:unitary_equiv} in mind, our goal can now be restated as follows:

\begin{quote}
    Identify, \textbf{uniform in $\tau$}, \textbf{norm-resolvent asymptotics} for $A_\eps^{(\tau)}$, as $\eps \downarrow 0$.
\end{quote}

Having turned our focus towards $A_\eps^{(\tau)}$, let us collect several ways of describing $A_\eps^{(\tau)}$ that will be useful for purposes of interpretation.

First, we recall from Definition \ref{defn:a_eps_tau} that $A_\eps^{(\tau)}$ is an operator on $L^2(Q)$ that corresponds to the differential expression $(\frac{1}{i} \nabla_x + \uptau) \frac{1}{\varepsilon^2} \widetilde{a}_{\eps^2}(x) (\frac{1}{i} \nabla_x + \uptau)$. Recall that the coefficient matrix is given by:
\begin{equation}\label{eqn:matrix_coeff}
    \frac{1}{\varepsilon^2} \widetilde{a}_{\varepsilon^2}(x) = 
    \begin{cases}
        \eps^{-2} I, \quad &x \in Q_{\text{stiff-ls}},\\
        I, \quad &x \in Q_{\text{soft}},\\
        \eps^{-2} I, \quad &x \in Q_{\text{stiff-int}},
    \end{cases}
\end{equation}
where the subscripts ``ls" and ``int" stands for landscape and interior respectively. 

Second, we have obtained $A_\eps^{(\tau)}$ from $A_\eps$ through a combination of the Gelfand transform $G_\eps$ and rescaling $\Phi_\eps$ in the previous subsection. Figure \ref{fig:toymodel} gives a description of this process, where we pass from the full space $\R^d$ to the unit cell $Q$.
\begin{figure}[h]
  \centering
  \includegraphics[page=4, clip, trim=1cm 4.5cm 5cm 2.5cm, width=1.00\textwidth]{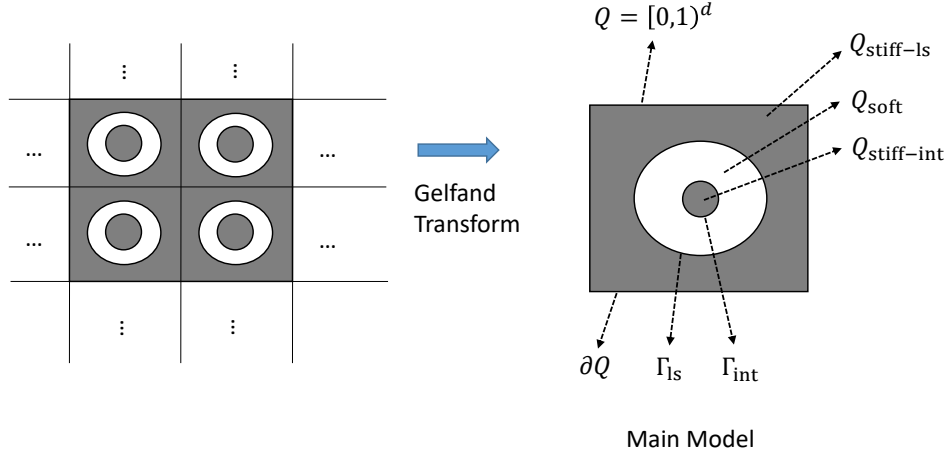}
  \caption{Obtaining the main model operator $A_\eps^{(\tau)}$ via Gelfand transform.} \label{fig:toymodel}
\end{figure}

Third, recall from Section \ref{sect:intro} that $A_\eps$ (and thus $A_\eps^{(\tau)}$) are defined with transmission BCs on the soft-stiff interfaces. That Definition \ref{defn:a_eps_tau} implies transmission BCs on $\Gamma_\ls$ and $\Gamma_\interior$ can be seen from the form domain $\mathcal{D}[A^{(\uptau)}_\varepsilon] = H^1_{\text{per}}(Q)$. Alternatively, we may write the BVP for the resolvent equation for $A_\eps^{(\tau)}$: The equation $(A_{\eps}^{(\tau)} - z)u = f \in L^2(Q)$ has a unique solution $u\equiv u_{\eps}^{(\tau)} = u_\stls + u_\soft + u_\stin$ whenever the following BVP can be solved uniquely in the weak sense:
\begin{equation}\label{eqn:mainmodel_bvp}
    \begin{cases}
        \varepsilon^{-2} \bigg(\frac{1}{i} \nabla + \uptau \bigg)^2 u_{\text{stiff-ls}} - zu_{\text{stiff-ls}} = f, \quad &\text{ in } Q_{\text{stiff-ls}},\\
        \bigg(\frac{1}{i} \nabla + \uptau \bigg)^2 u_{\text{soft}} - zu_{\text{soft}} = f, \quad &\text{ in } Q_{\text{soft}},\\
        \varepsilon^{-2} \bigg(\frac{1}{i} \nabla + \uptau \bigg)^2 u_{\text{stiff-int}} - zu_{\text{stiff-int}} = f, \quad &\text{ in } Q_{\text{stiff-int}}, \\
        u_{\text{stiff-ls}}=u_{\text{soft}} \quad &\text{ on } \Gamma_\ls, \\
        \varepsilon^{-2} \bigg[ \frac{\partial u_{\text{stiff-ls}}}{\partial n_{\text{stiff-ls}}} + i (\uptau \cdot n_{\text{stiff-ls}}) u_{\text{stiff-ls}} \bigg] + \bigg[ \frac{\partial u_{\text{soft}}}{\partial n_{\text{soft}}} + i (\uptau \cdot n_{\text{soft}}) u_{\text{soft}} \bigg] = 0
        &\text{ on } \Gamma_\ls, \\
        u_{\text{soft}}=u_{\text{stiff-int}} \quad &\text{ on } \Gamma_\interior, \\
        \bigg[ \frac{\partial u_{\text{soft}}}{\partial n_{\text{soft}}} + i (\uptau \cdot n_{\text{soft}}) u_{\text{soft}} \bigg] + \varepsilon^{-2} \bigg[ \frac{\partial u_{\text{stiff-int}}}{\partial n_{\text{stiff-int}}} + i (\uptau \cdot n_{\text{stiff-int}}) u_{\text{stiff-int}} \bigg] = 0
        &\text{ on } \Gamma_\interior, \\
        u_{\text{stiff-ls}} \text{ periodic} &\text{ on $\partial Q$},
    \end{cases}
\end{equation}
where $n_{\bigstar}$ denotes the outward unit normal vector with respect to $Q_{\bigstar}$, $\bigstar \in \{ \text{stiff-int}, \text{soft}, \text{stiff-ls} \} $.

Finally, we conclude this section by introducing the following notation:

\begin{defn}
    Let $(\bigstar, \bullet) \in \{(\text{soft}, \text{ls}), (\text{stiff-ls}, \text{ls}), (\text{soft}, \text{int}), (\text{stiff-int}, \text{int}) \}$, we denote by $n_\bigstar$ the outward pointing unit normal vector with respect to the component $Q_\bigstar$. Also, let $\partial^{(\uptau)}_{n_\bigstar,\bullet} u$ be the trace of the co-normal derivative of $u$, with respect to $\bigstar$, on the boundary $\Gamma_{\bullet}$. This is defined for $u\in H^{3/2}(Q)$, by
    \begin{align}
        \partial^{(\uptau)}_{n_\bigstar,\bullet} u := - \left( \frac{\partial u}{\partial n_\bigstar} + i(\uptau \cdot n_{\bigstar}) u \right) \bigg|_{\Gamma_{\bullet}}.
    \end{align}
    (Note the minus sign convention.)
\end{defn}

% ================================================================

\section{Boundary triple theory setup}\label{sect:bdry_triple_setup}
\subsection{Preliminaries}\label{sect:bdry_triple_prelim}

In this section, we will discuss the three ingredients that make up the Ryzhov boundary triple \cite{ryzhov2009}, namely, the $\uptau$-Dirichlet decoupling, the $\uptau$-harmonic lift, and the $\uptau$-Dirichlet-to-Neumann ($\uptau$-DtN) operator. First, we introduce a new notation for the spaces:

\begin{defn}
    Set $\mathcal{H} := L^2(Q) = L^2(Q_\stin) \oplus L^2(Q_\soft) \oplus L^2(Q_\stls)$ and $\mathcal{E} := L^2(\Gamma_\interior)\oplus L^2(\Gamma_\ls)$. We refer to $\mathcal{E}$ as the \textit{boundary space}.
\end{defn}

\begin{rmk}[On notation]
    We will view $L^2(Q_\stin)$, $L^2(Q_\soft)$, and $L^2(Q_\stls)$ as subspaces of $\mathcal{H}$. This means, for instance, that a function $u \in L^2(Q_\soft)$ may be viewed as an element of $\mathcal{H}$
    \begin{itemize}
        \item by an extension by zero onto $Q_\stin \cup Q_\stls$, in which we write $0 + u + 0$ or simply $u$,
        \item or by an identification with the second component of $(0,u,0)$.
    \end{itemize}
    We will switch between the two notations where convenient. A similar remark applies to $\mathcal{E}$ and its subspaces $L^2(\Gamma_\interior)$ and $L^2(\Gamma_\ls)$.
\end{rmk}

\begin{defn}[Projections on $\mathcal{H}$ and $\mathcal{E}$]\label{defn:projections}
    For $\bigstar \in \{\text{stiff-int}, \text{soft}, \text{stiff-ls} \}$, we write $P_\bigstar \in \mathcal{L}(\mathcal{H})$ for the orthogonal projection of $\mathcal{H}$ onto $L^2(Q_{\bigstar})$. Similarly, for $\bullet \in \{ \text{int}, \text{ls} \}$ we write $\mathcal{P}_\bullet \in \mathcal{L}(\mathcal{E})$ for the orthogonal projection of $\mathcal{E}$ onto $L^2(\Gamma_{\bullet})$. (Note the calligraphic font for projections on the boundary space.)
\end{defn}

\subsection*{The \texorpdfstring{$\tau$}{tau}-Dirichlet decoupling}

The first ingredient, the $\tau$-Dirichlet decoupling, is constructed using the Dirichlet operators $-(\nabla + i\uptau)^2$, appropriately rescaled, on each connected component of $Q$.

\begin{defn}
    The \textit{$\uptau$-Dirichlet decoupling} is the operator on $\mathcal{H} = L^2(Q)$ defined by 
    \begin{align}
        A^{(\uptau)}_{\varepsilon,0} = A^{\text{stiff-int},(\uptau)}_{\varepsilon,0} \oplus A^{\text{soft},(\uptau)}_{0} \oplus A^{\text{stiff-ls},(\uptau)}_{\varepsilon,0}
    \end{align}
    where, 
    \begin{itemize}
        \item $A^{\text{stiff-int},(\uptau)}_{\varepsilon,0}$ is the operator $- \eps^{-2}(\nabla + i\uptau)^2$ on $L^2(Q_{\text{stiff-int}})$ with Dirichlet BC on $\Gamma_\interior$. That is, the operator defined through its sesquilinear form having form domain $\mathcal{D}[A^{\text{stiff-int},(\uptau)}_{\varepsilon,0}] = H^1_0(Q_{\text{stiff-int}})$ and action $(u,v)\mapsto \int_{Q_{\text{stiff-int}}} \eps^{-1} (\frac{1}{i}\nabla + \uptau)u \cdot \overline{ \eps^{-1} (\frac{1}{i}\nabla + \uptau) v}$.
        
        \item $A^{\text{soft},(\uptau)}_{0}$ is the operator $-(\nabla + i\uptau)^2$ on $L^2(Q_{\text{soft}})$ with Dirichlet BCs on $\Gamma_\interior \cup \Gamma_\ls$. That is, $\mathcal{D}[A^{\text{soft},(\uptau)}_0] = H^1_0(Q_{\text{soft}})$.
        
        \item $A^{\text{stiff-ls},(\uptau)}_{\varepsilon,0}$ is the operator $- \eps^{-2}(\nabla + i\uptau)^2$ on $L^2(Q_{\text{stiff-ls}})$ with Dirichlet BC on $\Gamma_\ls$ and periodic BCs on $\partial Q$. That is, $\mathcal{D}[A^{\text{stiff-ls},(\uptau)}_{\varepsilon,0}] = H^1_{0,\text{per}}(Q_{\text{stiff-ls}})$, the closure of smooth functions that are periodic on $\partial Q$ and with compact support in $\partial Q \cup Q_{\text{stiff-ls}}$, under the $H^1$ norm.
    \end{itemize}
    Write $\widetilde{A}^{\text{stiff-int}, (\uptau)}_0 := \varepsilon^2 A^{\text{stiff-int},(\uptau)}_{\varepsilon,0}$ and $\widetilde{A}^{\text{stiff-ls}, (\uptau)}_0 := \varepsilon^2 A^{\text{stiff-ls},(\uptau)}_{\varepsilon,0}$ for the unweighted operators.
\end{defn}

We record some properties of $\azero{main}$ that will be useful to us.

\begin{prop}\label{prop:decoupling_properties}
    For all $\uptau \in \overline{Q'}=[-\pi,\pi]^d$, $A^{(\uptau)}_{\varepsilon,0}$ is self-adjoint, positive definite, has purely discrete spectrum, and $0\in\rho(A^{(\uptau)}_{\varepsilon,0})$. Moreover, $A^{\text{soft},(\uptau)}_0$ and $A^{\bigstar,(\uptau)}_{\varepsilon,0}$ are bounded below, \textit{uniformly in $\uptau$ and $\varepsilon$}, assuming $\varepsilon$ is small enough, $\bigstar \in \{ \text{stiff-int, stiff-ls}\}$. We also have the following estimates: For some $C>0$, independent of $\uptau$ and $\varepsilon$, assuming $\varepsilon$ is small enough,  
    \begin{align}
        \|(A^{\text{soft},(\uptau)}_{0})^{-1} \|_{L^2(Q_{\text{soft}})\rightarrow L^2(Q_{\text{soft}})} \leq C, \\
        \|(A^{\bigstar,(\uptau)}_{\varepsilon,0})^{-1} \|_{L^2(Q_{\bigstar})\rightarrow L^2(Q_{\bigstar})} \leq C \varepsilon^2.
    \end{align}
\end{prop}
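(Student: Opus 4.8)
The plan is to use that $A^{(\uptau)}_{\varepsilon,0}$ is an orthogonal sum and to reduce every assertion to a lower bound on the three quadratic forms involved.

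\emph{Self-adjointness, discrete spectrum, $0\in\rho$.} Each of the three defining forms is densely defined, symmetric and nonnegative, and its form domain (namely $H^1_0$ of the relevant component, or $H^1_{0,per}(Q_{\text{stiff-ls}})$) is complete in the form norm, which is equivalent to the $H^1$-norm; hence each form is closed and, by Kato's first representation theorem \cite{kato_book}, generates a self-adjoint, lower-semibounded operator. An orthogonal sum of such operators is again self-adjoint and lower-semibounded, so $A^{(\uptau)}_{\varepsilon,0}$ is. Since $Q_{\text{soft}},Q_{\text{stiff-int}},Q_{\text{stiff-ls}}$ are bounded Lipschitz domains, each form domain embeds compactly into the corresponding $L^2$ space (Rellich--Kondrachov), so each summand has compact resolvent and $A^{(\uptau)}_{\varepsilon,0}$ has purely discrete spectrum. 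Positive definiteness, once established below, gives $0\in\rho(A^{(\uptau)}_{\varepsilon,0})$ at once.

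\emph{Soft and stiff-interior parts (gauge transformation).} For a bounded open $\Omega$ and $u\in H^1_0(\Omega)$, set $v:=e^{i\uptau\cdot x}u$; multiplication by $e^{\pm i\uptau\cdot x}$ is a unitary of $L^2(\Omega)$ restricting to an automorphism of $H^1_0(\Omega)$, and $(\tfrac1i\nabla+\uptau)u=e^{-i\uptau\cdot x}\tfrac1i\nabla v$, so $|(\tfrac1i\nabla+\uptau)u|=|\nabla v|$ pointwise. Hence
\[
\int_\Omega\bigl|(\tfrac1i\nabla+\uptau)u\bigr|^2=\int_\Omega|\nabla v|^2\ \ge\ \lambda_1^{D}(\Omega)\,\|v\|_{L^2(\Omega)}^2=\lambda_1^{D}(\Omega)\,\|u\|_{L^2(\Omega)}^2 ,
\]
where $\lambda_1^{D}(\Omega)>0$ is the first Dirichlet eigenvalue of $-\Delta$ on $\Omega$, \emph{independent of $\uptau$}. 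Taking $\Omega=Q_{\text{soft}}$ gives $A^{\text{soft},(\uptau)}_{0}\ge\lambda_1^{D}(Q_{\text{soft}})>0$ and $\|(A^{\text{soft},(\uptau)}_{0})^{-1}\|\le 1/\lambda_1^{D}(Q_{\text{soft}})=:C$. Taking $\Omega=Q_{\text{stiff-int}}$ gives $\widetilde{A}^{\text{stiff-int},(\uptau)}_{0}\ge\lambda_1^{D}(Q_{\text{stiff-int}})$, so $A^{\text{stiff-int},(\uptau)}_{\varepsilon,0}=\varepsilon^{-2}\widetilde{A}^{\text{stiff-int},(\uptau)}_{0}\ge\varepsilon^{-2}\lambda_1^{D}(Q_{\text{stiff-int}})$, which is $\ge\lambda_1^{D}(Q_{\text{stiff-int}})$ for $\varepsilon\le1$ and yields $\|(A^{\text{stiff-int},(\uptau)}_{\varepsilon,0})^{-1}\|\le C\varepsilon^2$.

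\emph{Stiff-landscape part.} The gauge trick fails here, since $e^{i\uptau\cdot x}$ is not $\partial Q$-periodic unless $\uptau\in2\pi\mathbb{Z}^d$; instead I would prove $\lambda_0:=\inf_{\uptau\in\overline{Q'}}\min\sigma(\widetilde{A}^{\text{stiff-ls},(\uptau)}_{0})>0$ by compactness. Were it zero, there would be $\uptau_n\to\uptau_*\in\overline{Q'}$ (using compactness of $\overline{Q'}=[-\pi,\pi]^d$) and $u_n\in H^1_{0,per}(Q_{\text{stiff-ls}})$ with $\|u_n\|=1$, $\|(\tfrac1i\nabla+\uptau_n)u_n\|\to0$; then $\|\nabla u_n\|\le\|(\tfrac1i\nabla+\uptau_n)u_n\|+|\uptau_n|$ is bounded, so along a subsequence $u_n\rightharpoonup u$ in $H^1$ and $u_n\to u$ in $L^2$ (compact embedding), giving $\|u\|=1$; since $\uptau_n u_n\to\uptau_* u$ in $L^2$ and $\nabla u_n\rightharpoonup\nabla u$, weak lower semicontinuity forces $(\tfrac1i\nabla+\uptau_*)u=0$, whence $u=c\,e^{i\uptau_*\cdot x}$, which vanishes on $\Gamma_{\text{ls}}$ only for $c=0$ --- contradiction. (Equivalently, one may invoke continuous dependence of $\min\sigma(\widetilde{A}^{\text{stiff-ls},(\uptau)}_{0})$ on $\uptau$ via Kato's form-perturbation theory \cite{kato_book,reed_simon4} together with compactness of $\overline{Q'}$.) Thus $\widetilde{A}^{\text{stiff-ls},(\uptau)}_{0}\ge\lambda_0$ uniformly in $\uptau$, so $A^{\text{stiff-ls},(\uptau)}_{\varepsilon,0}=\varepsilon^{-2}\widetilde{A}^{\text{stiff-ls},(\uptau)}_{0}\ge\varepsilon^{-2}\lambda_0$ ($\ge\lambda_0$ for $\varepsilon\le1$), with $\|(A^{\text{stiff-ls},(\uptau)}_{\varepsilon,0})^{-1}\|\le C\varepsilon^2$. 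Assembling the three blocks gives all stated claims; in particular $A^{(\uptau)}_{\varepsilon,0}$ is bounded below, uniformly in $\uptau$ and in $\varepsilon\le1$, by $\min\{\lambda_1^{D}(Q_{\text{soft}}),\lambda_1^{D}(Q_{\text{stiff-int}}),\lambda_0\}>0$.

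\emph{Main obstacle.} Everything except the stiff-landscape uniformity is routine form theory; the crux is that the $\uptau$-dependence in that block cannot be gauged away because of the periodic boundary condition on $\partial Q$, so one must genuinely exploit compactness of the parameter cube $\overline{Q'}$ (and the compact Sobolev embedding on $Q_{\text{stiff-ls}}$), taking a little care that $\min\sigma(\widetilde{A}^{\text{stiff-ls},(\uptau)}_{0})$ may be a degenerate eigenvalue at some $\uptau$ --- harmless here, as only its strict positivity is used.
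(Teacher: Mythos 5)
Your proof is correct, but the key step --- the lower bound uniform in $\uptau$ --- is handled by a genuinely different mechanism than in the paper. The paper treats all three blocks at once: it establishes the bound at $\uptau=0$ via the Poincar\'e inequality and then invokes the diamagnetic inequality $|\nabla|f|(x)|\le|(\nabla+i\uptau)f(x)|$ to conclude that the bottom of the spectrum for general $\uptau$ is at least that of the $\uptau=0$ operator (this works equally for the periodic-on-$\partial Q$ block, since $f\mapsto|f|$ preserves $H^1_{0,per}(Q_{\text{stiff-ls}})$). You instead split the argument: for the purely Dirichlet blocks you gauge away $\uptau$ with $v=e^{i\uptau\cdot x}u$, which is sharper than the paper's inequality --- it shows the bottom of the spectrum is \emph{exactly} $\lambda_1^D(\Omega)$ for every $\uptau$ --- while for the stiff-landscape block, where the gauge is not $\partial Q$-periodic, you run a compactness-and-contradiction argument over $\overline{Q'}$. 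That argument is sound (the weak $H^1$ limit stays in the closed subspace $H^1_{0,per}$, $\uptau_nu_n\to\uptau_*u$ strongly, so $(\tfrac1i\nabla+\uptau_*)u=0$ on the connected set $Q_{\text{stiff-ls}}$ forces $u$ to be a multiple of a unimodular exponential, incompatible with the zero trace on $\Gamma_\ls$ unless $u=0$), but unlike the diamagnetic route it is non-quantitative: it yields $\lambda_0>0$ without identifying it, whereas the paper's argument pins the constant down as the $\uptau=0$ Poincar\'e constant. Both routes deliver everything the proposition asserts; the remaining items (self-adjointness, discrete spectrum, the resolvent bounds via $\operatorname{dist}(0,\sigma)^{-1}$, and the $\varepsilon^{-2}$ scaling of the stiff blocks) are handled the same way in both.
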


\begin{proof}
    The self-adjointness, positive-semi-definiteness, and spectral type follows immediately as it is the orthogonal sum of operators with these properties. The positive-definiteness will then follow from $0\in\rho(A^{(\uptau)}_{\varepsilon,0})$. To show this, we first note that the case $\uptau=0$ follows from the Poincar\'e inequality applied to each of the three operators in $A^{(\uptau)}_{\varepsilon,0}$, since the first/lowest eigenvalue $\lambda_1$ is related to the optimal Poincar\'e constant $\gamma$ by $\lambda_1 = \gamma^{-2}>0$. ($\gamma$ can be taken to be independent of $\varepsilon$, if we assume $\varepsilon$ is small.)
    
    For general $\uptau$, the lowest eigenvalue is always greater than or equal to the $\uptau=0$ case. This is due to the diamagnetic inequality $|\nabla|f|(x)|\leq |(\nabla + i\uptau)f(x)|$ a.e., $f\in H^1$, and the fact that we can always choose the first Dirichlet eigenfunction (for $\tau=0$) to be strictly positive \cite[Theorem 8.38]{gilbarg_trudinger}. This shows the claim on being uniformly bounded below.
    
    Since the norm of $(A^{\text{soft},(\uptau)}_0)^{-1}$ is bounded above by $\left(\text{dist}(0, \sigma(A^{\text{soft}, (\uptau)}_0)) \right)^{-1}$, the estimate follows. A similar argument applies to the ``stiff" decouplings. 
\end{proof}

% ================================================================

\subsection*{The \texorpdfstring{$\tau$}{tau}-harmonic lift}
The second ingredient, the \textbf{$\tau$-harmonic lift}, generalizes the map that takes boundary data to harmonic functions.

\begin{defn}
    The \textit{$\uptau$-harmonic lift} is the operator $\Pi^{(\uptau)}:\mathcal{E} \rightarrow \mathcal{H}$, defined by
    \begin{align}
        \Pi^{(\uptau)} = \Pi^{\text{stiff-int}, (\uptau)} \mathcal{P}_\interior + \Pi^{\text{soft}, (\uptau)} + \Pi^{\text{stiff-ls}, (\uptau)}\mathcal{P}_\ls,
    \end{align}
    where
    \begin{itemize}
        \item $\Pi^{\text{stiff-int}, (\uptau)}: L^2(\Gamma_\interior) \rightarrow L^2(Q_{\text{stiff-int}})$ is the operator $\phi \mapsto u_\phi$, where $u_\phi$ is the unique solution to the BVP
        \begin{equation}
            \begin{cases}
                -(\nabla + i\uptau)^2 u_\phi = 0 & \text{ in $Q_{\text{stiff-int}}$,} \\
                u_\phi = \phi &\text{ on $\Gamma_\interior$.}
            \end{cases}
        \end{equation}
        
        \item $\Pi^{\text{soft}, (\uptau)}: \mathcal{E} \rightarrow L^2(Q_{\text{soft}})$ is the operator $(\phi,\varphi) \mapsto u_{\phi,\varphi}$, where $u_{\phi,\varphi}$ is the unique solution to the BVP
        \begin{equation}
            \begin{cases}
                -(\nabla + i\uptau)^2 u_{\phi,\varphi} = 0 & \text{ in $Q_{\text{soft}}$,} \\
                u_{\phi,\varphi} = \phi &\text{ on $\Gamma_\interior$,} \\
                u_{\phi,\varphi} = \varphi &\text{ on $\Gamma_\ls$.}
            \end{cases}
        \end{equation}
        
        \item $\Pi^{\text{stiff-ls}, (\uptau)}: L^2(\Gamma_\ls) \rightarrow L^2(Q_{\text{stiff-ls}})$ is the operator $\varphi \mapsto u_\varphi$, where $u_\phi$ is the unique solution to the BVP
        \begin{equation}
            \begin{cases}
                -(\nabla + i\uptau)^2 u_\phi = 0 & \text{ in $Q_{\text{stiff-ls}}$,} \\
                u_\varphi = \varphi &\text{ on $\Gamma_\ls$,} \\
                u_\varphi \text{ periodic} &\text{ on $\partial Q$.}
            \end{cases}
        \end{equation}
    \end{itemize}
\end{defn}

Note that while $\Pi^{(\tau)}$ is not a direct sum of $\Pi^{\stin, (\uptau)}$, $\Pi^{\soft, (\uptau)}$, and $\Pi^{\soft, (\uptau)}$, their lifts into the components $L^2(Q_\stin)$, $L^2(Q_\soft)$, and $L^2(Q_\stls)$ are mutually orthogonal:
\begin{align*}
    \Pi^{(\uptau)}(\phi + \varphi) = \left( \Pi^{\text{stiff-int}, (\uptau)}\phi,  \Pi^{\text{soft}, (\uptau)}(\phi + \varphi),  \Pi^{\text{stiff-ls}, (\uptau)}\varphi \right), \quad \text{for $\phi \in L^2(\Gamma_\interior)$ and $\varphi \in L^2(\Gamma_\ls)$.}
\end{align*}

Below, we give a sketch on how the lifts are constructed, and refer the reader to \cite[Theorem 4.25]{strongly_elliptic_sys} for the full details. For concreteness, we focus on $\Pi^{\text{stiff-int}, (\uptau)}$. We remark that the construction applies to $\Pi^{\soft,(\tau)}$, as $Q_\soft$ is connected with Lipschitz domain $\Gamma_\interior \cup \Gamma_\ls$.

\begin{quote}
    The lift $\Pi^{\text{stiff-int}, (\uptau)}$ is initially defined as a mapping from $H^{1/2}(\Gamma_\interior)$ to $H^1(Q_\stin)$. This is possible because the fully homogeneous problem (zero RHS and zero on the boundary) is uniquely solved by $u\equiv 0$, as $A^{\text{stiff-int}, (\uptau)}_{\varepsilon,0}$ is injective (Proposition \ref{prop:decoupling_properties}).
    
    We then show that $\Pi^{\text{stiff-int}, (\uptau)}$ admits a continuous extension to $L^2(\Gamma_\interior)$, by verifying an $L^2$ estimate for $u = \Pi^{\text{stiff-int}, (\uptau)}\phi$, where $\phi \in H^{1/2}(\Gamma_\interior)$. To do this, consider the adjoint problem $``L^* w = f"$ corresponding to our harmonic lift problem $``Lu = 0"$. The idea now is to combine both problems with Green's identity, giving
    \begin{align*}
        0 = (u, f)_{L^2(Q_\stin)} + (\phi, \partial^{(\uptau)}_n w)_{L^2(\Gamma_\interior)}.
    \end{align*}
    Pick $f=u$. Bound $\| \partial^{(\uptau)}_n w \|_{L^2}$ in terms of $\| f \|_{L^2}$ (this step is tedious, and requires elliptic regularity) and we get the required inequality.
\end{quote}

By definition, the lifts do not depend on $\eps$. As for $\uptau$, we have the following:

\begin{prop}\label{prop:lift_properties}
    There is some $C>0$, independent of $\uptau$ (and $\varepsilon$), such that
    \begin{align}
        \|\Pi^{(\uptau)}\|_{\mathcal{E}\rightarrow \mathcal{H}} < C.
    \end{align}
\end{prop}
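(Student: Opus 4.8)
The plan is: the $\eps$-independence is immediate, since the BVPs defining $\Pi^{\stin,(\uptau)}$, $\Pi^{\soft,(\uptau)}$, $\Pi^{\stls,(\uptau)}$ involve only $-(\nabla+i\uptau)^2$ and carry no scaling, so $\Pi^{(\uptau)}$ does not see $\eps$. For the real content, uniformity in $\uptau$, I would first reduce to the three component lifts: since $\Pi^{\stin,(\uptau)}\phi$, $\Pi^{\soft,(\uptau)}(\phi+\varphi)$, $\Pi^{\stls,(\uptau)}\varphi$ are supported on the mutually disjoint sets $Q_\stin,Q_\soft,Q_\stls$, they are orthogonal in $\mathcal H$, so
\begin{align*}
\|\Pi^{(\uptau)}(\phi+\varphi)\|_{\mathcal H}^2=\|\Pi^{\stin,(\uptau)}\phi\|_{L^2(Q_\stin)}^2+\|\Pi^{\soft,(\uptau)}(\phi+\varphi)\|_{L^2(Q_\soft)}^2+\|\Pi^{\stls,(\uptau)}\varphi\|_{L^2(Q_\stls)}^2,
\end{align*}
and it suffices to bound each summand by a constant multiple of $\|\phi\|_{L^2(\Gamma_\interior)}^2+\|\varphi\|_{L^2(\Gamma_\ls)}^2$, uniformly for $\uptau\in\overline{Q'}$.

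For a fixed component, say $Q_\stin$ with boundary $\Gamma_\interior$, I would carry out the two-step construction recalled in the quoted passage, watching the constants. Step one (data in $H^{1/2}$): writing $\mathfrak t^{(\uptau)}[u,v]:=\int_{Q_\stin}(\frac{1}{i}\nabla+\uptau)u\cdot\overline{(\frac{1}{i}\nabla+\uptau)v}$, the form is bounded and, by the uniform lower bound on the lowest Dirichlet eigenvalue (Proposition \ref{prop:decoupling_properties}), uniformly coercive on $H^1_0(Q_\stin)$; composing Lax--Milgram with a bounded right inverse $H^{1/2}(\Gamma_\interior)\to H^1(Q_\stin)$ of the trace gives $\|\Pi^{\stin,(\uptau)}\phi\|_{H^1(Q_\stin)}\le C\|\phi\|_{H^{1/2}(\Gamma_\interior)}$. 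Step two (data in $L^2$): for $f\in L^2(Q_\stin)$ let $w$ solve the homogeneous Dirichlet problem $-(\nabla+i\uptau)^2w=f$, $w|_{\Gamma_\interior}=0$ (the form being symmetric, the adjoint problem coincides with this one); Green's identity and $w|_{\Gamma_\interior}=0$ give
\begin{align*}
\Big|\int_{Q_\stin}(\Pi^{\stin,(\uptau)}\phi)\,\overline f\,\Big|=\Big|\int_{\Gamma_\interior}\phi\,\overline{\partial^{(\uptau)}_{n_{\stin},\interior}w}\Big|\le\|\phi\|_{L^2(\Gamma_\interior)}\big\|\partial^{(\uptau)}_{n_{\stin},\interior}w\big\|_{L^2(\Gamma_\interior)}\le C_{\mathrm{reg}}\|\phi\|_{L^2(\Gamma_\interior)}\|f\|_{L^2(Q_\stin)},
\end{align*}
where, because $w|_{\Gamma_\interior}=0$, the co-normal trace reduces to $\partial_{n_{\stin}}w$ and $C_{\mathrm{reg}}$ is the product of the $H^2$-regularity constant of that Dirichlet problem and the trace constant $H^2(Q_\stin)\to L^2(\Gamma_\interior)$. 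Taking $f=\Pi^{\stin,(\uptau)}\phi$ yields $\|\Pi^{\stin,(\uptau)}\phi\|_{L^2(Q_\stin)}\le C_{\mathrm{reg}}\|\phi\|_{L^2(\Gamma_\interior)}$, and the remaining task is to make $C_{\mathrm{reg}}$ — and the step-one constant — independent of $\uptau$.

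This is where the shared principal part enters: every $-(\nabla+i\uptau)^2$ has the \emph{same} principal part $-\Delta$ (ellipticity constant $1$) and lower-order coefficients that are affine, resp.\ quadratic, in $\uptau$, hence bounded on the compact box $\overline{Q'}$ via $|\uptau|\le\pi\sqrt d$. I would then check: (i) $|\mathfrak t^{(\uptau)}[u,v]|\le(1+|\uptau|)^2\|u\|_{H^1}\|v\|_{H^1}$; (ii) by Young's inequality $\mathrm{Re}\,\mathfrak t^{(\uptau)}[u]\ge\tfrac12\|\nabla u\|^2-2|\uptau|^2\|u\|^2$, so the G\r{a}rding pair $\gamma=\tfrac12$, $c=2\pi^2 d+\tfrac12$ works for all $\uptau$, which with Proposition \ref{prop:decoupling_properties} gives uniform coercivity on $H^1_0$ in step one; (iii) in the standard $H^2$ a priori estimate for $-(\nabla+i\uptau)^2w=f$, $w|_\Gamma=0$, the constant depends only on the ellipticity constant of $-\Delta$, the $L^\infty$-norms of the lower-order coefficients, and the fixed smooth geometry — see \cite[Proposition 11.10]{konrad_book} for the role of uniform ellipticity; (iv) the trace operator and its right inverse are $\uptau$-free. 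Hence $C_{\mathrm{reg}}$ and the step-one constant are bounded uniformly in $\uptau$; the same argument applies to $\Pi^{\soft,(\uptau)}$ (boundary $\Gamma_\interior\cup\Gamma_\ls$, two pieces) and to $\Pi^{\stls,(\uptau)}$ (boundary $\Gamma_\ls$ together with the periodic identification on $\partial Q$, which becomes an interior face), and taking the maximum of the finitely many uniform constants gives the asserted $C$.

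The main obstacle is the bookkeeping inside McLean's Theorem 4.25: one must be sure that no hidden $\uptau$-dependence slips into the elliptic-regularity and duality estimates beyond what enters through the lower-order coefficients, and one must handle the mildly non-standard domains — $Q_\soft$, whose boundary is two disjoint smooth closed hypersurfaces (treated conservatively in the excerpt as merely Lipschitz), and $Q_\stls$ with its periodic face — for which one invokes the corresponding (Lipschitz or, since the surfaces are in fact smooth, classical) $H^2$/trace estimates, the constants still depending only on the fixed geometry. A softer alternative is to prove operator-norm continuity of $\uptau\mapsto\Pi^{(\uptau)}$ on the compact set $\overline{Q'}$ (compare Proposition \ref{prop:cts_dependence}) and take a maximum, but that continuity rests on the same uniform estimates, so it is not really a shortcut. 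Since the excerpt declines to reproduce the estimates of \cite[Chapter 4]{strongly_elliptic_sys}, the cleanest write-up just records this reduction to the $\uptau$-uniform ellipticity of the family $\{\mathfrak t^{(\uptau)}\}_{\uptau\in\overline{Q'}}$ and cites \cite{strongly_elliptic_sys} together with \cite[Proposition 11.10]{konrad_book}.
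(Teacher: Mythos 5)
Your proposal is correct and follows essentially the same route as the paper: the paper's own argument is precisely the observation that the family $-(\nabla+i\uptau)^2$ shares the principal part $-\Delta$, so the boundedness and G\r{a}rding constants of the forms can be chosen uniformly over $\overline{Q'}$, with the actual lift construction and $L^2$ estimate delegated to \cite[Theorem 4.25]{strongly_elliptic_sys}. You have merely unpacked that citation (orthogonal splitting over the three components, Lax--Milgram for $H^{1/2}$ data, and the transposition/duality step for the $L^2$ extension), which is consistent with, and more detailed than, what the paper records.
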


\begin{proof}
    This follows from the continuity of the mapping $\overline{Q'}\ni \uptau \to \| \Pi^{(\tau)} \|_{\mathcal{E} \to \mathcal{H}}$, which was proved in \cite[Proposition~5.5]{ys_thesis}.
\end{proof}

Let us record two more properties of $\Pi^{(\tau)}$ that are necessary for constructing boundary triples. First, owing to the fact that the decoupling $A^{(\uptau)}_{\varepsilon,0}$ has Dirichlet BCs, one has
\begin{align}
    \mathcal{D}(A^{(\uptau)}_{\varepsilon,0}) \cap \text{ran}(\Pi^{(\uptau)}) = \{ 0 \}.
\end{align}

Second, the individual lifts $\Pi^{\stin, (\uptau)}$, $\Pi^{\soft, (\uptau)}$, and $\Pi^{\stls, (\uptau)}$ are injective, and hence 
\begin{align}
    \text{ker}(\Pi^{(\uptau)}) = \{ 0 \}.
\end{align}
To prove the injectivity of, say $\Pi^{\stin, (\uptau)}$, one first observes that $\Pi^{\stin, (\uptau)}$ can be characterized as the adjoint of the operator
\begin{align*}
    L^2(Q_\stin) \ni f \mapsto \partial^{(\uptau)}_{n_\stin, \interior} (\widetilde{A}_{0}^{\stin, (\tau)})^{-1} f.
\end{align*}
(This is a prequel to the identity $\Pi^*=\Gamma_1 A_0^{-1}$ of Proposition \ref{prop:auxops_properties}.) Since $\Gamma_\interior$ is smooth, an argument using elliptic regularity implies that the range of this operator contains $C^\infty (\Gamma_\interior)$, which is dense in $L^2(\Gamma_\interior)$.

% ================================================================

\subsection*{The \texorpdfstring{$\tau$}{tau}-Dirichlet-to-Neumann operator}
The final ingredient of the boundary triple is the \textbf{$\tau$-Dirichlet-to-Neumann operator}.

\begin{defn}
    The $\uptau$-Dirichlet-to-Neumann ($\uptau$-DtN) operator is the (unbounded) operator $\Lambda^{(\uptau)}_\varepsilon$ on $\mathcal{E}$ defined with domain $\mathcal{D}(\Lambda^{(\uptau)}_\varepsilon) = H^1(\Gamma_\interior) \oplus H^1(\Gamma_\ls)$ and action
    \begin{align}
        (\phi,\varphi) \mapsto &-\varepsilon^{-2} \left[ \frac{\partial u_{\text{stiff-int}}} {\partial n_{\text{stiff-int}}} + i(\uptau \cdot n_{\text{stiff-int}}) u_{\text{stiff-int}}  \right] - \left[ \frac{\partial u_{\text{soft}}}{\partial n_{\text{soft}}} + i(\uptau \cdot n_{\text{soft}}) u_{\text{soft}} \right] \nonumber \\
        &-\varepsilon^{-2} \left[ \frac{\partial u_{\text{stiff-ls}}} {\partial n_{\text{stiff-ls}}} + i(\uptau \cdot n_{\text{stiff-ls}} ) u_{\text{stiff-ls}}  \right] \nonumber \\
        = &\eps^{-2}\partial^{(\uptau)}_{n_\stin,\interior} u_\stin + \partial^{(\uptau)}_{n_\soft,\interior} u_\soft + \partial^{(\uptau)}_{n_\soft, \ls} u_\soft + \eps^{-2}\partial^{(\uptau)}_{n_\stls,\ls} u_\stls,
    \end{align}
    where $u_{\phi,\varphi} = u = u_{\text{stiff-int}} + u_{\text{soft}} + u_{\text{stiff-ls}}$ is the solution to the BVP 
    
    \begin{equation}
        \begin{cases}
            -(\nabla + i\uptau)^2 u_{\text{stiff-int}} = 0 & \text{ in $Q_{\text{stiff-int}}$,} \\
            -(\nabla + i\uptau)^2 u_{\text{soft}} = 0 & \text{ in $Q_{\text{soft}}$,} \\
            -(\nabla + i\uptau)^2 u_{\text{stiff-ls}} = 0 & \text{ in $Q_{\text{stiff-ls}}$,} \\
            u_{\text{stiff-int}} = u_{\text{soft}} = \phi &\text{ on $\Gamma_\interior$,} \\
            u_{\text{stiff-ls}} = u_{\text{soft}} = \varphi &\text{ on $\Gamma_\ls$,} \\
            u_{\text{stiff-int}} \text{ periodic} &\text{ on $\partial Q$.}
        \end{cases}
    \end{equation}
    For convenience, we introduce the following notation for the DtN on the individual components:
    \begin{itemize}
        \item Denote by $\Lambda^{\text{stiff-int}, (\uptau)}_{\varepsilon}$ the operator with domain $H^1(\Gamma_\interior)$ and action $\phi \mapsto \varepsilon^{-2} \partial^{(\uptau)}_{n_\stin,\interior} u_\phi$, where $u_\phi = u_{\text{stiff-int}}$ solves the BVP
        \begin{equation}
            \begin{cases}
                -(\nabla + i\uptau)^2 u_{\text{stiff-int}} = 0 & \text{ in $Q_{\text{stiff-int}}$,} \\
                u_{\text{stiff-int}} = \phi &\text{ on $\Gamma_\interior$.} \\
            \end{cases}
        \end{equation}
        \item Denote by $\Lambda^{\text{soft}, (\uptau)}$ the operator with domain $H^1(\Gamma_\interior) \oplus H^1(\Gamma_\ls)$ and action \newline $(\phi, \varphi) \mapsto \partial^{(\uptau)}_{n_\soft,\interior} u_{\phi,\varphi} + \partial^{(\uptau)}_{n_\soft,\ls} u_{\phi,\varphi}$, where $u_{\phi,\varphi} = u_{\text{soft}}$ solves the BVP
        \begin{equation}
            \begin{cases}
                -(\nabla + i\uptau)^2 u_{\text{stiff-ls}} = 0 & \text{ in $Q_{\text{soft}}$,} \\
                u_{\text{soft}} = \phi &\text{ on $\Gamma_\interior$,} \\
                u_{\text{soft}} = \varphi &\text{ on $\Gamma_\ls$.}
            \end{cases}
        \end{equation}
        \item Denote by $\Lambda^{\text{stiff-ls}, (\uptau)}_{\varepsilon}$ the operator with domain $H^1(\Gamma_\ls)$ and action $\varphi \mapsto \varepsilon^{-2} \partial^{(\uptau)}_{n_\stls,\ls} u_\varphi$, where $u_\varphi = u_{\text{stiff-ls}}$ solves the BVP
        \begin{equation}
            \begin{cases}
                -(\nabla + i\uptau)^2 u_{\text{stiff-ls}} = 0 & \text{ in $Q_{\text{stiff-ls}}$,} \\
                u_{\text{stiff-ls}} = \varphi &\text{ on $\Gamma_\ls$,} \\
                u_{\text{stiff-ls}} \text{ periodic} &\text{ on $\partial Q$.}
            \end{cases}
        \end{equation}
        \item Write $\widetilde{\Lambda}^{\text{stiff-int},(\uptau)} := \varepsilon^2 \Lambda^{\text{stiff-int},(\uptau)}_\varepsilon$ and $\widetilde{\Lambda}^{\text{stiff-ls},(\uptau)} := \varepsilon^2 \Lambda^{\text{stiff-ls},(\uptau)}_\varepsilon$ for the unweighted operators.
        
    \end{itemize}
    
\end{defn}

We may thus write $\Lambda^{(\uptau)}_\varepsilon$ as a sum of operators on $L^2(\Gamma_\interior)$, $\mathcal{E}$, and $L^2(\Gamma_\ls)$ respectively:
\begin{align}
    \Lambda^{(\uptau)}_\varepsilon = \Lambda^{\text{stiff-int}, (\uptau)}_{\varepsilon} \mathcal{P}_\interior + \Lambda^{\text{soft}, (\uptau)} + \Lambda^{\text{stiff-ls}, (\uptau)}_{\varepsilon} \mathcal{P}_\ls.
\end{align}

\begin{rmk}
    We have used the assumption that the boundaries $\Gamma_\interior$ and $\Gamma_\ls$ are at least $C^{1,1}$, so that by \cite[Theorem~4.21]{strongly_elliptic_sys}, the co-normal derivatives are well-defined.
\end{rmk}

We refer the reader to \cite[p.~145]{strongly_elliptic_sys} and \cite{ardent_mazzeo} for general properties of $\tau$-DtN maps. Of note in the construction of the DtN maps, is the requirement that $u\equiv 0$ is the unique solution to the fully homogeneous problem, similarly to the lifts $\Pi$. This refers to formulae (4.35) to (4.38) of \cite{strongly_elliptic_sys}, and Section 3 of \cite{ardent_mazzeo} (the assumption that $0$ belongs to the resolvent set of the Dirichlet Laplacian).

To construct our boundary triple, we require the DtN map to be self-adjoint \cite[Assumption 2]{ryzhov2009}. This is immediate for $\Lambda_\eps^{\stin,(\uptau)}$, $\Lambda^{\soft,(\uptau)}$, and $\Lambda_\eps^{\stls,(\uptau)}$, by \cite[Theorem~3.1]{ardent_mazzeo}.

\begin{lem}\label{lem:dtn_selfadjoint}
    For $\varepsilon$ small enough, independently of $\uptau$, $\Lambda_\eps^{(\uptau)}$ is self-adjoint on $\mathcal{E}$.
\end{lem}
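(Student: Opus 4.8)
The setup sentence preceding the statement already does the essential bookkeeping: we may write $\Lambda^{(\uptau)}_\eps = A^{(\uptau)}_\eps + B^{(\uptau)}$, where $A^{(\uptau)}_\eps := \Lambda^{\text{stiff-int},(\uptau)}_\eps \oplus \Lambda^{\text{stiff-ls},(\uptau)}_\eps$ is the orthogonal sum (self-adjoint on $\mathcal{E}$, with domain $\mathcal{D} := H^1(\Gamma_\interior)\oplus H^1(\Gamma_\ls)$) and $B^{(\uptau)} := \Lambda^{\text{soft},(\uptau)}$ is self-adjoint, hence symmetric, on the same $\mathcal{D}$. Thus $\Lambda^{(\uptau)}_\eps$ is symmetric on $\mathcal{D}$, and the plan is simply to apply the Kato--Rellich theorem \cite{kato_book}: it suffices to show that $B^{(\uptau)}$ is $A^{(\uptau)}_\eps$-bounded with relative bound strictly less than $1$, with a bound that can be chosen independently of $\uptau\in\overline{Q'}$ once $\eps$ is small. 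Writing $A^{(\uptau)}_\eps = \eps^{-2}\widetilde A^{(\uptau)}$ with $\widetilde A^{(\uptau)} := \widetilde\Lambda^{\text{stiff-int},(\uptau)}\oplus\widetilde\Lambda^{\text{stiff-ls},(\uptau)}$ the ($\eps$-independent) unweighted operator, the relative bound will carry a factor $\eps^2$, which is the whole point of the ``soft'' scaling.

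Concretely, I would establish two auxiliary estimates, each with constants independent of $\uptau\in\overline{Q'}$: (i) $B^{(\uptau)}=\Lambda^{\text{soft},(\uptau)}$ is bounded from $\mathcal{D}$, equipped with the $H^1(\Gamma_\interior)\oplus H^1(\Gamma_\ls)$ norm, into $\mathcal{E}$; and (ii) the graph norm of $\widetilde A^{(\uptau)}$ dominates that same $H^1$ norm, i.e.\ $\|g\|_{H^1(\Gamma_\interior)\oplus H^1(\Gamma_\ls)} \le C\big(\|\widetilde A^{(\uptau)} g\|_{\mathcal{E}} + \|g\|_{\mathcal{E}}\big)$. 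Estimate (i) is of the same nature as Proposition \ref{prop:lift_properties}: $\Lambda^{\text{soft},(\uptau)}$ is a first-order elliptic operator on the smooth closed hypersurfaces $\Gamma_\interior$, $\Gamma_\ls$, and the $\uptau$-uniformity holds because the principal part of $-(\nabla+i\uptau)^2$, namely $-\Delta$, is $\uptau$-free while the lower-order terms are controlled over the compact set $\overline{Q'}$ (one may also read it off the harmonic lift $\Pi^{\text{soft},(\uptau)}$ together with a co-normal trace bound). Estimate (ii) is the matching elliptic regularity statement for the unweighted stiff DtN operators, which are classical elliptic pseudodifferential operators of order one with principal symbol $|\xi|$ (again $\uptau$-free); their graph norms are therefore equivalent to the $H^1$ norms on $\Gamma_\interior$, $\Gamma_\ls$, uniformly in $\uptau$. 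Combining (i) and (ii) with the scaling gives, for all $g\in\mathcal{D}$, $\|B^{(\uptau)} g\|_{\mathcal{E}} \le C\big(\|\widetilde A^{(\uptau)} g\|_{\mathcal{E}} + \|g\|_{\mathcal{E}}\big) = C\eps^2\|A^{(\uptau)}_\eps g\|_{\mathcal{E}} + C\|g\|_{\mathcal{E}}$, so for $\eps < C^{-1/2}$ (independent of $\uptau$) the relative bound $C\eps^2$ is $<1$ and Kato--Rellich yields that $\Lambda^{(\uptau)}_\eps = A^{(\uptau)}_\eps + B^{(\uptau)}$ is self-adjoint on $\mathcal{D} = H^1(\Gamma_\interior)\oplus H^1(\Gamma_\ls)$.

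The main obstacle is the $\uptau$-uniformity of estimate (ii) --- an $H^1$ graph-norm bound for the unweighted DtN maps whose constant does not degenerate as $\uptau$ varies. The quickest route is the pseudodifferential calculus: the full symbol of each DtN operator is built from that of $-(\nabla+i\uptau)^2$, whose principal part is the $\uptau$-independent $-\Delta$, so ellipticity is uniform and the $\uptau$-dependent remainder ranges over a compact family of lower-order operators; alternatively one argues directly via Green's identity and interior/boundary elliptic estimates for $-(\nabla+i\uptau)^2$ on $Q_\stin$ and $Q_\stls$, tracking $\uptau$ exactly as in the paragraph preceding Proposition \ref{prop:lift_properties}. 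I note in passing that the same symbolic considerations actually show $\Lambda^{\text{soft},(\uptau)} - \widetilde A^{(\uptau)}$ to be a \emph{bounded} operator on $\mathcal{E}$ --- the diagonal blocks share the principal symbol $|\xi|$, and the $\Gamma_\interior\!\leftrightarrow\!\Gamma_\ls$ blocks are smoothing since the two hypersurfaces are disjoint --- so that self-adjointness in fact holds for every $\eps>0$; we nonetheless keep the smallness hypothesis, for consistency with the rest of the paper (e.g.\ Proposition \ref{prop:decoupling_properties}) and because the Kato--Rellich argument above needs no $\Psi$DO machinery beyond (i) and (ii).
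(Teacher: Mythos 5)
Your proposal is correct and takes essentially the same route as the paper: decompose $\Lambda^{(\uptau)}_\eps$ as the self-adjoint orthogonal sum $\Lambda^{\text{stiff-int},(\uptau)}_\eps \oplus \Lambda^{\text{stiff-ls},(\uptau)}_\eps$ perturbed by the symmetric $\Lambda^{\text{soft},(\uptau)}$ on the common domain $H^1(\Gamma_\interior)\oplus H^1(\Gamma_\ls)$, extract a relative bound of order $\eps^2$ from the unweighted/weighted scaling, and conclude by Kato--Rellich. The only cosmetic difference is that the paper handles $\uptau$-uniformity by reducing to $\uptau=0$ and treating general $\uptau$ as a relatively bounded perturbation, whereas you argue the uniform estimates directly over the compact set $\overline{Q'}$; both are fine.
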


\begin{proof}[Sketch of proof.]
    We will outline the idea of \cite[Lemma 2.1]{eff_behavior} and state the modifications needed. 
    
    First, it suffices to discuss the $\uptau=0$ case, as general $\uptau$ can be viewed as a relatively bounded perturbation of the $\uptau=0$ case. We hence omit writing $\tau$ for the remainder of the proof.

    Second, we note that
    \begin{align}
        \Lambda^\stin_\eps \mathcal{P}_\interior + \Lambda^\stls_\eps \mathcal{P}_\ls 
        = \Lambda^\stin_\eps \oplus \Lambda^\stls_\eps,
    \end{align}
    which is an orthogonal sum of self-adjoint operators, hence it is self-adjoint on $\mathcal{E}$ with domain $H^1(\Gamma_\interior)\oplus H^1(\Gamma_\ls)$.
    
    Third, we view $\Lambda_\varepsilon$ as the operator $\Lambda^{\text{stiff-int}}_\varepsilon \oplus \Lambda^{\text{stiff-ls}}_\varepsilon$ perturbed by the ``soft" DtN operator $\Lambda^{\text{soft}}$. In fact, we may verify the following estimate: there exist some $\alpha, \beta > 0$, independent of $\eps$,  such that for all $(\phi, \varphi) \in \mathcal{D}(\Lambda^{\text{stiff-int}}_\varepsilon \oplus \Lambda^{\text{stiff-ls}}_\varepsilon)$,
    $$
        \| \Lambda^{\text{soft}}(\phi + \varphi) \|_{\mathcal{E}} \leq \alpha \varepsilon^2 \| (\Lambda^{\text{stiff-int}}_\varepsilon \oplus \Lambda^{\text{stiff-ls}}_\varepsilon) (\phi + \varphi) \|_{\mathcal{E}} + \beta \| \phi + \varphi \|_{\mathcal{E}}.
    $$
    Therefore, if $\varepsilon$ is small enough, then $\Lambda^{\text{soft}}$ is relatively $(\Lambda^{\text{stiff-int}}_\varepsilon \oplus \Lambda^{\text{stiff-ls}}_\varepsilon)$-bounded with bound strictly less than $1$, hence the sum $\Lambda_\eps = \Lambda^\soft + (\Lambda_\eps^{\stin} \oplus \Lambda_\eps^{\stls})$ is self-adjoint by the Kato-Rellich theorem \cite[Theorem~8.5]{konrad_book}.

    Finally, we note that $\Lambda^{\text{soft}}$ and $\Lambda^{\text{stiff-int}}_\varepsilon \oplus \Lambda^{\text{stiff-ls}}_\varepsilon$ have common domain $H^1(\Gamma_\interior) \oplus H^1(\Gamma_\ls)$, so this is also the domain of the sum $\Lambda_\eps$.
\end{proof}

\begin{quote}
    \textbf{We will henceforth assume that $\varepsilon>0$ is small enough to satisfy Proposition \ref{prop:decoupling_properties} and Lemma \ref{lem:dtn_selfadjoint}.}
\end{quote}

The DtN operator is an important object for our analysis. Not only is it one of the main ingredients of the boundary triple, it also features prominently in \textit{Krein's formula}, a key result in the boundary triples theory. In particular, of interest to us are spectral properties of the unweighted ``stiff" DtN components $\widetilde{\Lambda}^{\text{stiff-int},(\uptau)} = \varepsilon^2 \Lambda^{\text{stiff-int},(\uptau)}_\varepsilon$ and $\widetilde{\Lambda}^{\text{stiff-ls},(\uptau)} = \varepsilon^2 \Lambda^{\text{stiff-ls},(\uptau)}_\varepsilon$. The following proposition collects the relevant properties for these DtN maps.

\begin{prop}\label{prop:dtn_properties}
    For all $\uptau \in \overline{Q'} = [-\pi,\pi]^d$, $\widetilde{\Lambda}^{\text{stiff-int},(\uptau)}$, $\Lambda^{\text{soft}, (\uptau)}$, and $\widetilde{\Lambda}^{\text{stiff-ls},(\uptau)}$ are unbounded self-adjoint operators on $L^2(\Gamma_\interior)$, $\mathcal{E}$, and $L^2(\Gamma_\ls)$ respectively. They are semibounded from above (note our sign convention for $\partial^{(\uptau)}_n$), and have compact resolvents. Moreover, if we order the eigenvalues of $\widetilde{\Lambda}^{\text{stiff-int},(\uptau)}$ and $\widetilde{\Lambda}^{\text{stiff-ls},(\uptau)}$ in descending order counting multiplicities, then
    
    \begin{itemize}
        \item The eigenvalues of $\widetilde{\Lambda}^{\text{stiff-int},(\uptau)}$ satisfy
        \begin{equation}
            \text{For all $\uptau$,} \quad 0 = \mu^{\text{stiff-int}, (\uptau)}_1 > \mu^{\text{stiff-int}, (\uptau)}_2 \geq \mu^{\text{stiff-int}, (\uptau)}_3 \geq \cdots \rightarrow -\infty.
        \end{equation}
        
        The eigenfunction $\psi_1^{\text{stiff-int},(\uptau)}$ corresponding to the first eigenvalue is \newline $\psi_1^{\text{stiff-int},(\uptau)}(x) = |\Gamma_\interior|^{-\frac{1}{2}} e^{-i\uptau\cdot x}$. In particular this is a constant when $\uptau=0$.

        \item The eigenvalues of $\widetilde{\Lambda}^{\text{stiff-ls},(\uptau)}$ satisfy
        \begin{align}
            \text{If } \uptau=0, \text{ then } &0 = \mu^{\text{stiff-ls},(\uptau)}_1 > \mu^{\text{stiff-ls},(\uptau)}_2 \geq \mu^{\text{stiff-ls},(\uptau)}_3 \geq \cdots \rightarrow -\infty. \\
            \text{If } \uptau\neq 0, \text{ then } &0 > \mu^{\text{stiff-ls},(\uptau)}_1 \geq \mu^{\text{stiff-ls},(\uptau)}_2 \geq \mu^{\text{stiff-ls},(\uptau)}_3 \geq \cdots \rightarrow -\infty.
        \end{align}

        Moreover, $\mu_1^{\stls,(\tau)}$ is simple when $|\tau|$ is small enough.
        
        The first eigenvalue admits an asymptotic expansion in $\uptau$ with the leading order being quadratic in $\uptau$. That is, there exist a (strictly) negative-definite matrix $\mu_*^{\stls}$ satisfying
        \begin{align}
            \mu^{\text{stiff-ls},(\uptau)}_1 = \mu_*^{\stls} \uptau \cdot \uptau + O\left(|\uptau|^3\right).
        \end{align}
        For the case $\uptau=0$, the eigenfunction $\psi_1^{\text{stiff-ls},(\uptau)}$ corresponding to the first eigenvalue is constant, $\psi_1^{\text{stiff-ls},(\uptau)}(x) = |\Gamma_\ls|^{-\frac{1}{2}}\mathbf{1}_{\Gamma_\ls}(x)$. For general $\uptau$, the eigenfunction $\psi_1^{\text{stiff-ls},(\uptau)}$ admits an expansion: there exist some $\psi_*^{\stls} = (\psi_{(1)}^{\stls}, \cdots , \psi_{(d)}^{\stls}) \in \left(L^2(\Gamma_\ls)\right)^d$ such that
        \begin{align}
            \psi_1^{\text{stiff-ls},(\uptau)} = |\Gamma_\ls|^{-\frac{1}{2}} \left( 1 + \uptau \cdot \psi_*^{\stls} + O\left(|\uptau|^2 \right)  \right).
        \end{align}
        
    \end{itemize}
    
\end{prop}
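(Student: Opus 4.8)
The plan rests on one form identity. For the stiff components $\bigstar\in\{\stin,\stls\}$, Green's formula applied to the $\uptau$-harmonic lift $\Pi^{\bigstar,(\uptau)}$ — with the contribution over $\partial Q$ cancelling by periodicity when $\bigstar=\stls$ — gives
\[
\bigl(\widetilde\Lambda^{\bigstar,(\uptau)}\varphi,\varphi\bigr)=-\bigl\|(\nabla+i\uptau)\,\Pi^{\bigstar,(\uptau)}\varphi\bigr\|_{L^2(Q_\bigstar)}^2,
\]
and the analogous identity holds for $\Lambda^{\soft,(\uptau)}$ with $\Pi^{\soft,(\uptau)}$. Self-adjointness of all three operators on their boundary spaces, with operator domains $H^1(\Gamma_\interior)$, $H^1(\Gamma_\interior)\oplus H^1(\Gamma_\ls)$, $H^1(\Gamma_\ls)$ respectively — hence of $\Lambda^{\soft,(\uptau)}$ on $\mathcal E$ and of the orthogonal sum $\widetilde\Lambda^{\stin,(\uptau)}\oplus\widetilde\Lambda^{\stls,(\uptau)}$ — was essentially recorded before the statement, from standard Dirichlet-to-Neumann theory (\cite{ardent_mazzeo,strongly_elliptic_sys}); each is unbounded, being a first-order pseudodifferential operator; and the identity shows all three are $\le0$, i.e.\ semibounded above by $0$ (recall the minus sign in $\partial^{(\uptau)}_n$). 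As $\Gamma_\interior,\Gamma_\ls$ are smooth and compact, $H^1(\Gamma)$ embeds compactly into $L^2(\Gamma)$, so the resolvents are compact; with self-adjointness and the upper bound this gives purely discrete spectrum, listable in decreasing order and accumulating only at $-\infty$. None of this involves $\varepsilon$, since $\widetilde\Lambda^{\bigstar,(\uptau)}$ is the unweighted operator.

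The top eigenvalue and its eigenfunction come straight from the identity. Since the operators are $\le0$, their largest eigenvalue $\mu^{\bigstar,(\uptau)}_1$ equals $0$ iff there is a nonzero $\varphi$ with $(\nabla+i\uptau)\Pi^{\bigstar,(\uptau)}\varphi\equiv0$; because $Q_\stin$ and $Q_\stls$ are connected this forces $\Pi^{\bigstar,(\uptau)}\varphi=c\,e^{-i\uptau\cdot x}$, so such an eigenspace is at most one-dimensional. For $Q_\stin$ the datum $e^{-i\uptau\cdot x}|_{\Gamma_\interior}$ is always admissible, with lift $e^{-i\uptau\cdot x}$ and vanishing $\uptau$-conormal derivative; hence $\mu^{\stin,(\uptau)}_1=0$ for all $\uptau$, with normalized eigenfunction $|\Gamma_\interior|^{-1/2}e^{-i\uptau\cdot x}$, and one-dimensionality of the kernel gives $\mu^{\stin,(\uptau)}_2<0$. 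For $Q_\stls$ one additionally needs $e^{-i\uptau\cdot x}$ periodic on $\partial Q$, i.e.\ $\uptau\in2\pi\mathbb Z^d\cap[-\pi,\pi]^d=\{0\}$; so $\mu^{\stls,(0)}_1=0$ with eigenfunction $|\Gamma_\ls|^{-1/2}\mathbf{1}_{\Gamma_\ls}$ (again simple, $\mu^{\stls,(0)}_2<0$), while for $\uptau\neq0$ no such datum exists, so $0\notin\sigma(\widetilde\Lambda^{\stls,(\uptau)})$ and $\mu^{\stls,(\uptau)}_1<0$. The strict gap $\mu^{\stls,(\uptau)}_1>\mu^{\stls,(\uptau)}_2$ for $\uptau\neq0$ holds for small $|\uptau|$ by the perturbation argument below, and for general $\uptau$ as in \cite{eff_behavior} (a positivity-improving domination argument in the gauge-transformed picture).

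For the expansions near $\uptau=0$ I would gauge out the phase: $u=e^{-i\uptau\cdot x}v$ turns $-(\nabla+i\uptau)^2u=0$ into $\Delta v=0$ and the periodicity of $u$ on $\partial Q$ into $\uptau$-quasiperiodicity of $v$, so the unitary $\varphi\mapsto e^{i\uptau\cdot x}\varphi$ on $L^2(\Gamma_\ls)$ conjugates $\widetilde\Lambda^{\stls,(\uptau)}$ to the Steklov-type operator for $-\Delta$ on $Q_\stls$ with $\uptau$-quasiperiodic conditions on $\partial Q$; equivalently
\[
-\mu^{\stls,(\uptau)}_1=\min\bigl\{\,\|\nabla v\|_{L^2(Q_\stls)}^2:\ v\ \uptau\text{-quasiperiodic on }\partial Q,\ \|v|_{\Gamma_\ls}\|_{L^2(\Gamma_\ls)}=1\,\bigr\}=:\sigma_1(\uptau)\ \ge\ 0.
\]
The family $\uptau\mapsto\widetilde\Lambda^{\stls,(\uptau)}$ is real-analytic (from the polynomial dependence on $\uptau$ of the defining elliptic boundary value problem, packaged as an analytic family of forms on the fixed form domain $H^{1/2}(\Gamma_\ls)$), and $\mu^{\stls,(0)}_1=0$ is simple and isolated, so Kato's analytic perturbation theory (\cite{kato_book,reed_simon4}) gives a neighbourhood of $0$ on which $\mu^{\stls,(\uptau)}_1$ is simple, real-analytic, with a real-analytic normalized eigenfunction $\psi^{\stls,(\uptau)}_1$. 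The linear term of $\mu^{\stls,(\uptau)}_1$ vanishes since $\mu^{\stls,(\uptau)}_1\le0=\mu^{\stls,(0)}_1$ makes $\uptau=0$ a maximum. For the quadratic term, expand the minimizer as $v^{(\uptau)}=|\Gamma_\ls|^{-1/2}\bigl(1+i\uptau\cdot(x+N(x))+O(|\uptau|^2)\bigr)$, where $N=(N_1,\dots,N_d)$, $N_l\in H^1_{per}(Q_\stls)$, is forced by the Euler--Lagrange equations ($\Delta v^{(\uptau)}=0$ in $Q_\stls$; $\partial_\nu v^{(\uptau)}=\sigma_1(\uptau)v^{(\uptau)}=O(|\uptau|^2)$ on $\Gamma_\ls$; $\uptau$-quasiperiodicity) to solve the perforated-cell problem $\Delta N_l=0$ in $Q_\stls$, $\partial_\nu N_l=-\nu_l$ on $\Gamma_\ls$, $N_l$ periodic on $\partial Q$. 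Substituting back and isolating the genuine $O(|\uptau|^3)$ terms,
\[
\sigma_1(\uptau)=|\Gamma_\ls|^{-1}\sum_{k,l=1}^d\uptau_k\uptau_l\int_{Q_\stls}(e_k+\nabla N_k)\cdot(e_l+\nabla N_l)\,dx+O(|\uptau|^3)=:|\Gamma_\ls|^{-1}\,(A^{\mathrm{hom}}\uptau)\cdot\uptau+O(|\uptau|^3),
\]
with $A^{\mathrm{hom}}$ the homogenized matrix of the periodically perforated cell $Q_\stls$ (Neumann on the holes). Hence $\mu^{\stls}_*=-|\Gamma_\ls|^{-1}A^{\mathrm{hom}}$, which is strictly negative-definite because $(A^{\mathrm{hom}}\xi)\cdot\xi=\int_{Q_\stls}|\xi+\nabla(\xi\cdot N)|^2\,dx$ vanishes only when $\xi\cdot(x+N)$ is locally constant on the connected $Q_\stls$, impossible for $\xi\neq0$ as $x\mapsto\xi\cdot x$ is not periodic. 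Restricting $v^{(\uptau)}$ to $\Gamma_\ls$ and undoing the gauge gives $\psi^{\stls,(\uptau)}_1=e^{-i\uptau\cdot x}v^{(\uptau)}|_{\Gamma_\ls}=|\Gamma_\ls|^{-1/2}\bigl(1+i\uptau\cdot N|_{\Gamma_\ls}+O(|\uptau|^2)\bigr)$, so $\psi^{\stls}_*=(iN_1|_{\Gamma_\ls},\dots,iN_d|_{\Gamma_\ls})\in(L^2(\Gamma_\ls))^d$.

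I expect the main obstacle to be this last part: setting up the analytic perturbation cleanly (analyticity of the DtN family in $\uptau$ with a $\uptau$-independent form domain), and then correctly identifying the first-order correction of the Steklov eigenfunction with the Neumann perforated-cell corrector, so that the Hessian is recognised as minus a homogenized tensor and is strictly definite by connectedness of $Q_\stls$ in the torus. The remaining delicate point — the strict gap $\mu^{\stls,(\uptau)}_1>\mu^{\stls,(\uptau)}_2$ for all $\uptau\neq0$ — I would handle exactly as in \cite{eff_behavior}; everything else is the form identity above plus standard spectral theory.
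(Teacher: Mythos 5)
Your proposal is correct in substance, and it is genuinely more self-contained than the paper's own proof, which is essentially a sequence of citations: the paper outsources self-adjointness, semiboundedness and compactness of the resolvent to \cite{ardent_mazzeo} (with the compact trace $H^1(\Omega)\to L^2(\partial\Omega)$ as the key fact, where you instead use the compact embedding $H^1(\Gamma)\hookrightarrow L^2(\Gamma)$ together with $\mathcal D(\widetilde\Lambda)=H^1(\Gamma)$ — both work), outsources simplicity of the top eigenvalue to \cite[Theorem 1.2]{ardent_elst}, and outsources both expansions to \cite{friedlander2002}; only the identification of the zero eigenvalue and its eigenfunction is checked directly, exactly as you do. What your route buys: the Green's-identity form $(\widetilde\Lambda^{\bigstar,(\uptau)}\varphi,\varphi)=-\|(\nabla+i\uptau)\Pi^{\bigstar,(\uptau)}\varphi\|^2$ makes nonpositivity, the kernel description via connectedness, the dichotomy between $Q_\stin$ (where $e^{-i\uptau\cdot x}$ is always admissible) and $Q_\stls$ (where periodicity on $\partial Q$ forces $\uptau=0$), and the simplicity of the eigenvalue $0$ all transparent; and your corrector derivation identifies $\mu_*^{\stls}=-|\Gamma_\ls|^{-1}A^{\mathrm{hom}}$ as (minus) the homogenized tensor of the perforated cell with Neumann holes, with strict definiteness from non-periodicity of $\xi\cdot x$ — information the paper never extracts. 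Your Step-by-step expansion is in fact the same machinery the paper later deploys in full detail in the proof of Proposition \ref{prop:cts_dependence} (power series in $it$, the corrector problem \eqref{eqn-u1problem}, the formula for $\alpha_2$), so nothing there is out of reach. The two points you flag are the genuine ones: making the analytic dependence of the DtN family on $\uptau$ rigorous on a fixed form domain (this is precisely what \cite{friedlander2002} supplies), and the strict gap $\mu_1^{\stls,(\uptau)}>\mu_2^{\stls,(\uptau)}$ for general $\uptau\neq0$, which the paper also only handles by citation; deferring these does not put you below the paper's own level of rigor.
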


\begin{proof}
    See \cite{ardent_mazzeo}, which discusses the DtN operator corresponding to $-\Delta$ (the case $\uptau=0$), and with a bounded connected domain $\Omega \subset \mathbb{R}^d$ with Lipschitz boundary. It was proven in \cite{ardent_mazzeo} that the DtN map is self-adjoint, semibounded, and has compact resolvent \cite[Theorem 3.1]{ardent_mazzeo}. The key fact allowing us to conclude the compactness of the resolvent is the compactness of trace operator $H^1(\Omega)\rightarrow L^2(\partial \Omega)$. An easy modification to the case $-(\nabla + i\tau)^2$ allows us to conclude the same for $\widetilde{\Lambda}^{\stin,(\uptau)}$, $\widetilde{\Lambda}^{\stls,(\uptau)}$, and $\Lambda^{\text{soft}, (\uptau)}$.
    
    Simplicity of the first DtN eigenvalue is proved in the follow-up work in \cite[Theorem 1.2]{ardent_elst}. Note that $(\psi_k^{\stin,(0)},\mu_k^{\stin,(0)})$ is an eigenpair for $\widetilde{\Lambda}^{\stin,(0)}$ if and only if $(\psi_k^{\stin,(0)}e^{-i\tau\cdot x},$ $\mu_k^{\stin,(0)})$ is an eigenpair for $\widetilde{\Lambda}^{\stin,(\tau)}$.

    Since $Q_\stls$ (with edges identified) is connected and $\Gamma_\ls$ is smooth, the arguments of \cite[Proposition 4.1]{ardent_elst} can be modified to the setting of $H^1_\text{per}(Q_\stls)$ to give the simplicity of $\mu_1^{\stls,(0)}$. The claim that $\mu_1^{\stls,(\tau)}<0$ for $\tau\neq 0$ is a consequence of Corollary \ref{cor:stls_evalue1}. Corollary \ref{cor:stls_evalue1}, combined with the fact that $\mu_2^{\stls,(\tau)}$ can be bounded away from zero uniformly in $\tau$ implies the simplicity of $\mu_1^{\stls,(\tau)}$ for small $|\tau|$. We postpone the self-contained argument on $\mu_2^{\stls,(\tau)}$ to the proof of Theorem \ref{thm:m_inverse_est}.

    The claims that lowest eigenvalue is zero for $\widetilde{\Lambda}^{\text{stiff-int},(\uptau)}$ for all $\uptau$ and for $\widetilde{\Lambda}^{\text{stiff-ls},(\uptau)}$ for $\uptau=0$ is a direct check on the expression for the eigenfunctions. The claims on the asymptotic expansions for $\mu_1^{\text{stiff-ls},(\uptau)}$ and $\psi_1^{\text{stiff-ls},(\uptau)}$ are proven in \cite[Proposition~5.5]{ys_thesis}.
\end{proof}

% ================================================================

\subsection{Applying the triple framework. General properties.}

We will now use the three ingredients provided in the previous section to define boundary triples and several auxiliary operators. This construction is done for each $\varepsilon>0$ and $\uptau \in Q$.

\begin{defn}
    (\cite{ryzhov2009}.) By a \textit{(Ryzhov) boundary triple}, we mean two separable Hilbert spaces $\mathcal{H}$ and $\mathcal{E}$ ($\mathcal{E}$ is called the \textit{boundary space}), and a triple of operators $(A_0,\Lambda, \Pi)$ such that:
    
    \begin{itemize}
        \item (Dirichlet decoupling) $A_0$ is an unbounded self-adjoint operator on $\mathcal{H}$, with $0\in\rho(A_0)$,
        \item (DtN operator) $\Lambda$ is an unbounded self-adjoint operator on $\mathcal{E}$,
        \item (Lift) $\Pi:\mathcal{E}\rightarrow\mathcal{H}$ is a bounded linear map such that $\text{ker}(\Pi) = \{ 0 \}$,
        \item $\mathcal{D}(A_0) \cap \text{ran}(\Pi) = \{ 0 \}$.
    \end{itemize}
    
    When the underlying Hilbert spaces are clear form the context, we will simply refer to $(A_0,\Lambda,\Pi)$ as the boundary triple.
\end{defn}

We now proceed to define the auxiliary operators $\widehat{A}$, $\Gamma_0$, $\Gamma$, $S(z)$, and $M(z)$, corresponding to a boundary triple $(A_0,\Lambda,\Pi)$.

\begin{defn}\label{defn:aux_operators}
    Let $(A_0,\Lambda, \Pi)$ be a boundary triple with spaces $\mathcal{H}$ and $\mathcal{E}$. Define the following operators:
    \begin{itemize}
        \item $\widehat{A}: \mathcal{H} \supset \mathcal{D}(\widehat{A}) \rightarrow \mathcal{H}$, with domain $\mathcal{D}(\widehat{A}) = \mathcal{D}(A_0) \dot{+} \text{ran}(\Pi)$ and action
        \begin{align}
            \widehat{A}(A^{-1}_0 f + \Pi \phi) = f, \quad f\in\mathcal{H}, \phi \in \mathcal{E}.
        \end{align}
        \item $\Gamma_0: \mathcal{H} \supset \mathcal{D}(\Gamma_0) \rightarrow \mathcal{E}$, with domain $\mathcal{D}(\Gamma_0) = \mathcal{D}(A_0) \dot{+} \text{ran}(\Pi)$ and action
        \begin{align}
            \Gamma_0(A^{-1}_0 f + \Pi \phi) = \phi, \quad f\in\mathcal{H}, \phi \in \mathcal{E}.
        \end{align}
        
        (We have used the assumptions $\mathcal{D}(A_0) \cap \text{ran}(\Pi) = \{ 0 \}$, $0\in\rho(A_0)$, and $\text{ker}(\Pi) = \{ 0\}$.)

        \item $\Gamma_1: \mathcal{H} \supset \mathcal{D}(\Gamma_1) \rightarrow \mathcal{E}$, with domain $\mathcal{D}(\Gamma_1) = \mathcal{D}(A_0) \dot{+} \Pi \mathcal{D}(\Lambda)$ and action
        \begin{align}
            \Gamma_1(A^{-1}_0 f + \Pi \phi) = \Pi^* f + \Lambda \phi, \quad f\in\mathcal{H}, \phi \in \mathcal{D}(\Lambda) \subset \mathcal{E}.
        \end{align}

        \item (Solution operator) For $z\in \rho(A_0)$, define the bounded linear operator $S(z):\mathcal{E}\rightarrow \mathcal{H}$ by
        \begin{align}
            S(z)\phi := \left( I + z(A_0 - z)^{-1} \right) \Pi\phi.
        \end{align}
        
        \item (M-operator) For $z\in\rho(A_0)$, we define the operator $M(z): \mathcal{E} \supset \mathcal{D}(M(z)) \rightarrow \mathcal{E}$, with domain $\mathcal{D}(M(z)) = \mathcal{D}(\Lambda)$ (independent of $z$), and action
        \begin{align}
            M(z)\phi := \Gamma_1 S(z) \phi, \quad \phi \in \mathcal{D}(M(z)).
        \end{align}
    \end{itemize}
\end{defn}

Let us now provide a motivation for the operators in Definition \ref{defn:aux_operators}. Given $f \in \mathcal{H}$, $\phi\in\mathcal{E}$, and $z\in\rho(A_0)$, we would like to solve the following system of linear equations
\begin{equation}\label{eqn:abstract_system}
    \begin{cases}
        (\widehat{A}-z) u = f, \\
        \Gamma_0 u = \phi.
    \end{cases}
\end{equation}
The system bears resemblance to BVPs, with one equation on the main Hilbert space $\mathcal{H}$ and another on the boundary space $\mathcal{E}$. Here, $\Gamma_0$ has the interpretation of the (Dirichlet) trace, since by definition $\Gamma_0 \Pi \phi = \phi$ and $\Pi$ will be the harmonic lift in Section \ref{sect:bdry_triple_prelim}.

Choosing $\Lambda$ to be the DtN map from Section \ref{sect:bdry_triple_prelim}, the identity $\Lambda = \Gamma_1 \Pi$ then implies that $\Gamma_1$ has the interpretation of the Neumann trace.

As for $S(z)$, \cite[Theorem 3.2]{ryzhov2009} says that the system has a unique solution $u_z^{f,\phi} = (A_0 - z)^{-1} f + (I - zA_0^{-1})^{-1}\Pi \phi$, where the two terms solve the following systems respectively:

\vspace{.5em}

    \begin{minipage}{.45\linewidth}
    \begin{equation*}
        \begin{cases}
            (\widehat{A}-z)u = f, \\
            \Gamma_0 u=0.
        \end{cases}
    \end{equation*}
    \end{minipage}
    \begin{minipage}{.5\linewidth}
    \begin{equation}
        \begin{cases}
            (\widehat{A}-z)u = 0, \\
            \Gamma_0 u = \phi.
        \end{cases}
    \end{equation}
    \end{minipage}

\vspace{1em}

We then set $S(z)$ to be the operator solving the second system, justifying the name ``solution operator". One should compare this with the BVP for the harmonic lift in the previous section. Notice that $S(z)$ is not merely any generalization of $\Pi$ from $z=0$ to $z\in\rho(A_0)$ in the sense that $S(0) = \Pi$, but one with an additional property that the dependence on $z$ is reflected explicitly in the first equation of our system, $(\widehat{A}-z)u=0$.

Combining the interpretations for $\Gamma_1$ and $S(z)$, we see that $M(z) = \Gamma_1 S(z)$ could be interpreted as the DtN map with spectral parameter $z$. Similarly to $S(z)$, we have $M(0) = \Lambda$.

Returning to our setting, we have four triples of interest:

\begin{itemize}
    \item (Full cube) $( A^{(\uptau)}_{\varepsilon, 0}, \Lambda^{(\uptau)}_\varepsilon , \Pi^{(\uptau)} )$ with $\mathcal{H}= L^2(Q)$ and $\mathcal{E} = L^2(\Gamma_\interior) \oplus L^2(\Gamma_\ls)$.
    
    \item (Stiff interior) $( A^{\text{stiff-int}, (\uptau)}_{\varepsilon, 0}, \Lambda^{\text{stiff-int}, (\uptau)}_\varepsilon , \Pi^{\text{stiff-int}, (\uptau)} )$ with $L^2(Q_{\text{stiff-int}})$ and boundary space $L^2(\Gamma_\interior)$.
    
    \item (Soft annulus) $( A^{\text{soft}, (\uptau)}_{0}, \Lambda^{\text{soft}, (\uptau)} , \Pi^{\text{soft}, (\uptau)} )$ with $L^2(Q_{\text{soft}})$ and boundary space $L^2(\Gamma_\interior) \oplus L^2(\Gamma_\ls)$.
    
    \item (Stiff landscape) $( A^{\text{stiff-ls}, (\uptau)}_{\varepsilon, 0}, \Lambda^{\text{stiff-ls}, (\uptau)}_\varepsilon , \Pi^{\text{stiff-ls}, (\uptau)} )$ with $L^2(Q_{\text{stiff-ls}})$ and boundary space $L^2(\Gamma_\ls)$.
\end{itemize}

Now apply Definition \ref{defn:aux_operators} to get the following auxiliary operators

\begin{itemize}
    \item (Full cube) $\widehat{A}^{(\uptau)}_\varepsilon$, $\Gamma_0^{(\uptau)}$, $\Gamma^{(\uptau)}_{\varepsilon,1}$, $S^{(\uptau)}_\varepsilon(z)$, and $M^{(\uptau)}_\varepsilon(z)$.
    
    \item (Stiff interior) $\widehat{A}^{\text{stiff-int}, (\uptau)}_\varepsilon$, $\Gamma_0^{\text{stiff-int}, (\uptau)}$, $\Gamma^{\text{stiff-int}, (\uptau)}_{\varepsilon,1}$, $S^{\text{stiff-int}, (\uptau)}_\varepsilon(z)$, and $M^{\text{stiff-int}, (\uptau)}_\varepsilon(z)$.
    
    \item (Soft annulus) $\widehat{A}^{\text{soft}, (\uptau)}$, $\Gamma_0^{\text{soft}, (\uptau)}$, $\Gamma^{\text{soft}, (\uptau)}_{1}$, $S^{\text{soft}, (\uptau)}(z)$, and $M^{\text{soft}, (\uptau)}(z)$.
    
    \item (Stiff landscape) $\widehat{A}^{\text{stiff-ls}, (\uptau)}_\varepsilon$, $\Gamma_0^{\text{stiff-ls}, (\uptau)}$, $\Gamma^{\text{stiff-ls}, (\uptau)}_{\varepsilon,1}$, $S^{\text{stiff-ls}, (\uptau)}_\varepsilon(z)$, and $M^{\text{stiff-ls}, (\uptau)}_\varepsilon(z)$.
\end{itemize}

\begin{rmk}
    Our main model operator $A^{(\uptau)}_\varepsilon$ defined in Section \ref{sect:mainmodel_defn} is \textit{not} $\widehat{A}^{(\uptau)}_\varepsilon$, but as we see shortly, will coincide with an operator denoted by $\widehat{A}^{(\uptau)}_{\varepsilon, 0,I}$. $\widehat{A}^{(\uptau)}_{\varepsilon, 0,I}$ will be derived from $\widehat{A}^{(\uptau)}_\varepsilon$.
\end{rmk}

In the next section we will discuss some extra properties that arise from our specific setup. Here, we collect some properties which are applicable to a general boundary triple $(A_0,\Lambda,\Pi)$. Some of these have already been used to motivate the definition of the triple.

\begin{prop}[Properties of auxiliary operators]\label{prop:auxops_properties}
    Let $(A_0,\Lambda,\Pi)$ be a boundary triple with spaces $\mathcal{H}$ and $\mathcal{E}$. Construct the operators $\widehat{A}$, $\Gamma_0$, $\Gamma_1$, $S(z)$, and $M(z)$. Let $z \in \rho(A_0)$. Then,
    \begin{enumerate}
        \item $\text{ker}(\Gamma_0) = \mathcal{D}(A_0)$, and $\text{ran}(S(z)) = \text{ker}(\widehat{A}-z)$.
        \item $\Gamma_0 S(z) = I_\mathcal{E}$. In particular, since $S(0) = \Pi$, we have $\Gamma_0 S(0) = \Gamma_0 \Pi = I_\mathcal{E}$.
        \item $\Lambda = \Gamma_1 \Pi$, and $\Pi^* = \Gamma_1 A_0^{-1}$.
        \item $S(z) = (I-zA_0^{-1})^{-1}\Pi$. In particular, $S(\overline{z})^* = \Gamma_1 (A_0-z)^{-1}$.
        \item $M(z) = \Lambda + z\Pi^* (I-zA_0^{-1})^{-1} \Pi$. In particular, $M(0) = \Lambda$, and $M(z)^* = M(\overline{z})$.
        \item $\rho(A_0) \ni z \mapsto M(z)$ is an analytic operator-valued function where the operators are closed in $\mathcal{E}$ and have common ($z$-independent) domain $\mathcal{D}(\Lambda)$.
        \item For $z,\zeta \in \rho(A_0)$, $M(z)-M(\zeta)$ is bounded, and $M(z) - M(\zeta) = (z-\zeta)S(\overline{z})^* S(\zeta)$. In particular, $\text{Im}M(z) := \text{Im}(M(z) - M(0)) = \text{Im}(z) S(\overline{z})^* S(\overline{z})$.
        \item $I_{\text{ker}(\widehat{A}-z)} \subset S(z)\Gamma_0$.
    \end{enumerate}
\end{prop}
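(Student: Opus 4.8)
The plan is to prove the nine claims in the stated order, since each rests on the previous ones, after isolating at the outset one algebraic fact that powers parts (1)--(5). Namely, for $z\in\rho(A_0)$ the bounded operator $I-zA_0^{-1}$ is invertible on $\mathcal{H}$ with
\begin{align*}
    (I-zA_0^{-1})^{-1} = A_0(A_0-z)^{-1} = I + z(A_0-z)^{-1},
\end{align*}
as one sees by factoring $I-zA_0^{-1}=(A_0-z)A_0^{-1}$ and using $0,z\in\rho(A_0)$. Hence $S(z)\phi=(I+z(A_0-z)^{-1})\Pi\phi$ admits the rewriting
\begin{align*}
    S(z)\phi = A_0^{-1}\bigl(z\,S(z)\phi\bigr) + \Pi\phi ,
\end{align*}
since $z(A_0-z)^{-1}\Pi\phi = A_0^{-1}\bigl(zA_0(A_0-z)^{-1}\Pi\phi\bigr) = A_0^{-1}\bigl(zS(z)\phi\bigr)$. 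I will call this the \emph{canonical decomposition} of $S(z)\phi$ in $\mathcal{D}(A_0)\dot{+}\text{ran}(\Pi)$; reading $\widehat{A}$, $\Gamma_0$, $\Gamma_1$ off this decomposition (Definition~\ref{defn:aux_operators}) is the engine for the first five parts.

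For (1): $\Gamma_0 u=0$ forces the $\text{ran}(\Pi)$-component of $u$ to vanish (uniqueness of the decomposition, guaranteed by $0\in\rho(A_0)$, $\text{ker}(\Pi)=\{0\}$, $\mathcal{D}(A_0)\cap\text{ran}(\Pi)=\{0\}$), giving $\text{ker}(\Gamma_0)=\mathcal{D}(A_0)$; applying $\widehat{A}$ to the canonical decomposition gives $\widehat{A}\,S(z)\phi = zS(z)\phi$, so $\text{ran}(S(z))\subseteq\text{ker}(\widehat{A}-z)$, and conversely a solution of $\widehat{A}u=zu$ with $u=A_0^{-1}f+\Pi\phi$ satisfies $f=z(I-zA_0^{-1})^{-1}\Pi\phi$, whence $u=S(z)\phi$ by the preliminary identity. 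Part (2) is the $\text{ran}(\Pi)$-component of the canonical decomposition, and $S(0)=\Pi$ by inspection. Part (3) is the definition of $\Gamma_1$ evaluated at $f=0$, respectively at $\phi=0$. Part (4) is the preliminary identity post-composed with $\Pi$, plus $\Pi^*=\Gamma_1 A_0^{-1}$ and self-adjointness of $A_0$ to pass to adjoints. For (5), applying $\Gamma_1$ to the canonical decomposition yields $M(z)\phi=\Gamma_1 S(z)\phi = z\Pi^* S(z)\phi+\Lambda\phi$ on $\mathcal{D}(\Lambda)$, and substituting $S(z)=(I-zA_0^{-1})^{-1}\Pi$ gives the displayed formula; $M(0)=\Lambda$ is immediate, and $M(z)^*=M(\overline{z})$ follows because $M(z)=\Lambda+B(z)$ with $\Lambda$ self-adjoint and $B(z)=z\Pi^*(I-zA_0^{-1})^{-1}\Pi$ bounded with $B(z)^*=\overline{z}\,\Pi^*(I-\overline{z}A_0^{-1})^{-1}\Pi$.

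For (6), $z\mapsto(I-zA_0^{-1})^{-1}$ is norm-analytic on $\rho(A_0)$ (locally a Neumann series), hence so is $B(z)$, and $M(z)=\Lambda+B(z)$ is closed (bounded perturbation of a self-adjoint operator) with the fixed domain $\mathcal{D}(\Lambda)$, which is the asserted analyticity. Part (7) is the one genuine computation: from (4), $S(\overline{z})^*=\Gamma_1(A_0-z)^{-1}$ and $S(\zeta)=(I+\zeta(A_0-\zeta)^{-1})\Pi$, and the first resolvent identity collapses $(A_0-z)^{-1}\bigl(I+\zeta(A_0-\zeta)^{-1}\bigr)$ to $\tfrac{1}{z-\zeta}\bigl(z(A_0-z)^{-1}-\zeta(A_0-\zeta)^{-1}\bigr)$, so that
\begin{align*}
    (z-\zeta)S(\overline{z})^* S(\zeta) = \Gamma_1\bigl(z(A_0-z)^{-1}-\zeta(A_0-\zeta)^{-1}\bigr)\Pi ,
\end{align*}
which is exactly $M(z)-M(\zeta)$ read off from $M(z)\phi=\Lambda\phi+z\,\Gamma_1(A_0-z)^{-1}\Pi\phi$ (the $\Lambda\phi$ terms cancel, and the right-hand side is visibly bounded); the imaginary-part statement is the case $\zeta=\overline{z}$ together with $M(\overline{z})=M(z)^*$ from (5) and $M(0)=\Lambda=\Lambda^*$. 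Finally (8) and (9) unwind from (1)--(2): any $u_z\in\text{ker}(\widehat{A}-z)$ equals $S(z)\phi$ with $\phi=\Gamma_0 u_z$ by (2), so $S(z)\Gamma_0 u_z=u_z$, which is (9); and if in addition $u_z\in\mathcal{D}(\Gamma_1)$ then uniqueness of the canonical decomposition forces $\phi\in\mathcal{D}(\Lambda)=\mathcal{D}(M(z))$, giving $M(z)\Gamma_0 u_z=M(z)\phi=\Gamma_1 S(z)\phi=\Gamma_1 u_z$, which is (8).

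I expect the main obstacle to be bookkeeping rather than difficulty: one must repeatedly check that $\Gamma_1$ is applied only on $\mathcal{D}(\Gamma_1)=\mathcal{D}(A_0)\dot{+}\Pi\mathcal{D}(\Lambda)$, that every splitting is the genuine (hence unique) decomposition in $\mathcal{D}(A_0)\dot{+}\text{ran}(\Pi)$, and that the $z$-dependence is tracked correctly through the two resolvent identities. The only step needing a non-formal manipulation is the factorization $M(z)-M(\zeta)=(z-\zeta)S(\overline{z})^* S(\zeta)$ in part (7), where the first resolvent identity together with self-adjointness of $A_0$ does the work.
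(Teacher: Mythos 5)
Your proof is correct. The paper itself does not prove this proposition---it simply refers the reader to \cite[Section 3]{ryzhov2009}---and your argument is essentially the standard one found there: the identity $(I-zA_0^{-1})^{-1}=I+z(A_0-z)^{-1}$ and the resulting ``canonical decomposition'' $S(z)\phi=A_0^{-1}(zS(z)\phi)+\Pi\phi$ are exactly the right engine, the resolvent-identity computation in part (7) is carried out correctly, and the domain bookkeeping (in particular that $u_z\in\mathcal{D}(\Gamma_1)$ forces $\Gamma_0u_z\in\mathcal{D}(\Lambda)$ by uniqueness of the decomposition) is handled properly.
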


\begin{proof}
    We refer the reader to \cite[Section 3]{ryzhov2009} for the proofs of these identities.
\end{proof}

One more property deserves mention. Because of its importance we place it in a separate result.

\begin{thm}(Green's second identity)\label{thm:greens_id}
    For all $u,v \in \mathcal{D}(\Gamma_1) = \mathcal{D}(A_0) \dot{+} \Pi\mathcal{D}(\Lambda)$, we have
    \begin{align}
        (\widehat{A}u,v)_{\mathcal{H}} - (u,\widehat{A}v)_{\mathcal{H}} = (\Gamma_1 u, \Gamma_0 v)_{\mathcal{E}} - (\Gamma_0 u, \Gamma_1 v)_{\mathcal{E}}.
    \end{align}
\end{thm}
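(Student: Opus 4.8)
The plan is to exploit the defining decomposition of $\mathcal{D}(\Gamma_1)$ and reduce the identity to the self-adjointness of $A_0$ and of $\Lambda$. First I would note that, by the standing assumptions $0\in\rho(A_0)$, $\mathcal{D}(A_0)\cap\operatorname{ran}(\Pi)=\{0\}$, and $\ker(\Pi)=\{0\}$, every $u\in\mathcal{D}(\Gamma_1)=\mathcal{D}(A_0)\dot{+}\Pi\mathcal{D}(\Lambda)$ has a \emph{unique} representation $u=A_0^{-1}f+\Pi\phi$ with $f\in\mathcal{H}$ and $\phi\in\mathcal{D}(\Lambda)\subset\mathcal{E}$; likewise $v=A_0^{-1}g+\Pi\psi$ with $g\in\mathcal{H}$, $\psi\in\mathcal{D}(\Lambda)$. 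With this notation the definitions in Definition \ref{defn:aux_operators} give directly $\widehat{A}u=f$, $\widehat{A}v=g$, $\Gamma_0 u=\phi$, $\Gamma_0 v=\psi$, $\Gamma_1 u=\Pi^*f+\Lambda\phi$, and $\Gamma_1 v=\Pi^*g+\Lambda\psi$, so the whole statement becomes an algebraic identity in $f,g,\phi,\psi$.

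Next I would expand the left-hand side by sesquilinearity of the inner product on $\mathcal{H}$:
\begin{align*}
    (\widehat{A}u,v)_{\mathcal{H}}-(u,\widehat{A}v)_{\mathcal{H}}
    &=(f,A_0^{-1}g)_{\mathcal{H}}+(f,\Pi\psi)_{\mathcal{H}}-(A_0^{-1}f,g)_{\mathcal{H}}-(\Pi\phi,g)_{\mathcal{H}}.
\end{align*}
Since $A_0$ is self-adjoint with $0\in\rho(A_0)$, the bounded operator $A_0^{-1}$ is self-adjoint, so $(f,A_0^{-1}g)_{\mathcal{H}}=(A_0^{-1}f,g)_{\mathcal{H}}$ and these two terms cancel. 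Using the adjoint relation for $\Pi$, the remaining terms become $(\Pi^*f,\psi)_{\mathcal{E}}-(\phi,\Pi^*g)_{\mathcal{E}}$.

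Then I would expand the right-hand side the same way:
\begin{align*}
    (\Gamma_1 u,\Gamma_0 v)_{\mathcal{E}}-(\Gamma_0 u,\Gamma_1 v)_{\mathcal{E}}
    &=(\Pi^*f,\psi)_{\mathcal{E}}+(\Lambda\phi,\psi)_{\mathcal{E}}-(\phi,\Pi^*g)_{\mathcal{E}}-(\phi,\Lambda\psi)_{\mathcal{E}}.
\end{align*}
Because $\phi,\psi\in\mathcal{D}(\Lambda)$ and $\Lambda$ is self-adjoint on $\mathcal{E}$ (Lemma \ref{lem:dtn_selfadjoint}; or the abstract assumption in the definition of a boundary triple), $(\Lambda\phi,\psi)_{\mathcal{E}}=(\phi,\Lambda\psi)_{\mathcal{E}}$ and these cancel, leaving exactly $(\Pi^*f,\psi)_{\mathcal{E}}-(\phi,\Pi^*g)_{\mathcal{E}}$, which matches the reduced left-hand side. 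This closes the proof.

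I do not anticipate a genuine obstacle here: the only point requiring a little care is justifying that the representation $u=A_0^{-1}f+\Pi\phi$ is unique and that $\Gamma_0,\Gamma_1,\widehat{A}$ are therefore well defined on the stated domain — but this is already built into Definition \ref{defn:aux_operators} and the boundary-triple axioms. (Alternatively, one could quote Proposition \ref{prop:auxops_properties}(3), $\Pi^*=\Gamma_1 A_0^{-1}$, to phrase the cancellation, but it is not needed.) The identity then follows purely from the symmetry of $A_0^{-1}$ and of $\Lambda$.
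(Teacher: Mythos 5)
Your proof is correct. The paper itself does not supply an argument for this identity --- it simply defers to \cite[Theorem 3.6]{ryzhov2009} --- and your computation (unique decomposition $u=A_0^{-1}f+\Pi\phi$ guaranteed by $0\in\rho(A_0)$, $\ker\Pi=\{0\}$, and $\mathcal{D}(A_0)\cap\operatorname{ran}(\Pi)=\{0\}$, followed by cancellation via the self-adjointness of $A_0^{-1}$ and of $\Lambda$ and the adjoint relation $(f,\Pi\psi)_{\mathcal{H}}=(\Pi^*f,\psi)_{\mathcal{E}}$) is exactly the standard argument behind that citation, so nothing is missing.
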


\begin{proof}
    See \cite[Theorem 3.6]{ryzhov2009}.
\end{proof}

We now outline the key steps in constructing the operator $\widehat{A}_{\beta_0,\beta_1}$ (see \cite[Section 4-5]{ryzhov2009} for details). Given $f \in \mathcal{H}$, $\phi\in\mathcal{E}$, and $z\in\rho(A_0)$, we would like to uniquely solve the following system:
\begin{equation}\label{eqn:abstract_system2}
    \begin{cases}
        (\widehat{A}-z) u = f, \\
        (\beta_0 \Gamma_0 + \beta_1 \Gamma_1) u = \phi.
    \end{cases}
\end{equation}

The biggest set where this would make sense is $u \in \mathcal{D}(\Gamma_1)$. That is, $u = A_0^{-1}g + \Pi \varphi$, with $g \in \mathcal{H}$ and $\varphi \in \mathcal{D}(\Lambda)$. Furthermore, we would like to make sense of $\beta_0$, $\beta_1$ not only as numbers, but also as \textit{operators} on $\mathcal{E}$, as doing so would allow us to greatly expand our interpretation of a BVP. To figure out reasonable conditions on $\beta_0$, $\beta_1$, we observe that for $u$ as above, 
\begin{align}
    (\beta_0 \Gamma_0 + \beta_1 \Gamma_1) (A_0^{-1}g + \Pi \varphi) = \beta_1 \Pi^* g + (\beta_0 + \beta_1 \Lambda) \varphi.
\end{align}

Therefore, we make the following assumptions:
\begin{defn}
    We assume that $\beta_0$ and $\beta_1$ are linear operators on $\mathcal{E}$ such that $\mathcal{D}(\beta_0) \supset \mathcal{D}(\Lambda)$, $\beta_1 \in \mathcal{L}(\mathcal{E})$, and $\beta_0 + \beta_1 \Lambda$ is closable. 
\end{defn}

The closability condition has been added because in what follows, we would like $\overline{\beta_0 + \beta_1 \Lambda}$, or equivalently $\overline{\beta_0 + \beta_1 M(z)}$ by Proposition \ref{prop:auxops_properties}(5), to be boundedly invertible. As a byproduct, we have expanded our solution space from $u\in \mathcal{D}(\Gamma_1) = \{ u=A_0^{-1}g + \Pi \varphi \text{ }|\text{ } g \in \mathcal{H}, \varphi \in \mathcal{D}(\Lambda) \}$ to 
\begin{align*}
    \mathcal{H}_{\overline{\beta_0 + \beta_1 \Lambda}} := \{ u=A_0^{-1}g + \Pi \varphi \text{ }|\text{ } g \in \mathcal{H}, \varphi \in \mathcal{D}(\overline{\beta_0 + \beta_1 \Lambda}) \}.    
\end{align*}
This space, together with the closability of $\beta_0 + \beta_1 \Lambda$, enable the subsequent steps in the construction: 

\begin{enumerate}
    \item $\mathcal{H}_{\overline{\beta_0 + \beta_1 \Lambda}}$ is a Hilbert space with norm $\| u \|_{\overline{\beta_0 + \beta_1 \Lambda}}^2 := \|g \|_\mathcal{H}^2 + \| \varphi \|_{\mathcal{E}}^2 + \|(\overline{\beta_0 + \beta_1 \Lambda}) \varphi \|_{\mathcal{E}}^2$, 
    
    \item $\beta_0 \Gamma_0 + \beta_1 \Gamma_1$ may be extended to a bounded operator from $(\mathcal{H}_{\overline{\beta_0 + \beta_1 \Lambda}}, \| \cdot \|_{\overline{\beta_0 + \beta_1 \Lambda}})$ to $\mathcal{E}$.
    
    \item If $\overline{\beta_0 + \beta_1 M(z)}$ defined on $\mathcal{D}(\overline{\beta_0 + \beta_1 M(z)}) = \mathcal{D}(\overline{\beta_0 + \beta_1 \Lambda})$ is boundedly invertible on $\mathcal{E}$, then the system (\ref{eqn:abstract_system2}) has a unique solution in $\mathcal{H}_{\overline{\beta_0 + \beta_1 \Lambda}}$.
    
    \item There exist an operator $\widehat{A}_{\beta_0,\beta_1}$ constructed from $\beta_0$, $\beta_1$, and the triple $(A_0,\Lambda,\Pi)$:
    \begin{thm}\label{thm:kreins_formula}
        (\cite[Theorem 5.5]{ryzhov2009}) Assume $z\in\rho(A_0)$ is such that $\overline{\beta_0 + \beta_1 M(z)}$ defined on $\mathcal{D}(\overline{\beta_0 + \beta_1 \Lambda})$ is boundedly invertible on $\mathcal{E}$. Define
        \begin{align}
            R_{\beta_0,\beta_1}(z) := (A_0 - z)^{-1} + S(z) Q_{\beta_0,\beta_1}(z) S(\overline{z})^*,
        \end{align}
        where $Q_{\beta_0,\beta_1}(z) := - (\overline{\beta_0 + \beta_1 M(z)})^{-1} \beta_1$. Then $R_{\beta_0,\beta_1}(z)$ is the resolvent at $z$ of a closed densely defined operator $\widehat{A}_{\beta_0,\beta_1}$ on $\mathcal{H}$. Its domain satisfies the following inclusion
        \begin{align}
            \mathcal{D}(\widehat{A}_{\beta_0,\beta_1}) \subset \{ u \in \mathcal{H}_{\overline{\beta_0 + \beta_1 \Lambda}}\text{ }|\text{ } (\beta_0 \Gamma_0 + \beta_1 \Gamma_1)u = 0 \} = \text{ker}(\beta_0 \Gamma_0 + \beta_1 \Gamma_1).
        \end{align}
        Furthermore, we have $\widehat{A}_{\beta_0,\beta_1} \subset \widehat{A}$. That is, $\widehat{A}_{\beta_0,\beta_1}u = \widehat{A}u$ whenever $u\in\mathcal{D}(\widehat{A}_{\beta_0,\beta_1})$.
    \end{thm}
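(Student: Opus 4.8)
The plan is to bypass a direct verification of the resolvent identity: I would first define $\widehat{A}_{\beta_0,\beta_1}$ up front as the restriction of $\widehat{A}$ to the ($z$-\emph{independent}) subspace $\mathcal{D}_0 := \ker\!\big(\overline{\beta_0\Gamma_0+\beta_1\Gamma_1}\big) \subset \mathcal{H}_{\overline{\beta_0+\beta_1\Lambda}}$ (the kernel of the bounded extension of the combined boundary operator constructed in step~2 above), and then show $R_{\beta_0,\beta_1}(z)$ is its inverse at $z$. Boundedness of $R_{\beta_0,\beta_1}(z)$ is immediate: $(A_0-z)^{-1}\in\mathcal{L}(\mathcal{H})$ since $z\in\rho(A_0)$; $S(z)$ and $S(\bar z)^* = \Gamma_1(A_0-z)^{-1}$ are bounded by Proposition \ref{prop:auxops_properties}(4); and $Q_{\beta_0,\beta_1}(z) = -\overline{(\beta_0+\beta_1 M(z))}^{-1}\beta_1$ is bounded by the invertibility hypothesis together with $\beta_1\in\mathcal{L}(\mathcal{E})$.

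The heart of the argument is to verify that $u := R_{\beta_0,\beta_1}(z)f$ solves the abstract system (\ref{eqn:abstract_system2}) with data $(f,0)$. Writing $\psi := Q_{\beta_0,\beta_1}(z)S(\bar z)^* f$, one has $\psi\in\mathcal{D}(\overline{\beta_0+\beta_1 M(z)}) = \mathcal{D}(\overline{\beta_0+\beta_1\Lambda})$ (Proposition \ref{prop:auxops_properties}(5,6)) and $u = (A_0-z)^{-1}f + S(z)\psi$; using $A_0 \subset \widehat{A}$, $\ker\Gamma_0 = \mathcal{D}(A_0)$, $\Gamma_0 S(z) = I_{\mathcal{E}}$ and $\text{ran}(S(z)) = \ker(\widehat{A}-z)$ (Proposition \ref{prop:auxops_properties}(1,2)), this yields $u\in\mathcal{H}_{\overline{\beta_0+\beta_1\Lambda}}$, $(\widehat{A}-z)u = f$, and $\Gamma_0 u = \psi$. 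For the boundary condition, decompose $u = A_0^{-1}g + \Pi\psi$ with $g = (I+z(A_0-z)^{-1})f + zS(z)\psi$; by continuity of the extended boundary map, $\overline{(\beta_0\Gamma_0+\beta_1\Gamma_1)}u = \beta_1\Pi^* g + \overline{(\beta_0+\beta_1\Lambda)}\psi$, and substituting the identities $\overline{\beta_0+\beta_1\Lambda} = \overline{\beta_0+\beta_1 M(z)} - z\beta_1\Pi^* S(z)$, $\overline{(\beta_0+\beta_1 M(z))}\psi = -\beta_1 S(\bar z)^* f$, and $S(\bar z)^* f = \Pi^* g - z\Pi^* S(z)\psi$ (all consequences of Proposition \ref{prop:auxops_properties}(3,4,5) and the definition of $\psi$) makes the two terms cancel, so $\overline{(\beta_0\Gamma_0+\beta_1\Gamma_1)}u = 0$.

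With this in hand the conclusion is short. $R_{\beta_0,\beta_1}(z)$ is injective, since $R_{\beta_0,\beta_1}(z)f = 0$ forces $f = (\widehat{A}-z)0 = 0$. Its range equals exactly $\mathcal{D}_0$: ``$\subseteq$'' was just shown, and for ``$\supseteq$'' one takes $u\in\mathcal{D}_0$, sets $f := (\widehat{A}-z)u$, and notes that $u$ and $R_{\beta_0,\beta_1}(z)f$ both solve (\ref{eqn:abstract_system2}) with data $(f,0)$, hence coincide by the unique solvability recalled in the construction above. Since $\mathcal{D}_0$ is manifestly independent of $z$, the operator $\widehat{A}_{\beta_0,\beta_1} := \widehat{A}|_{\mathcal{D}_0}$ is well defined, $(\widehat{A}_{\beta_0,\beta_1}-z)\colon \mathcal{D}_0 \to \mathcal{H}$ is a bijection with bounded inverse $R_{\beta_0,\beta_1}(z)$, so $z\in\rho(\widehat{A}_{\beta_0,\beta_1})$ and $\widehat{A}_{\beta_0,\beta_1}$ is closed; the inclusions $\mathcal{D}(\widehat{A}_{\beta_0,\beta_1}) = \mathcal{D}_0 \subseteq \ker(\beta_0\Gamma_0+\beta_1\Gamma_1)$ and $\widehat{A}_{\beta_0,\beta_1}\subseteq\widehat{A}$ are then built in by construction.

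Two places call for care. First, throughout the computation above one must never write $\Gamma_1 u$ in isolation: $\Gamma_1$ is defined only on $\mathcal{D}(\Gamma_1)$ (where the boundary component lies in $\mathcal{D}(\Lambda)$), whereas our $\psi$ need only lie in the larger space $\mathcal{D}(\overline{\beta_0+\beta_1\Lambda})$; only the combination $\beta_0\Gamma_0+\beta_1\Gamma_1$ extends boundedly to $\mathcal{H}_{\overline{\beta_0+\beta_1\Lambda}}$, which is precisely why the closability hypothesis on $\beta_0+\beta_1\Lambda$ and the Hilbert-space structure on $\mathcal{H}_{\overline{\beta_0+\beta_1\Lambda}}$ were introduced, and why the cancellation must be carried out with the extended map rather than termwise. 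Second, and this is where I expect the genuine obstacle to be, one must show $\mathcal{D}_0 = \mathcal{D}(\widehat{A}_{\beta_0,\beta_1})$ is dense in $\mathcal{H}$. The natural route is to identify $R_{\beta_0,\beta_1}(z)^*$ with the resolvent at $\bar z$ of the operator attached to the ``adjoint'' boundary parameters, so that applying the injectivity argument to that adjoint problem gives $\ker R_{\beta_0,\beta_1}(z)^* = \{0\}$, i.e.\ $\overline{\text{ran}\,R_{\beta_0,\beta_1}(z)} = \mathcal{H}$; making ``adjoint boundary parameters'' precise — unravelling $(\overline{\beta_0+\beta_1 M(z)})^*$ and $\beta_1^*$ and checking the corresponding invertibility — is the technically heaviest part, and can alternatively be invoked from \cite[Section 5]{ryzhov2009}.
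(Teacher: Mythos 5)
The paper offers no proof of this statement at all: it is imported verbatim as \cite[Theorem 5.5]{ryzhov2009}, with only the informal four-step outline preceding it and the remark pointing to the pseudoresolvent machinery of Engel--Nagel. Your proposal is therefore a genuine reconstruction of Ryzhov's argument rather than a match to anything in the paper, and the core of it is correct. The decomposition $u=A_0^{-1}g+\Pi\psi$ with $g=(I+z(A_0-z)^{-1})f+zS(z)\psi$ is right (it follows from $S(z)=(I-zA_0^{-1})^{-1}\Pi$), the three cancellation identities you list are exactly the content of Proposition \ref{prop:auxops_properties}(3)--(5) combined with $S(\bar z)^*=\Pi^*(I+z(A_0-z)^{-1})$, and your insistence on working only with the \emph{extended} map $\overline{\beta_0\Gamma_0+\beta_1\Gamma_1}$ rather than with $\Gamma_1$ termwise is precisely the point of the closability hypothesis. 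Your identification $\mathrm{ran}\,R_{\beta_0,\beta_1}(z)=\ker\bigl(\overline{\beta_0\Gamma_0+\beta_1\Gamma_1}\bigr)$ is in fact slightly stronger than the stated inclusion, and it does follow from the unique-solvability claim (which itself reduces, via $\ker(\widehat{A}-z)=\mathrm{ran}\,S(z)$ and $\overline{(\beta_0\Gamma_0+\beta_1\Gamma_1)}S(z)\varphi=\overline{(\beta_0+\beta_1M(z))}\varphi$, to the invertibility hypothesis); the $z$-independence of this kernel then gives closedness for free, exactly as you say.

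The one piece that is genuinely not finished is density of $\mathcal{D}(\widehat{A}_{\beta_0,\beta_1})$, which you flag yourself. Injectivity of $R_{\beta_0,\beta_1}(z)$ plus the first resolvent identity only yields a closed operator with domain $\mathrm{ran}\,R_{\beta_0,\beta_1}(z)$; to get density you do need $\ker R_{\beta_0,\beta_1}(z)^*=\{0\}$, and unravelling $R_{\beta_0,\beta_1}(z)^*=(A_0-\bar z)^{-1}+S(\bar z)\,Q_{\beta_0,\beta_1}(z)^*\,S(z)^*$ into the resolvent of an adjoint boundary-value problem (so that the injectivity argument can be reapplied) requires relating $\bigl(\overline{\beta_0+\beta_1M(z)}\bigr)^*$ to $\overline{\beta_0^*+\beta_1^* M(\bar z)}$-type objects, which is not automatic for unbounded $\beta_0$. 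Since the paper itself delegates the entire theorem to \cite{ryzhov2009}, deferring this single step to the same source is acceptable, but be aware that it is the only point at which your argument is an outline rather than a proof.
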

\end{enumerate}

\begin{rmk} 
\begin{itemize}
    \item We refer to the formula for $(\widehat{A}_{\beta_0,\beta_1}-z)^{-1} \equiv R_{\beta_0,\beta_1}(z)$ as \textit{Krein's formula}.
    
    \item We did not give a complete description of $\mathcal{D}(\widehat{A}_{\beta_0,\beta_1})$. The best we have is $\mathcal{D}(\widehat{A}_{\beta_0,\beta_1}) = \text{ran}(R_{\beta_0,\beta_1}(z))$, where the RHS can be expressed in the triple $(A_0,\Lambda,\Pi)$ by Krein's formula. We also note here that $\mathcal{D}(\widehat{A}_{\beta_0,\beta_1})$ fits into the following chain of inclusions:
    $$\text{ker}(\Gamma_0) \cap \text{ker}(\Gamma_1) \subset \mathcal{D}(\widehat{A}_{\beta_0,\beta_1}) \subset \text{ker}(\beta_0 \Gamma_0 + \beta_1 \Gamma_1) \subset \mathcal{H}_{\overline{\beta_0 + \beta_1 \Lambda}} \subset \mathcal{D}(A) \subset \mathcal{H}.$$
    
    \item We do not claim that $\widehat{A}_{\beta_0,\beta_1}$ is self-adjoint. See \cite[Corollary 5.8]{ryzhov2009} for a sufficient condition.
    
    \item The construction of the closed operator $\widehat{A}_{\beta_0,\beta_1}$ from resolvents is a general result of ``pseudoresolvents" which can be found in \cite[Chapter VIII.1.1]{kato_book}.
    
    \item For the purposes of this paper, it is important that $\beta_0$ and $\beta_1$ are allowed to depend on $z$. Correspondingly, the operator $\widehat{A}_{\beta_0,\beta_1}$ depends on $z$ as well. \qedhere
\end{itemize}
\end{rmk}

\begin{nts}
    (On the issue of pesudoresolvents)
    The reference \cite[Chapter 4, Proposition 1.6]{engel_nagel_short} (Engel-Nagel) is just \cite[Chapter VIII.1.1]{kato_book} (Kato) but explained better. To summarize the discussion: the main issue here not existence, since we can take $R(z)^{-1} + zI$. Rather, the issue is consistency ... how to we know that $R(z)^{-1} + zI$ and $R(w)^{-1} + wI$ defines the same operator? (The family $\{R(z)\}_z$ needs to satisfy the first resolvent identity for every pair $R(z)$ and $R(w)$.)
\end{nts}

Theorem \ref{thm:kreins_formula} says that for $f\in\mathcal{H}$, the equation $(\widehat{A}_{\beta_0,\beta_1}-z)u = f$ can be solved uniquely if and only if the same holds for the system
\begin{equation}\label{eqn:abstract_system_b0b1}
    \begin{cases}
        (\widehat{A}-z) u = f, \\
        (\beta_0 \Gamma_0 + \beta_1 \Gamma_1) u = 0.
    \end{cases}
\end{equation}

A solution to this system implies a ``weak solution" in the sense of \cite[Definition 3.8]{ryzhov2009}, which coincides with the typical definition of a weak solution. Therefore we can relate the operators constructed in this way to a typical BVP, such as our main model in Section \ref{sect:mainmodel_defn}. To be precise, for the case $\beta_0 = 0$ and $\beta_1 = I$, we conclude that

\begin{cor}\label{cor:main_model_equiv}
    $A_{\varepsilon}^{(\uptau)} = \widehat{A}_{\varepsilon,0,I}^{(\uptau)}$, and $(A_{\varepsilon}^{(\uptau)}-z)^{-1} = (A_{\varepsilon,0}^{(\uptau)}-z)^{-1} - S_\varepsilon^{(\uptau)}(z) M_\varepsilon^{(\uptau)}(z)^{-1} S_\varepsilon^{(\uptau)}(\overline{z})^*$ whenever $z\in \C\setminus\R$.
\end{cor}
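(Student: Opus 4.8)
## Proof Proposal

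The plan is to invoke Theorem \ref{thm:kreins_formula} with the specific choice $\beta_0 = 0$ and $\beta_1 = I_\mathcal{E}$, applied to the ``full cube'' boundary triple $(A_{\varepsilon,0}^{(\uptau)}, \Lambda_\varepsilon^{(\uptau)}, \Pi^{(\uptau)})$, and then to identify the resulting operator $\widehat{A}_{\varepsilon,0,I}^{(\uptau)}$ with the main model operator $A_\varepsilon^{(\uptau)}$ from Section \ref{sect:mainmodel_defn}. First I would check the hypotheses needed to even write down $\widehat{A}_{\varepsilon,0,I}^{(\uptau)}$: with $\beta_0=0$, $\beta_1=I$, the operator $\overline{\beta_0 + \beta_1 M_\varepsilon^{(\uptau)}(z)}$ is simply $M_\varepsilon^{(\uptau)}(z)$ itself (which is already closed by Proposition \ref{prop:auxops_properties}(6), so no closure is needed), and the requirements $\mathcal{D}(\beta_0)\supset\mathcal{D}(\Lambda)$, $\beta_1\in\mathcal{L}(\mathcal{E})$, and closability of $\beta_0+\beta_1\Lambda = \Lambda_\varepsilon^{(\uptau)}$ are all trivially satisfied since $\Lambda_\varepsilon^{(\uptau)}$ is self-adjoint (Lemma \ref{lem:dtn_selfadjoint}). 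The hypothesis that actually has content is that $M_\varepsilon^{(\uptau)}(z)$ be boundedly invertible on $\mathcal{E}$; this is where the set $K_\sigma$ enters, and I would simply cite the relevant invertibility statement (Theorem \ref{thm:m_inverse_est}) as the definition/characterization of $z\in K_\sigma$ for which the formula is valid. Given that, Theorem \ref{thm:kreins_formula} directly yields that
\begin{align*}
    R_{0,I}(z) = (A_{\varepsilon,0}^{(\uptau)}-z)^{-1} + S_\varepsilon^{(\uptau)}(z)\, Q_{0,I}(z)\, S_\varepsilon^{(\uptau)}(\overline{z})^*, \qquad Q_{0,I}(z) = -M_\varepsilon^{(\uptau)}(z)^{-1},
\end{align*}
is the resolvent of a closed densely defined operator $\widehat{A}_{\varepsilon,0,I}^{(\uptau)}$, which gives exactly the claimed Krein formula once $Q_{0,I}$ is substituted.

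The second half — the identification $A_\varepsilon^{(\uptau)} = \widehat{A}_{\varepsilon,0,I}^{(\uptau)}$ — is the step I expect to require the most care. By Theorem \ref{thm:kreins_formula} we know $\widehat{A}_{\varepsilon,0,I}^{(\uptau)} \subset \widehat{A}_\varepsilon^{(\uptau)}$ with $\mathcal{D}(\widehat{A}_{\varepsilon,0,I}^{(\uptau)}) \subset \ker(\Gamma_{\varepsilon,1}^{(\uptau)})$, and the system (\ref{eqn:abstract_system_b0b1}) with $\beta_0=0,\beta_1=I$ reads $(\widehat{A}_\varepsilon^{(\uptau)}-z)u=f$, $\Gamma_{\varepsilon,1}^{(\uptau)}u = 0$. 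I would argue that this abstract system is precisely the weak formulation of the BVP (\ref{eqn:mainmodel_bvp}): the condition $u\in\mathcal{D}(\Gamma_{\varepsilon,1}^{(\uptau)}) = \mathcal{D}(A_{\varepsilon,0}^{(\uptau)})\dot{+}\Pi^{(\uptau)}\mathcal{D}(\Lambda_\varepsilon^{(\uptau)})$ encodes that $u$ restricted to each of the three components has the appropriate $H^2$-regularity and that the Dirichlet traces match across $\Gamma_\interior$ and $\Gamma_\ls$ (continuity of $u$), while the PDE $(\widehat{A}_\varepsilon^{(\uptau)}-z)u=f$ gives the three bulk equations (with the correct $\varepsilon^{-2}$ weights built into $\widehat{A}_\varepsilon^{(\uptau)}$ via the definition of $A_{\varepsilon,0}^{(\uptau)}$), and the condition $\Gamma_{\varepsilon,1}^{(\uptau)}u=0$ — using $\Gamma_{\varepsilon,1}^{(\uptau)}$ as the weighted Neumann trace, which is legitimate because $\Lambda_\varepsilon^{(\uptau)} = \Gamma_{\varepsilon,1}^{(\uptau)}\Pi^{(\uptau)}$ by Proposition \ref{prop:auxops_properties}(3) — is exactly the two co-normal transmission conditions in (\ref{eqn:mainmodel_bvp}) combining $\varepsilon^{-2}\partial^{(\uptau)}_{n_\stin,\interior}u + \partial^{(\uptau)}_{n_\soft,\interior}u = 0$ and $\partial^{(\uptau)}_{n_\soft,\ls}u + \varepsilon^{-2}\partial^{(\uptau)}_{n_\stls,\ls}u = 0$. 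Since both $A_\varepsilon^{(\uptau)}$ (defined by its form, hence self-adjoint) and $\widehat{A}_{\varepsilon,0,I}^{(\uptau)}$ have the same resolvent equation solved uniquely for $z\in K_\sigma$, and $A_\varepsilon^{(\uptau)}$ is self-adjoint while $\widehat{A}_{\varepsilon,0,I}^{(\uptau)}\subset\widehat{A}_\varepsilon^{(\uptau)}$, equality of resolvents at one such $z$ forces $A_\varepsilon^{(\uptau)} = \widehat{A}_{\varepsilon,0,I}^{(\uptau)}$.

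The main obstacle, then, is the translation between the abstract boundary condition $\Gamma_{\varepsilon,1}^{(\uptau)}u=0$ and the concrete transmission conditions: one must verify that for $u = (A_{\varepsilon,0}^{(\uptau)})^{-1}g + \Pi^{(\uptau)}\varphi$ the value $\Gamma_{\varepsilon,1}^{(\uptau)}u = (\Pi^{(\uptau)})^*g + \Lambda_\varepsilon^{(\uptau)}\varphi$ genuinely coincides with the sum of weighted co-normal derivative traces appearing in (\ref{eqn:mainmodel_bvp}), and that the decomposition $u = (A_{\varepsilon,0}^{(\uptau)})^{-1}g + \Pi^{(\uptau)}\varphi$ together with the matching of Dirichlet data across $\Gamma_\interior$, $\Gamma_\ls$ captures all of $\mathcal{D}(A_\varepsilon^{(\uptau)})$. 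This is essentially a regularity/trace bookkeeping argument — identifying $\Pi^{(\uptau)}$ with the componentwise harmonic lifts and $\Lambda_\varepsilon^{(\uptau)} = \Lambda_\varepsilon^{\stin,(\uptau)} + \Lambda^{\soft,(\uptau)} + \Lambda_\varepsilon^{\stls,(\uptau)}$ with the sum of weighted DtN maps on the respective interfaces — and runs exactly as in \cite[Section 4]{eff_behavior}, so I would carry it out by reference to that argument with the obvious adaptation to the two-interface (stiff-soft-stiff) geometry, remarking only on the points where the annular structure changes the decomposition (namely that the soft lift $\Pi^{\soft,(\uptau)}$ takes a pair $(\phi,\varphi)$ of boundary data rather than a single datum).
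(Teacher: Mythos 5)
Your proposal is correct and follows essentially the same route as the paper: apply Theorem \ref{thm:kreins_formula} with $\beta_0=0$, $\beta_1=I$, and identify $\widehat{A}_{\varepsilon,0,I}^{(\uptau)}$ with $A_\varepsilon^{(\uptau)}$ by matching the abstract system $(\widehat{A}-z)u=f$, $\Gamma_1 u=0$ with the weak formulation of (\ref{eqn:mainmodel_bvp}). The only cosmetic difference is that the paper closes the identification by citing self-adjointness of $\widehat{A}_{\varepsilon,0,I}^{(\uptau)}$ (Ryzhov, Corollary 5.8) together with maximal symmetry, whereas you conclude via equality of resolvents at a single non-real $z$ — both are valid and amount to the same thing.
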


\begin{proof}
    $A_{\varepsilon}^{(\uptau)}$ is self-adjoint, and so is $\widehat{A}_{\varepsilon,0,I}^{(\uptau)}$ by \cite[Corollary 5.8]{ryzhov2009}. Now consider the resolvents of both operators at $z = i$. The range of $(\widehat{A}_{\varepsilon,0,I}^{(\uptau)} - i)^{-1}$ is $\mathcal{D}(\widehat{A}_{\varepsilon,0,I}^{(\uptau)})$, and is also the set of solutions to \eqref{eqn:abstract_system_b0b1} for some $f\in \mathcal{H}$. Similarly, the range of $(A_{\varepsilon}^{(\uptau)} - i)^{-1}$ is $\mathcal{D}(A_{\varepsilon}^{(\uptau)})$, and is also the set of weak solutions to \eqref{eqn:abstract_system_b0b1}. By the preceding paragraph, we have $\mathcal{D}(\widehat{A}_{\varepsilon,0,I}^{(\uptau)}) \subset \mathcal{D}(A_{\varepsilon}^{(\uptau)})$, and so $\widehat{A}_{\varepsilon,0,I}^{(\uptau)} \subset A_{\varepsilon}^{(\uptau)}$. Since self-adjoint operators are maximally symmetric, they must be equal.
\end{proof}

\subsection{Properties of the triple arising from our setup}\label{sect:specific_setup}

First, we state the actions of the auxiliary operators in a convenient form. We skip $\widehat{A}$ and $\Gamma_0$ as they are just null extensions of $A_0$ and the left inverse of $\Pi$ respectively. We will only do this for the the triple on $Q_{\text{stiff-ls}}$, omitting similar statements for $Q_{\text{stiff-int}}$ and $Q_{\text{soft}}$ for brevity.

By the identity $\Lambda_\varepsilon^{\text{stiff-ls},(\uptau)} = \Gamma_{\varepsilon,1}^{\text{stiff-ls},(\uptau)} \Pi^{\text{stiff-ls},(\uptau)}$, we see that $\Gamma_{\varepsilon,1}^{\text{stiff-ls}, (\uptau)}$ takes $u = \Pi^{\text{stiff-ls}, (\uptau)} \phi \in \Pi^{\text{stiff-ls}, (\uptau)} \mathcal{D}(\Lambda_\varepsilon^{\text{stiff-ls},(\uptau)})$ to $\varepsilon^{-2} \partial^{(\uptau)}_{n_\stls,\ls} u$, where $u$ solves the BVP
\begin{equation*}
    \begin{cases}
        -(\nabla + i\uptau)^2 u = 0 & \text{ in $Q_{\text{stiff-ls}}$,} \\
        u = \phi &\text{ on $\Gamma_\ls$,} \\
        u \text{ periodic} &\text{ on $\partial Q$.}
    \end{cases}
\end{equation*}

\begin{rmk}
    The action of $\Gamma_1$ is characterized by two equations, $\Lambda = \Gamma_1 \Pi$ and $\Pi^* = \Gamma_1 A_0^{-1}$. The above description only discusses $\Lambda = \Gamma_1 \Pi$.
\end{rmk}

The operator $S_\varepsilon^{\text{stiff-ls},(\uptau)}(z) = (I - z(A_{\varepsilon,0}^{\text{stiff-ls},(\uptau)})^{-1}) \Pi^{\text{stiff-ls},(\uptau)}$ takes $\phi \in L^2(\Gamma_\ls)$ to $u_\phi \in \mathcal{D}(A_{\varepsilon,0}^{\text{stiff-ls},(\uptau)}) \dot{+}$ $\text{ran}(\Pi^{\text{stiff-ls},(\uptau)}) \subset L^2(Q_{\text{stiff-ls}})$,
where $u=u_\phi$ solves the BVP (in the sense of system \eqref{eqn:abstract_system})
\begin{equation*}
    \begin{cases}
        ( -\varepsilon^{-2}(\nabla + i\uptau)^2 -z ) u = 0, \\
        \Gamma_0^{\text{stiff-ls},(\uptau)} u = \phi. \\
    \end{cases}
\end{equation*}

The operator $M_\varepsilon^{\text{stiff-ls},(\uptau)}(z) = \Gamma_{\varepsilon,1}^{\text{stiff-ls},(\uptau)} S_\varepsilon^{\text{stiff-ls},(\uptau)}(z)$ takes $\phi \in H^1(\Gamma_\ls) = \mathcal{D}(\Lambda_\varepsilon^{\text{stiff-ls},(\uptau)})$ to $\varepsilon^{-2}\partial^{(\uptau)}_{n_\stls,\ls} u$, where $u = u_\phi$ solves the BVP
\begin{equation*}
    \begin{cases}
        (-\varepsilon^{-2}(\nabla + i\uptau)^2 - z ) u = 0 & \text{ in $Q_{\text{stiff-ls}}$,} \\
        u = \phi &\text{ on $\Gamma_\ls$,} \\
        u \text{ periodic} &\text{ on $\partial Q$.}
    \end{cases}
\end{equation*}

\begin{nts}
    I chose $Q_\stls$ instead of $Q_\soft$, so that we can see the dependence on $\eps$.
\end{nts}

In view of keeping our notation compact, we make the following convention:

\begin{rmk}[On notation]
    In the remainder of the text, we will often abuse notation and write for instance, the operator $``P_1AP_2"$, for projections $P_1$ and $P_2$, to mean any one of the following:
    \begin{itemize}
        \item the composition of the three operators, $P_1 A P_2 : \mathcal{H} \rightarrow \mathcal{H}$,
        
        \item the compression $r P_1A|_{P_2\mathcal{H}}: P_2{\mathcal{H}}\rightarrow P_1\mathcal{H}$, where $r:\mathcal{H} \rightarrow P_1\mathcal{H}$ is the restriction operator,
        
        \item the operator $P_1A|_{P_2\mathcal{H}}: P_2{\mathcal{H}}\rightarrow \mathcal{H}$, which is equal to the composition of the embedding $i:P_1\mathcal{H}\rightarrow \mathcal{H}$ with the compression,
        
        \item the operator $rP_1A|_{P_2\mathcal{H}} + rP_10|_{P_2^{\perp} \mathcal{H}}: \mathcal{H}\rightarrow P_1\mathcal{H}$, which is the null extension of the compression to the full space. \qedhere
    \end{itemize}
\end{rmk}

Using the projections on $\mathcal{H}$ and $\mathcal{E}$ (Definition \ref{defn:projections}), we now discuss the relation between auxiliary operators of different triples. The first observation follows directly from the definition of the triples:
\begin{align}\label{eqn:pi_op_sum}
    \Pi^{\text{stiff-int},(\uptau)} = P_{\text{stiff-int}} \Pi^{(\uptau)} \mathcal{P}_\interior, \quad
    \Pi^{\text{soft},(\uptau)} = P_{\text{soft}} \Pi^{(\uptau)}, \quad \text{and }
    \Pi^{\text{stiff-ls},(\uptau)} = P_{\text{stiff-ls}} \Pi^{(\uptau)} \mathcal{P}_\ls.
\end{align}

Secondly, by our description of the action of $S(z)$, we have
\begin{align}\label{eqn:s_op_sum}
    S_\varepsilon^{\text{stiff-int},(\uptau)}(z) &= P_{\text{stiff-int}} S_\varepsilon^{(\uptau)}(z) \mathcal{P}_\interior, \quad
    S^{\text{soft},(\uptau)}(z) = P_{\text{soft}} S_\varepsilon^{(\uptau)}(z), \quad \text{and } \nonumber \\
    S_\varepsilon^{\text{stiff-ls},(\uptau)}(z) &= P_{\text{stiff-ls}} S_\varepsilon^{(\uptau)}(z) \mathcal{P}_\ls.
\end{align}

This could be proven directly, for instance for $S^{\text{soft},(\uptau)}(z)$, by noting that
\begin{alignat*}{2}
    P_\text{soft} S_\varepsilon^{(\uptau)}(z) &= P_\text{soft} (I - z (A_0^{(\uptau)} - z)^{-1})\Pi^{(\uptau)} && \text{by definition of $S(z)$,}\\
    &= P_\text{soft} (I - z (A_0^{(\uptau)} - z)^{-1}) P_\text{soft} \Pi^{(\uptau)} && \text{$L^2(Q_{\text{soft}})$ is an invariant subspace for $A_0^{(\uptau)}$,}\\
    &= (I_{L^2(Q_\text{soft})} - z (A_0^{\text{soft},(\uptau)} - z)^{-1})\Pi^{\text{soft}, (\uptau)} \quad && \text{by construction of $A_0^{(\uptau)}$ and by (\ref{eqn:pi_op_sum}).}
\end{alignat*}

As for $M(z)$, we have

\begin{prop}\label{prop:m_op_sum}
    For $z\in\rho(A_{\varepsilon,0}^{(\uptau)})$, the following identity holds
    \begin{align}
        M_\varepsilon^{(\uptau)}(z) = M_\varepsilon^{\text{stiff-int},(\uptau)}(z) \mathcal{P}_\interior + M^{\text{soft},(\uptau)}(z) +  M_\varepsilon^{\text{stiff-ls}, (\uptau)}(z) \mathcal{P}_\ls.
    \end{align}
\end{prop}

\begin{proof}
    We will drop $\uptau$ and $\varepsilon$. Let $\phi \in H^1(\Gamma_\interior)$ and $\varphi \in H^1(\Gamma_\ls)$. We see that
    \begin{itemize}
        \item $M^{\text{stiff-int}}(z) = \Gamma_1^{\text{stiff-int}} S^{\text{stiff-int}}(z)$ takes $\phi$ to $\varepsilon^{-2}\partial_{n_\stin,\interior}^{(\uptau)} u_\phi$,
        \item $M^{\text{soft}}(z) = \Gamma_1^{\text{soft}} S^{\text{soft}}(z)$ takes $\phi + \varphi$ to $\partial^{(\uptau)}_{n_\soft,\interior}u_{\phi,\varphi} + \partial^{(\uptau)}_{n_\soft,\ls} u_{\phi,\varphi}$,
        \item $M^{\text{stiff-ls}}(z) = \Gamma_1^{\text{stiff-ls}} S^{\text{stiff-ls}}(z)$ takes $\varphi$ to $\varepsilon^{-2}\partial_{n_\stls,\ls}^{(\uptau)} u_\varphi$,
    \end{itemize}
    where $u = u_{\text{stiff-int}} + u_{\text{soft}} + u_{\text{stiff-ls}} = u_\phi + u_{\phi,\varphi} + u_\varphi$ solves the BVP
    
    \begin{equation}
        \begin{cases}
            (-\varepsilon^{-2}(\nabla + i\uptau)^2 - z) u_{\phi} = 0 & \text{ in $Q_{\text{stiff-int}}$,} \\
            (-(\nabla + i\uptau)^2 - z) u_{\phi,\varphi} = 0 & \text{ in $Q_{\text{soft}}$,} \\
            (-\varepsilon^{-2}(\nabla + i\uptau)^2 - z) u_{\varphi} = 0 & \text{ in $Q_{\text{stiff-ls}}$,} \\
            u_{\phi} = u_{\phi,\varphi} = \phi &\text{ on $\Gamma_\interior$,} \\
            u_{\varphi} = u_{\phi,\varphi} = \varphi &\text{ on $\Gamma_\ls$,} \\
            u_{\varphi} \text{ periodic} &\text{ on $\partial Q$.}
        \end{cases}
    \end{equation}
    The rule $(\phi,\varphi) \mapsto \varepsilon^{-2}\partial_{n_\stin,\interior}^{(\uptau)} u_\phi + \partial^{(\uptau)}_{n_\soft,\interior}u_{\phi,\varphi} + \partial^{(\uptau)}_{n_\soft,\ls} u_{\phi,\varphi} + \varepsilon^{-2}\partial_{n_\stls,\ls}^{(\uptau)} u_\varphi$ is precisely the action of $M(z)$.
\end{proof}

\begin{rmk}
    Note that $M^{\text{stiff-int}}(z) \neq \mathcal{P}_\interior M(z) \mathcal{P}_\interior$. The LHS is $\Gamma_1^{\text{stiff-int}} S^{\text{stiff-int}}(z)$, which takes $\phi \in H^1(\Gamma_\interior)$ to $\varepsilon^{-2} \partial^{(\uptau)}_{n_\stin,\interior} u_\phi$. The RHS takes $\phi \in H^1(\Gamma_\interior)$ to $\varepsilon^{-2} \partial^{(\uptau)}_{n_\stin,\interior} u_\phi + \partial^{(\uptau)}_{n_\soft,\interior} u_{\phi,0}$. Even though we are confronted with this asymmetry, the above proposition assures us that the additive structure of $M(z)$ remains. The additivity is exploited to great effect in \cite{eff_behavior}.
\end{rmk}

Finally, we discuss the dependence of the auxiliary operators on $\varepsilon$ and $\uptau$, and $z$. To obtain estimates that are uniform in $z$, we will restrict our choice of $z$ to the following set.

\begin{defn}
    Fix $\sigma>0$ and a compact subset of $K \subset \mathbb{C}$. Let $K_\sigma$ be the compact subset of $K$ that is $\sigma$ distance away from the real line, that is, $K_\sigma = \{ z \in \mathbb{C} : z \in K, \text{dist}(z,\mathbb{R}) \geq \sigma \}$.
\end{defn}

\begin{lem}\label{lem:soln_op_est}
    We have $S^{\text{stiff-int},(\uptau)}_\varepsilon (z) - \Pi^{\text{stiff-int},(\uptau)} = O(\varepsilon^2)$, $S^{\text{stiff-ls},(\uptau)}_\varepsilon (z) - \Pi^{\text{stiff-ls},(\uptau)} = O(\varepsilon^2)$ and $S^{\text{soft},(\uptau)}(z) - \Pi^{\text{soft},(\uptau)} = O(1)$ in operator norm, uniformly in $\uptau \in Q'$ and $z\in K_\sigma$.
\end{lem}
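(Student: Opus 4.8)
The plan is to reduce the whole statement to the elementary identity
\begin{align*}
    S(z) - \Pi = z\,(A_0 - z)^{-1}\,\Pi ,
\end{align*}
which is immediate from the definition $S(z)\phi := \bigl(I + z(A_0-z)^{-1}\bigr)\Pi\phi$ (equivalently from Proposition \ref{prop:auxops_properties}(4)). Applied to the three sub-triples of Section \ref{sect:specific_setup}, this yields, for $\bigstar\in\{\text{stiff-int},\text{stiff-ls}\}$,
\begin{align*}
    S^{\bigstar,(\uptau)}_\varepsilon(z) - \Pi^{\bigstar,(\uptau)} = z\,(A^{\bigstar,(\uptau)}_{\varepsilon,0} - z)^{-1}\Pi^{\bigstar,(\uptau)},
    \qquad
    S^{\text{soft},(\uptau)}(z) - \Pi^{\text{soft},(\uptau)} = z\,(A^{\text{soft},(\uptau)}_{0} - z)^{-1}\Pi^{\text{soft},(\uptau)}.
\end{align*}
Taking operator norms, each right-hand side is bounded by $|z|\cdot\|(\,\cdot\,-z)^{-1}\|\cdot\|\Pi^{\,\cdot\,}\|$, so the whole proof becomes a matter of bounding these three factors uniformly in $\uptau\in Q'$ and $z\in K_\sigma$.

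Two of the three factors are immediate. Since $K$ is compact, $|z|\le R:=\sup_{w\in K}|w|<\infty$. And each lift component is controlled by the full lift: by (\ref{eqn:pi_op_sum}), $\Pi^{\bigstar,(\uptau)}$ (resp.\ $\Pi^{\text{soft},(\uptau)}$) is $\Pi^{(\uptau)}$ pre- and post-composed with a projection and an isometric inclusion, hence $\|\Pi^{\bigstar,(\uptau)}\|,\|\Pi^{\text{soft},(\uptau)}\|\le\|\Pi^{(\uptau)}\|\le C$ uniformly in $\uptau$ (and trivially in $\varepsilon$), by Proposition \ref{prop:lift_properties}.

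It remains to bound the resolvent norms, and this is where the stiff and soft cases diverge. For $\bigstar\in\{\text{stiff-int},\text{stiff-ls}\}$ the operator $A^{\bigstar,(\uptau)}_{\varepsilon,0}$ is positive self-adjoint, and Proposition \ref{prop:decoupling_properties} gives $\|(A^{\bigstar,(\uptau)}_{\varepsilon,0})^{-1}\|\le C\varepsilon^2$, i.e.\ $\sigma(A^{\bigstar,(\uptau)}_{\varepsilon,0})\subset[C^{-1}\varepsilon^{-2},\infty)$. After possibly further shrinking the admissible range of $\varepsilon$ so that $C^{-1}\varepsilon^{-2}\ge 2R$, we then have $\text{dist}\bigl(z,\sigma(A^{\bigstar,(\uptau)}_{\varepsilon,0})\bigr)\ge C^{-1}\varepsilon^{-2}-R\ge\tfrac12 C^{-1}\varepsilon^{-2}$ for all $z\in K$, so by the spectral theorem $\|(A^{\bigstar,(\uptau)}_{\varepsilon,0}-z)^{-1}\|\le 2C\varepsilon^2$; note that no separation from $\mathbb{R}$ is needed here. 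Combining the three bounds gives $\|S^{\bigstar,(\uptau)}_\varepsilon(z)-\Pi^{\bigstar,(\uptau)}\|\le 2RC^2\varepsilon^2=O(\varepsilon^2)$, uniformly. For the soft case, $A^{\text{soft},(\uptau)}_{0}$ is self-adjoint and $\varepsilon$-independent with real spectrum, so for $z\in K_\sigma$ (hence $|\text{Im}\,z|\ge\sigma$) the spectral theorem gives only $\|(A^{\text{soft},(\uptau)}_{0}-z)^{-1}\|\le|\text{Im}\,z|^{-1}\le\sigma^{-1}$, and therefore $\|S^{\text{soft},(\uptau)}(z)-\Pi^{\text{soft},(\uptau)}\|\le RC\sigma^{-1}=O(1)$, uniformly in $\uptau\in Q'$ and $z\in K_\sigma$.

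I expect the only genuine subtlety — and the reason the statement is phrased over $K_\sigma$ rather than over all of $K$ — to be exactly this last estimate: the stiff decouplings have spectra escaping to $+\infty$ as $\varepsilon\to0$, but the soft decoupling $A^{\text{soft},(\uptau)}_{0}$ has a fixed discrete spectrum on the positive real axis, so without the separation $\text{dist}(z,\mathbb{R})\ge\sigma$ the resolvent $(A^{\text{soft},(\uptau)}_{0}-z)^{-1}$, and hence $S^{\text{soft},(\uptau)}(z)$ itself, would be unbounded as $z$ approaches an eigenvalue. Everything else is routine bookkeeping of the a priori bounds already recorded in Propositions \ref{prop:decoupling_properties} and \ref{prop:lift_properties}.
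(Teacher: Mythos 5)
Your proof is correct and follows exactly the route the paper intends: the paper's own proof consists of the single remark that the lemma is "a direct consequence of the formula $S(z)=(I+z(A_0-z)^{-1})\Pi$", and you have simply filled in the uniform bounds on $|z|$, $\|\Pi^{(\uptau)}\|$, and the resolvents using Propositions \ref{prop:decoupling_properties} and \ref{prop:lift_properties}. Your closing observation — that the restriction to $K_\sigma$ is only needed for the soft decoupling, whose spectrum does not escape to infinity as $\varepsilon\to 0$ — is accurate and correctly identifies the one non-trivial point.
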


\begin{proof}
    This is a direct consequence of the formula $S(z)=(I + z(A_0-z)^{-1}) \Pi$.
\end{proof}

In terms of estimates that are uniform over $\varepsilon$, $\uptau$, and $z\in K_\sigma$, recall that we have already provided one for the decoupling $A_0$ in Proposition \ref{prop:decoupling_properties}, and one for the lift $\Pi$ in Proposition \ref{prop:lift_properties}.

Similarly to the solution operator $S(z)$, we may ask for a simplification of $M(z)$ up to $O(\varepsilon^2)$. Recall the notation for the unweighted decoupling $\widetilde{A}_0^{\text{stiff-int},(\uptau)}, \widetilde{A}_0^{\text{stiff-ls},(\uptau)}$ and unweighted DtN maps $\widetilde{\Lambda}^{\text{stiff-int},(\uptau)}$, $ \widetilde{\Lambda}^{\text{stiff-ls}, (\uptau)}$.

\begin{lem}\label{lem:m_op_est}
    For $\bigstar \in \{ \text{stiff-int}, \text{stiff-ls}\}$, we have 
    \begin{align}
        M_\varepsilon^{\bigstar,(\uptau)}(z) = \varepsilon^{-2} \widetilde{\Lambda}^{\bigstar, (\uptau)} + z (\Pi^{\bigstar,(\uptau)})^* \Pi^{\bigstar,(\uptau)} + O(\varepsilon^2),
    \end{align}
    in the operator norm, where the estimate is uniform over $\uptau \in Q'$ and $z \in K_\sigma$.
\end{lem}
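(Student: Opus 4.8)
The starting point is Proposition \ref{prop:auxops_properties}(5), applied to the stiff component triple:
$$M_\varepsilon^{\bigstar,(\uptau)}(z) = \Lambda_\varepsilon^{\bigstar,(\uptau)} + z\,(\Pi^{\bigstar,(\uptau)})^* \bigl(I - z(A_{\varepsilon,0}^{\bigstar,(\uptau)})^{-1}\bigr)^{-1} \Pi^{\bigstar,(\uptau)}.$$
Now recall the unweighted notation: $\Lambda_\varepsilon^{\bigstar,(\uptau)} = \varepsilon^{-2}\widetilde{\Lambda}^{\bigstar,(\uptau)}$ by definition, so the first term is already exactly $\varepsilon^{-2}\widetilde{\Lambda}^{\bigstar,(\uptau)}$ with no error. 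It remains to show that the bounded perturbation term equals $z\,(\Pi^{\bigstar,(\uptau)})^*\Pi^{\bigstar,(\uptau)} + O(\varepsilon^2)$, uniformly in $\uptau \in Q'$ and $z \in K_\sigma$. The difference between the perturbation term and $z(\Pi^{\bigstar,(\uptau)})^*\Pi^{\bigstar,(\uptau)}$ is
$$z\,(\Pi^{\bigstar,(\uptau)})^* \Bigl[ \bigl(I - z(A_{\varepsilon,0}^{\bigstar,(\uptau)})^{-1}\bigr)^{-1} - I \Bigr] \Pi^{\bigstar,(\uptau)} = z\,(\Pi^{\bigstar,(\uptau)})^*\, z(A_{\varepsilon,0}^{\bigstar,(\uptau)})^{-1}\bigl(I - z(A_{\varepsilon,0}^{\bigstar,(\uptau)})^{-1}\bigr)^{-1}\,\Pi^{\bigstar,(\uptau)},$$
using the algebraic identity $(I-X)^{-1} - I = X(I-X)^{-1}$ with $X = z(A_{\varepsilon,0}^{\bigstar,(\uptau)})^{-1}$.

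The estimate now follows from three uniform bounds already available in the excerpt. First, $\|\Pi^{\bigstar,(\uptau)}\|_{\mathcal{E}\to\mathcal{H}} < C$ uniformly in $\uptau$ and $\varepsilon$ by Proposition \ref{prop:lift_properties} (hence the same for its adjoint). Second, $\|(A_{\varepsilon,0}^{\bigstar,(\uptau)})^{-1}\|_{op} \leq C\varepsilon^2$ uniformly in $\uptau, \varepsilon$ by Proposition \ref{prop:decoupling_properties}. Third, $|z|$ is bounded on the compact set $K_\sigma$ (indeed $K_\sigma \subset K$ is compact), so $\|X\|_{op} = |z|\,\|(A_{\varepsilon,0}^{\bigstar,(\uptau)})^{-1}\|_{op} \leq C'\varepsilon^2 \leq \tfrac12$ once $\varepsilon$ is small enough (uniformly, as we have already assumed), whence $\|(I-X)^{-1}\|_{op} \leq 2$ by the Neumann series. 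Combining, the displayed difference has operator norm bounded by $|z|^2 \cdot C^2 \cdot C\varepsilon^2 \cdot 2 \leq C''\varepsilon^2$, with $C''$ depending only on $\sup_{z\in K_\sigma}|z|$ and the constants from the two propositions — in particular independent of $\uptau$ and $z\in K_\sigma$. This gives the claimed $O(\varepsilon^2)$.

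There is really no main obstacle here; the lemma is a short bookkeeping exercise once the three uniform estimates are in hand. The only point requiring a word of care is making sure all constants are genuinely uniform: the $\uptau$-uniformity of $\|\Pi^{(\uptau)}\|$ traces back (via Proposition \ref{prop:lift_properties}) to the fact that the principal symbols of $-(\nabla+i\uptau)^2$ coincide for all $\uptau$, and the $\uptau$-uniformity of the resolvent bound on $A_{\varepsilon,0}^{\bigstar,(\uptau)}$ comes from the diamagnetic-inequality argument in Proposition \ref{prop:decoupling_properties}; both are stated there, so they may simply be invoked. One should also note that the perturbation term is genuinely a bounded operator on all of $\mathcal{E}$ (as $\Pi^{\bigstar,(\uptau)}$ maps into $\mathcal{H}$ and $(\Pi^{\bigstar,(\uptau)})^*$ back to $\mathcal{E}$, with the resolvent factor bounded), consistent with $M_\varepsilon^{\bigstar,(\uptau)}(z)$ and $\varepsilon^{-2}\widetilde{\Lambda}^{\bigstar,(\uptau)}$ sharing the domain $\mathcal{D}(\widetilde{\Lambda}^{\bigstar,(\uptau)})$, so the identity is an equality of operators on that common domain plus a bounded correction, exactly as in Proposition \ref{prop:auxops_properties}(5).
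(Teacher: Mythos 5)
Your proof is correct and follows essentially the same route as the paper's: apply Proposition \ref{prop:auxops_properties}(5), note $\Lambda_\varepsilon^{\bigstar,(\uptau)}=\varepsilon^{-2}\widetilde{\Lambda}^{\bigstar,(\uptau)}$ exactly, and expand $\bigl(I-z(A_{\varepsilon,0}^{\bigstar,(\uptau)})^{-1}\bigr)^{-1}$ via the Neumann series using the uniform bounds of Propositions \ref{prop:decoupling_properties} and \ref{prop:lift_properties}. You simply spell out the bookkeeping that the paper leaves implicit.
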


\begin{proof}
    We omit $\bigstar$. Since $\widetilde{A}_0^{(\uptau)} = \varepsilon^2 A_{\varepsilon,0}^{(\uptau)}$, we get $\varepsilon^2 (\widetilde{A}_0^{(\uptau)})^{-1} = (A_{\varepsilon,0}^{(\uptau)})^{-1}$. Hence
    \begin{align}
        M_\varepsilon^{(\uptau)}(z) 
        &= \varepsilon^{-2} \widetilde{\Lambda}^{(\uptau)} + z (\Pi^{(\uptau)})^* \left(I-z\varepsilon^{2} \left(\widetilde{A}_0^{(\uptau)}\right)^{-1} \right)^{-1} \Pi^{(\uptau)} \nonumber \\
        &= \varepsilon^{-2} \widetilde{\Lambda}^{(\uptau)} + z (\Pi^{(\uptau)})^* \Pi^{(\uptau)} + O(\varepsilon^2).
    \end{align}
    The second equality follows from the Neumann series, which is justified by the uniform in $\uptau$ bounds for the decoupling and the lift from Propositions \ref{prop:decoupling_properties} and \ref{prop:lift_properties}, assuming $\varepsilon$ is small enough.
\end{proof}

\section{Norm-resolvent asymptotics}\label{sect:norm_resolvent_asymp}

After the long setup, we are now ready to begin the task of homogenization, by which we mean that a study of the norm-resolvent asymptotics of the main model operator $A_{\varepsilon}^{(\uptau)}$ of Section \ref{sect:mainmodel_defn}. We would like to identify an operator $\mathcal{A}_\text{hom}^{(\uptau)}$ that we will refer to as a \textit{homogenized operator}. To qualify as a ``homogenized" operator, we require that
\begin{itemize}
    \item $\mathcal{A}_\text{hom}^{(\uptau)}$ be self-adjoint on a possibly smaller subspace $L^2(Q_{\text{soft}})\oplus \widetilde{\mathcal{H}}$ of $L^2(Q)$.
    \item The dependence $\varepsilon$ is only allowed in the action of $\mathcal{A}_\text{hom}^{(\uptau)}$, on the stiff component. In particular, the subspace $\widetilde{\mathcal{H}}$, and the domain $\mathcal{D}( \mathcal{A}_\text{hom}^{(\uptau)} )$ must be independent of $\varepsilon$.
    \item $\mathcal{A}_\text{hom}^{(\uptau)}$ and $A_\eps^{(\uptau)}$ are asymptotically equivalent, as $\eps\downarrow 0$, in some specified topology.
\end{itemize}

\subsection{Decomposing the boundary space}

In this section, we will decompose the boundary space $\mathcal{E} = L^2(\Gamma_\interior) \oplus L^2(\Gamma_\ls)$ with respect to the spectral subspaces of the DtN map $\Lambda_\varepsilon^{(\uptau)}$. We give a brief explanation here on the underlying idea:

By the Krein's formula (Theorem \ref{thm:kreins_formula}), we turn our attention to the solution operator $S_\varepsilon^{(\uptau)}(z)$ and M-operator $M_\varepsilon^{(\uptau)}(z)$. Lemmas \ref{lem:soln_op_est} and \ref{lem:m_op_est} tells us that $M(z)$ is the badly behaved term of the two. Focusing on $M(z)$, problematic region of the resolvent set is located at $z=0$ and its vicinity. To see this, recall from Corollary \ref{cor:main_model_equiv} that $A_\eps^{(\tau)}$ has $\beta_0 = 0$ and $\beta_1 = I$, giving us $\beta_0 + \beta_1 M(z) = M(z)$, which the Krein's formula then assumes to be boundedly invertible. This however, becomes increasingly difficult as $\eps$ is small, because Lemma \ref{lem:m_op_est} shows that the term $\varepsilon^{-2}\widetilde{\Lambda}^{\bigstar}$ dominates when $\varepsilon$ is small. This suggests us to break the problem into two in the spectral picture: a compact neighborhood of $z=0$ and its complement. Thanks to $\Lambda$ having compact resolvent (Proposition \ref{prop:dtn_properties}), the spectral subspace of the former could be chosen to be finite dimensional, which greatly simplifies the analysis.

\begin{nts}
    \begin{itemize}
        \item ``Inversion of a `large' operator is small." This is false. We only have the lower bound $\frac{1}{\| T \|} \leq \| T^{-1} \|$. If we assume $T$ is self-adjoint, then we have $\|T^{-1}\| = \frac{1}{\text{dist}(0,\sigma(T))}$.

        \item The point $z=0$ is out of my reach as $0 \notin K_\sigma$, so I should not be bothering myself with $z=0$. This is not true. In fact, the point of the above paragraph is to explain why we should bother ourselves with $z=0$. To repeat: even if we take $z \in K_\sigma$, the identity $M(z) = \eps^{-2} \Lambda + z \Pi^* \Pi + O(\eps^2)$ will force us to look at the term $\eps^{-2}\Lambda = M(0)$. \qedhere
    \end{itemize}
\end{nts}

Recall the \textit{unweighted} DtN map in Proposition \ref{prop:dtn_properties}. We introduce the following notation:

\begin{defn}
    Let $\bigstar \in \{\text{stiff-int},\text{stiff-ls}\}$. From now on, we will only consider the first eigenvalue and eigenfunction pair. Therefore, we will drop the indices and write $\mu^{\bigstar,(\uptau)} := \mu_1^{\bigstar, (\uptau)}$ and $\psi^{\bigstar,(\uptau)} := \psi_1^{\bigstar,(\uptau)}$. Note that $\psi_1^{\stin,(\uptau)}$ and $\psi_1^{\stls,(\uptau)}$ are mutually orthogonal. Introduce the orthogonal projections $\mathcal{P}_{\bigstar}^{(\uptau)} := (\cdot, \psi^{\bigstar,(\uptau)})_{\mathcal{E}}\psi^{\bigstar,(\uptau)}$, $\mathcal{P}^{(\uptau)} := \mathcal{P}_{\text{stiff-int}}^{(\uptau)} \oplus \mathcal{P}_{\text{stiff-ls}}^{(\uptau)}$, and $\mathcal{P}_{\perp}^{(\uptau)} = I_{\mathcal{E}} - \mathcal{P}^{(\uptau)}$.
\end{defn}

\begin{rmk}
    \begin{itemize}
        \item (On notation) Note the use of calligraphic font for projections on $\mathcal{E}$. So $\mathcal{P}_{\text{stiff-int}}^{(\uptau)}$ should not be confused with $P_{\text{stiff-int}}$, which is a projection on $\mathcal{H}$.
        \item As Proposition \ref{prop:dtn_properties} does not assert the simplicity of $\mu_1^{\stls,(\tau)}$ for large $\tau$, we may for the moment pick any eigenfunction $\psi_1^{\stls,(\tau)}$. Proposition \ref{prop:cts_dependence} will show that $\psi_1^{\stls,(\tau)}$ can be chosen in a continuous manner, which will be assumed from that point on. \qedhere
    \end{itemize}
\end{rmk}

Recall that the unweighted DtN on the stiff-components, $\widetilde{\Lambda}^{\text{stiff-int},(\uptau)} \oplus \widetilde{\Lambda}^{\text{stiff-ls},(\uptau)}$ is self-adjoint with domain $H^1(\Gamma_\interior) \oplus H^1(\Gamma_\ls)$. With respect to the decomposition $\mathcal{E} = \mathcal{P}^{(\uptau)}_{\text{stiff-int}}\mathcal{E} \oplus \mathcal{P}^{(\uptau)}_{\text{stiff-ls}}\mathcal{E} \oplus \mathcal{P}^{(\uptau)}_{\perp}\mathcal{E}$, we may now write $\widetilde{\Lambda}^{\text{stiff-int},(\uptau)} \oplus \widetilde{\Lambda}^{\text{stiff-ls},(\uptau)}$ as
\begin{align}
    \widetilde{\Lambda}^{\text{stiff-int},(\uptau)} \oplus \widetilde{\Lambda}^{\text{stiff-ls},(\uptau)} = 
    \begin{pmatrix}
    \mu^{\text{stiff-int},(\uptau)} & 0 & 0\\
    0 & \mu^{\text{stiff-ls},(\uptau)} & 0\\
    0 & 0 & \mathcal{P}^{(\uptau)}_{\perp} \left(\widetilde{\Lambda}^{\text{stiff-int},(\uptau)} \oplus \widetilde{\Lambda}^{\text{stiff-ls},(\uptau)}\right) \mathcal{P}^{(\uptau)}_{\perp}
    \end{pmatrix}.
\end{align}

As for the (weighted) M-operator $M_\varepsilon^{(\uptau)}(z)$, we write its block operator representation with respect to the decompositions $\mathcal{E} = \mathcal{P}^{(\uptau)}_{\text{stiff-int}}\mathcal{E} \oplus \mathcal{P}^{(\uptau)}_{\text{stiff-ls}}\mathcal{E} \oplus \mathcal{P}^{(\uptau)}_{\perp}\mathcal{E}$ and $\mathcal{E} = \mathcal{P}^{(\uptau)}\mathcal{E} \oplus \mathcal{P}^{(\uptau)}_{\perp}\mathcal{E}$:
\begin{align}\label{eqn:m_matrix_decomp}
    M_\varepsilon^{(\uptau)}(z) = 
    \begin{matrix}
    & \mathcal{P}^{(\uptau)}_{\text{stiff-int}} \mathcal{E} & \mathcal{P}^{(\uptau)}_{\text{stiff-ls}} \mathcal{E} & \mathcal{P}^{(\uptau)}_\perp \mathcal{E} \\
    \mathcal{P}^{(\uptau)}_{\text{stiff-int}} \mathcal{E} & \tikzmark{a11}{$\mathbb{A}_{11}$} & \mathbb{A}_{12} & \tikzmark{b1}{$\mathbb{B}_{1}$}\\
    \mathcal{P}^{(\uptau)}_{\text{stiff-ls}} \mathcal{E} & \mathbb{A}_{21} & \mathbb{A}_{22} & \mathbb{B}_2\\
    \mathcal{P}^{(\uptau)}_\perp \mathcal{E} & \tikzmark{e1}{$\mathbb{E}_{1}$} & \mathbb{E}_2 & \tikzmark{d33}{$\mathbb{D}$}
    \end{matrix}
    = 
    \begin{matrix}
        & \mathcal{P}^{(\uptau)} \mathcal{E} & \mathcal{P}^{(\uptau)}_\perp \mathcal{E} \\
        \mathcal{P}^{(\uptau)} \mathcal{E}  & \tikzmark{a}{$\mathbb{A}$} & \tikzmark{b}{$\mathbb{B}$} \\
        \mathcal{P}^{(\uptau)}_\perp \mathcal{E} & \tikzmark{e}{$\mathbb{E}$} & \tikzmark{d}{$\mathbb{D}$}
    \end{matrix}.
\end{align}
\begin{tikzpicture}[remember picture,overlay]
    \draw[thick,decorate,decoration={calligraphic straight parenthesis}] ($(e1.south west)+(-.7em,0)$) -- ($(a11.north west)+(-.5em,0)$);
    \draw[thick,decorate,decoration={calligraphic straight parenthesis}] ($(b1.north east)+(+.43em,0)$) -- ($(d33.south east)+(+.6em,0)$);
\end{tikzpicture}
\begin{tikzpicture}[remember picture,overlay]
    \draw[thick,decorate,decoration={calligraphic straight parenthesis}] ($(e.south west)+(-.5em,0)$) -- ($(a.north west)+(-.5em,0)$);
    \draw[thick,decorate,decoration={calligraphic straight parenthesis}] ($(b.north east)+(+.5em,0)$) -- ($(d.south east)+(+.5em,0)$);
\end{tikzpicture}

\begin{lem}\label{lem:m_components_cts_ext}
    The components $\mathbb{A}$, $\mathbb{B}$, and $\mathbb{E}$ of $M_\varepsilon^{(\uptau)}(z)$ are all extendable to bounded operators on their respective spaces, where $z \in \rho(A^{(\uptau)}_{\varepsilon,0})$.
\end{lem}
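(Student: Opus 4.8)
Here is how I would approach the proof.

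The plan is to isolate the single block that carries any content. The blocks $\mathbb{A}=\mathcal{P}^{(\uptau)}M_\varepsilon^{(\uptau)}(z)\mathcal{P}^{(\uptau)}$ and $\mathbb{E}=\mathcal{P}_{\perp}^{(\uptau)}M_\varepsilon^{(\uptau)}(z)\mathcal{P}^{(\uptau)}$ both act out of the two-dimensional subspace $\mathcal{P}^{(\uptau)}\mathcal{E}=\mathrm{span}\{\psi^{\text{stiff-int},(\uptau)},\psi^{\text{stiff-ls},(\uptau)}\}$, and since the stiff Steklov eigenfunctions lie in $H^1(\Gamma_\interior)\oplus H^1(\Gamma_\ls)=\mathcal{D}(\Lambda_\varepsilon^{(\uptau)})=\mathcal{D}(M_\varepsilon^{(\uptau)}(z))$, these two blocks are defined on all of $\mathcal{P}^{(\uptau)}\mathcal{E}$ and are bounded automatically, being linear maps on a finite-dimensional space. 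Thus everything reduces to the off-diagonal block $\mathbb{B}=\mathcal{P}^{(\uptau)}M_\varepsilon^{(\uptau)}(z)\mathcal{P}_{\perp}^{(\uptau)}$, which is defined only on the dense subspace $\mathcal{P}_{\perp}^{(\uptau)}\mathcal{E}\cap\mathcal{D}(\Lambda_\varepsilon^{(\uptau)})$ of $\mathcal{P}_{\perp}^{(\uptau)}\mathcal{E}$.

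For $\mathbb{B}$ I would first peel off the bounded part of $M$: by Proposition~\ref{prop:auxops_properties}(5),(7), $M_\varepsilon^{(\uptau)}(z)=\Lambda_\varepsilon^{(\uptau)}+R_\varepsilon^{(\uptau)}(z)$ with $R_\varepsilon^{(\uptau)}(z):=z(\Pi^{(\uptau)})^*(I-z(A_{\varepsilon,0}^{(\uptau)})^{-1})^{-1}\Pi^{(\uptau)}$ bounded on $\mathcal{E}$ for every $z\in\rho(A_{\varepsilon,0}^{(\uptau)})$, so it suffices to show that $\mathcal{P}^{(\uptau)}\Lambda_\varepsilon^{(\uptau)}\mathcal{P}_{\perp}^{(\uptau)}$ extends boundedly. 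Writing $\Lambda_\varepsilon^{(\uptau)}=\varepsilon^{-2}\widetilde{\Lambda}^{\text{stiff-int},(\uptau)}+\Lambda^{\text{soft},(\uptau)}+\varepsilon^{-2}\widetilde{\Lambda}^{\text{stiff-ls},(\uptau)}$ (the two stiff DtN maps null-extended from $L^2(\Gamma_\interior)$, resp.\ $L^2(\Gamma_\ls)$, to $\mathcal{E}$), the crucial observation is that the compression $\mathcal{P}^{(\uptau)}$ annihilates the two $\varepsilon^{-2}$-terms: for $\psi\in\mathcal{P}_{\perp}^{(\uptau)}\mathcal{E}\cap\mathcal{D}(\Lambda_\varepsilon^{(\uptau)})$ one has
\[
\mathcal{P}^{(\uptau)}\widetilde{\Lambda}^{\text{stiff-int},(\uptau)}\psi=0=\mathcal{P}^{(\uptau)}\widetilde{\Lambda}^{\text{stiff-ls},(\uptau)}\psi .
\]
Indeed $\widetilde{\Lambda}^{\text{stiff-int},(\uptau)}\psi\in L^2(\Gamma_\interior)$ is orthogonal to $\psi^{\text{stiff-ls},(\uptau)}\in L^2(\Gamma_\ls)$, which kills the $\mathcal{P}^{(\uptau)}_{\text{stiff-ls}}$-component; and by self-adjointness of $\widetilde{\Lambda}^{\text{stiff-int},(\uptau)}$ together with $\psi^{\text{stiff-int},(\uptau)}$ being its first eigenfunction, $(\widetilde{\Lambda}^{\text{stiff-int},(\uptau)}\psi,\psi^{\text{stiff-int},(\uptau)})_{\mathcal{E}}=\mu^{\text{stiff-int},(\uptau)}(\psi,\psi^{\text{stiff-int},(\uptau)})_{\mathcal{E}}=0$ since $\psi\perp\psi^{\text{stiff-int},(\uptau)}$; the stiff-ls term is identical. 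Along the way I would note that $\mathcal{P}_{\perp}^{(\uptau)}$ maps $\mathcal{D}(\Lambda_\varepsilon^{(\uptau)})$ into itself, so that the domain of $\mathbb{B}$ really is dense in $\mathcal{P}_{\perp}^{(\uptau)}\mathcal{E}$.

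Once the $\varepsilon^{-2}$-terms are gone, $\mathcal{P}^{(\uptau)}\Lambda_\varepsilon^{(\uptau)}\psi=\mathcal{P}^{(\uptau)}\Lambda^{\text{soft},(\uptau)}\psi$ on that dense domain, and the remaining estimate is a one-line duality bound: since $\mathcal{P}^{(\uptau)}$ is the rank-two orthogonal projection onto $\mathrm{span}\{\psi^{\text{stiff-int},(\uptau)},\psi^{\text{stiff-ls},(\uptau)}\}$ and $\Lambda^{\text{soft},(\uptau)}$ is self-adjoint on $\mathcal{E}$ with these eigenfunctions lying in $\mathcal{D}(\Lambda^{\text{soft},(\uptau)})=H^1(\Gamma_\interior)\oplus H^1(\Gamma_\ls)$, for $\bigstar\in\{\text{stiff-int},\text{stiff-ls}\}$ one estimates $|(\Lambda^{\text{soft},(\uptau)}\psi,\psi^{\bigstar,(\uptau)})_{\mathcal{E}}|=|(\psi,\Lambda^{\text{soft},(\uptau)}\psi^{\bigstar,(\uptau)})_{\mathcal{E}}|\le\|\Lambda^{\text{soft},(\uptau)}\psi^{\bigstar,(\uptau)}\|_{\mathcal{E}}\,\|\psi\|_{\mathcal{E}}$, whence $\|\mathcal{P}^{(\uptau)}\Lambda^{\text{soft},(\uptau)}\psi\|_{\mathcal{E}}\le C\|\psi\|_{\mathcal{E}}$ with $C=\|\Lambda^{\text{soft},(\uptau)}\psi^{\text{stiff-int},(\uptau)}\|_{\mathcal{E}}+\|\Lambda^{\text{soft},(\uptau)}\psi^{\text{stiff-ls},(\uptau)}\|_{\mathcal{E}}$. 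Since a densely defined operator bounded on its domain extends uniquely to a bounded operator, this finishes $\mathbb{B}$. The only genuinely substantive point is the vanishing of the $\varepsilon^{-2}$-compressions — recognizing that compressing $M_\varepsilon^{(\uptau)}(z)$ by $\mathcal{P}^{(\uptau)}$ removes exactly the part of $\Lambda_\varepsilon^{(\uptau)}$ that blows up as $\varepsilon\downarrow 0$, leaving only the $\varepsilon$-independent (and merely unbounded, not singular) soft DtN, whose matrix elements against the fixed eigenfunctions are finite by duality; everything else is domain bookkeeping.
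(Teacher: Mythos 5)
Your proof is correct and, for the only block that genuinely requires work ($\mathbb{B}$), it uses exactly the paper's mechanism: peel off the bounded part of $M_\varepsilon^{(\uptau)}(z)$ via Proposition~\ref{prop:auxops_properties}(5), then use self-adjointness of the DtN map to throw $\Lambda$ onto the fixed Steklov eigenfunctions and conclude by Cauchy--Schwarz on the dense domain $\mathcal{P}_\perp^{(\uptau)}\mathcal{D}(\Lambda_\varepsilon^{(\uptau)})$. Your two refinements — dispatching $\mathbb{A}$ and $\mathbb{E}$ by automatic boundedness of linear maps on the finite-dimensional domain $\mathcal{P}^{(\uptau)}\mathcal{E}$ (the eigenfunctions lying in $H^1$), and observing that $\mathcal{P}^{(\uptau)}$ annihilates the stiff $\varepsilon^{-2}$-terms of $\Lambda_\varepsilon^{(\uptau)}$ on $\mathcal{P}_\perp^{(\uptau)}\mathcal{E}$ — are both valid; the latter is not needed for this lemma but is precisely the point the paper exploits later in Proposition~\ref{prop:m_components_unif_bound} to get the bound on $\|\mathbb{B}\|_{op}$ uniform in $\varepsilon$.
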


\begin{proof}
    We will drop $\varepsilon$ and $\tau$. We modify the arguments of \cite[Section 3.2]{eff_behavior}. By Proposition \ref{prop:auxops_properties}(5), it suffices to check the claim for the case $z = 0$.

    We first check this for the operator $\mathbb{B} = \mathcal{P}\Lambda \mathcal{P}_\perp$. We claim that $\mathcal{P}_\perp \mathcal{D}(\Lambda) \subset \mathcal{D}(\Lambda)$. In other words, $\mathcal{D}(\mathbb{B})$ contains the set $\mathcal{D}(\Lambda)$ which is dense in $\mathcal{E}$. This is because if $(\phi,\varphi) \in \mathcal{D}(\Lambda)$, then $\mathcal{P}(\phi,\varphi) = (\mathcal{P}_{\text{stiff-int}}\phi, \mathcal{P}_{\text{stiff-ls}}\varphi) \in \text{span}\{\psi^{\text{stiff-int}}\} \oplus \text{span}\{\psi^{\text{stiff-ls}}\}$, and the eigenvectors are in $H^1$ on their respective spaces. Then, notice that $\mathcal{P}_\perp(\phi,\varphi)$ can be written as a linear combination of elements in $\mathcal{D}(\Lambda)$, as $\mathcal{P}_\perp(\phi,\varphi) = (\phi - \mathcal{P}_{\text{stiff-int}}\phi,\varphi - \mathcal{P}_{\text{stiff-int}}\varphi) = (\phi,\varphi) - \mathcal{P}(\phi,\varphi)$.
    
    Now suppose that $(\phi,\varphi) \in \mathcal{D}(\Lambda) = H^1(\Gamma_\interior) \oplus H^1(\Gamma_\ls) \subset \mathcal{D}(\mathbb{B})$, then its image under $\mathbb{B}$ is
    \begin{align}\label{eqn:nice_projection_exp}
        \mathcal{P}\Lambda \mathcal{P}_\perp (\phi,\varphi) &= 
        \left( \Lambda \mathcal{P}_\perp (\phi,\varphi), (\psi^{\text{stiff-int}}, \psi^{\text{stiff-ls}}) \right)_{\mathcal{E}} (\psi^{\text{stiff-int}}, \psi^{\text{stiff-ls}}) \nonumber \\
        &= \left( \mathcal{P}_\perp (\phi,\varphi), \Lambda (\psi^{\text{stiff-int}}, \psi^{\text{stiff-ls}}) \right)_{\mathcal{E}} (\psi^{\text{stiff-int}}, \psi^{\text{stiff-ls}}),
    \end{align}
    as $\Lambda$ is self-adjoint. Then using the Cauchy-Schwarz inequality, $\| \mathcal{P}_\perp \| \leq 1$, and that $\psi^{\text{stiff-int}}$ and $\psi^{\text{stiff-ls}}$ are normalized eigenfunctions, we deduce that 
    \begin{align}\label{eqn:nice_projection_bound}
        \| \mathcal{P}\Lambda \mathcal{P}_\perp (\phi,\varphi) \|_{\mathcal{E}} \leq
        \sqrt{2} \| \Lambda (\psi^{\text{stiff-int}}, \psi^{\text{stiff-ls}}) \|_{\mathcal{E}} \|(\phi,\varphi) \|_{\mathcal{E}}.
    \end{align}
    Since $H^1(\Gamma_\interior) \oplus H^1(\Gamma_\ls)$ is dense in $\mathcal{E}$, $\mathbb{B}$ admits a continuous extension to an operator $\mathcal{P}_\perp \mathcal{E} \rightarrow \mathcal{P}\mathcal{E}$. The same reasoning holds for $\mathbb{A} = \mathcal{P}\Lambda\mathcal{P}$ and $\mathbb{E}=\mathcal{P}_\perp \Lambda\mathcal{P}$.
\end{proof}

\begin{nts}
    I chose to normalize $\| \psi^{\stin} \|_{L^2(\Gamma_\interior)} = \| \psi^{\stls} \|_{L^2(\Gamma_\ls)} = 1$ instead of \newline $\| (\psi^{\stin}, \psi^{\stls}) \|_{\mathcal{E}} = 1$, because it makes the formulas for $\mathcal{A}_{\eps,\text{hom}}^{(\tau)}$ a little easier.
\end{nts}

\begin{quote}
    \textbf{We will henceforth write $\mathbb{A}$, $\mathbb{B}$, and $\mathbb{E}$ to mean its continuous extension to the full subspaces $\mathcal{P}^{(\uptau)}\mathcal{E}$ and $\mathcal{P}^{(\uptau)}_\perp \mathcal{E}$.}
\end{quote}

Note that $\mathbb{A}$, $\mathbb{B}$, and $\mathbb{E}$ depend on $\varepsilon$, $\uptau$, and $z$, since $M_\varepsilon^{(\uptau)}(z)$ does. In light of this, Lemma \ref{lem:m_components_cts_ext} is insufficient for our purposes: we would like to argue further why for $\mathbb{B}$ and $\mathbb{E}$, the RHS of (\ref{eqn:nice_projection_bound}) can be bounded by $C \|(\phi,\varphi) \|$, where $C$ is a uniform constant.

\begin{prop}\label{prop:m_components_unif_bound}
    The bound on $\|\mathbb{B}\|_{op}$ and $\|\mathbb{E}\|_{op}$ can be chosen independently of $z \in K_\sigma$, $\uptau \in Q'$, and $\varepsilon>0$. The bound on $\| \mathbb{A} \|_{op}$ can be chosen independently of $z \in K_\sigma$ and $\uptau \in Q'$.
\end{prop}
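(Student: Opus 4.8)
The plan is to peel a uniformly bounded remainder off $M^{(\uptau)}_\varepsilon(z)$, observe that the remaining ($\varepsilon$-singular) piece is block-diagonal with respect to the spectral splitting and hence contributes only to $\mathbb{A}$, and then control the leftover ``soft'' compressions by a continuity-and-compactness argument on $\overline{Q'}$. Concretely, first I would use Proposition \ref{prop:auxops_properties}(5) together with $(I - z(A^{(\uptau)}_{\varepsilon,0})^{-1})^{-1} = I + z(A^{(\uptau)}_{\varepsilon,0}-z)^{-1}$ to write $M^{(\uptau)}_\varepsilon(z) = \Lambda^{(\uptau)}_\varepsilon + R^{(\uptau)}_\varepsilon(z)$, where $R^{(\uptau)}_\varepsilon(z) := z(\Pi^{(\uptau)})^*(I + z(A^{(\uptau)}_{\varepsilon,0}-z)^{-1})\Pi^{(\uptau)} \in \mathcal{L}(\mathcal{E})$. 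For $z\in K_\sigma$ the self-adjointness of $A^{(\uptau)}_{\varepsilon,0}$ and $\sigma(A^{(\uptau)}_{\varepsilon,0})\subset(0,\infty)$ (Proposition \ref{prop:decoupling_properties}) give $\|(A^{(\uptau)}_{\varepsilon,0}-z)^{-1}\|\le 1/\operatorname{dist}(z,\mathbb{R})\le 1/\sigma$, and with $\|\Pi^{(\uptau)}\|\le C$ uniform in $\uptau,\varepsilon$ (Proposition \ref{prop:lift_properties}) and $|z|$ bounded on $K$, this yields $\|R^{(\uptau)}_\varepsilon(z)\|\le C'$ uniformly in $z\in K_\sigma$, $\uptau\in Q'$, and small $\varepsilon>0$. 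Consequently each of $\mathcal{P}^{(\uptau)}R^{(\uptau)}_\varepsilon(z)\mathcal{P}^{(\uptau)}_\perp$, $\mathcal{P}^{(\uptau)}_\perp R^{(\uptau)}_\varepsilon(z)\mathcal{P}^{(\uptau)}$, and $\mathcal{P}^{(\uptau)}R^{(\uptau)}_\varepsilon(z)\mathcal{P}^{(\uptau)}$ has operator norm $\le C'$, uniformly.

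Next I would analyse the compressions of $\Lambda^{(\uptau)}_\varepsilon = (\Lambda^{\text{stiff-int},(\uptau)}_\varepsilon\oplus\Lambda^{\text{stiff-ls},(\uptau)}_\varepsilon) + \Lambda^{\text{soft},(\uptau)}$. Since $\mathcal{P}^{(\uptau)}_{\text{stiff-int}}$ and $\mathcal{P}^{(\uptau)}_{\text{stiff-ls}}$ are eigenprojections of the (weighted) stiff DtN maps, that stiff part is block-diagonal for $\mathcal{E} = \mathcal{P}^{(\uptau)}_{\text{stiff-int}}\mathcal{E}\oplus\mathcal{P}^{(\uptau)}_{\text{stiff-ls}}\mathcal{E}\oplus\mathcal{P}^{(\uptau)}_\perp\mathcal{E}$ (the display before Lemma \ref{lem:m_components_cts_ext}), so $\mathcal{P}^{(\uptau)}\Lambda^{(\uptau)}_\varepsilon\mathcal{P}^{(\uptau)}_\perp = \mathcal{P}^{(\uptau)}\Lambda^{\text{soft},(\uptau)}\mathcal{P}^{(\uptau)}_\perp$ and $\mathcal{P}^{(\uptau)}_\perp\Lambda^{(\uptau)}_\varepsilon\mathcal{P}^{(\uptau)} = \mathcal{P}^{(\uptau)}_\perp\Lambda^{\text{soft},(\uptau)}\mathcal{P}^{(\uptau)}$; in particular no $\varepsilon^{-2}$ enters $\mathbb{B}$ or $\mathbb{E}$. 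For $\mathbb{A}$ the stiff diagonal block is $\varepsilon^{-2}\mu^{\text{stiff-int},(\uptau)}\mathcal{P}^{(\uptau)}_{\text{stiff-int}} + \varepsilon^{-2}\mu^{\text{stiff-ls},(\uptau)}\mathcal{P}^{(\uptau)}_{\text{stiff-ls}} = \varepsilon^{-2}\mu^{\text{stiff-ls},(\uptau)}\mathcal{P}^{(\uptau)}_{\text{stiff-ls}}$ by $\mu^{\text{stiff-int},(\uptau)}\equiv 0$ (Proposition \ref{prop:dtn_properties}), whose norm $\varepsilon^{-2}|\mu^{\text{stiff-ls},(\uptau)}| \le \varepsilon^{-2}\sup_{\overline{Q'}}|\mu^{\text{stiff-ls},(\uptau)}|$ is finite by continuity of $\uptau\mapsto\mu^{\text{stiff-ls},(\uptau)}$ and compactness of $\overline{Q'}$, and is independent of $z$ and $\uptau$; but it unavoidably scales like $\varepsilon^{-2}$, which is precisely why no $\varepsilon$-uniform bound is asserted for $\mathbb{A}$.

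Finally, for the leftover soft compressions $\mathcal{P}^{(\uptau)}\Lambda^{\text{soft},(\uptau)}\mathcal{P}^{(\uptau)}_\perp$, $\mathcal{P}^{(\uptau)}_\perp\Lambda^{\text{soft},(\uptau)}\mathcal{P}^{(\uptau)}$, and $\mathcal{P}^{(\uptau)}\Lambda^{\text{soft},(\uptau)}\mathcal{P}^{(\uptau)}$, I would reuse the mechanism of Lemma \ref{lem:m_components_cts_ext}: wherever a rank-two spectral projection $\mathcal{P}^{(\uptau)}$ meets $\Lambda^{\text{soft},(\uptau)}$, the self-adjointness of $\Lambda^{\text{soft},(\uptau)}$ and $\psi^{\text{stiff-int},(\uptau)},\psi^{\text{stiff-ls},(\uptau)}\in H^1 = \mathcal{D}(\Lambda^{\text{soft},(\uptau)})$ let me move $\Lambda^{\text{soft},(\uptau)}$ onto these eigenfunctions, bounding each block (using $\|\mathcal{P}^{(\uptau)}_\perp\|,\|\mathcal{P}^{(\uptau)}\|\le1$) by a fixed constant times $\|\Lambda^{\text{soft},(\uptau)}\psi^{\text{stiff-int},(\uptau)}\|_\mathcal{E} + \|\Lambda^{\text{soft},(\uptau)}\psi^{\text{stiff-ls},(\uptau)}\|_\mathcal{E}$. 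The crux is then $\sup_{\uptau\in\overline{Q'}}\|\Lambda^{\text{soft},(\uptau)}\psi^{\bigstar,(\uptau)}\|_\mathcal{E}<\infty$, which I would obtain by continuity and compactness: $\uptau\mapsto\psi^{\text{stiff-int},(\uptau)}$ is explicit and smooth; $\uptau\mapsto\psi^{\text{stiff-ls},(\uptau)}$ is continuous into $H^1(\Gamma_\ls)$ by analytic perturbation theory for the family $\widetilde{\Lambda}^{\text{stiff-ls},(\uptau)}$ together with simplicity of its first eigenvalue for every $\uptau$ (consistent with Proposition \ref{prop:dtn_properties}); and $\uptau\mapsto\Lambda^{\text{soft},(\uptau)}\in\mathcal{L}(H^1(\partial Q_\soft),L^2(\partial Q_\soft))$ is continuous because $-(\nabla+i\uptau)^2$ has $\uptau$-independent principal part with continuously varying lower-order coefficients (the continuous-dependence-on-parameters input, cf.\ Proposition \ref{prop:cts_dependence}). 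Composing gives a continuous $\mathcal{E}$-valued function on the compact set $\overline{Q'}$, hence bounded, and assembling Steps 1--3 yields the stated bounds. I expect this last uniform-in-$\uptau$ estimate to be the main obstacle: although each of its ingredients is ``soft'', one must carefully establish continuity of the stiff Steklov eigenfunctions over \emph{all} of $\overline{Q'}$ (not merely via the expansion near $\uptau=0$ in Proposition \ref{prop:dtn_properties}) and of the soft Dirichlet-to-Neumann map in $\uptau$; the remainder estimate of Step 1 and the block-diagonal cancellation of the $\varepsilon^{-2}$ terms in Step 2 are routine bookkeeping.
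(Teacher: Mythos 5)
Your overall architecture matches the paper's: split $M^{(\uptau)}_\varepsilon(z)=\Lambda^{(\uptau)}_\varepsilon+(\text{uniformly bounded in }z,\uptau,\varepsilon)$ via Proposition \ref{prop:auxops_properties}(5) to dispose of the $z$-dependence, observe that the stiff DtN maps are block-diagonal for the spectral splitting so that $\mathbb{B}$ and $\mathbb{E}$ see only $\Lambda^{\soft,(\uptau)}$ (hence no $\varepsilon^{-2}$), and reduce everything to bounding $\|\Lambda^{\soft,(\uptau)}\psi^{\bigstar,(\uptau)}\|_{\mathcal{E}}$ uniformly over $\uptau\in\overline{Q'}$ via the finite-rank mechanism of Lemma \ref{lem:m_components_cts_ext}. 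Where you diverge is precisely at that last, decisive step. You propose to prove $\sup_{\uptau}\|\Lambda^{\soft,(\uptau)}\psi^{\bigstar,(\uptau)}\|<\infty$ by composing two continuity claims: continuity of $\uptau\mapsto\psi^{\stls,(\uptau)}$ \emph{in the $H^1(\Gamma_\ls)$ norm} and continuity of $\uptau\mapsto\Lambda^{\soft,(\uptau)}$ as an element of $\mathcal{L}(H^1(\partial Q_\soft),L^2(\partial Q_\soft))$. The paper instead avoids any eigenfunction continuity in $H^1(\Gamma)$: it writes $\Lambda^{\soft,(\uptau)}=\Lambda^{\soft,(0)}+B_\soft$ and $\widetilde{\Lambda}^{\bigstar,(\uptau)}=\widetilde{\Lambda}^{\bigstar,(0)}+B_\bigstar$ with $B_\soft,B_\bigstar$ uniformly bounded (Friedlander), then invokes relative boundedness of $\Lambda^{\soft,(0)}$ with respect to $\widetilde{\Lambda}^{\bigstar,(0)}$ to convert $\|\Lambda^{\soft,(\uptau)}\psi^{\bigstar,(\uptau)}\|$ into $\|\widetilde{\Lambda}^{\bigstar,(\uptau)}\psi^{\bigstar,(\uptau)}\|=|\mu^{\bigstar,(\uptau)}|$, so that only continuity of the \emph{scalar} eigenvalue is needed. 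That trick exploits the fact that $\psi^{\bigstar,(\uptau)}$ is an eigenfunction, and it buys a substantially cheaper continuity input.

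The weak point of your route is the $H^1(\Gamma_\ls)$-continuity of $\uptau\mapsto\psi^{\stls,(\uptau)}$, which you flag but do not discharge. Analytic perturbation theory for $\widetilde{\Lambda}^{\stls,(\uptau)}$ together with simplicity of the first Steklov eigenvalue gives continuity of the eigenprojection, hence of $\psi^{\stls,(\uptau)}$, \emph{in $L^2(\Gamma_\ls)$} — the Hilbert space on which the operator family acts — not in $H^1(\Gamma_\ls)$. To upgrade you would need, e.g., uniform equivalence of the graph norm of $\widetilde{\Lambda}^{\stls,(\uptau)}$ with the $H^1(\Gamma_\ls)$ norm together with continuity of $\uptau\mapsto\widetilde{\Lambda}^{\stls,(\uptau)}\psi^{\stls,(\uptau)}$ in $L^2$; note that even the paper's Proposition \ref{prop:cts_dependence} only controls the interior solution in $H^1(Q_\stls)$ and the trace in $L^2(\Gamma_\ls)$, which is one derivative short of what your argument consumes. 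So either supply that upgrade explicitly, or switch at this step to the paper's relative-boundedness reduction, which closes the argument with only eigenvalue continuity and the compactness of $\overline{Q'}$.
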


\begin{proof}
    Proposition \ref{prop:m_op_sum} permits us to address the ``soft" and ``stiff" parts individually. Proposition \ref{prop:auxops_properties}(5) allows us to split $M(z)$ into an unbounded part $\Lambda$ (which depends on $\varepsilon$ and $\uptau$), and a bounded part $z\Pi^* (I - zA_0^{-1})^{-1} \Pi$ (which depends on $\varepsilon$, $\uptau$, and $z$).
    
    We claim that the bounded part may be bounded uniformly in $\varepsilon$, $\uptau$, and $z$. Indeed, by Lemma \ref{lem:m_op_est}, it suffices to work with the ``soft" case. The claim then follows from Proposition \ref{prop:lift_properties}, and the assumption that $K_\sigma$ is compact. As the unbounded part does not depend on $z$, this proves assertion on the \textbf{independence on $z$}.
    
    Next we discuss the \textbf{independence on $\uptau$}. Without loss of generality, let us consider $\mathbb{B}=\mathcal{P}^{(\uptau)} \Lambda_\varepsilon^{(\uptau)} \mathcal{P}_\perp^{(\uptau)}$. In (\ref{eqn:nice_projection_bound}), we have shown that $\mathbb{B}$ has operator norm not exceeding
    \begin{align}\label{eqn:m_components_unif_bound_ineq}
        &\sqrt{2}\| \Lambda_\varepsilon^{(\uptau)}(\psi^{\text{stiff-int},(\uptau)}, \psi^{\text{stiff-ls},(\uptau)} )\|_{\mathcal{E}} \nonumber\\
        &\leq \sqrt{2} \left(
        \|\Lambda_\varepsilon^{\text{stiff-int},(\uptau)} \psi^{\text{stiff-int},(\uptau)} \|_{\mathcal{E}} +
        \|\Lambda_\varepsilon^{\text{stiff-ls},(\uptau)} \psi^{\text{stiff-ls},(\uptau)} \|_{\mathcal{E}} + 
        \|\Lambda^{\text{soft},(\uptau)} (\psi^{\text{stiff-int},(\uptau)}, \psi^{\text{stiff-ls},(\uptau)} ) \|_{\mathcal{E}} \right)  \nonumber\\
        &= \sqrt{2} \bigg( 
        |\varepsilon^{-2} \mu^{\text{stiff-int},(\uptau)}| \underbrace{\| \psi^{\text{stiff-int},(\uptau)} \|_{\mathcal{E}}}_{= 1} +
        |\varepsilon^{-2} \mu^{\text{stiff-ls},(\uptau)}| \underbrace{\| \psi^{\text{stiff-ls},(\uptau)} \|_{\mathcal{E}}}_{= 1}  \nonumber\\
        &\qquad\qquad +\|\Lambda^{\text{soft},(\uptau)} (\psi^{\text{stiff-int},(\uptau)}, \psi^{\text{stiff-ls},(\uptau)} ) \|_{\mathcal{E}} \bigg).
    \end{align}
    (Actually, the first two terms are absent for $\mathbb{B}$ and $\mathbb{E}$, as we see below, but we would like to include $\mathbb{A}$ for this discussion.) We apply a ``\textbf{(perturbation~+~compactness) argument}" as follows:
    
    By combining the variational characterization of $\mu^{\bigstar, (\uptau)}$ with a perturbative argument, we deduce the continuity of the mapping $\overline{Q'} \ni \uptau \mapsto \mu^{\bigstar, (\uptau)}$. Since $\overline{Q'}$ is compact, $|\varepsilon^{-2} \mu^{\bigstar,(\uptau)}| $ is bounded uniformly in $\uptau$. Next, we turn to the third term in the RHS of (\ref{eqn:m_components_unif_bound_ineq}). By \cite[Lemma 2]{friedlander2002}, we may write,
    \begin{align}\label{eqn:useful_decomp_tau_zero}
        \Lambda^{\text{soft}, (\uptau)} = \Lambda^{\text{soft},(0)} + B_{\text{soft}}, \quad \widetilde{\Lambda}^{\bigstar, (\uptau)} = \widetilde{\Lambda}^{\bigstar, (0)} + B_{\bigstar}, \quad \bigstar \in \{ \text{stiff-int},\text{stiff-ls} \},
    \end{align}
    where $B_\bigstar$, $B_\soft$ are uniformly (in $\uptau$) bounded operators. We then have
    \begin{align}\label{eqn:m_components_unif_bound_calculation}
        \| \Lambda^{\text{soft},(\uptau)} &(\psi^{\text{stiff-int},(\uptau)}, \psi^{\text{stiff-ls},(\uptau)} ) \| \leq \| \Lambda^{\text{soft},(0)} (\psi^{\text{stiff-int},(\uptau)}, \psi^{\text{stiff-ls},(\uptau)} ) \| + \| B_{\text{soft}} \|_{op} \sqrt{2} \nonumber\\
        &\qquad \leq  \| \Lambda^{\text{soft},(0)} (\psi^{\text{stiff-int},(\uptau)}, \psi^{\text{stiff-ls},(\uptau)} ) \| + C \nonumber\\
        &\qquad\leq \alpha_1 \| \widetilde{\Lambda}^{\text{stiff-int},(0)} \psi^{\text{stiff-int},(\uptau)} \| +
        \alpha_2  \| \widetilde{\Lambda}^{\text{stiff-ls},(0)} \psi^{\text{stiff-ls},(\uptau)} \| + \beta \nonumber\\
        &\qquad\leq \alpha_1 ( \| \widetilde{\Lambda}^{\text{stiff-int},(\uptau) \psi^{\text{stiff-int},(\uptau)} \|} + \| B_{\text{stiff-int}} \psi^{\text{stiff-int},(\uptau)} \| ) + \nonumber\\
        &\qquad\qquad\qquad\qquad \alpha_2 ( \| \widetilde{\Lambda}^{\text{stiff-ls},(\uptau) \psi^{\text{stiff-ls},(\uptau)} \|} + \| B_{\text{stiff-ls}} \psi^{\text{stiff-ls},(\uptau)} \| ) + \beta \nonumber\\
        &\qquad\leq C_1 |\mu^{\text{stiff-int},(\uptau)}|+ C_2 |\mu^{\text{stiff-ls},(\uptau)}| + C_3,
    \end{align}
    where the constants are all independent of $\tau$. The second and fourth inequality follows from (\ref{eqn:useful_decomp_tau_zero}). The third inequality follows by noting that the domains $\mathcal{D}(\Lambda^{\text{soft},(\uptau)})$, $\mathcal{D}( \widetilde{\Lambda}^{\bigstar,(\uptau)})$ are independent of $\uptau$, and then using the observation that $\Lambda^{\text{soft},(0)}$ is relatively $\Lambda^{\bigstar,(0)}$-bounded by \cite[Lemma 8.4]{konrad_book}. As noted above, $|\mu^{\bigstar,(\uptau)}|$ is bounded uniformly in $\uptau$.
    
    Finally, for \textbf{independence on $\varepsilon$}, we notice further that
    \begin{align}
        \mathbb{B} = \mathcal{P}^{(\uptau)}( \Lambda^{\text{soft},(\uptau)} + \Lambda_\varepsilon^{\text{stiff-int},(\uptau)} \mathcal{P}_\ls + \Lambda_\varepsilon^{\text{stiff-int},(\uptau)} \mathcal{P}_\interior) \mathcal{P}^{(\uptau)}_\perp = \mathcal{P}^{(\uptau)} \Lambda^{\text{soft},(\uptau)} \mathcal{P}^{(\uptau)}_\perp,
    \end{align}
    since $\mathcal{P}_\perp^{(\uptau)}\mathcal{E}$ is an invariant subspace for the stiff DtN maps. (We have a diagonal block matrix.) A similar statement holds for $\mathbb{E}$ (but not for $\mathbb{A}$).
\end{proof}

\begin{rmk}
    \begin{itemize}
        \item While \cite{friedlander2002} does not study the case of annuluar domains, the arguments of \cite[Lemma 2]{friedlander2002} still apply to give \eqref{eqn:useful_decomp_tau_zero}, since $Q_\soft$ is connected with smooth boundary $\Gamma_\interior \cup \Gamma_\ls$.
        \item A variant of the (perturbation + compactness) argument will be used again in Theorem \ref{thm:m_inverse_est} (for the term ``$\mathbb{S}$"). \qedhere
    \end{itemize}
\end{rmk}

\subsection{Inverting the M-operator}
Corollary \ref{cor:main_model_equiv} suggests that our study of norm-resolvent asymptotics of the main model operator $A_{\varepsilon}^{(\uptau)}$ requires us to estimate $(M_\varepsilon^{(\uptau)}(z))^{-1}$ in the operator norm. The goal of this section is to prove

\begin{thm}\label{thm:m_inverse_est}
    We have the following estimate in the operator norm
    \begin{align}
        \left(M_\varepsilon^{(\uptau)}(z)\right)^{-1} = 
        \begin{pmatrix}
            \mathbb{A}^{-1} & 0\\
            0 & 0
        \end{pmatrix}
        + O(\varepsilon^2),
    \end{align}
    relative to the decomposition $\mathcal{E} = \mathcal{P}^{(\uptau)}\mathcal{E} \oplus \mathcal{P}_{\perp}^{(\uptau)}\mathcal{E}$. $\| \mathbb{A}^{-1} \|_{op}$ is bounded uniformly in $\varepsilon>0$, $\uptau \in Q'$ and $z \in K_\sigma$. This estimate is uniform in $\uptau \in Q'$ and $z \in K_\sigma$.
\end{thm}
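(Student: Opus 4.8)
\emph{Plan.} Since $M_\varepsilon^{(\uptau)}(z)$ depends on $z$ only through its bounded part (Proposition \ref{prop:auxops_properties}(5)), I would first write $M_\varepsilon^{(\uptau)}(z) = \Lambda_\varepsilon^{(\uptau)} + \mathcal{R}_\varepsilon^{(\uptau)}(z)$ with $\mathcal{R}_\varepsilon^{(\uptau)}(z) = z(\Pi^{(\uptau)})^*\Pi^{(\uptau)} + z^2(\Pi^{(\uptau)})^*(A^{(\uptau)}_{\varepsilon,0}-z)^{-1}\Pi^{(\uptau)}$ (using Proposition \ref{prop:auxops_properties}(4)); by Propositions \ref{prop:decoupling_properties} and \ref{prop:lift_properties} together with $\mathrm{dist}(z,\mathbb{R})\geq\sigma$ (so $\|(A^{(\uptau)}_{\varepsilon,0}-z)^{-1}\|\leq\sigma^{-1}$) and compactness of $K_\sigma$, this $\mathcal{R}_\varepsilon^{(\uptau)}(z)$ is bounded uniformly in $\varepsilon$, $\uptau\in Q'$, $z\in K_\sigma$. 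Relative to $\mathcal{E} = \mathcal{P}^{(\uptau)}\mathcal{E}\oplus\mathcal{P}^{(\uptau)}_\perp\mathcal{E}$ I would use the block form $M_\varepsilon^{(\uptau)}(z) = \begin{pmatrix}\mathbb{A}&\mathbb{B}\\\mathbb{E}&\mathbb{D}\end{pmatrix}$ from (\ref{eqn:m_matrix_decomp}) and the block factorisation $M_\varepsilon^{(\uptau)}(z) = \begin{pmatrix}I&\mathbb{B}\mathbb{D}^{-1}\\0&I\end{pmatrix}\begin{pmatrix}\mathbb{A}-\mathbb{B}\mathbb{D}^{-1}\mathbb{E}&0\\0&\mathbb{D}\end{pmatrix}\begin{pmatrix}I&0\\\mathbb{D}^{-1}\mathbb{E}&I\end{pmatrix}$, which reduces the whole theorem to three quantitative facts, all uniform in $\varepsilon$, $\uptau\in Q'$, $z\in K_\sigma$: (i) $\mathbb{D}$ is invertible with $\|\mathbb{D}^{-1}\|=O(\varepsilon^2)$; (ii) $\mathbb{A}$ is invertible with $\|\mathbb{A}^{-1}\|_{op}$ bounded; (iii) $\|\mathbb{B}\|_{op},\|\mathbb{E}\|_{op}$ bounded, which is exactly Proposition \ref{prop:m_components_unif_bound}. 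Indeed, given (i)--(iii), $\|\mathbb{B}\mathbb{D}^{-1}\mathbb{E}\|=O(\varepsilon^2)$, so for $\varepsilon$ small the Schur complement $\mathbb{A}-\mathbb{B}\mathbb{D}^{-1}\mathbb{E}$ is invertible with $(\mathbb{A}-\mathbb{B}\mathbb{D}^{-1}\mathbb{E})^{-1}=\mathbb{A}^{-1}+O(\varepsilon^2)$ (resolvent identity), and multiplying out the inverse of the factorisation shows the $(1,2)$, $(2,1)$ and $(2,2)$ entries are $O(\varepsilon^2)$ while the $(1,1)$ entry is $\mathbb{A}^{-1}+O(\varepsilon^2)$; the uniform bound on $\|\mathbb{A}^{-1}\|$ from (ii) is the one asserted in the theorem.

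For (i): $\mathcal{P}^{(\uptau)}_\perp$ commutes with $\widetilde{\Lambda}^{\text{stiff-int},(\uptau)}+\widetilde{\Lambda}^{\text{stiff-ls},(\uptau)}$ (their eigenspaces are orthogonal across $L^2(\Gamma_\interior)\oplus L^2(\Gamma_\ls)$ and $\mathcal{P}^{(\uptau)}$ is built from the respective top eigenvectors), so with $\widetilde{T}:=(\widetilde{\Lambda}^{\text{stiff-int},(\uptau)}+\widetilde{\Lambda}^{\text{stiff-ls},(\uptau)})|_{\mathcal{P}^{(\uptau)}_\perp\mathcal{E}}$ and Proposition \ref{prop:m_op_sum} (at $z=0$), $\mathbb{D} = \varepsilon^{-2}\widetilde{T} + \mathcal{P}^{(\uptau)}_\perp\Lambda^{\text{soft},(\uptau)}\mathcal{P}^{(\uptau)}_\perp + \mathcal{P}^{(\uptau)}_\perp\mathcal{R}_\varepsilon^{(\uptau)}(z)\mathcal{P}^{(\uptau)}_\perp$. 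Having removed the top Steklov eigenvalue on each stiff component, $\widetilde{T}\leq\max(\mu_2^{\text{stiff-int},(\uptau)},\mu_2^{\text{stiff-ls},(\uptau)})I$; by continuity of these (second) eigenvalues in $\uptau$ (perturbation theory, as in Proposition \ref{prop:m_components_unif_bound}) and compactness of $\overline{Q'}$ there is $\mu_*<0$ with $\widetilde{T}\leq\mu_*I$ uniformly, hence $\|\widetilde{T}^{-1}\|\leq|\mu_*|^{-1}$. The estimate in the proof of Lemma \ref{lem:dtn_selfadjoint} together with (\ref{eqn:useful_decomp_tau_zero}) supplies uniform constants $\alpha,\beta$ with $\|\Lambda^{\text{soft},(\uptau)}\xi\|\leq\alpha\|(\widetilde{\Lambda}^{\text{stiff-int},(\uptau)}+\widetilde{\Lambda}^{\text{stiff-ls},(\uptau)})\xi\|+\beta\|\xi\|$ on $H^1(\Gamma_\interior)\oplus H^1(\Gamma_\ls)$; restricting to $\mathcal{P}^{(\uptau)}_\perp\mathcal{E}$ and composing with $\varepsilon^2\widetilde{T}^{-1}$ gives $\|\mathcal{P}^{(\uptau)}_\perp\Lambda^{\text{soft},(\uptau)}\mathcal{P}^{(\uptau)}_\perp\,\varepsilon^2\widetilde{T}^{-1}\|=O(\varepsilon^2)$, and likewise for the $\mathcal{R}$ term. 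Thus $\mathbb{D}=(I+K_\varepsilon)\varepsilon^{-2}\widetilde{T}$ with $\|K_\varepsilon\|=O(\varepsilon^2)<1$ for $\varepsilon$ small, so $\mathbb{D}^{-1}=\varepsilon^2\widetilde{T}^{-1}(I+K_\varepsilon)^{-1}$ and $\|\mathbb{D}^{-1}\|=O(\varepsilon^2)$, uniformly.

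For (ii): $\mathcal{P}^{(\uptau)}\mathcal{E}$ is two-dimensional, spanned by the normalised, mutually orthogonal $\psi^{\text{stiff-int},(\uptau)}\in L^2(\Gamma_\interior)$ and $\psi^{\text{stiff-ls},(\uptau)}\in L^2(\Gamma_\ls)$, and $\mathrm{Im}\,\mathbb{A}=\mathcal{P}^{(\uptau)}\,\mathrm{Im}\,M_\varepsilon^{(\uptau)}(z)\,\mathcal{P}^{(\uptau)}=\mathrm{Im}(z)\,(S_\varepsilon^{(\uptau)}(\bar z)\mathcal{P}^{(\uptau)})^*(S_\varepsilon^{(\uptau)}(\bar z)\mathcal{P}^{(\uptau)})$ by Proposition \ref{prop:auxops_properties}(7). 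I would then prove $\|S_\varepsilon^{(\uptau)}(\bar z)\xi\|_{\mathcal{H}}\geq c\,\|\xi\|_{\mathcal{E}}$ for $\xi\in\mathcal{P}^{(\uptau)}\mathcal{E}$ with $c>0$ uniform: by (\ref{eqn:s_op_sum}), $S_\varepsilon^{(\uptau)}(\bar z)\psi^{\text{stiff-int},(\uptau)}$ has zero $L^2(Q_{\text{stiff-ls}})$-component and $L^2(Q_{\text{stiff-int}})$-component $S_\varepsilon^{\text{stiff-int},(\uptau)}(\bar z)\psi^{\text{stiff-int},(\uptau)}=\Pi^{\text{stiff-int},(\uptau)}\psi^{\text{stiff-int},(\uptau)}+O(\varepsilon^2)$ (Lemma \ref{lem:soln_op_est}), and symmetrically for $\psi^{\text{stiff-ls},(\uptau)}$; injectivity of the lifts, the continuity of $\uptau\mapsto\psi^{\text{stiff-int},(\uptau)}=|\Gamma_\interior|^{-1/2}e^{-i\uptau\cdot x}$ and $\uptau\mapsto\psi^{\text{stiff-ls},(\uptau)}$ (Proposition \ref{prop:dtn_properties}), and the continuity of $\uptau\mapsto\Pi^{\bigstar,(\uptau)}$ (Proposition \ref{prop:cts_dependence}), together with compactness of $\overline{Q'}$, give a uniform positive lower bound on $\|\Pi^{\bigstar,(\uptau)}\psi^{\bigstar,(\uptau)}\|$, hence on $\|S_\varepsilon^{\bigstar,(\uptau)}(\bar z)\psi^{\bigstar,(\uptau)}\|$ for $\varepsilon$ small. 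Since the two surviving pieces lie in the orthogonal subspaces $L^2(Q_{\text{stiff-int}})$ and $L^2(Q_{\text{stiff-ls}})$ of $\mathcal{H}$, Pythagoras yields the claimed bound on $\|S_\varepsilon^{(\uptau)}(\bar z)\xi\|$. Combined with $|\mathrm{Im}\,z|\geq\sigma$ this gives $\pm\mathrm{Im}\,\mathbb{A}\geq\sigma c^2 I$ on $\mathcal{P}^{(\uptau)}\mathcal{E}$, whence $\|\mathbb{A}\xi\|\geq\sigma c^2\|\xi\|$ (and the same for $\mathbb{A}^*$), so $\mathbb{A}$ is invertible with $\|\mathbb{A}^{-1}\|\leq(\sigma c^2)^{-1}$ uniformly.

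\emph{Main obstacle.} The delicate step is (ii). The block $\mathbb{A}$ carries the entry $\varepsilon^{-2}\mu^{\text{stiff-ls},(\uptau)}$, which is large and negative once $\uptau$ is bounded away from $0$, so $\mathbb{A}^{-1}$ cannot be controlled by viewing $\mathbb{A}$ as a perturbation of a fixed operator. The remedy is to drop any attempt to estimate $\mathbb{A}$ directly and instead exploit the non-degeneracy of its imaginary part — available only because $z\in K_\sigma$ sits off the real axis — which transfers the difficulty to a \emph{uniform} (in $\varepsilon$ and $\uptau$) lower bound for $S_\varepsilon^{(\uptau)}(\bar z)$ on the two-dimensional spectral subspace; this is precisely where Lemma \ref{lem:soln_op_est}, injectivity of the harmonic lifts, and a (perturbation + compactness) argument in the spirit of Proposition \ref{prop:m_components_unif_bound} must be combined carefully.
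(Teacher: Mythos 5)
Your proposal is correct and follows the same overall strategy as the paper: decompose $M_\varepsilon^{(\uptau)}(z)$ in blocks relative to $\mathcal{P}^{(\uptau)}\mathcal{E}\oplus\mathcal{P}_\perp^{(\uptau)}\mathcal{E}$, prove a uniform lower bound for $\mathbb{A}$ via its imaginary part and the non-degeneracy of the lifted eigenvectors (resting on Proposition \ref{prop:cts_dependence}, injectivity of $\Pi^{(\uptau)}$, and compactness of $\overline{Q'}$), prove $\|\mathbb{D}^{-1}\|=O(\varepsilon^2)$ by splitting off the unweighted stiff DtN with its top eigenvalue removed and using relative boundedness of the soft part, and feed in the uniform bounds on $\mathbb{B},\mathbb{E}$ from Proposition \ref{prop:m_components_unif_bound}. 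You deviate in two minor but sensible ways. First, you invert via the UDL factorisation whose Schur complement is taken with respect to $\mathbb{D}$, i.e.\ $\mathbb{A}-\mathbb{B}\mathbb{D}^{-1}\mathbb{E}$, whereas the paper uses the inversion formula built on $\mathbb{S}=\mathbb{D}-\mathbb{E}\mathbb{A}^{-1}\mathbb{B}$; your version has the advantage that $\mathbb{B}\mathbb{D}^{-1}$ and $\mathbb{D}^{-1}\mathbb{E}$ are manifestly bounded once $\mathbb{D}^{-1}$ is controlled, so you avoid the closability bookkeeping for $\overline{\mathbb{S}}$ and $\overline{\mathbb{A}^{-1}\mathbb{B}}$, at the cost of having to pass from $(\mathbb{A}-\mathbb{B}\mathbb{D}^{-1}\mathbb{E})^{-1}$ back to $\mathbb{A}^{-1}$ (which your resolvent-identity step handles). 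Second, for the lower bound on $\mathrm{Im}\,\mathbb{A}$ you project onto the stiff components of $\mathcal{H}$ first and then use Lemma \ref{lem:soln_op_est} to replace $S_\varepsilon^{\bigstar,(\uptau)}(\bar z)$ by $\Pi^{\bigstar,(\uptau)}+O(\varepsilon^2)$; the paper instead proves a uniform lower bound for $(I-\bar z(A_{\varepsilon,0}^{(\uptau)})^{-1})^{-1}$ on all of $\mathcal{H}$ by a spectral-projection argument (its Claim 1) before restricting. Your shortcut is legitimate and slightly more economical, since only the stiff components of $S_\varepsilon^{(\uptau)}(\bar z)\mathcal{P}^{(\uptau)}v$ are needed for the Pythagoras lower bound; both routes ultimately reduce to the same continuity-plus-compactness input.
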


\begin{nts}
    \begin{itemize}
        \item (On the term $\mathbb{A}$) It suffices to show that $\text{Im}(M_\varepsilon^{(\uptau)}(z))$ is bounded below, because $|(Tv,v)| = |(\text{Re}Tv,v)| + |(\text{Im}Tv,v)|$.
        \item (On the term $\mathbb{A}$) Claim 1 does not simplify by throwing out the soft parts. Claim 2 does simplify, and we do throw it out in the second equation.
        \item (On the term $\mathbb{A}$) Yes you can simplify things by invoking continuity in $\tau$. But I refuse to use this until Proposition \ref{prop:cts_dependence} (cts dependence). I also refuse to use \cite{friedlander2002} (Friedlander), with the exception of Lemma \ref{lem:m_components_cts_ext} (as I don't see an alternative). This is because \cite{friedlander2002} is extremely sloppy, and the reader trying to dig through the details will need to close numerous gaps: 
        \begin{enumerate}
            \item \cite[Lemma 2]{friedlander2002} do not specify the domains its operators. If you read between the lines, you find that his DtN is a map from $H^{1/2}$ to $H^{-1/2}$, unlike us. So how do we know that the eigenfunctions and eigenvalues of the DtN in \cite{friedlander2002} is the same for us?
            \item After \cite[Lemma 2]{friedlander2002} ($\tau \mapsto \Lambda^{\stls,(\tau)}$ is differentiable) he remarks that $\tau \mapsto \Lambda^{\stls,(\tau)}$ is in fact smooth as $D_\tau \Lambda^{\stls,(\tau)}$ is smooth. The latter apparently ``obvious" from the \cite[formula (3.4)]{friedlander2002}.
            \item In the proof of \cite[Lemma 7]{friedlander2002}, it was claimed without justification that the smoothness of $\tau \mapsto \Lambda^{\stls,(\tau)}$ imply the the smoothness of $\tau \mapsto \psi_1^{\stls,(\tau)}$ and $\tau \mapsto \mu_1^{\stls,(\tau)}$. (I refuse to accept the reply ``by Kato" until I can work out the full details.) \qedhere
        \end{enumerate}

        \item (On the term $\mathbb{S}$) Reply to Step 2, on a simpler mthd to show that $\mu_2^{(\tau)}$ is bounded away from zero: ``Assume it's not, and arrive at a contradiction, because you know where the $\mu_1^{(\tau)}$ is" ... This ``simpler argument" needs more ingredients, as the band fcts $\mu_1^{(\tau)}$ and $\mu_2^{(\tau)}$ may overlap.

        \item (On the term $\mathbb{S}$) Reply to why am I not using the fact that $|\text{Im}z|>0$: I did use this. But the dependence on $z$ falls only on $\mathbb{D}_{\eps,b}^{(\tau)}(z)$, so it is limited to small portions of the argument. 

        \item (On the term $\mathbb{S}$) On a suggestion to prove a lower bound for $\mathbb{D}_\soft + \mathbb{D}_\st$, to show that the sum is SA: I don't see this, why does positivity implies self-adjointness?
        
        Also, the lower bounds for $\mathbb{D}_\st$ and $\mathbb{D}_\soft$ are discussed in Step 2 and Step 3 respectively.

        \item (On the term $\mathbb{S}$) Why does the soft component seem so complicated as compared to \cite{eff_behavior} (effective behavior): The argument in \cite{eff_behavior} was short, because it did not explain why the constants in \eqref{eqn:relative_bounded} can be chosen independently of $\tau$. Also, how does one deal with $\mathcal{P}^{(\tau)}$?

        \item (On the term $\mathbb{S}$) On Step 4, ``is this the best way to do it?": What does Kirill have in mind? I thought obtaining an expression for $\mathbb{D}^{-1}$ is already quite straightforward, since it helps to get the bound $\| \mathbb{D}^{-1} \| \leq C\eps^2$.

        \item On the remark after the proof, that I am contradicting myself by saying that ``We do not show that $\mathbb{D}_\soft$ is self-adjoint": There is no contradiction. I used $\mathbb{D}_\soft$ is symmetric and $\mathbb{D}_\st$ is self-adjoint to obtain $\mathbb{D}_\soft + \mathbb{D}_\st$ is self-adjoint.

        \item P.S. cty of $\tau \mapsto \Lambda^{(\tau)}u$ is used in Step 3.
    \end{itemize}
\end{nts}

\begin{proof}
    Since $M_\varepsilon^{(\uptau)}(z)$ is closed by Proposition \ref{prop:auxops_properties}(6), \cite[Theorem 2.3.3(i)]{tretter} implies that its inverse can be written in block operator form as:
    \begin{align}\label{eqn:block_op_inversion}
        \left(M_\varepsilon^{(\uptau)}(z)\right)^{-1} = \overline{
        \begin{pmatrix}
            \mathbb{A} & \mathbb{B}\\
            \mathbb{E} & \mathbb{D}
        \end{pmatrix}
        }^{-1} = 
        \begin{pmatrix}
            \mathbb{A}^{-1} + \overline{\mathbb{A}^{-1}\mathbb{B}}(\overline{\mathbb{S}})^{-1} \mathbb{E}\mathbb{A}^{-1}  & -\overline{\mathbb{A}^{-1}\mathbb{B}}(\overline{\mathbb{S}})^{-1}\\
            -(\overline{\mathbb{S}})^{-1} \mathbb{E}\mathbb{A}^{-1} & (\overline{\mathbb{S}})^{-1}
        \end{pmatrix},
    \end{align}
    where the Schur-Frobenius complement $\mathbb{S}$ is given by $\mathbb{S} := \mathbb{D} - \mathbb{E} \mathbb{A}^{-1} \mathbb{B}$. In writing down the above formula, it suffices to check that: (a) $\mathbb{A}$ is boundedly invertible, (b) $\mathbb{B}$ is bounded, (c) $\mathbb{D}$ is closed, and (d) $\mathbb{S}$ is closable, with $\overline{\mathbb{S}}$ being boundedly invertible.

    (b) is immediate, since $\mathbb{B}$ has finite range. In the remainder of the proof, we verify (a), (c), and (d), and provide bounds on $\mathbb{A}^{-1}$ and $(\overline{\mathbb{S}})^{-1}$, with dependence on $\varepsilon$, $\uptau$, and $z$ shown explicitly.
    %
    %
    %
    %
    % A is boundedly invertible.

    \subsubsection*{The term \texorpdfstring{$\mathbb{A} = \mathcal{P}^{(\uptau)} M_\varepsilon^{(\uptau)}(z) \mathcal{P}^{(\uptau)}$}{A}}
    
    As $\mathbb{A}$ is an operator on the finite-dimensional space $\mathcal{P}^{(\uptau)}\mathcal{E}$, $\mathbb{A}$ will be boundedly invertible (uniformly in $\varepsilon$, $\uptau$, and $z$) if we can show that $\mathbb{A}$ is bounded below (uniformly in $\varepsilon$, $\uptau$, and $z$), as this implies injectivity. To do so, recall from Proposition \ref{prop:auxops_properties}(7) that $\text{Im}(M_\varepsilon^{(\uptau)}(z))= \text{Im}(M_\varepsilon^{(\uptau)}(z) - M_\varepsilon^{(\uptau)}(0))$.
    Now define the real part of $M_\varepsilon^{(\uptau)}(z)$ by
    \begin{align}
        \text{Re}(M_\varepsilon^{(\uptau)}(z)) &:= M_\varepsilon^{(\uptau)}(0) + \text{Re}\left( M_\varepsilon^{(\uptau)}(z) - M_\varepsilon^{(\uptau)}(0)\right)
        = \Lambda_\varepsilon^{(\uptau)} + \text{Re}\left(z (\Pi^{(\uptau)})^* (I-z (A_{\varepsilon,0}^{(\uptau)})^{-1})^{-1} \Pi^{(\uptau)} \right).
    \end{align}
    Then, by the symmetry of $\Lambda_\varepsilon^{(\uptau)}$,
    \begin{align}
        (M_\varepsilon^{(\uptau)}(z)v,v)_{\mathcal{E}} = (\text{Re}(M_\varepsilon^{(\uptau)}(z))v,v)_{\mathcal{E}} + i (\text{Im}(M_\varepsilon^{(\uptau)}(z))v,v)_{\mathcal{E}}, \quad v \in \mathcal{D}(M_\varepsilon^{(\uptau)}(z)) = \mathcal{D}(\Lambda_\varepsilon^{(\uptau)}).
    \end{align}
    
    Therefore, it suffices to show that $\text{Im}(M_\varepsilon^{(\uptau)}(z))$ is bounded below on $\mathcal{P}^{(\tau)} \mathcal{E}$. To show this, we recall from Proposition \ref{prop:auxops_properties}(7) that $\text{Im}( M_\varepsilon^{(\uptau)}(z) ) = (\text{Im } z) S_\varepsilon^{(\uptau)}(\overline{z})^* S_\varepsilon^{(\uptau)}(\overline{z})$. Since $z \in K_\sigma$, we may ignore the term $\text{Im }z$. Then, for $v\in \mathcal{E}$, Proposition \ref{prop:auxops_properties}(4) implies that
    \begin{align}\label{eqn:main_estimate}
        (\mathcal{P}^{(\uptau)} S_\eps^{(\uptau)}(\overline{z})^* S_\eps^{(\uptau)}(\overline{z}) \mathcal{P}^{(\uptau)} v,v)_{\mathcal{E}}
        = \| (I - \overline{z} (A_{\eps,0}^{(\uptau)})^{-1} )^{-1} \Pi^{(\uptau)} \mathcal{P}^{(\uptau)} v \|_{\mathcal{H}}^2.
    \end{align}

    \textbf{Claim 1: $(I - \overline{z} (A_{\varepsilon,0}^{(\uptau)})^{-1} )^{-1}$ is bounded below in the operator norm, uniformly in $\varepsilon$, $\uptau$, and $z$.} $A_{\varepsilon,0}^{(\uptau)}$ has compact resolvent (Proposition \ref{prop:decoupling_properties}), and hence admits an eigenfunction expansion
    \begin{align}
        A_{\varepsilon,0}^{(\uptau)} = \sum_{j=1}^{\infty} \left( \cdot, w_{\varepsilon,j}^{(\uptau)} \right)_{\mathcal{H}} \lambda_{\varepsilon,j}^{(\uptau)} w_{\varepsilon,j}^{(\uptau)},
    \end{align}
    where the eigenvalues $\lambda_{\varepsilon,j}^{(\uptau)}$ are real, due to the self-adjointness of $A_{\varepsilon,0}^{(\uptau)}$. We now split the operator in two, in the spectral picture: Since $K_\sigma$ is compact, there is some $R = R(K_\sigma) > 0$ such that $B(0,R)$ contains $K_\sigma$. We then choose ($\varepsilon$ and $\uptau$ dependent) spectral projections $P_1 = P_{\varepsilon,1}^{(\uptau)}$ and $P_2 = P_{\varepsilon,2}^{(\uptau)}$ on $\mathcal{H} = L^2(Q)$ such that $P_1 = I_\mathcal{H} - P_2$ and
    \begin{align}
        P_{\varepsilon,2}^{(\uptau)}\mathcal{H} = \text{span} \left\{ w_{\varepsilon,j}^{(\uptau)} :  j \text{ satisfies } |\lambda_{\varepsilon,j}^{(\uptau)}| > 3R (> R \geq |\overline{z}|) \right\}.
    \end{align}

    Next we observe that for $f\in\mathcal{H}$,
    \begin{alignat}{2}\label{eqn:a0_efct_exp}
        &\| (I - \overline{z} (A_{\varepsilon,0}^{(\uptau)})^{-1} )^{-1} f \|_{\mathcal{H}}^2 = \| (I - \overline{z} (A_{\varepsilon,0}^{(\uptau)})^{-1} )^{-1} (P_1 f + P_2 f) \|_{\mathcal{H}}^2 && \nonumber\\
        &= \| P_1 (I - \overline{z} (A_{\varepsilon,0}^{(\uptau)})^{-1} )^{-1} f + P_2 (I - \overline{z} (A_{\varepsilon,0}^{(\uptau)})^{-1} )^{-1} f) \|_{\mathcal{H}}^2 && \text{ $P_1$, $P_2$ are spectral projections.} \nonumber\\
        &= \| P_1 (I - \overline{z} (A_{\varepsilon,0}^{(\uptau)})^{-1} )^{-1} f \|_{\mathcal{H}}^2 + \|P_2 (I - \overline{z} (A_{\varepsilon,0}^{(\uptau)})^{-1} )^{-1} f) \|_{\mathcal{H}}^2 && \text{ Pythagoras theorem.} \nonumber\\
        &= \| (I - \overline{z} (A_{\varepsilon,0}^{(\uptau)})^{-1} )^{-1} P_1 f \|_{\mathcal{H}}^2 + \| (I - \overline{z} (A_{\varepsilon,0}^{(\uptau)})^{-1} )^{-1} P_2 f) \|_{\mathcal{H}}^2.
    \end{alignat}
    
    If we denote by $J = J_{\varepsilon}^{(\uptau)} \in \mathbb{N}$ the smallest integer that satisfies the condition of $P_2= P_{\varepsilon,2}^{(\uptau)}$, then we may write $P_1 f$ (and similarly for $P_2 f$) as
    \begin{align}
        P_{\varepsilon,1}^{(\uptau)} f 
        = \sum_{j=1}^{J_{\varepsilon}^{(\uptau)} - 1} \left( f, w_{\varepsilon,j}^{(\uptau)} \right)_{\mathcal{H}} w_{\varepsilon,j}^{(\uptau)} 
        = \sum_{j=1}^{J_{\varepsilon}^{(\uptau)} - 1} c_{\varepsilon,j}^{(\uptau)} w_{\varepsilon,j}^{(\uptau)}.
    \end{align}

    With this notation, the first term on the RHS of (\ref{eqn:a0_efct_exp}) can be estimated below as:
    \begin{alignat}{2}
        \| (I - \overline{z} &(A_{\varepsilon,0}^{(\uptau)})^{-1} )^{-1} P_{\varepsilon,1}^{(\uptau)} f \|_{\mathcal{H}}^2
        = \| A_{\varepsilon,0}^{(\uptau)} (A_{\varepsilon,0}^{(\uptau)} - \overline{z})^{-1} P_{\varepsilon,1}^{(\uptau)} f\|_{\mathcal{H}}^2 \nonumber\\
        &\geq c_1 \| (A_{\varepsilon,0}^{(\uptau)} - \overline{z})^{-1} P_{\varepsilon,1}^{(\uptau)} f\|_{\mathcal{H}}^2 &&\text{ By Proposition \ref{prop:decoupling_properties}.} \nonumber\\
        &= c_1 \left\| \sum_{j=1}^{J_{\varepsilon}^{(\uptau)} - 1} \frac{c_{\varepsilon,j}^{(\uptau)}} {\lambda_{\varepsilon,j}^{(\uptau)} - \overline{z} } w_{\varepsilon,j}^{(\uptau)} \right\|_{\mathcal{H}}^2 &&\text{ By functional calculus.} \nonumber\\
        &= c_1 \sum_{j=1}^{J_\varepsilon^{(\uptau)}-1} \frac{|c_{\varepsilon,j}^{(\uptau)}|^2}{|\lambda_{\varepsilon,j}^{(\uptau)} - \overline{z}|^2}  &&\text{ By Parseval's identity.} \nonumber\\
        &\geq c_1 \left( \min_{1\leq j \leq J_\varepsilon^{(\uptau)}-1} \left\{ \frac{1}{ |\lambda_{\varepsilon,j}^{(\uptau)} - \overline{z}|^2}\right\} \right) \sum_{j=1}^{J_{\varepsilon}^{(\uptau)} - 1} |c_{\varepsilon,j}^{(\uptau)}|^2 \nonumber\\
        &= c_1  c_2  \|P_{\varepsilon,1}^{(\uptau)} f \|_{\mathcal{H}}^2 &&\text{ By Parseval's identity,}
    \end{alignat}
    where $c_1>0$ and $c_2:= 1/(2R)^2$ are constants independent of $\varepsilon$, $\uptau$, and $z$. Observe that although $P_1$ depends on $\varepsilon$ and $\uptau$, the constant $c_2$ does not -- $c_2$ only depends on $K_\sigma$ through $B(0,R)$.
    
    For the term $(I - \overline{z} (A_{\varepsilon,0}^{(\uptau)})^{-1} )^{-1} P_{\varepsilon,2}^{(\uptau)} f$, we observe that since $P_{\varepsilon,2}^{(\uptau)}$ is a spectral projection for $A_{\varepsilon,0}^{(\uptau)}$, the second term equals $(I - \overline{z} (A_{\varepsilon,0}^{(\uptau)} P_{\varepsilon,2}^{(\uptau)})^{-1} )^{-1} f$. Next, recall that $P_2$ is chosen such that
    \begin{align}
        \left\| \overline{z} (A_{\varepsilon,0}^{(\uptau)} P_{\varepsilon,2}^{(\uptau)})^{-1} \right\|_{\mathcal{H}\rightarrow\mathcal{H}}
        &= |\overline{z}| \left\| (A_{\varepsilon,0}^{(\uptau)} P_{\varepsilon,2}^{(\uptau)})^{-1} \right\|_{\mathcal{H}\rightarrow\mathcal{H}}
        = |\overline{z}| \frac{1}{\text{dist}\left(0,\lambda_{\varepsilon, J_{\varepsilon}^{(\uptau)}}^{(\uptau)} \right)} \nonumber \\
        & = |\overline{z}| \frac{1}{\left|\lambda_{\varepsilon, J_{\varepsilon}^{(\uptau)}}^{(\uptau)} \right|} 
        < |\overline{z}| \frac{1}{3R} < \frac{1}{3}.
    \end{align}
    (Once again, $P_2$ depends on $\varepsilon$ and $\uptau$, while this estimate does not.) As a result, we may apply the Neumann series expansion:
    \begin{align}
        (I - \overline{z} (A_{\varepsilon,0}^{(\uptau)} P_{\varepsilon,2}^{(\uptau)})^{-1} )^{-1} = 
        I + \overline{z} (A_{\varepsilon,0}^{(\uptau)} P_{\varepsilon,2}^{(\uptau)})^{-1} + \cdots
    \end{align}
    
    The terms after $I$ have norm not exceeding $\sum_{n=1}^{\infty} (1/3)^n = 1/2$. Therefore the reverse triangle inequality implies that $(I - \overline{z} (A_{\varepsilon,0}^{(\uptau)})^{-1} )^{-1} P_{\varepsilon,2}^{(\uptau)}$ is bounded below (independently of $\varepsilon$, $\uptau$, and $z$). This proves Claim 1.

    Applying Claim 1 to (\ref{eqn:main_estimate}), we now have some $\tilde{c}>0$ independent of $\varepsilon$, $\uptau$, and $z$, such that
    \begin{align*}
        \| (I - \overline{z} (A_{\varepsilon,0}^{(\uptau)})^{-1} )^{-1} \Pi^{(\uptau)} \mathcal{P}^{(\uptau)} v \|_{\mathcal{H}}^2
        \geq \tilde{c} \| \Pi^{(\uptau)} \mathcal{P}^{(\uptau)} v \|_\mathcal{H}^2.
    \end{align*}
    
    \textbf{Claim 2: There is some $c>0$, independent of $\varepsilon$, $\uptau$, and $z$, such that 
    \begin{align}
        \| \Pi^{(\uptau)} \mathcal{P}^{(\uptau)} v \|_{\mathcal{H}}^2 \geq c \| \mathcal{P}^{(\uptau)} v \|_{\mathcal{E}}^2, \quad \text{where } v \in \mathcal{E}.
    \end{align}}
    
    Write $v = \phi + \varphi$, where $\phi \in L^2(\Gamma_\interior)$ and $\varphi \in L^2(\Gamma_\ls)$. Since $\mathcal{P}^{(\uptau)}$ is the projection onto $\text{span}\{ \psi^{\text{stiff-int},(\uptau)} \} \oplus \text{span}\{ \psi^{\text{stiff-ls},(\uptau)} \}$, we can write $\mathcal{P}^{(\uptau)} v = c_1 \psi^{\text{stiff-int},(\uptau)} + c_2 \psi^{\text{stiff-ls},(\uptau)}$ where $c_1,c_2 \in \mathbb{C}$. Now suppose that $v \neq 0$, then we must have either $c_1 \neq 0$ or $c_2 \neq 0$. Then
    \begin{align}
        \begin{cases}
            \left\| \Pi^{(\uptau)} (\mathcal{P}^{(\uptau)} v) \right\|_{\mathcal{H}}^2  \geq \left\| \Pi^{\text{stiff-int},(\uptau)} c_1 \psi^{\text{stiff-int},(\uptau)}  \right\|_{L^2(Q_{\text{stiff-int}})}^2 \quad &\text{if $c_1\neq 0$,} \\
            \left\| \Pi^{(\uptau)} (\mathcal{P}^{(\uptau)} v) \right\|_{\mathcal{H}}^2  \geq \left\| \Pi^{\text{stiff-ls},(\uptau)} c_2 \psi^{\text{stiff-ls},(\uptau)}  \right\|_{L^2(Q_{\text{stiff-ls}})}^2 &\text{if $c_2\neq 0$.}
        \end{cases}
    \end{align}
    The inequality follows by the Pythagoras theorem, as we recall that $\Pi^{(\uptau)}(\phi + \varphi) = \Pi^{\text{stiff-int}, (\uptau)}\phi + \Pi^{\text{soft}, (\uptau)}(\phi + \varphi) + \Pi^{\text{stiff-ls}, (\uptau)}\varphi$, and the lifts into the individual components $\Pi^{\text{stiff-int}, (\uptau)}\phi$, $\Pi^{\text{soft}, (\uptau)}(\phi + \varphi)$, and $\Pi^{\text{stiff-ls}, (\uptau)}\varphi$ are orthogonal. Therefore, by the linearity of $\Pi$ and the homogeneity of norms, it suffices to find some $c>0$ independent of $\varepsilon$, $\uptau$, and $z$ such that
    \begin{align*}
        \| \Pi^{\bigstar,(\uptau)} \psi^{\bigstar,(\uptau)} \|_{\mathcal{H}} \geq c \| \psi^{\bigstar,(\uptau)} \|_{\mathcal{E}} \stackrel{\psi \text{ is normalized}}{=} c, \quad \bigstar \in \{ \text{stiff-int}, \text{stiff-ls} \}.
    \end{align*}

    The proof of this inequality follows from two facts (the closure of $Q'$ is of importance here):
    \begin{enumerate}[label=(\roman*)]
        \item For each $\uptau \in \overline{Q'}$, $\| \Pi^{\bigstar,(\uptau)} \psi^{\bigstar,(\uptau)} \|$ is strictly positive, as or else this means the $\uptau-$harmonic lift of a non-zero function $\psi$ is zero, which contradicts the injectivity of $\Pi^{(\uptau)}$.
        
        \item The mapping $\overline{Q'} \ni \uptau \mapsto \| \Pi^{\bigstar,(\uptau)} \psi^{\bigstar,(\uptau)} \| \in \mathbb{R}_{\geq 0}$ is continuous.
    \end{enumerate}
    
    It remains to prove (ii) and hence complete Claim 2. The proof of the fact will be postponed to Proposition \ref{prop:cts_dependence}, and this concludes the discussion on $\mathbb{A}$.

    \subsubsection*{The term \texorpdfstring{$\mathbb{S} = \mathbb{D}-\mathbb{E} \mathbb{A}^{-1}\mathbb{B}$}{S}}
    
    We will proceed in four steps. \textbf{Step 1.} First, we introduce the notation
    \begin{align}\label{eqn:d_decompose}
        \mathbb{D}_\varepsilon^{(\uptau)}(z) = \mathbb{D}_{\text{soft}}^{(\uptau)} + \mathbb{D}_{\varepsilon, \text{stiff}}^{(\uptau)} + \mathbb{D}_{\varepsilon ,b}^{(\uptau)}(z)
    \end{align}
    where $\mathbb{D}_{\text{soft}} = \mathcal{P}_\perp^{(\uptau)} \Lambda^{\text{soft},(\uptau)} \mathcal{P}_\perp^{(\uptau)}$, $\mathbb{D}_{\text{stiff}} = \mathcal{P}_\perp^{(\uptau)} ( \Lambda_\varepsilon^{\text{stiff-int},(\uptau)} \oplus \Lambda_\varepsilon^{\text{stiff-ls},(\uptau)} ) \mathcal{P}_\perp^{(\uptau)}$, and $\mathbb{D}_b$ as what remains of $\mathbb{D}_\varepsilon^{(\uptau)}(z) = \mathcal{P}_\perp^{(\uptau)}M_\varepsilon^{(\uptau)}(z) \mathcal{P}_\perp^{(\uptau)}$. In this way, $\mathbb{D}_b$ is a bounded operator on $\mathcal{P}_\perp^{(\uptau)}\mathcal{E}$, with operator norm bounded uniformly in $\varepsilon$, $\uptau$, and $z$, by Proposition \ref{prop:auxops_properties}(5). 
    
    Furthermore, we claim that $\mathbb{D}_{\text{soft}} + \mathbb{D}_{\text{stiff}}$ is self-adjoint on $\mathcal{P}_\perp^{(\uptau)}\mathcal{E}$ with domain $\mathcal{D}(\mathbb{D}_\text{soft} + \mathbb{D}_\text{stiff}) = \mathcal{D}(\mathbb{D}_\text{soft}) = \mathcal{D}(\mathbb{D}_\text{stiff})$: The claim on the domain follows simply by construction. $\mathbb{D}_{\text{stiff}}$ is self-adjoint since $\mathcal{P}^{(\uptau)}$ is a spectral projection. $\mathbb{D}_\text{soft}$ is symmetric, and is relatively $\mathbb{D}_{\text{stiff}}$-bounded with relative bound strictly less than one, as pointed out in the proof of Lemma \ref{lem:dtn_selfadjoint}. Therefore the claim follows by the Kato-Rellich theorem \cite[Theorem~8.5]{konrad_book}.
    
    Being a sum of a closed $\mathbb{D}_\text{soft} + \mathbb{D}_\text{stiff}$ and a bounded $\mathbb{D}_b$ operator, it follows that $\mathbb{D}$ is closed, and so $\mathbb{S}$ is closed by the boundedness of $\mathbb{E}\mathbb{A}^{-1}\mathbb{B}$. We therefore drop the closures for $\mathbb{S}$ in (\ref{eqn:block_op_inversion}).

    \textbf{Step 2.} Next we discuss estimates for $\mathbb{D}$. As mentioned in Step 1, $\mathbb{D}_{\varepsilon ,b}^{(\uptau)}(z)$ is uniformly bounded in $\eps$, $\tau$ and $z$. As for $\mathbb{D}_{\varepsilon, \text{stiff}}^{(\uptau)}$ we claim that $\mathbb{D}_{\varepsilon, \text{stiff}}^{(\uptau)}$ is invertible with the following estimate
    \begin{align}\label{eqn:d_inverse_est}
        \left\| \left( \mathbb{D}_{\varepsilon, \text{stiff}}^{(\uptau)} \right)^{-1} \right\|_{\mathcal{P}_\perp^{(\uptau)} \mathcal{E} \rightarrow \mathcal{P}_\perp^{(\uptau)} \mathcal{E} } \leq C \varepsilon^2, \quad \text{$C>0$ is independent of $\varepsilon,\uptau$ and $z$.}
    \end{align}

    The independence on $z$ is immediate. Invertibility follows from Proposition \ref{prop:dtn_properties} and the fact that we have removed the lowest eigenspace using $\mathcal{P}^{(\uptau)}_\perp$. Since $\mathcal{P}^{(\uptau)}$ is the projection with respect to the unweighted DtN operator, we can separate out $\varepsilon$ and obtain the bound $C\varepsilon^2$, with $C$ independent of $\varepsilon$. It remains to justify the independence of $C$ on $\uptau$. For this, it suffices to show that the second eigenvalues $\mu_2^{\stin, (\uptau)}$ and $\mu_2^{\stls, (\uptau)}$ can be bounded away from zero, uniformly in $\uptau$. This is certainly true for each $\uptau$ (by Proposition \ref{prop:dtn_properties}), and can be extended to a neighbourhood $B(\uptau,\delta)$ of $\uptau$, as the mapping $\uptau \mapsto \mu_2^{(\uptau)}$ is continuous. 
    
    Now consider a dense set $\{ \uptau_n \} \subset \overline{Q'}$. With $B(\uptau_n,\delta_n)$ obtained as above, $\{ B(\uptau_n,\delta_n) \}_n$ is now an open cover of $\overline{Q'}$. By compactness of $\overline{Q'}$, we may extract a finite subcover $\{ B(\uptau_{n_k},\delta_{n_k}) \}_{k=1}^{K}$. Since $\mu_2^{(\uptau)}$ is bounded away from zero on each $B_k \equiv B(\uptau_{n_k},\delta_{n_k})$, we deduce that $\mu_2^{(\uptau)}$ is bounded \textit{above} by $\max_k \{ \mu_2^{(\uptau)} : \uptau \in B_k \}$, the latter being strictly negative (note our convention of the DtN map), and independent of $\uptau$. This concludes the justification of (\ref{eqn:d_inverse_est}).
    
    \textbf{Step 3.} Now consider the unweighted stiff DtN operator, denoted by $\widetilde{\mathbb{D}}^{(\uptau)}_\text{stiff} = \varepsilon^2 \mathbb{D}_{\varepsilon, \text{stiff}}^{(\uptau)}$. We claim that there exists constants $\alpha,\beta>0$, independent of $\uptau$ such that
    \begin{align}\label{eqn:relative_bounded}
        \| \mathbb{D}_{\text{soft}}^{(\uptau)} u \| \leq \alpha \| \widetilde{\mathbb{D}}^{(\uptau)}_\text{stiff} u \| + \beta \| u \|, \quad \text{ $\forall u\in \mathcal{D}(\mathbb{D}_{\text{soft}}^{(\uptau)}) = \mathcal{D}(\widetilde{\mathbb{D}}^{(\uptau)}_\text{stiff})$.}
    \end{align}
    That is, $\mathbb{D}_\text{soft}^{(\uptau)}$ is relatively $\widetilde{\mathbb{D}}^{(\uptau)}_\text{stiff}-$bounded, with uniform constants $\alpha$, $\beta$. To prove this claim, we first show this without the projections $\mathcal{P}_\perp^{(\uptau)}$, that is, for $\Lambda^{\text{soft},(\uptau)}$ and $\widetilde{\Lambda}^{\text{stiff-int},(\uptau)} \oplus \widetilde{\Lambda}^{\text{stiff-ls},(\uptau)}$. This is done by using \cite[Lemma 8.4]{konrad_book} to show relative boundedness for each $\uptau$, then applying perturbation theory to the soft and stiff DtN maps, then using the compactness of $\overline{Q'}$, similarly to what was done for (\ref{eqn:d_inverse_est}). We omit the details for brevity.
    
    We then proceed to add back the projections. Pre-composing with $\mathcal{P}^{(\uptau)}_\perp$ is trivial. Since $\mathcal{P}^{(\uptau)}$ is a spectral projection for the stiff DtN map, post-composing with $\mathcal{P}^{(\uptau)}_\perp$ is immediate, giving us the RHS of the inequality. As for $\Lambda^{\text{soft},(\uptau)}$, we write 
    \begin{align}
        \Lambda^{\text{soft},(\uptau)}\mathcal{P}^{(\uptau)}_\perp = \mathcal{P}^{(\uptau)} \Lambda^{\text{soft},(\uptau)}\mathcal{P}^{(\uptau)}_\perp + \mathcal{P}^{(\uptau)}_\perp \Lambda^{\text{soft},(\uptau)}\mathcal{P}^{(\uptau)}_\perp = \mathcal{P}^{(\uptau)} \Lambda^{\text{soft},(\uptau)}\mathcal{P}^{(\uptau)}_\perp + \mathbb{D}_\text{soft}^{(\uptau)}.
    \end{align}
    The first term is bounded uniformly in $\uptau$ thanks to Proposition \ref{prop:m_components_unif_bound}. Hence it can be absorbed into the RHS by picking a bigger $\beta$. This shows the claim for (\ref{eqn:relative_bounded}).

    \textbf{Step 4.} We omit the short argument combining (\ref{eqn:d_inverse_est}) and (\ref{eqn:relative_bounded}) to arrive at
    \begin{align}
        \|\mathbb{D}_\text{soft}^{(\uptau)} ( \mathbb{D}_{\varepsilon, \text{stiff}}^{(\uptau)} )^{-1} \|_{\mathcal{P}_\perp^{(\uptau)} \mathcal{E} \rightarrow \mathcal{P}_\perp^{(\uptau)} \mathcal{E} } \leq C \varepsilon^2, \quad \text{ where $C>0$ is independent of $\varepsilon$, $\uptau$ and $z$.}
    \end{align}
    (See \cite[Section 3.2]{eff_behavior} for details.) As a result, we have found the inverse for $\mathbb{D}$, namely
    \begin{align}
        \mathbb{D}^{-1} = \mathbb{D}_{\text{stiff}}^{-1} \left( I_{\mathcal{P}_\perp^{(\uptau)}\mathcal{E}} + \mathbb{D}_{\text{soft}} \mathbb{D}_{\text{stiff}}^{-1} + \mathbb{D}_b \mathbb{D}_{\text{stiff}}^{-1} \right)^{-1}.
    \end{align}

    Furthermore, thanks to our estimates on $\mathbb{D}_{\text{soft}} \mathbb{D}_{\text{stiff}}^{-1}$ and $\mathbb{D}_{\text{stiff}}^{-1}$ obtained above, we know that the terms after $I$ are of order $O(\varepsilon^2)$. Therefore the Neumann series expansion applies, giving the overall estimate of $\|\mathbb{D}^{-1} \| \leq C \varepsilon^2$. Meanwhile, Proposition \ref{prop:m_components_unif_bound} implies that $\| \mathbb{E} \mathbb{A}^{-1} \mathbb{B} \| \leq C$, where $C$ is an independent constant. Therefore, the formula $\mathbb{S}^{-1} = (I - \mathbb{D}^{-1} \mathbb{E} \mathbb{A}^{-1} \mathbb{B})^{-1} \mathbb{D}^{-1}$ implies that $\| \mathbb{S}^{-1} \| \leq C \varepsilon^2$. That is, $\mathbb{S}$ is boundedly invertible with the mentioned bound, where $C>0$ is independent of $\varepsilon$, $\uptau$, and $z$. This concludes the discussion on the term $\mathbb{S}$.
    
    We have shown that $\| \mathbb{A}^{-1} \| \leq C$ and $\| \mathbb{S}^{-1} \| \leq C \eps^2$. Together with $\| \mathbb{B} \| \leq C$, $\| \mathbb{E} \| \leq C$ (Proposition \ref{prop:m_components_unif_bound}), and (\ref{eqn:block_op_inversion}), this concludes the proof of the theorem.
\end{proof}

%\subsubsection{Continuous dependence in \texorpdfstring{$\tau$}{tau}}

We conclude the proof of Theorem \ref{thm:m_inverse_est} with the following result:

\begin{prop}[Continuous dependence]\label{prop:cts_dependence}
    The mapping $f:\overline{Q'} \rightarrow \mathbb{R}_{\geq 0}$ given by
    \begin{align*}
        f(\uptau) = \| \Pi^{\bigstar, (\uptau)} \psi^{\bigstar, (\uptau)} \|_{L^2(Q)}
    \end{align*}
    is continuous, where $(\bullet,\bigstar) \in \{ \text{(int, stiff-int), (ls, stiff-ls)} \}$. 
\end{prop}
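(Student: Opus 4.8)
The plan is to treat the two cases of the pair $(\bullet,\bigstar)$ separately, since one is trivial and the other carries all the content. For $(\bullet,\bigstar)=(\interior,\stin)$ I observe that, by Proposition~\ref{prop:dtn_properties}, $\psi^{\stin,(\uptau)}(x)=|\Gamma_\interior|^{-1/2}e^{-i\uptau\cdot x}$ on $\Gamma_\interior$, and since $(\nabla+i\uptau)e^{-i\uptau\cdot x}\equiv 0$ the function $x\mapsto|\Gamma_\interior|^{-1/2}e^{-i\uptau\cdot x}$ is $\uptau$-harmonic on all of $Q_\stin$ with the correct Dirichlet data; by uniqueness of the $\uptau$-harmonic lift (Proposition~\ref{prop:decoupling_properties}) it equals $\Pi^{\stin,(\uptau)}\psi^{\stin,(\uptau)}$, so $f(\uptau)=|\Gamma_\interior|^{-1/2}|Q_\stin|^{1/2}$ is constant, in particular continuous. (This shortcut fails for $\stls$ precisely because of the periodicity condition on $\partial Q$: $e^{-i\uptau\cdot x}$ is $\mathbb{Z}^d$-periodic only for $\uptau=0$.) So the remaining work is the case $(\bullet,\bigstar)=(\ls,\stls)$.

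For that case I would first reduce the statement to two norm-continuity claims. Writing $T^{(\uptau)}:=(\Pi^{\stls,(\uptau)})^*\Pi^{\stls,(\uptau)}\in\mathcal{L}(L^2(\Gamma_\ls))$ and letting $\mathcal{P}^{(\uptau)}$ be the rank-one orthogonal projection of $L^2(\Gamma_\ls)$ onto $\mathrm{span}\{\psi^{\stls,(\uptau)}\}$ (the spectral projection of $\widetilde{\Lambda}^{\stls,(\uptau)}$ at its simple top Steklov eigenvalue $\mu^{\stls,(\uptau)}$), one has $f(\uptau)^2=(T^{(\uptau)}\psi^{\stls,(\uptau)},\psi^{\stls,(\uptau)})_{\mathcal{E}}=\operatorname{tr}\!\big(T^{(\uptau)}\mathcal{P}^{(\uptau)}\big)$, which is independent of the phase of $\psi^{\stls,(\uptau)}$. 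It then suffices to prove: (a) $\uptau\mapsto\Pi^{\stls,(\uptau)}$ is continuous in the operator norm $L^2(\Gamma_\ls)\to L^2(Q_\stls)$, whence $\uptau\mapsto T^{(\uptau)}$ is norm continuous (using the uniform bound $\|\Pi^{\stls,(\uptau)}\|\le C$ of Proposition~\ref{prop:lift_properties}); and (b) $\uptau\mapsto\mathcal{P}^{(\uptau)}$ is norm continuous on $L^2(\Gamma_\ls)$. Continuity of $f^2$ (hence of $f$) then follows from $|\operatorname{tr}(T^{(\uptau)}\mathcal{P}^{(\uptau)})-\operatorname{tr}(T^{(\uptau_0)}\mathcal{P}^{(\uptau_0)})|\le\|T^{(\uptau)}-T^{(\uptau_0)}\|_{op}+2\|T^{(\uptau_0)}\|_{op}\|\mathcal{P}^{(\uptau)}-\mathcal{P}^{(\uptau_0)}\|_{op}$, since $\mathcal{P}^{(\uptau)}-\mathcal{P}^{(\uptau_0)}$ has rank at most two.

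For (a) I would work with the \emph{unweighted} triple $(\widetilde{A}^{\stls,(\uptau)}_0,\widetilde{\Lambda}^{\stls,(\uptau)},\Pi^{\stls,(\uptau)})$ and use Proposition~\ref{prop:auxops_properties}(3): $(\Pi^{\stls,(\uptau)})^*=\widetilde{\Gamma}^{\stls,(\uptau)}_1(\widetilde{A}^{\stls,(\uptau)}_0)^{-1}$, i.e.\ $(\Pi^{\stls,(\uptau)})^*f=-\partial_{n_\stls}\big[(\widetilde{A}^{\stls,(\uptau)}_0)^{-1}f\big]\big|_{\Gamma_\ls}$ (the $i(\uptau\cdot n)$-term drops because $(\widetilde{A}^{\stls,(\uptau)}_0)^{-1}f$ vanishes on $\Gamma_\ls$). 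Setting $w_\uptau:=(\widetilde{A}^{\stls,(\uptau)}_0)^{-1}f$, the difference satisfies $-(\nabla+i\uptau)^2(w_\uptau-w_{\uptau_0})=\big[(\nabla+i\uptau)^2-(\nabla+i\uptau_0)^2\big]w_{\uptau_0}$, and the operator on the right is first order with coefficients $O(|\uptau-\uptau_0|)$, so its $L^2$-norm is $\le C|\uptau-\uptau_0|\,\|w_{\uptau_0}\|_{H^1}\le C|\uptau-\uptau_0|\,\|f\|_{L^2}$ (the $H^1$-bound being uniform in $\uptau_0\in\overline{Q'}$ by Proposition~\ref{prop:decoupling_properties} and a Gårding estimate with $\uptau$-independent constants). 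Since $\Gamma_\ls$ is smooth and $\partial Q$ contributes only interior (flat-torus) regularity, elliptic regularity gives $\|w_\uptau-w_{\uptau_0}\|_{H^2}\le C|\uptau-\uptau_0|\,\|f\|_{L^2}$ uniformly over the compact set $\overline{Q'}$; composing with the fixed bounded Neumann trace $H^2(Q_\stls)\to H^{1/2}(\Gamma_\ls)\hookrightarrow L^2(\Gamma_\ls)$ yields $\|(\Pi^{\stls,(\uptau)})^*-(\Pi^{\stls,(\uptau_0)})^*\|_{op}\le C|\uptau-\uptau_0|$, and (a) follows by taking adjoints. For (b) I would use that $\mu^{\stls,(\uptau)}$ is simple (Proposition~\ref{prop:dtn_properties}) and, by continuity of the first two Steklov eigenvalues in $\uptau$ together with compactness of $\overline{Q'}$ (the same perturbation-plus-compactness reasoning as in Step~2 of the proof of Theorem~\ref{thm:m_inverse_est}), is separated from the rest of $\sigma(\widetilde{\Lambda}^{\stls,(\uptau)})$ by a gap bounded below uniformly near any $\uptau_0$; combined with the decomposition $\widetilde{\Lambda}^{\stls,(\uptau)}=\widetilde{\Lambda}^{\stls,(0)}+B^{(\uptau)}$ with $B^{(\uptau)}\in\mathcal{L}(\mathcal{E})$ depending norm-continuously on $\uptau$ (\cite[Lemma~2]{friedlander2002}, cf.~(\ref{eqn:useful_decomp_tau_zero}); equivalently, the analysis of \cite{friedlander2002} applies near every $\uptau_0$, not just $\uptau_0=0$), the resolvent $\uptau\mapsto(\zeta-\widetilde{\Lambda}^{\stls,(\uptau)})^{-1}$ is norm continuous, hence so is the Riesz projection $\mathcal{P}^{(\uptau)}=\frac{1}{2\pi i}\oint_\gamma(\zeta-\widetilde{\Lambda}^{\stls,(\uptau)})^{-1}\,d\zeta$ for a small circle $\gamma$ about $\mu^{\stls,(\uptau_0)}$.

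The main obstacle is step (b): one must upgrade the \emph{pointwise} facts about the first Steklov eigenvalue (simplicity and isolation, from Proposition~\ref{prop:dtn_properties}) to \emph{norm}-resolvent continuity of the unbounded family $\widetilde{\Lambda}^{\stls,(\uptau)}$, after which norm continuity of the eigenprojection is automatic; this is where the smoothness of $\Gamma_\ls$, the flat-torus structure near $\partial Q$, and the results of \cite{friedlander2002} carry the real weight. Everything else is bookkeeping with the boundary-triple identities already recorded in Propositions~\ref{prop:auxops_properties} and~\ref{prop:decoupling_properties}, plus standard elliptic regularity on a domain whose only curved boundary component is smooth.
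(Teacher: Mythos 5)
Your proof is correct in outline, but it takes a genuinely different route from the paper's. The paper deliberately avoids spectral perturbation theory (its remark preceding the proposition states that the ``quick proof'' via continuity of $\uptau\mapsto\Pi^{\bigstar,(\uptau)}$ and $\uptau\mapsto\psi_1^{\bigstar,(\uptau)}$ is available but unsatisfactory, since perturbation theory is otherwise used as a black box) and instead splits $f(\uptau_2)-f(\uptau_1)$ into $\|u_1-w\|+\|u_2-w\|$ with $w=\Pi^{(\uptau_2)}\psi^{(\uptau_1)}$, then proves local Lipschitz continuity of $\uptau\mapsto\psi^{\stls,(\uptau)}$ by a hands-on power-series expansion of the Steklov eigenpair in $\uptau=t\theta$ with explicit remainder estimates (Steps 1--4f), and handles $\|u_1-w\|$ by an energy estimate. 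You instead write $f(\uptau)^2=\operatorname{tr}(T^{(\uptau)}\mathcal P^{(\uptau)})$ and reduce to (a) operator-norm continuity of the lift, proved directly via $\Pi^*=\Gamma_1A_0^{-1}$ and uniform elliptic regularity, and (b) norm continuity of the rank-one Riesz projection. Both halves are sound: your stiff-interior observation that $\Pi^{\stin,(\uptau)}\psi^{\stin,(\uptau)}=|\Gamma_\interior|^{-1/2}e^{-i\uptau\cdot x}\mathbf 1_{Q_\stin}$, so that $f$ is constant there, is a clean shortcut the paper does not exploit; and your elliptic-regularity argument for (a) is a legitimate direct substitute for the paper's $\|u_1-w\|$ estimate. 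What your approach buys is brevity and a phase-invariant formulation; what the paper's buys is that every continuity claim is reduced to explicit energy estimates.

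The one place you should be careful is step (b). The paper's equation (\ref{eqn:useful_decomp_tau_zero}) extracts from \cite{friedlander2002} only that $B_{\bigstar}$ is \emph{uniformly bounded} in $\uptau$; your Riesz-projection argument needs the strictly stronger statement that $\uptau\mapsto B^{(\uptau)}$ is \emph{norm-continuous} (equivalently, norm-resolvent continuity of $\widetilde\Lambda^{\stls,(\uptau)}$), which you assert parenthetically rather than prove. This is true (the family is in fact analytic of type (A) in the sense used by Friedlander), but it is precisely the perturbation-theoretic input the paper's long computation was written to make explicit, so as stated your proof does not discharge that dependency --- it relocates it into the citation. To make the argument self-contained you would need either to prove norm-continuity of $B^{(\uptau)}$ directly (e.g.\ by an estimate of $\Lambda^{(\uptau_2)}-\Lambda^{(\uptau_1)}$ on $\mathcal D(\Lambda)$ analogous to your step (a)), or to accept the black box, which the paper explicitly set out to avoid.
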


\begin{proof}
    See \cite[Proposition~5.5]{ys_thesis}. This boils down to proving the continuity of $\uptau \mapsto \Pi^{\bigstar,(\uptau)}$ and $\uptau \mapsto \psi_1^{\bigstar,(\uptau)}$. The bulk of the proof is devoted to the existence of a choice $\psi_1^{\stls,(\uptau)}$ that makes the mapping $\overline{Q'} \ni \uptau \mapsto \psi_1^{\stls,(\uptau)} \in L^2(\Gamma_\ls)$ continuous, and it is done through the method of asymptotic expansions. Note that this proves the remaining assertions of Proposition \ref{prop:dtn_properties}.
\end{proof}

%\newpage
\section{Identifying a suitable homogenized operator}\label{sect:identifying_homo_op}

The task now is to identify an operator $\widehat{A}_{\beta_0,\beta_1}$ that is $O(\varepsilon^2)$ close to $A_\varepsilon^{(\uptau)} = \widehat{A}_{0,I}$ in the norm-resolvent sense, by using Theorem \ref{thm:m_inverse_est}. To ensure that $\widehat{A}_{\beta_0,\beta_1}$ is well defined, we need to check that (i) $\beta_0$ and $\beta_1$ satisfies the domain considerations, and (ii) $\beta_0 + \beta_1 M_\varepsilon^{(\uptau)}(z)$ is boundedly invertible. Here we record a useful observation that is used for checking (ii):

\begin{lem}\label{lem:useful_b0b1_check}
    For $z\in K_\sigma$, we have
    \begin{align}\label{eqn:useful_b0b1_check}
        - \left( \overline{\mathcal{P}_{\perp}^{(\uptau)} + \mathcal{P}^{(\uptau)} M_\varepsilon^{(\uptau)}(z) } \right)^{-1} \mathcal{P}^{(\uptau)} = - \mathcal{P}^{(\uptau)} \left( \mathcal{P}^{(\uptau)} M_\varepsilon^{(\uptau)}(z) \mathcal{P}^{(\uptau)}  \right)^{-1}  \mathcal{P}^{(\uptau)}
    \end{align}
\end{lem}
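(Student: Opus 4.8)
The plan is to verify this identity by a direct block-operator computation, using the decomposition $\mathcal{E} = \mathcal{P}^{(\uptau)}\mathcal{E} \oplus \mathcal{P}_\perp^{(\uptau)}\mathcal{E}$ from equation (\ref{eqn:m_matrix_decomp}). Write $\beta_0 = \mathcal{P}_\perp^{(\uptau)}$ and $\beta_1 = \mathcal{P}^{(\uptau)}$, so that (using $M_\varepsilon^{(\uptau)}(z)$ in the block form with entries $\mathbb{A},\mathbb{B},\mathbb{E},\mathbb{D}$) we have
\begin{align*}
    \beta_0 + \beta_1 M_\varepsilon^{(\uptau)}(z) =
    \begin{pmatrix}
        0 & 0 \\ 0 & I_{\mathcal{P}_\perp^{(\uptau)}\mathcal{E}}
    \end{pmatrix}
    +
    \begin{pmatrix}
        \mathbb{A} & \mathbb{B} \\ 0 & 0
    \end{pmatrix}
    =
    \begin{pmatrix}
        \mathbb{A} & \mathbb{B} \\ 0 & I_{\mathcal{P}_\perp^{(\uptau)}\mathcal{E}}
    \end{pmatrix}.
\end{align*}
First I would check the domain and closability hypotheses: $\mathcal{D}(\beta_0) = \mathcal{E} \supset \mathcal{D}(\Lambda_\varepsilon^{(\uptau)})$, $\beta_1 = \mathcal{P}^{(\uptau)} \in \mathcal{L}(\mathcal{E})$, and $\beta_0 + \beta_1 \Lambda_\varepsilon^{(\uptau)}$ is closable (its closure has the block form above with $\mathbb{A},\mathbb{B}$ evaluated at $z=0$, which is a bounded perturbation of a block-diagonal closable operator by Lemma \ref{lem:m_components_cts_ext}). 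The key point is that $\mathbb{A} = \mathcal{P}^{(\uptau)} M_\varepsilon^{(\uptau)}(z)\mathcal{P}^{(\uptau)}$ is boundedly invertible on the finite-dimensional space $\mathcal{P}^{(\uptau)}\mathcal{E}$ by Theorem \ref{thm:m_inverse_est}, valid for $z \in K_\sigma$, and $\mathbb{B}$ extends to a bounded operator by Lemma \ref{lem:m_components_cts_ext}.

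Next I would invert the upper-triangular block operator explicitly. Since the $(2,2)$-entry is the identity and the $(2,1)$-entry is zero, the inverse is
\begin{align*}
    \left( \overline{\beta_0 + \beta_1 M_\varepsilon^{(\uptau)}(z)} \right)^{-1} =
    \begin{pmatrix}
        \mathbb{A}^{-1} & -\mathbb{A}^{-1}\mathbb{B} \\ 0 & I_{\mathcal{P}_\perp^{(\uptau)}\mathcal{E}}
    \end{pmatrix},
\end{align*}
which one verifies by multiplying out both ways. Composing on the right with $\mathcal{P}^{(\uptau)} = \begin{pmatrix} I_{\mathcal{P}^{(\uptau)}\mathcal{E}} & 0 \\ 0 & 0 \end{pmatrix}$ kills the second block column, leaving
\begin{align*}
    \left( \overline{\beta_0 + \beta_1 M_\varepsilon^{(\uptau)}(z)} \right)^{-1} \mathcal{P}^{(\uptau)} =
    \begin{pmatrix}
        \mathbb{A}^{-1} & 0 \\ 0 & 0
    \end{pmatrix}
    = \mathcal{P}^{(\uptau)} \left( \mathcal{P}^{(\uptau)} M_\varepsilon^{(\uptau)}(z) \mathcal{P}^{(\uptau)} \right)^{-1} \mathcal{P}^{(\uptau)},
\end{align*}
using that $\left( \mathcal{P}^{(\uptau)} M_\varepsilon^{(\uptau)}(z)\mathcal{P}^{(\uptau)} \right)^{-1}$ is by definition $\mathbb{A}^{-1}$ acting on $\mathcal{P}^{(\uptau)}\mathcal{E}$ (null-extended). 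Negating both sides gives (\ref{eqn:useful_b0b1_check}).

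The main obstacle, and the only genuinely delicate point, is justifying that one may legitimately pass between the closure $\overline{\beta_0 + \beta_1 M_\varepsilon^{(\uptau)}(z)}$ and its block-operator representation — i.e., that the formal $2\times 2$ matrix computation is valid at the level of the (possibly unbounded) closed operator rather than just on the dense core $\mathcal{D}(\Lambda_\varepsilon^{(\uptau)})$. Here I would invoke \cite[Theorem 2.3.3]{tretter} (already used in the proof of Theorem \ref{thm:m_inverse_est}), together with the observations that $\mathbb{B}$ and $\mathbb{E}$ are bounded while $\mathbb{A}$ is bounded and boundedly invertible: in fact, because the $(2,2)$ block is the bounded invertible operator $I_{\mathcal{P}_\perp^{(\uptau)}\mathcal{E}}$ and the $(1,2)$ block $\mathbb{B}$ is bounded, the whole operator $\beta_0 + \beta_1 M_\varepsilon^{(\uptau)}(z)$ is in fact already \emph{bounded} (its first block row is bounded by Lemma \ref{lem:m_components_cts_ext}, its second block row is $(0, I)$), so no closure is actually needed and the bar may be dropped. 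This makes the block inversion completely elementary. I would state this simplification explicitly and then the computation above closes the proof.
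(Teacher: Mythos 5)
Your proposal is correct and follows essentially the same route as the paper: both exploit the upper-triangular block structure of $\mathcal{P}_{\perp}^{(\uptau)} + \mathcal{P}^{(\uptau)} M_\varepsilon^{(\uptau)}(z)$ relative to $\mathcal{E} = \mathcal{P}^{(\uptau)}\mathcal{E} \oplus \mathcal{P}_\perp^{(\uptau)}\mathcal{E}$, invert it explicitly using the bounded invertibility of $\mathbb{A}$ from Theorem \ref{thm:m_inverse_est} and the boundedness of $\mathbb{B}$, and then right-multiply by $\mathcal{P}^{(\uptau)}$ to kill the off-diagonal column. Your added observation that the operator is in fact bounded (so the closure bar is harmless) matches the paper's opening remark that the sum of the two bounded operators is closed.
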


\begin{proof}
    Note that $\mathcal{P}_\perp^{(\uptau)}$ and $\mathcal{P}^{(\uptau)}M_\varepsilon^{(\uptau)}(z)$ are bounded operators, hence the sum is closed.~That $\mathcal{P}_{\perp}^{(\uptau)} + \mathcal{P}^{(\uptau)} M_\varepsilon^{(\uptau)}(z)$ is boundedly invertible follows from the second equality of (\ref{eqn:block_op_inversion}), as
    \begin{align}
         -\left( \mathcal{P}_{\perp}^{(\uptau)} + \mathcal{P}^{(\uptau)} M_\varepsilon^{(\uptau)}(z) \right)^{-1} = -
        \begin{pmatrix}
            \mathcal{P}^{(\uptau)} M_\varepsilon^{(\uptau)} \mathcal{P}^{(\uptau)} & \mathcal{P}^{(\uptau)} M_\varepsilon^{(\uptau)} \mathcal{P}_\perp^{(\uptau)} \\
            0 & I
        \end{pmatrix}^{-1} = -
        \begin{pmatrix}
            \mathbb{A}^{-1} & -\mathbb{A}^{-1} \mathbb{B} \\
            0 & I
        \end{pmatrix},
    \end{align}
    which is bounded by Theorem \ref{thm:m_inverse_est}. Now applying $\mathcal{P}^{(\uptau)}$ on the right, we obtain
    \begin{align}
        -\left( \mathcal{P}_{\perp}^{(\uptau)} + \mathcal{P}^{(\uptau)} M_\varepsilon^{(\uptau)}(z) \right)^{-1} \mathcal{P}^{(\uptau)} = -
        \begin{pmatrix}
            \mathbb{A}^{-1} & 0 \\
            0 & 0
        \end{pmatrix}.
    \end{align}
    This is precisely the RHS of (\ref{eqn:useful_b0b1_check}), completing the proof.
\end{proof}

\begin{rmk}
    We have abused notation when writing $\mathcal{P}^{(\uptau)}$ in (\ref{eqn:useful_b0b1_check}). To be precise,
    \begin{align*}
        - \big( \underbrace{\mathcal{P}_{\perp}^{(\uptau)}}_{\mathcal{E} \rightarrow \mathcal{E}} + \underbrace{\mathcal{P}^{(\uptau)} M_\varepsilon^{(\uptau)}(z)}_{\mathcal{E} \rightarrow \mathcal{E}} \big)^{-1} \underbrace{\mathcal{P}^{(\uptau)}}_{\mathcal{E} \rightarrow \mathcal{E}}
        = - \underbrace{\mathcal{P}^{(\uptau)}}_{\mathcal{P}^{(\uptau)} \mathcal{E} \rightarrow \mathcal{E}}
        \big( \underbrace{\mathcal{P}^{(\uptau)} M_\varepsilon^{(\uptau)}(z) \mathcal{P}^{(\uptau)}}_{\mathcal{P}^{(\uptau)} \mathcal{E} \rightarrow \mathcal{P}^{(\uptau)} \mathcal{E}  } \big)^{-1}
        \underbrace{\mathcal{P}^{(\uptau)}}_{\mathcal{E} \rightarrow \mathcal{P}^{(\uptau)} \mathcal{E}}. \tag*{\qedhere}
    \end{align*}
\end{rmk}

Our first attempt on identifying a suitable homogenized operator is

\begin{thm}\label{thm:first_asymp_result}
    There exist $C>0$, independent of $\varepsilon>0$ (assumed to be small enough), $z\in K_\sigma$, and $\uptau \in Q'$, such that
    \begin{align}
        \left\| (A_\varepsilon^{(\uptau)} - z)^{-1} - \left( \widehat{A}^{(\uptau)}_{\varepsilon, \mathcal{P}_\perp^{(\uptau)}, \mathcal{P}^{(\uptau)} } - z \right)^{-1} \right\|_{\mathcal{H} \rightarrow \mathcal{H}} \leq C \varepsilon^2.
    \end{align}
    The operator $\widehat{A}^{(\uptau)}_{\varepsilon, \mathcal{P}_\perp^{(\uptau)}, \mathcal{P}^{(\uptau)} }$ is constructed relative to the triple $(A_{\varepsilon,0}^{(\uptau)}, \Lambda_\varepsilon^{(\uptau)}, \Pi^{(\uptau)})$ with $\mathcal{H} = L^2(Q)$ and boundary space $\mathcal{E} = L^2(\Gamma_\interior) \oplus L^2(\Gamma_\ls)$. Furthermore, $\widehat{A}^{(\uptau)}_{\varepsilon, \mathcal{P}_\perp^{(\uptau)}, \mathcal{P}^{(\uptau)} }$ is self-adjoint.
\end{thm}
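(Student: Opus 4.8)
The plan is to verify that the pair $(\beta_0, \beta_1) = (\mathcal{P}_\perp^{(\uptau)}, \mathcal{P}^{(\uptau)})$ satisfies the hypotheses needed to invoke Theorem \ref{thm:kreins_formula} (Krein's formula), and then to estimate the resulting resolvent against that of $A_\varepsilon^{(\uptau)} = \widehat{A}_{\varepsilon, 0, I}^{(\uptau)}$ using Theorem \ref{thm:m_inverse_est} and Lemma \ref{lem:soln_op_est}. First I would record that $\beta_1 = \mathcal{P}^{(\uptau)} \in \mathcal{L}(\mathcal{E})$ and $\beta_0 = \mathcal{P}_\perp^{(\uptau)}$ has $\mathcal{D}(\beta_0) = \mathcal{E} \supset \mathcal{D}(\Lambda_\varepsilon^{(\uptau)})$; moreover $\beta_0 + \beta_1 \Lambda_\varepsilon^{(\uptau)} = \mathcal{P}_\perp^{(\uptau)} + \mathcal{P}^{(\uptau)} \Lambda_\varepsilon^{(\uptau)}$ is closed (hence trivially closable) since $\mathcal{P}_\perp^{(\uptau)}$ is bounded and $\mathcal{P}^{(\uptau)}\Lambda_\varepsilon^{(\uptau)}$ is a bounded operator — the latter because $\mathcal{P}^{(\uptau)}$ projects onto a \emph{finite-dimensional} eigenspace of the stiff part of the DtN, so $\mathcal{P}^{(\uptau)} \Lambda_\varepsilon^{(\uptau)}$ is the operator $\mathbb{A} + \mathbb{B}$ of Lemma \ref{lem:m_components_cts_ext}, extendable to a bounded operator. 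Thus the domain considerations are met, and $\overline{\beta_0 + \beta_1 \Lambda_\varepsilon^{(\uptau)}} = \beta_0 + \beta_1 \Lambda_\varepsilon^{(\uptau)}$ with $\mathcal{H}_{\overline{\beta_0 + \beta_1 \Lambda}} = \mathcal{H}$.

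Next I would check condition (ii): $\overline{\beta_0 + \beta_1 M_\varepsilon^{(\uptau)}(z)} = \mathcal{P}_\perp^{(\uptau)} + \mathcal{P}^{(\uptau)} M_\varepsilon^{(\uptau)}(z)$ is boundedly invertible on $\mathcal{E}$ for $z \in K_\sigma$. This is exactly the content established in the proof of Lemma \ref{lem:useful_b0b1_check}: relative to the decomposition $\mathcal{E} = \mathcal{P}^{(\uptau)}\mathcal{E} \oplus \mathcal{P}_\perp^{(\uptau)}\mathcal{E}$, the operator has block form $\left(\begin{smallmatrix} \mathbb{A} & \mathbb{B} \\ 0 & I \end{smallmatrix}\right)$, and its inverse $\left(\begin{smallmatrix} \mathbb{A}^{-1} & -\mathbb{A}^{-1}\mathbb{B} \\ 0 & I \end{smallmatrix}\right)$ is bounded by Theorem \ref{thm:m_inverse_est} (which gives $\|\mathbb{A}^{-1}\| \le C$ uniformly and $\|\mathbb{B}\| \le C$ via Proposition \ref{prop:m_components_unif_bound}). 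Hence Theorem \ref{thm:kreins_formula} applies and produces the closed operator $\widehat{A}^{(\uptau)}_{\varepsilon, \mathcal{P}_\perp^{(\uptau)}, \mathcal{P}^{(\uptau)}}$ with resolvent
\begin{align*}
    \left(\widehat{A}^{(\uptau)}_{\varepsilon, \mathcal{P}_\perp^{(\uptau)}, \mathcal{P}^{(\uptau)}} - z\right)^{-1} = (A_{\varepsilon,0}^{(\uptau)} - z)^{-1} + S_\varepsilon^{(\uptau)}(z)\, Q(z)\, S_\varepsilon^{(\uptau)}(\overline{z})^*,
\end{align*}
where $Q(z) = -\big(\mathcal{P}_\perp^{(\uptau)} + \mathcal{P}^{(\uptau)} M_\varepsilon^{(\uptau)}(z)\big)^{-1} \mathcal{P}^{(\uptau)} = -\left(\begin{smallmatrix} \mathbb{A}^{-1} & 0 \\ 0 & 0 \end{smallmatrix}\right)$ by Lemma \ref{lem:useful_b0b1_check}.

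For the estimate, I would subtract the Krein formula for $A_\varepsilon^{(\uptau)} = \widehat{A}_{\varepsilon,0,I}^{(\uptau)}$ from Corollary \ref{cor:main_model_equiv}, namely $(A_\varepsilon^{(\uptau)} - z)^{-1} = (A_{\varepsilon,0}^{(\uptau)} - z)^{-1} - S_\varepsilon^{(\uptau)}(z) (M_\varepsilon^{(\uptau)}(z))^{-1} S_\varepsilon^{(\uptau)}(\overline{z})^*$. The difference is
\begin{align*}
    (A_\varepsilon^{(\uptau)} - z)^{-1} - \left(\widehat{A}^{(\uptau)}_{\varepsilon, \mathcal{P}_\perp^{(\uptau)}, \mathcal{P}^{(\uptau)}} - z\right)^{-1} = -S_\varepsilon^{(\uptau)}(z)\left[ (M_\varepsilon^{(\uptau)}(z))^{-1} - \begin{pmatrix} \mathbb{A}^{-1} & 0 \\ 0 & 0 \end{pmatrix} \right] S_\varepsilon^{(\uptau)}(\overline{z})^*.
\end{align*}
By Theorem \ref{thm:m_inverse_est} the bracket is $O(\varepsilon^2)$ in operator norm, uniformly in $\uptau \in Q'$ and $z \in K_\sigma$; by Lemma \ref{lem:soln_op_est} together with Proposition \ref{prop:lift_properties}, $\|S_\varepsilon^{(\uptau)}(z)\|$ and $\|S_\varepsilon^{(\uptau)}(\overline{z})^*\|$ are bounded uniformly in the same parameters (since $S_\varepsilon^{(\uptau)}(z) = \Pi^{(\uptau)} + O(\varepsilon^2)$ on each component). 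Multiplying the bounds gives the claimed $C\varepsilon^2$ estimate. Finally, self-adjointness of $\widehat{A}^{(\uptau)}_{\varepsilon, \mathcal{P}_\perp^{(\uptau)}, \mathcal{P}^{(\uptau)}}$ follows from the sufficient criterion of \cite[Corollary 5.8]{ryzhov2009}: here $\beta_0 = \mathcal{P}_\perp^{(\uptau)}$ and $\beta_1 = \mathcal{P}^{(\uptau)}$ are commuting self-adjoint projections with $\beta_0 \beta_1 = 0$, $\beta_0^* \beta_1 = \beta_1^* \beta_0$, and $\beta_0 \Lambda_\varepsilon^{(\uptau)} \beta_1^* = \beta_0 \Lambda_\varepsilon^{(\uptau)} \beta_1$ is symmetric (indeed it equals $\mathcal{P}_\perp^{(\uptau)} \Lambda_\varepsilon^{(\uptau)} \mathcal{P}^{(\uptau)} = \mathbb{E}$, which one checks satisfies the required self-adjointness condition of that corollary), so the construction yields a self-adjoint operator. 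The main obstacle I anticipate is the careful bookkeeping in verifying the precise hypotheses of \cite[Corollary 5.8]{ryzhov2009} for self-adjointness — matching the abstract conditions on $\beta_0, \beta_1$ (and the compatibility with $\Lambda$) to our concrete spectral projections — since everything else reduces to assembling estimates already in hand; I would double-check that the decomposition of $\mathcal{E}$ used to read off the block form of $\mathcal{P}_\perp^{(\uptau)} + \mathcal{P}^{(\uptau)} M_\varepsilon^{(\uptau)}(z)$ is consistently $\mathcal{P}^{(\uptau)}\mathcal{E} \oplus \mathcal{P}_\perp^{(\uptau)}\mathcal{E}$ throughout.
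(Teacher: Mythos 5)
Your proposal is correct and follows essentially the same route as the paper: Krein's formula for $(\beta_0,\beta_1)=(\mathcal{P}_\perp^{(\uptau)},\mathcal{P}^{(\uptau)})$, the identity of Lemma \ref{lem:useful_b0b1_check} to identify $Q(z)$ with the $\mathbb{A}^{-1}$ block, Theorem \ref{thm:m_inverse_est} together with the uniform bounds on $S_\varepsilon^{(\uptau)}(z)$ for the $O(\varepsilon^2)$ estimate, and \cite[Corollary 5.8]{ryzhov2009} for self-adjointness. The additional bookkeeping you supply (closedness of $\beta_0+\beta_1\Lambda_\varepsilon^{(\uptau)}$, the upper-triangular block form) is consistent with the paper's terse argument.
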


\begin{proof}
    The inequality follows by Krein's formula (Theorem \ref{thm:kreins_formula}), the estimate on $M_\varepsilon^{(\uptau)}(z)$ (Theorem \ref{thm:m_inverse_est}), and the identity (\ref{eqn:useful_b0b1_check}). Self-adjointness of $\widehat{A}^{(\uptau)}_{\varepsilon, \mathcal{P}_\perp^{(\uptau)}, \mathcal{P}^{(\uptau)} }$ follows from \cite[Corollary 5.8]{ryzhov2009}.
\end{proof}

\begin{rmk}
    Theorem \ref{thm:first_asymp_result} extends the result of \cite[Theorem 3.1]{eff_behavior} to the stiff-soft-stiff setup, with $\mathcal{P}^{(\uptau)}$ now being two-dimensional, corresponding to the two stiff components.
\end{rmk}

While the operator $\widehat{A}^{(\uptau)}_{\varepsilon, \mathcal{P}_\perp^{(\uptau)}, \mathcal{P}^{(\uptau)} }$ satisfies the first criterion (self-adjointness) of a homogenized operator, it is unclear what the action of $\widehat{A}^{(\uptau)}_{\varepsilon, \mathcal{P}_\perp^{(\uptau)}, \mathcal{P}^{(\uptau)} }$ is, since it requires us to convert the boundary condition $\mathcal{P}_\perp^{(\uptau)} \Gamma_0^{(\uptau)} + \mathcal{P}^{(\uptau)} \Gamma_{\varepsilon,1}^{(\uptau)} = 0$ into the action of $\widehat{A}^{(\uptau)}_{\varepsilon, \mathcal{P}_\perp^{(\uptau)}, \mathcal{P}^{(\uptau)} }$. The goal now is to build on our result and identify other $O(\varepsilon^2)$-close operators whose actions can be more easily written down.

\subsection{An ``observer" in the soft component}

We begin this section by making the following definitions

\begin{defn}
    $M_\varepsilon^{\text{stiff},(\uptau)}(z) := M_\varepsilon^{\text{stiff-int},(\uptau)}(z) \mathcal{P}_\interior + M_\varepsilon^{\text{stiff-ls},(\uptau)}(z) \mathcal{P}_\ls$.
\end{defn}

\begin{defn}
    For $z\in \rho(A_\varepsilon^{(\uptau)})$, set $R_\varepsilon^{(\uptau)}(z) := P_{\text{soft}} (A_\varepsilon^{(\uptau)} - z)^{-1} P_{\text{soft}}$.
\end{defn}

We will refer to $R_\varepsilon^{(\uptau)}(z)$ as the \textit{generalized resolvent} of $A_\varepsilon^{(\uptau)}$ at $z$, with respect to $L^2(Q_{\text{soft}})$. The term ``generalized resolvent" refers to the fact that it is the resolvent of some operator on a larger space. This is not to be confused with pseudoresolvents in Theorem \ref{thm:kreins_formula}.

In this section, we demonstrate that we can draw conclusions on the full system on $L^2(Q)$, using the partial information on $L^2(Q_\soft)$ provided by $R_\varepsilon^{(\uptau)}(z)$, as the missing pieces can be attributed to ``error". Let us begin with an easy but important computation, which says that $R_\varepsilon^{(\uptau)}$ is itself a solution operator for some abstract BVP on $L^2(Q_{\text{soft}})$:

\begin{prop}\label{prop:r_eps_resolvent_alt}
    We have,
    \begin{align}
        R_\varepsilon^{(\uptau)}(z) = \left( \widehat{A}_{M_\varepsilon^{\text{stiff},(\uptau)}(z), I}^{\text{soft},(\uptau)} - z \right)^{-1},
    \end{align}
    where $\widehat{A}_{M_\varepsilon^{\text{stiff},(\uptau)}(z), I}^{\text{soft},(\uptau)}$ is constructed from the triple $(A_0^{\text{soft},(\uptau)}, \Lambda^{\text{soft},(\uptau)}, \Pi^{\text{soft}, (\uptau)} )$ with $L^2(Q_{\text{soft}})$ and boundary space $L^2(\Gamma_\interior) \oplus L^2(\Gamma_\ls)$. In other words $R_\varepsilon^{(\uptau)}(z)$ is the solution operator of the BVP:
    \begin{align}
        \begin{cases}
            \left( -(\nabla + i\uptau)^2 - z \right)u = f \qquad &\text{in $Q_\text{soft}$,}\\
            \partial_{n_{\text{soft}}}^{(\uptau)} u = - M_\varepsilon^{\stin,(\uptau)}(z) u &\text{on $\Gamma_\interior$,} \\
            \partial_{n_{\text{soft}}}^{(\uptau)} u = - M_\varepsilon^{\stls,(\uptau)}(z) u &\text{on $\Gamma_\ls$,}
        \end{cases}
    \end{align}
    which is to be rigorously interpreted in terms of the following system 
    \begin{align}
        \begin{cases}
            (\widehat{A}^{\text{soft},(\uptau)} - z )u = f, \\
            \Gamma_1^{\text{soft},(\uptau)}u = - M_\varepsilon^{\text{stiff},(\uptau)}(z) \Gamma_0^{\text{soft},(\uptau)}u.
        \end{cases}
    \end{align}
    Here, $f\in L^2(Q_\text{soft})$. (Note the $z$-dependent boundary conditions.)
\end{prop}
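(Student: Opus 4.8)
The plan is to work entirely within the boundary-triple formalism established for the soft triple $(A_0^{\text{soft},(\uptau)}, \Lambda^{\text{soft},(\uptau)}, \Pi^{\text{soft},(\uptau)})$. First I would identify $\beta_0 = M_\varepsilon^{\text{stiff},(\uptau)}(z)$ and $\beta_1 = I$ and check that the hypotheses of Theorem \ref{thm:kreins_formula} are met: $\beta_1 = I \in \mathcal{L}(\mathcal{E})$ trivially, $\mathcal{D}(\beta_0) = \mathcal{D}(\Lambda^{\text{soft},(\uptau)}) = H^1(\Gamma_\interior) \oplus H^1(\Gamma_\ls) \supset \mathcal{D}(\Lambda^{\text{soft},(\uptau)})$ (here $\beta_0$ really is the DtN-type operator $M_\varepsilon^{\text{stiff},(\uptau)}(z)$, which by Proposition \ref{prop:auxops_properties}(6) has the same $z$-independent domain $\mathcal{D}(\Lambda^{\text{stiff},(\uptau)})$, and one checks these domains match), and $\beta_0 + \beta_1 \Lambda^{\text{soft},(\uptau)} = M_\varepsilon^{\text{stiff},(\uptau)}(z) + \Lambda^{\text{soft},(\uptau)}$ is closable (in fact one can argue it equals, up to a bounded term, the self-adjoint full DtN $\Lambda_\varepsilon^{(\uptau)}$ restricted appropriately, or invoke the Kato--Rellich reasoning from Lemma \ref{lem:dtn_selfadjoint}). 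Then $\overline{\beta_0 + \beta_1 M^{\text{soft},(\uptau)}_\varepsilon(z)} = \overline{M_\varepsilon^{\text{stiff},(\uptau)}(z) + M^{\text{soft},(\uptau)}_\varepsilon(z)}$, and by Proposition \ref{prop:m_op_sum} this is exactly $M_\varepsilon^{(\uptau)}(z)$, the full-cube $M$-operator, which by Corollary \ref{cor:main_model_equiv} is boundedly invertible for $z \in K_\sigma$ (being the operator whose inverse appears in Krein's formula for $A_\varepsilon^{(\uptau)}$). So $\widehat{A}^{\text{soft},(\uptau)}_{M_\varepsilon^{\text{stiff},(\uptau)}(z), I}$ is well-defined and Theorem \ref{thm:kreins_formula} gives
\begin{align}
    \left( \widehat{A}^{\text{soft},(\uptau)}_{M_\varepsilon^{\text{stiff},(\uptau)}(z), I} - z \right)^{-1} = (A_0^{\text{soft},(\uptau)} - z)^{-1} - S^{\text{soft},(\uptau)}(z) \left( M_\varepsilon^{(\uptau)}(z) \right)^{-1} S^{\text{soft},(\uptau)}(\overline{z})^*.
\end{align}

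Next I would compute $R_\varepsilon^{(\uptau)}(z) = P_{\text{soft}} (A_\varepsilon^{(\uptau)} - z)^{-1} P_{\text{soft}}$ directly from Krein's formula for $A_\varepsilon^{(\uptau)} = \widehat{A}^{(\uptau)}_{\varepsilon,0,I}$ (Corollary \ref{cor:main_model_equiv}):
\begin{align}
    (A_\varepsilon^{(\uptau)} - z)^{-1} = (A_{\varepsilon,0}^{(\uptau)} - z)^{-1} - S_\varepsilon^{(\uptau)}(z) \left( M_\varepsilon^{(\uptau)}(z) \right)^{-1} S_\varepsilon^{(\uptau)}(\overline{z})^*.
\end{align}
Compressing by $P_{\text{soft}}$: since $A_{\varepsilon,0}^{(\uptau)}$ is block-diagonal with respect to the decomposition $L^2(Q) = L^2(Q_\stin) \oplus L^2(Q_\soft) \oplus L^2(Q_\stls)$ and $L^2(Q_\soft)$ is an invariant subspace, $P_{\text{soft}} (A_{\varepsilon,0}^{(\uptau)} - z)^{-1} P_{\text{soft}} = (A_0^{\text{soft},(\uptau)} - z)^{-1}$. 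For the correction term, I would use $P_{\text{soft}} S_\varepsilon^{(\uptau)}(z) = P_{\text{soft}} S_\varepsilon^{(\uptau)}(z) P_{\text{soft}}|_{\mathcal{E}}$... more precisely I invoke the identity $S^{\text{soft},(\uptau)}(z) = P_{\text{soft}} S_\varepsilon^{(\uptau)}(z)$ from (\ref{eqn:s_op_sum}), and dually $S_\varepsilon^{(\uptau)}(\overline{z})^* P_{\text{soft}} = \left( P_{\text{soft}} S_\varepsilon^{(\uptau)}(\overline{z}) \right)^* = S^{\text{soft},(\uptau)}(\overline{z})^*$. Hence $P_{\text{soft}} S_\varepsilon^{(\uptau)}(z) (M_\varepsilon^{(\uptau)}(z))^{-1} S_\varepsilon^{(\uptau)}(\overline{z})^* P_{\text{soft}} = S^{\text{soft},(\uptau)}(z) (M_\varepsilon^{(\uptau)}(z))^{-1} S^{\text{soft},(\uptau)}(\overline{z})^*$, which matches the expression above exactly. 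This proves the operator identity. Finally, I would append the interpretive part: by Theorem \ref{thm:kreins_formula} the operator $\widehat{A}^{\text{soft},(\uptau)}_{M_\varepsilon^{\text{stiff},(\uptau)}(z), I}$ satisfies $\widehat{A}^{\text{soft},(\uptau)}_{M_\varepsilon^{\text{stiff},(\uptau)}(z), I} \subset \widehat{A}^{\text{soft},(\uptau)}$ with domain contained in $\ker(M_\varepsilon^{\text{stiff},(\uptau)}(z)\Gamma_0^{\text{soft},(\uptau)} + \Gamma_1^{\text{soft},(\uptau)})$, which unpacks (using the interpretation of $\Gamma_0, \Gamma_1$ as Dirichlet/Neumann traces and $\widehat{A}^{\text{soft},(\uptau)}$ as $-(\nabla + i\uptau)^2$) precisely into the stated BVP with the $z$-dependent Robin-type conditions $\partial_{n_\soft}^{(\uptau)} u = -M_\varepsilon^{\stin,(\uptau)}(z) u$ on $\Gamma_\interior$ and $\partial_{n_\soft}^{(\uptau)} u = -M_\varepsilon^{\stls,(\uptau)}(z) u$ on $\Gamma_\ls$; here I would note that $M_\varepsilon^{\text{stiff},(\uptau)}(z) = M_\varepsilon^{\stin,(\uptau)}(z) + M_\varepsilon^{\stls,(\uptau)}(z)$ decomposes across the two boundary components $\mathcal{E} = L^2(\Gamma_\interior) \oplus L^2(\Gamma_\ls)$ since the two stiff triples live on disjoint components (cf. Proposition \ref{prop:m_op_sum} and the discussion following it).

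The main obstacle I anticipate is the bookkeeping around the bounded invertibility of $\overline{M_\varepsilon^{\text{stiff},(\uptau)}(z) + M^{\text{soft},(\uptau)}_\varepsilon(z)}$ and verifying it genuinely equals $M_\varepsilon^{(\uptau)}(z)$ with the right domain: one must be careful that $M_\varepsilon^{\text{stiff},(\uptau)}(z)$ is \emph{unbounded} (its stiff components scale like $\varepsilon^{-2}$), so the sum-of-operators $M_\varepsilon^{\text{stiff},(\uptau)}(z) + M^{\text{soft},(\uptau)}_\varepsilon(z)$ requires the common-domain observation from Lemma \ref{lem:dtn_selfadjoint} and Proposition \ref{prop:m_op_sum} to be legitimate, and the closure is already closed (no closure bar actually needed) by the Kato--Rellich argument. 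Everything else — the compression computation and the BVP translation — is routine manipulation with the identities (\ref{eqn:pi_op_sum}), (\ref{eqn:s_op_sum}), Proposition \ref{prop:m_op_sum}, and the general properties in Proposition \ref{prop:auxops_properties}.
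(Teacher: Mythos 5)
Your proposal is correct and follows essentially the same route as the paper: compress Krein's formula for $A_\varepsilon^{(\uptau)} = \widehat{A}_{\varepsilon,0,I}^{(\uptau)}$ by $P_{\text{soft}}$, use the identities $P_{\text{soft}}(A_{\varepsilon,0}^{(\uptau)}-z)^{-1}P_{\text{soft}} = (A_0^{\text{soft},(\uptau)}-z)^{-1}$ and $P_{\text{soft}}S_\varepsilon^{(\uptau)}(z) = S^{\text{soft},(\uptau)}(z)$ together with Proposition \ref{prop:m_op_sum}, and recognise the result as Krein's formula for $\widehat{A}^{\text{soft},(\uptau)}_{M_\varepsilon^{\text{stiff},(\uptau)}(z),I}$. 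Your additional verification of the hypotheses of Theorem \ref{thm:kreins_formula} (domains, closability, bounded invertibility of $M_\varepsilon^{(\uptau)}(z)$) is a welcome elaboration of a step the paper leaves implicit, not a different argument.
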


\begin{proof}
    We have
    \begin{align}\label{eqn:r_eps_resolvent}
        R_\varepsilon^{(\uptau)}(z) 
        &= P_\text{soft} (A_{\varepsilon,0}^{(\uptau)} - z)^{-1} P_\text{soft} 
        - P_\text{soft}  S_\varepsilon^{(\uptau)}(z) \left( M_\varepsilon^{(\uptau)}(z) \right)^{-1}  \left( S_\varepsilon^{(\uptau)}(\bar{z}) \right)^*   P_\text{soft}  \nonumber\\
        &= (A_0^{\text{soft},(\uptau)} - z)^{-1} - S^{\text{soft},(\uptau)}(z) \left( M_\varepsilon^{(\uptau)}(z) \right)^{-1} \left( S^{\text{soft},(\uptau)}(\bar{z}) \right)^* \nonumber\\
        &= (A_0^{\text{soft},(\uptau)} - z)^{-1} - S^{\text{soft},(\uptau)}(z) \left( 
        M_\varepsilon^{\text{stiff},(\uptau)}(z)  + M^{\text{soft},(\uptau)}(z) \right)^{-1} \left( S^{\text{soft},(\uptau)}(\bar{z}) \right)^*.
    \end{align}
    The first equality follows by Corollary \ref{cor:main_model_equiv}. For the second equality, $P_\text{soft} (A_{\varepsilon,0}^{(\uptau)} - z)^{-1} P_\text{soft} = (A_0^{\text{soft},(\uptau)} - z)^{-1}$ follows directly by construction, and $P_\text{soft} S_{\varepsilon}^{(\uptau)}(z) = S^{\text{soft}, (\uptau)}(z)$ by (\ref{eqn:s_op_sum}). The final equality follows by Proposition \ref{prop:m_op_sum}. The assertion on the solution operator then follows by Theorem \ref{thm:kreins_formula}.
\end{proof}

\begin{rmk}
    By passing from $L^2(Q)$ to $L^2(Q_\text{soft})$, the BCs has changed from $(\beta_0$, $\beta_1) = (0,I)$ to $(\beta_0$, $\beta_1) = (M^{\text{stiff}}(z), I)$. We describe this informally by saying that an observer living in $L^2(Q_\text{soft})$ is able to feel the effect of the ``stiff" part of the system through the ($z$-dependent) BCs.
\end{rmk}

Recall the computations in Lemma \ref{lem:useful_b0b1_check}: when converting $-(\mathcal{P}_\perp^{(\uptau)} + \mathcal{P}^{(\uptau)} M_\varepsilon^{(\uptau)}(z) )^{-1} \mathcal{P}^{(\uptau)}$ to \\ $-\mathcal{P}^{(\uptau)} ( \mathcal{P}^{(\uptau)} M_\varepsilon^{(\uptau)}(z) \mathcal{P}^{(\uptau)} )^{-1} \mathcal{P}^{(\uptau)}$, we are only interested in the left column of the block matrix. This suggests that we could modify the top right entry to our desire. (The bottom right entry should be kept as $I$ to ensure invertibility of the matrix.) In particular,
\begin{align}
    &\begin{pmatrix}
        \mathcal{P}^{(\uptau)} M_\varepsilon^{\text{stiff}, (\uptau)}(z) \mathcal{P}^{(\uptau)} + \mathcal{P}^{(\uptau)} M^{\text{soft}, (\uptau)}(z) \mathcal{P}^{(\uptau)} & \mathcal{P}^{(\uptau)} M_\varepsilon^{(\uptau)}(z) \mathcal{P}_\perp^{(\uptau)} \\
        0 & I
    \end{pmatrix}^{-1} \mathcal{P}^{(\uptau)} \nonumber \\
    = 
    &\begin{pmatrix}
        \mathcal{P}^{(\uptau)} M_\varepsilon^{\text{stiff}, (\uptau)}(z) \mathcal{P}^{(\uptau)} + \mathcal{P}^{(\uptau)} M^{\text{soft}, (\uptau)}(z) \mathcal{P}^{(\uptau)} & \mathcal{P}^{(\uptau)} M^{\text{soft}, (\uptau)}(z) \mathcal{P}_\perp^{(\uptau)} \\
        0 & I
    \end{pmatrix}^{-1} \mathcal{P}^{(\uptau)}
    =
    \begin{pmatrix}
        \mathbb{A}^{-1} & 0 \\
        0 & 0
    \end{pmatrix}
\end{align}

This suggests us to make the following definition:

\begin{defn}
    $R_{\varepsilon,\text{eff}}^{(\uptau)}(z) := \left( \widehat{A}^{\text{soft},(\uptau)}_{\mathcal{P}_\perp^{(\uptau)} + \mathcal{P}^{(\uptau)} M_\varepsilon^{\text{stiff},(\uptau)}(z) \mathcal{P}^{(\uptau)}, \mathcal{P}^{(\uptau)} }  - z\right)^{-1}$.
\end{defn}

To check that the choice $\beta_0$ and $\beta_1$ for $R_{\varepsilon,\text{eff}}^{(\uptau)}(z)$ is valid, we note that (i) $\beta_0 + \beta_1 M^{\text{soft}(\uptau)}(z)$ is a sum of bounded operators with maximal domain hence it is closed, (ii) the invertibility of the matrix follows because it is upper triangular, (iii) boundedness of the inverse follows from the estimate for $\mathbb{A}^{-1}$ in Theorem \ref{thm:m_inverse_est}. The observation on the equality of matrices is used in the following result:

\begin{prop}\label{prop:r_eff_estimate_soft}
    We have the following operator norm estimate, uniform in $z \in K_\sigma$ and $\uptau \in Q'$:
    \begin{align*}
        R_\varepsilon^{(\uptau)}(z) - R_{\varepsilon,\text{eff}}^{(\uptau)}(z) = O(\varepsilon^2).
    \end{align*}
\end{prop}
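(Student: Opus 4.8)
The plan is to compare the Krein's-formula representations of $R_\varepsilon^{(\uptau)}(z)$ (from Proposition \ref{prop:r_eps_resolvent_alt}) and of $R_{\varepsilon,\mathrm{eff}}^{(\uptau)}(z)$ (from its definition), both constructed on the triple $(A_0^{\mathrm{soft},(\uptau)}, \Lambda^{\mathrm{soft},(\uptau)}, \Pi^{\mathrm{soft},(\uptau)})$ on $L^2(Q_\soft)$. Since both operators share the same decoupling $A_0^{\mathrm{soft},(\uptau)}$ and the same solution operator $S^{\mathrm{soft},(\uptau)}(z)$ and $S^{\mathrm{soft},(\uptau)}(\bar z)^*$, Theorem \ref{thm:kreins_formula} gives
\begin{align*}
    R_\varepsilon^{(\uptau)}(z) - R_{\varepsilon,\mathrm{eff}}^{(\uptau)}(z)
    = S^{\mathrm{soft},(\uptau)}(z) \left[ Q_{0,I}(z) - Q_{\beta_0,\beta_1}(z) \right] S^{\mathrm{soft},(\uptau)}(\bar z)^*,
\end{align*}
where $Q_{0,I}(z) = -(M_\varepsilon^{\mathrm{stiff},(\uptau)}(z) + M^{\mathrm{soft},(\uptau)}(z))^{-1} = -(M_\varepsilon^{(\uptau)}(z))^{-1}$ corresponds to $R_\varepsilon^{(\uptau)}(z)$, and $Q_{\beta_0,\beta_1}(z) = -(\overline{\mathcal{P}_\perp^{(\uptau)} + \mathcal{P}^{(\uptau)} M_\varepsilon^{\mathrm{stiff},(\uptau)}(z)\mathcal{P}^{(\uptau)} + \mathcal{P}^{(\uptau)} M^{\mathrm{soft},(\uptau)}(z)})^{-1}\mathcal{P}^{(\uptau)}$ corresponds to $R_{\varepsilon,\mathrm{eff}}^{(\uptau)}(z)$ (using $\beta_1 = \mathcal{P}^{(\uptau)}$). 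By Lemma \ref{lem:soln_op_est}, $S^{\mathrm{soft},(\uptau)}(z) = O(1)$ and $S^{\mathrm{soft},(\uptau)}(\bar z)^* = O(1)$ uniformly over $\uptau \in Q'$ and $z \in K_\sigma$, so it suffices to show that the bracketed difference of the two $Q$-operators is $O(\varepsilon^2)$ in operator norm, uniformly in $\uptau$ and $z$.

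The heart of the matter is then to show $(M_\varepsilon^{(\uptau)}(z))^{-1} - Q_{\beta_0,\beta_1}(z) = O(\varepsilon^2)$. But this is essentially already done: Theorem \ref{thm:m_inverse_est} gives $(M_\varepsilon^{(\uptau)}(z))^{-1} = \begin{pmatrix} \mathbb{A}^{-1} & 0 \\ 0 & 0 \end{pmatrix} + O(\varepsilon^2)$ relative to the decomposition $\mathcal{E} = \mathcal{P}^{(\uptau)}\mathcal{E} \oplus \mathcal{P}_\perp^{(\uptau)}\mathcal{E}$, with the estimate uniform in $\uptau \in Q'$ and $z \in K_\sigma$. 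On the other hand, the block-matrix computation displayed immediately before the definition of $R_{\varepsilon,\mathrm{eff}}^{(\uptau)}(z)$ shows precisely that $Q_{\beta_0,\beta_1}(z) = -\begin{pmatrix} \mathbb{A}^{-1} & 0 \\ 0 & 0 \end{pmatrix}$ exactly (no error term) --- here one uses that the top-left block of $\mathcal{P}^{(\uptau)}M_\varepsilon^{\mathrm{stiff},(\uptau)}(z)\mathcal{P}^{(\uptau)} + \mathcal{P}^{(\uptau)}M^{\mathrm{soft},(\uptau)}(z)\mathcal{P}^{(\uptau)}$ agrees with the $\mathbb{A}$ appearing in (\ref{eqn:m_matrix_decomp}), since the off-diagonal (w.r.t.\ the stiff/soft split) contributions live only in the top-right block and $M_\varepsilon^{(\uptau)}(z) = M_\varepsilon^{\mathrm{stiff},(\uptau)}(z) + M^{\mathrm{soft},(\uptau)}(z)$ by Proposition \ref{prop:m_op_sum}. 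Matching sign conventions ($Q_{0,I}(z) = -(M_\varepsilon^{(\uptau)}(z))^{-1}$), the two $Q$-operators differ by exactly the $O(\varepsilon^2)$ remainder from Theorem \ref{thm:m_inverse_est}.

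Putting the pieces together: write the difference of resolvents as $S^{\mathrm{soft},(\uptau)}(z)[(M_\varepsilon^{(\uptau)}(z))^{-1} - (-Q_{\beta_0,\beta_1}(z))]S^{\mathrm{soft},(\uptau)}(\bar z)^*$, bound the two sandwiching factors by a uniform constant via Lemma \ref{lem:soln_op_est}, and bound the middle factor by $C\varepsilon^2$ via Theorem \ref{thm:m_inverse_est}, with $C$ independent of $\varepsilon$, $\uptau \in Q'$, $z \in K_\sigma$. The one point that needs a careful word rather than a calculation is the identification that the upper-triangular block matrix in the displayed equation before the definition of $R_{\varepsilon,\mathrm{eff}}^{(\uptau)}(z)$ is genuinely the block representation of $\overline{\beta_0 + \beta_1 M^{\mathrm{soft},(\uptau)}(z)}$ relative to $\mathcal{P}^{(\uptau)}\mathcal{E} \oplus \mathcal{P}_\perp^{(\uptau)}\mathcal{E}$ --- i.e.\ that $\beta_1 = \mathcal{P}^{(\uptau)}$ kills the bottom-left block while $\beta_0 = \mathcal{P}_\perp^{(\uptau)} + \mathcal{P}^{(\uptau)}M_\varepsilon^{\mathrm{stiff},(\uptau)}(z)\mathcal{P}^{(\uptau)}$ supplies the bottom-right identity and the stiff part of the top-left block; I expect this bookkeeping, and verifying the validity of $(\beta_0,\beta_1)$ (closability and bounded invertibility of $\overline{\beta_0 + \beta_1 M^{\mathrm{soft},(\uptau)}(z)}$, which has already been noted right before Proposition \ref{prop:r_eff_estimate_soft}), to be the only mildly delicate part; everything else is a direct application of Theorem \ref{thm:m_inverse_est} and Lemma \ref{lem:soln_op_est}.
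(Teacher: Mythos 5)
Your proposal is correct and is essentially the paper's own argument: the paper reaches the same conclusion by first comparing $R_\varepsilon^{(\uptau)}(z)$ with the compression $P_{\text{soft}}(\widehat{A}^{(\uptau)}_{\varepsilon,\mathcal{P}_\perp^{(\uptau)},\mathcal{P}^{(\uptau)}}-z)^{-1}P_{\text{soft}}$ via Theorem \ref{thm:first_asymp_result} and then identifying that compression with $R_{\varepsilon,\text{eff}}^{(\uptau)}(z)$ exactly, which, once unwound through Lemma \ref{lem:useful_b0b1_check}, is precisely your direct comparison of $-(M_\varepsilon^{(\uptau)}(z))^{-1}$ against the exactly-inverted upper-triangular block matrix, sandwiched between the uniformly bounded $S^{\text{soft},(\uptau)}(z)$ and $S^{\text{soft},(\uptau)}(\bar z)^*$. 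The only organizational difference is that you bypass the full-space intermediate operator and work with the two Krein formulas on the soft triple directly; the key inputs (Theorem \ref{thm:m_inverse_est}, Proposition \ref{prop:m_op_sum}, Lemma \ref{lem:soln_op_est}) are identical.
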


\begin{proof}
    See Appendix \ref{appendix:krein_calc}.
\end{proof}

We now turn our attention to discuss dilations of $R_{\varepsilon,\text{eff}}^{(\uptau)}(z)$. We would like to guess an operator $\mathcal{R}_{\varepsilon,\text{eff}}^{(\uptau)}(z)$ (note the use of calligraphic font) on the full space $L^2(Q)$ that is $O(\varepsilon^2)$ close to $(A_\varepsilon^{(\uptau)}-z)^{-1}$. The hope is that $\mathcal{R}_{\varepsilon,\text{eff}}^{(\uptau)}(z)$ is the resolvent of a self-adjoint operator whose action depends on $\varepsilon$ in a clear way. One necessary condition is $\mathcal{R}_{\varepsilon, \text{eff}}^{(\uptau)}(z)^* = \mathcal{R}_{\varepsilon, \text{eff}}^{(\uptau)}(\bar{z})$. The guess is as follows:

\begin{defn}
    Let $\mathcal{R}_{\varepsilon,\text{eff}}^{(\uptau)}(z)$ be the operator on $L^2(Q)$ defined by the following formula with respect to the decomposition $\mathcal{H} = L^2(Q_\text{soft}) \oplus L^2(Q_\text{stiff-int}) \oplus L^2(Q_\text{stiff-ls})$:
    \begin{align}\label{eqn:r_eff_fullspace}
       \mathcal{R}_{\varepsilon,\text{eff}}^{(\uptau)}(z) = 
       \begin{pmatrix}
        R_{\varepsilon,\text{eff}}^{(\uptau)}(z) & a_{12} & a_{13} \\
        a_{21} & a_{22} & a_{23} \\
        a_{31} & a_{32} & a_{33}
    \end{pmatrix}
    \end{align}
    where
    \begin{align*}
        a_{21} &= \Pi^{\text{stiff-int}, (\uptau)} k^{(\uptau)}(z)
        \left[ R_{\varepsilon,\text{eff}}^{(\uptau)}(z) - (A_0^{\text{soft},(\uptau)} - z)^{-1} \right] \\
        a_{31} &= \Pi^{\text{stiff-ls}, (\uptau)} k^{(\uptau)}(z)
        \left[ R_{\varepsilon,\text{eff}}^{(\uptau)}(z) - (A_0^{\text{soft},(\uptau)} - z)^{-1} \right] \\
        &\qquad a_{12} = \left( k^{(\uptau)}(\bar{z}) \left[ R_{\varepsilon,\text{eff}}^{(\uptau)}(\bar{z}) - (A_0^{\text{soft},(\uptau)} - \bar{z})^{-1} \right]   \right)^* \left( \Pi^{\text{stiff-int},(\uptau)} \right)^* \\
        &\qquad a_{22} = \Pi^{\text{stiff-int}, (\uptau)} k^{(\uptau)}(z)
        \left( k^{(\uptau)}(\bar{z}) \left[ R_{\varepsilon,\text{eff}}^{(\uptau)}(\bar{z}) - (A_0^{\text{soft},(\uptau)} - \bar{z})^{-1} \right]   \right)^* \left( \Pi^{\text{stiff-int},(\uptau)} \right)^*  \\
        &\qquad a_{32} = \Pi^{\text{stiff-ls}, (\uptau)} k^{(\uptau)}(z)
        \left( k^{(\uptau)}(\bar{z}) \left[ R_{\varepsilon,\text{eff}}^{(\uptau)}(\bar{z}) - (A_0^{\text{soft},(\uptau)} - \bar{z})^{-1} \right]   \right)^* \left( \Pi^{\text{stiff-int},(\uptau)} \right)^*  \\
        &\qquad\qquad\qquad a_{13} = \left( k^{(\uptau)}(\bar{z}) \left[ R_{\varepsilon,\text{eff}}^{(\uptau)}(\bar{z}) - (A_0^{\text{soft},(\uptau)} - \bar{z})^{-1} \right]   \right)^* \left( \Pi^{\text{stiff-ls},(\uptau)} \right)^* \\
        &\qquad\qquad\qquad a_{23} = \Pi^{\text{stiff-int}, (\uptau)} k^{(\uptau)}(z)
        \left( k^{(\uptau)}(\bar{z}) \left[ R_{\varepsilon,\text{eff}}^{(\uptau)}(\bar{z}) - (A_0^{\text{soft},(\uptau)} - \bar{z})^{-1} \right]   \right)^* \left( \Pi^{\text{stiff-ls},(\uptau)} \right)^*  \\
        &\qquad\qquad\qquad a_{33} = \Pi^{\text{stiff-ls}, (\uptau)} k^{(\uptau)}(z)
        \left( k^{(\uptau)}(\bar{z}) \left[ R_{\varepsilon,\text{eff}}^{(\uptau)}(\bar{z}) - (A_0^{\text{soft},(\uptau)} - \bar{z})^{-1} \right]   \right)^* \left( \Pi^{\text{stiff-ls},(\uptau)} \right)^*
    \end{align*}
    where $k^{(\uptau)}(z) := \Gamma_0^{\text{soft},(\uptau)}|_{\mathcal{D}(A_0^{\text{soft},(\uptau)}) \dot{+} \text{ran}(\Pi^{\soft,(\tau)}\mathcal{P}^{(\tau)})}$.
\end{defn}

\begin{prop}\label{prop:r_eff_fullspace}
    We have the following operator norm estimate, uniform in $z\in K_\sigma$ and $\uptau \in Q'$:
    $$
        (A_\varepsilon^{(\uptau)} - z )^{-1} - \mathcal{R}_{\varepsilon,\text{eff}}^{(\uptau)}(z) = O(\varepsilon^2).
    $$
\end{prop}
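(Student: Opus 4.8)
\textbf{Proof plan for Proposition \ref{prop:r_eff_fullspace}.}

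The plan is to build the dilation $\mathcal{R}_{\varepsilon,\text{eff}}^{(\uptau)}(z)$ up from the block $R_{\varepsilon,\text{eff}}^{(\uptau)}(z)$ by comparing it, block by block, with the true resolvent $(A_\varepsilon^{(\uptau)}-z)^{-1}$ written in the decomposition $\mathcal{H} = L^2(Q_\soft) \oplus L^2(Q_\stin) \oplus L^2(Q_\stls)$. The soft-soft block of $(A_\varepsilon^{(\uptau)}-z)^{-1}$ is exactly $R_\varepsilon^{(\uptau)}(z) = P_\soft (A_\varepsilon^{(\uptau)}-z)^{-1} P_\soft$, which by Proposition \ref{prop:r_eff_estimate_soft} is $O(\varepsilon^2)$-close to $R_{\varepsilon,\text{eff}}^{(\uptau)}(z)$. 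So the soft-soft entry is handled. For the off-diagonal and stiff-stiff blocks, I would use Krein's formula (Corollary \ref{cor:main_model_equiv}): $(A_\varepsilon^{(\uptau)}-z)^{-1} = (A_{\varepsilon,0}^{(\uptau)}-z)^{-1} - S_\varepsilon^{(\uptau)}(z) M_\varepsilon^{(\uptau)}(z)^{-1} S_\varepsilon^{(\uptau)}(\bar z)^*$, and compress with $P_\stin$, $P_\stls$ on the left and $P_\soft$ on the right (and permutations thereof). Since $(A_{\varepsilon,0}^{(\uptau)}-z)^{-1}$ is block-diagonal (the decoupling has no cross terms), the cross blocks come entirely from the Krein correction term. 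Using (\ref{eqn:s_op_sum}) to write $P_\bigstar S_\varepsilon^{(\uptau)}(z) = S_\varepsilon^{\bigstar,(\uptau)}(z)\mathcal{P}_\bullet$ and the identity (\ref{eqn:useful_b0b1_check}) from Lemma \ref{lem:useful_b0b1_check} (together with Theorem \ref{thm:m_inverse_est}), one replaces $M_\varepsilon^{(\uptau)}(z)^{-1}$ by $\mathrm{diag}(\mathbb{A}^{-1},0) + O(\varepsilon^2)$, which forces the surviving contributions to factor through $\mathcal{P}^{(\uptau)}\mathcal{E}$.

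The key computational step is then to recognize the leading (i.e.\ non-$O(\varepsilon^2)$) part of each compressed Krein term as precisely the $a_{ij}$ in the definition of $\mathcal{R}_{\varepsilon,\text{eff}}^{(\uptau)}(z)$. For the stiff-soft block $P_\stin(A_\varepsilon^{(\uptau)}-z)^{-1}P_\soft$, Lemma \ref{lem:soln_op_est} gives $S_\varepsilon^{\stin,(\uptau)}(z) = \Pi^{\stin,(\uptau)} + O(\varepsilon^2)$, and one identifies $S_\varepsilon^{\stin,(\uptau)}(z)\mathcal{P}^{(\uptau)}\mathbb{A}^{-1}\mathcal{P}^{(\uptau)}(S_\varepsilon^{\soft,(\uptau)}(\bar z))^*$ with $\Pi^{\stin,(\uptau)} k^{(\uptau)}(z)[R_{\varepsilon,\text{eff}}^{(\uptau)}(z) - (A_0^{\soft,(\uptau)}-z)^{-1}]$; this uses the byproduct identity in the remark after Proposition \ref{prop:r_eff_estimate_soft} (which comes from Appendix \ref{appendix:krein_calc}) that expresses $R_{\varepsilon,\text{eff}}^{(\uptau)}(z) - (A_0^{\soft,(\uptau)}-z)^{-1}$ as $-S^{\soft,(\uptau)}(z)\,\mathrm{diag}(\mathbb{A}^{-1},0)\,(S^{\soft,(\uptau)}(\bar z))^*$, and the role of $k^{(\uptau)}(z) = \Gamma_0^{\soft,(\uptau)}$ restricted appropriately to peel off the $S^{\soft}(z)$ factor and leave a trace in $\mathcal{P}^{(\uptau)}\mathcal{E}$. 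The stiff-stiff blocks are obtained the same way, with both $S_\varepsilon^{\bigstar,(\uptau)}(z)$ factors replaced by lifts $\Pi^{\bigstar,(\uptau)}$ up to $O(\varepsilon^2)$, producing the nested adjoint structure displayed in $a_{22}, a_{23}, a_{32}, a_{33}$. Throughout, uniformity in $z\in K_\sigma$ and $\uptau\in Q'$ is inherited from Propositions \ref{prop:decoupling_properties}, \ref{prop:lift_properties}, \ref{prop:m_components_unif_bound}, Lemma \ref{lem:soln_op_est}, and Theorem \ref{thm:m_inverse_est}, all of which give $\uptau$- and $z$-uniform bounds; the $O(\varepsilon^2)$ errors accumulate additively over the finitely many blocks.

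The main obstacle I anticipate is bookkeeping rather than a conceptual difficulty: one must carefully track which projections ($\mathcal{P}^{(\uptau)}$ versus $\mathcal{P}_\perp^{(\uptau)}$, $\mathcal{P}_\interior$ versus $\mathcal{P}_\ls$) appear where, since $M^{\soft,(\uptau)}(z)$ is \emph{not} block-diagonal with respect to $\mathcal{P}^{(\uptau)} \oplus \mathcal{P}_\perp^{(\uptau)}$ (as stressed in the remark after Proposition \ref{prop:m_op_sum}), so the naive factorization through $\mathbb{A}^{-1}$ only works after the $O(\varepsilon^2)$ reduction of $M_\varepsilon^{(\uptau)}(z)^{-1}$ has been applied. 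A secondary subtlety is confirming that the $O(\varepsilon^2)$ difference between $S_\varepsilon^{\bigstar,(\uptau)}(z)$ and $\Pi^{\bigstar,(\uptau)}$, when multiplied against the bounded operators $\mathbb{A}^{-1}$, $k^{(\uptau)}(z)$, and $R_{\varepsilon,\text{eff}}^{(\uptau)}(z) - (A_0^{\soft,(\uptau)}-z)^{-1}$ (all uniformly bounded), indeed only contributes at order $\varepsilon^2$ — which it does, since everything it multiplies is $O(1)$. The cleanest route is probably to prove the block identities on the operator level first (exact, with no errors) for an auxiliary dilation of $R_\varepsilon^{(\uptau)}(z)$ itself, namely $(A_\varepsilon^{(\uptau)}-z)^{-1}$ expressed via the soft triple, then substitute $R_{\varepsilon,\text{eff}}^{(\uptau)}(z)$ for $R_\varepsilon^{(\uptau)}(z)$ and $\Pi^{\bigstar,(\uptau)}$ for $S_\varepsilon^{\bigstar,(\uptau)}(z)$, collecting the resulting discrepancies into a single $O(\varepsilon^2)$ estimate; I would defer the detailed Krein-formula manipulations to the same appendix (Appendix \ref{appendix:krein_calc}) that proves Proposition \ref{prop:r_eff_estimate_soft}.
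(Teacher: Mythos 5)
Your proposal is correct and follows essentially the same route as the paper's Appendix~\ref{appendix:krein_calc} proof: entry-wise comparison via Krein's formula, the block-diagonal reduction of $M_\varepsilon^{(\uptau)}(z)^{-1}$ to $\mathrm{diag}(\mathbb{A}^{-1},0)+O(\varepsilon^2)$, insertion of $k^{(\uptau)}(z)$ through the identity $\Gamma_0^{\soft,(\uptau)}S^{\soft,(\uptau)}(z)=I$, replacement of $S_\varepsilon^{\bigstar,(\uptau)}(z)$ by $\Pi^{\bigstar,(\uptau)}$ up to $O(\varepsilon^2)$ via Lemma~\ref{lem:soln_op_est}, and uniformity inherited from the cited bounds. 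The only cosmetic difference is that the paper first passes to $(\widehat{A}^{(\uptau)}_{\varepsilon,\mathcal{P}_\perp^{(\uptau)},\mathcal{P}^{(\uptau)}}-z)^{-1}$ via Theorem~\ref{thm:first_asymp_result} and computes its blocks exactly, whereas you apply the $M^{-1}$ reduction directly to $(A_\varepsilon^{(\uptau)}-z)^{-1}$; these are interchangeable.
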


\begin{proof}
    This is verified entry-wise, and only requires minimal modifications to the proof in \cite[Theorem 3.9]{eff_behavior}. For completeness we include its proof in Appendix \ref{appendix:krein_calc}.
\end{proof}

Recall that for $R_{\varepsilon,\text{eff}}^{(\uptau)}(z)$ we have $\beta_0 = \mathcal{P}_\perp^{(\uptau)} + \mathcal{P}^{(\uptau)} M_\varepsilon^{\text{stiff},(\uptau)}(z) \mathcal{P}^{(\uptau)}$. We can further simplify the term $\mathcal{P}^{(\uptau)} M_\varepsilon^{\text{stiff},(\uptau)}(z) \mathcal{P}^{(\uptau)}$ by using Lemma \ref{lem:m_op_est} and (\ref{eqn:pi_op_sum}):
\begin{align}
    \mathcal{P}^{(\uptau)} &M_\varepsilon^{\text{stiff},(\uptau)}(z) \mathcal{P}^{(\uptau)} = \mathcal{P}^{(\uptau)} M_\varepsilon^{\text{stiff-int},(\uptau)}(z) \mathcal{P}_\interior \mathcal{P}^{(\uptau)} + \mathcal{P}^{(\uptau)} M_\varepsilon^{\text{stiff-ls},(\uptau)}(z) \mathcal{P}_\ls \mathcal{P}^{(\uptau)} \nonumber\\
    &= \mathcal{P}_{\text{stiff-int}}^{(\uptau)} M_\varepsilon^{\text{stiff-int},(\uptau)}(z) \mathcal{P}_{\text{stiff-int}}^{(\uptau)} \oplus \mathcal{P}_{\text{stiff-ls}}^{(\uptau)} M_\varepsilon^{\text{stiff-ls},(\uptau)}(z) \mathcal{P}_{\text{stiff-ls}}^{(\uptau)} \nonumber\\
    &= \left( \mathcal{P}_{\text{stiff-int}}^{(\uptau)} \Lambda_\varepsilon^{\text{stiff-int},(\uptau)} \mathcal{P}_{\text{stiff-int}}^{(\uptau)} + z \mathcal{P}^{(\uptau)}_{\text{stiff-int}} (\Pi^{\text{stiff-int},(\uptau)})^* \Pi^{\text{stiff-int},(\uptau)} \mathcal{P}^{(\uptau)}_{\text{stiff-int}} \right) \nonumber\\
    &\qquad \oplus \left( \mathcal{P}_{\text{stiff-ls}}^{(\uptau)} \Lambda_\varepsilon^{\text{stiff-ls},(\uptau)} \mathcal{P}_{\text{stiff-ls}}^{(\uptau)} + z \mathcal{P}^{(\uptau)}_{\text{stiff-ls}} (\Pi^{\text{stiff-ls},(\uptau)})^* \Pi^{\text{stiff-ls},(\uptau)} \mathcal{P}^{(\uptau)}_{\text{stiff-ls}} \right) + O(\varepsilon^2) \nonumber\\
    &= \mathcal{P}^{(\uptau)} \left( \Lambda_\varepsilon^{\text{stiff-int},(\uptau)} \oplus \Lambda_\varepsilon^{\text{stiff-ls},(\uptau)} \right) \mathcal{P}^{(\uptau)} \nonumber\\
    &\qquad + z\mathcal{P}^{(\uptau)} \left( (\Pi^{\text{stiff-int},(\uptau)})^* \Pi^{\text{stiff-int},(\uptau)} \oplus (\Pi^{\text{stiff-ls},(\uptau)})^* \Pi^{\text{stiff-ls},(\uptau)} \right) \mathcal{P}^{(\uptau)} + O(\varepsilon^2) \nonumber\\
    &= \mathcal{P}^{(\uptau)} \left( \Lambda_\varepsilon^{\text{stiff-int},(\uptau)} \oplus \Lambda_\varepsilon^{\text{stiff-ls},(\uptau)} \right) \mathcal{P}^{(\uptau)} \nonumber\\
    &\qquad + z \mathcal{P}^{(\uptau)} \left( (\Pi^{\text{stiff-int},(\uptau)} \oplus \Pi^{\text{stiff-ls},(\uptau)})^* (\Pi^{\text{stiff-int},(\uptau)} \oplus \Pi^{\text{stiff-ls},(\uptau)}) \right) \mathcal{P}^{(\uptau)} + O(\varepsilon^2).
\end{align}

This is helpful as it separates the term that depends on $\varepsilon^{-2}$ (the stiff DtN maps), from the terms that are uniformly bounded (the stiff harmonic lifts). We therefore define:

\begin{defn}\label{defn:r_hom_soft}
    We define $R_{\varepsilon,\text{hom}}^{(\uptau)}(z)$ as the following operator on $L^2(Q_\text{soft})$:
    $$\left( \widehat{A}^{\text{soft},(\uptau)}_{\mathcal{P}_\perp^{(\uptau)} + \mathcal{P}^{(\uptau)} \left[ \left(\Lambda_\varepsilon^{\text{stiff-int},(\uptau)} \oplus \Lambda_\varepsilon^{\text{stiff-ls},(\uptau)} \right) + z  (\Pi^{\text{stiff-int},(\uptau)} \oplus \Pi^{\text{stiff-ls},(\uptau)})^* (\Pi^{\text{stiff-int},(\uptau)} \oplus \Pi^{\text{stiff-ls},(\uptau)}) \right] \mathcal{P}^{(\uptau)} ,\mathcal{P}^{(\uptau)}  } - z \right)^{-1},$$
    and set $\mathcal{R}_{\varepsilon,\text{hom}}^{(\uptau)}(z)$ as the operator on $L^2(Q)$ defined by (\ref{eqn:r_eff_fullspace}), but with all the terms involving ``$R_{\varepsilon,\text{eff}}^{(\uptau)}$" to be replaced by $R_{\varepsilon,\text{hom}}^{(\uptau)}$.
\end{defn}

For the validity of the choice $(\beta_0,\beta_1)$, we use: the validity of $(\beta_0,\beta_1)$ for $R_{\varepsilon,\text{eff}}^{(\uptau)}(z)$, and the observation that $\mathbb{A}$ is bounded below uniformly in $\varepsilon$, $\uptau$, and $z$. (Details are provided in the proof of Theorem \ref{thm:self_adjoint} later (``top left entry").)

We conclude this section with the following result:

\begin{thm}
    We have the following operator norm estimate, uniform in $z\in K_\sigma$ and $\uptau \in Q'$:
    \begin{align*}
        (A_\varepsilon^{(\uptau)} - z)^{-1} - \mathcal{R}_{\varepsilon,\text{hom}}^{(\uptau)}(z) = O(\varepsilon^2).
    \end{align*}
\end{thm}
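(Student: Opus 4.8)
The plan is to combine the two $O(\varepsilon^2)$ estimates already established with the triangle inequality. By Proposition \ref{prop:r_eff_fullspace}, we have $(A_\varepsilon^{(\uptau)} - z)^{-1} - \mathcal{R}_{\varepsilon,\text{eff}}^{(\uptau)}(z) = O(\varepsilon^2)$ uniformly in $z\in K_\sigma$ and $\uptau \in Q'$. Thus it suffices to prove that $\mathcal{R}_{\varepsilon,\text{eff}}^{(\uptau)}(z) - \mathcal{R}_{\varepsilon,\text{hom}}^{(\uptau)}(z) = O(\varepsilon^2)$ with the same uniformity, and then conclude by
\begin{align*}
    \| (A_\varepsilon^{(\uptau)} - z)^{-1} - \mathcal{R}_{\varepsilon,\text{hom}}^{(\uptau)}(z) \|
    \leq \| (A_\varepsilon^{(\uptau)} - z)^{-1} - \mathcal{R}_{\varepsilon,\text{eff}}^{(\uptau)}(z) \|
    + \| \mathcal{R}_{\varepsilon,\text{eff}}^{(\uptau)}(z) - \mathcal{R}_{\varepsilon,\text{hom}}^{(\uptau)}(z) \|.
\end{align*}

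The core of the argument is therefore to show $R_{\varepsilon,\text{eff}}^{(\uptau)}(z) - R_{\varepsilon,\text{hom}}^{(\uptau)}(z) = O(\varepsilon^2)$ on $L^2(Q_\text{soft})$, and then to propagate this estimate entry-wise through the dilation formula (\ref{eqn:r_eff_fullspace}). For the soft-component estimate, I would apply Krein's formula (Theorem \ref{thm:kreins_formula}) to both resolvents, which are built from the same triple $(A_0^{\text{soft},(\uptau)}, \Lambda^{\text{soft},(\uptau)}, \Pi^{\text{soft},(\uptau)})$ but with different $\beta_0$: for $R_{\varepsilon,\text{eff}}^{(\uptau)}$ one has $\beta_0 = \mathcal{P}_\perp^{(\uptau)} + \mathcal{P}^{(\uptau)} M_\varepsilon^{\text{stiff},(\uptau)}(z) \mathcal{P}^{(\uptau)}$, while for $R_{\varepsilon,\text{hom}}^{(\uptau)}$ one has $\beta_0$ with $M_\varepsilon^{\text{stiff},(\uptau)}(z)$ replaced by $\Lambda_\varepsilon^{\text{stiff-int},(\uptau)} + \Lambda_\varepsilon^{\text{stiff-ls},(\uptau)} + z(\Pi^{\text{stiff-int},(\uptau)} + \Pi^{\text{stiff-ls},(\uptau)})^*(\Pi^{\text{stiff-int},(\uptau)} + \Pi^{\text{stiff-ls},(\uptau)})$. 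The difference of these two $\beta_0$'s is $\mathcal{P}^{(\uptau)}\big(M_\varepsilon^{\text{stiff},(\uptau)}(z) - \Lambda_\varepsilon^{\text{stiff},(\uptau)} - z(\cdots)^*(\cdots)\big)\mathcal{P}^{(\uptau)}$, which is exactly $O(\varepsilon^2)$ by Lemma \ref{lem:m_op_est} (applied componentwise to the two stiff parts, together with the displayed computation preceding Definition \ref{defn:r_hom_soft}). Since $\beta_1 = \mathcal{P}^{(\uptau)}$ is unchanged, the difference of the two pseudoresolvents $Q_{\beta_0,\beta_1}(z) = -(\overline{\beta_0 + \beta_1 M^{\text{soft},(\uptau)}(z)})^{-1}\beta_1$ is controlled by the standard resolvent identity $X^{-1} - Y^{-1} = X^{-1}(Y - X)Y^{-1}$, using that both inverses are bounded uniformly in $\varepsilon$, $\uptau$, $z$ (this uniform boundedness is precisely the content of the estimate on $\mathbb{A}^{-1}$ in Theorem \ref{thm:m_inverse_est}, since the relevant block is $\mathbb{A}$ up to the $O(\varepsilon^2)$ correction, which we keep small by assuming $\varepsilon$ small). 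As $S^{\text{soft},(\uptau)}(z)$ and $S^{\text{soft},(\uptau)}(\bar{z})^*$ are bounded uniformly (Lemma \ref{lem:soln_op_est} and Proposition \ref{prop:lift_properties}), this yields $R_{\varepsilon,\text{eff}}^{(\uptau)}(z) - R_{\varepsilon,\text{hom}}^{(\uptau)}(z) = O(\varepsilon^2)$.

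Once the soft-block estimate is in hand, the full-space estimate $\mathcal{R}_{\varepsilon,\text{eff}}^{(\uptau)}(z) - \mathcal{R}_{\varepsilon,\text{hom}}^{(\uptau)}(z) = O(\varepsilon^2)$ follows by inspecting the block matrix (\ref{eqn:r_eff_fullspace}): every entry $a_{ij}$ is obtained by composing the bounded, $\varepsilon$-independent operators $\Pi^{\bigstar,(\uptau)}$, $(\Pi^{\bigstar,(\uptau)})^*$, $k^{(\uptau)}(z)$ (a restriction of $\Gamma_0^{\text{soft},(\uptau)}$, bounded uniformly on the relevant finite-codimension piece), and the resolvent $(A_0^{\text{soft},(\uptau)}-z)^{-1}$, with either $R_{\varepsilon,\text{eff}}^{(\uptau)}(z)$ or $R_{\varepsilon,\text{eff}}^{(\uptau)}(\bar z)$; replacing these by $R_{\varepsilon,\text{hom}}^{(\uptau)}$ changes each entry by $O(\varepsilon^2)$, since the difference $R_{\varepsilon,\text{eff}}^{(\uptau)} - R_{\varepsilon,\text{hom}}^{(\uptau)} = O(\varepsilon^2)$ (also at $\bar z$, by taking adjoints) and all surrounding factors are uniformly bounded. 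Summing a fixed ($3\times 3$) number of $O(\varepsilon^2)$ entries gives the claim. The main obstacle — more a bookkeeping burden than a conceptual one — is verifying the uniform (in $\varepsilon$, $\uptau$, $z\in K_\sigma$) boundedness of $k^{(\uptau)}(z)$ and of $(\overline{\beta_0 + \beta_1 M^{\text{soft},(\uptau)}(z)})^{-1}$ for the ``hom'' choice of $\beta_0$; the latter is exactly the point flagged after Definition \ref{defn:r_hom_soft} as ``details provided in the proof of Theorem \ref{thm:self_adjoint}'', and reduces to the lower bound on $\mathbb{A}$ from Theorem \ref{thm:m_inverse_est} plus smallness of $\varepsilon$. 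Everything else is the routine resolvent-identity manipulation already carried out for Propositions \ref{prop:r_eff_estimate_soft} and \ref{prop:r_eff_fullspace} in Appendix \ref{appendix:krein_calc}, so I would simply state that the argument is analogous and refer there.
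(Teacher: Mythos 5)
Your proposal is correct and follows essentially the same route as the paper: the paper's own (very terse) proof likewise reduces the claim to $R_{\varepsilon,\text{eff}}^{(\uptau)}(z) - R_{\varepsilon,\text{hom}}^{(\uptau)}(z) = O(\varepsilon^2)$ and invokes the resolvent identity applied to $-(\overline{\beta_0 + \beta_1 M(z)})^{-1}$, with bounded invertibility "by construction". You have merely filled in the details the paper leaves implicit (the $O(\varepsilon^2)$ difference of the two $\beta_0$'s via Lemma \ref{lem:m_op_est}, the uniform bound on the inverse via the $\mathbb{A}^{-1}$ estimate of Theorem \ref{thm:m_inverse_est}, and the entry-wise propagation through the block matrix), all of which are consistent with the paper's intended argument.
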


\begin{proof}
    This follows from $R_{\varepsilon,\text{eff}}^{(\uptau)}(z) - R_{\varepsilon,\text{hom}}^{(\uptau)}(z) = O(\varepsilon^2)$, which can be checked, for instance by the resolvent identity applied to $-(\overline{\beta_0 + \beta_1 M(z)})^{-1}$ (this is boundedly invertible by construction).
\end{proof}

\subsection{Self-adjointness of \texorpdfstring{$\mathcal{A}_{\varepsilon,\text{hom}}^{(\uptau)}$}{A eps hom tau} (Preliminaries)}

In the previous section, we have identified a candidate operator $\mathcal{R}_{\varepsilon,\text{hom}}^{(\uptau)}(z)$ on $L^2(Q)$ which could serve as the resolvent of some self-adjoint operator which will be denoted later by $\mathcal{A}_{\varepsilon,\text{hom}}^{(\uptau)}$. However, the term $k^{(\uptau)}(z)$ has finite-range, since $\mathcal{P}^{(\uptau)}\mathcal{E}$ is finite dimensional. This implies that self-adjointness on $L^2(Q)$ is impossible, as we will be left with non-zero defect indices. We may still pursue the question of self-adjointness, but on some subspace of $L^2(Q)$. This motivates us to define:

\begin{defn}\label{defn:truncated_dtn_lift}
    Write $\breve{\mathcal{E}}^{(\uptau)} := \mathcal{P}^{(\uptau)}\mathcal{E}$ for the truncated boundary space. Now introduce the following truncated operators:
    \begin{alignat*}{2}
        &\breve{\Pi}^{\text{soft}, (\uptau)} := \Pi^{\text{soft}, (\uptau)} |_{\breve{\mathcal{E}}^{(\uptau)}},
        \qquad &&\breve{\Lambda}^{\text{soft},(\uptau)} := \mathcal{P}^{(\uptau)} \Lambda^{\text{soft},(\uptau)} |_{\breve{\mathcal{E}}^{(\uptau)}}, \\
        &\breve{\Pi}^{\text{stiff-int}, (\uptau)} := \Pi^{\text{stiff-int}, (\uptau)} |_{\breve{\mathcal{E}}^{(\uptau)}}, 
        \qquad &&\breve{\Lambda}^{\text{stiff-int},(\uptau)}_\varepsilon := \mathcal{P}^{(\uptau)} \Lambda^{\text{stiff-int},(\uptau)}_\varepsilon |_{\breve{\mathcal{E}}^{(\uptau)}}, \\
        &\breve{\Pi}^{\text{stiff-ls}, (\uptau)} := \Pi^{\text{stiff-ls}, (\uptau)} |_{\breve{\mathcal{E}}^{(\uptau)}}, 
        \qquad &&\breve{\Lambda}^{\text{stiff-ls},(\uptau)}_\varepsilon := \mathcal{P}^{(\uptau)} \Lambda^{\text{stiff-ls},(\uptau)}_\varepsilon |_{\breve{\mathcal{E}}^{(\uptau)}}.
    \end{alignat*}
    Set $\breve{\Pi}^{\text{stiff}, (\uptau)} := \breve{\Pi}^{\text{stiff-int},(\uptau)} \oplus \breve{\Pi}^{\text{stiff-ls},(\uptau)}$ and $\breve{\Lambda}_{\varepsilon}^{\text{stiff}, (\uptau)} := \breve{\Lambda}_{\varepsilon}^{\text{stiff-int}, (\uptau)} \oplus \breve{\Lambda}_{\varepsilon}^{\text{stiff-ls}, (\uptau)}$. By the truncated DtN maps $\breve{\Lambda}$, we mean its continuous extension to the full subspace $\breve{\mathcal{E}}$. (Recall Lemma \ref{lem:m_components_cts_ext} and the comment thereafter.)
\end{defn}

\begin{rmk}
    The goal of this section is to prove self-adjointness for each $\varepsilon$ and $\uptau$. We will hence drop the dependence on $\varepsilon$ and $\uptau$ where convenient.
\end{rmk}

As $\mathcal{P}^{(\uptau)}$ is a spectral projection with respect to the stiff DtN maps, we immediately see that $\breve{\Lambda}^{\text{stiff-int}}$ and $\breve{\Lambda}^{\text{stiff-ls}}$ are self-adjoint. In fact, $\breve{\Lambda}^{\text{soft}}$ is self-adjoint too, as it is symmetric on the finite dimensional space $\breve{\mathcal{E}}$.

The lifts $\breve{\Pi}^{\text{soft}}$, $\breve{\Pi}^{\text{stiff-int}}$, and $\breve{\Pi}^{\text{stiff-ls}}$ are injective and bounded since they are restrictions of operators that are so. We can turn it into a surjective map by restricting its codomain to:

\begin{defn} Introduce the following subspaces of $\mathcal{H} = L^2(Q)$:
    \begin{align*}
        \breve{\mathcal{H}}^{\text{soft},(\uptau)} := \text{ran}(\breve{\Pi}^{\text{soft},(\uptau)}), \quad
        \breve{\mathcal{H}}^{\text{stiff-int},(\uptau)} := \text{ran}(\breve{\Pi}^{\text{stiff-int},(\uptau)}), \quad
        \breve{\mathcal{H}}^{\text{stiff-ls},(\uptau)} := \text{ran}(\breve{\Pi}^{\text{stiff-ls},(\uptau)}),
    \end{align*}
    and set $\breve{\mathcal{H}}^{\text{stiff},(\uptau)} = \breve{\mathcal{H}}^{\text{stiff-int},(\uptau)} \oplus \breve{\mathcal{H}}^{\text{stiff-ls},(\uptau)}$. (The orthogonality is a consequence of our setup.)
\end{defn}

We may now define the following triple together with its auxiliary operators:

\begin{defn}\label{defn:trunc_triple}
    Consider the $(A_0^{\text{soft},(\uptau)}, \breve{\Lambda}^{\text{soft},(\uptau)}, \breve{\Pi}^{\text{soft},(\uptau)})$ on $L^2(Q_\text{soft})$ and boundary space $\breve{\mathcal{E}}^{(\uptau)}$. Construct the following operators in accordance with Definition \ref{defn:aux_operators}:
    \begin{align*}
        \Breve{\widehat{A}^{\text{soft},(\uptau)}} &: \mathcal{D}(A_0^{\text{soft},(\uptau)}) \dot{+} \breve{\mathcal{H}}^{\text{soft},(\uptau)} \rightarrow L^2(Q_\text{soft}), \\
        \breve{\Gamma}_0^{\text{soft},(\uptau)} &: \mathcal{D}(A_0^{\text{soft},(\uptau)}) \dot{+} \breve{\mathcal{H}}^{\text{soft},(\uptau)} \rightarrow \breve{\mathcal{E}}^{(\uptau)}, \\
        \breve{\Gamma}_1^{\text{soft},(\uptau)} &: \mathcal{D}(A_0^{\text{soft},(\uptau)}) \dot{+} \breve{\Pi}^{\text{soft},(\uptau)} \mathcal{D}(\breve{\Lambda}^{\text{soft},(\uptau)}) \rightarrow \breve{\mathcal{E}}^{(\uptau)}, \\
        \breve{S}^{\text{soft},(\uptau)}(z) &: \breve{\mathcal{E}}^{(\uptau)} \rightarrow L^2(Q_\text{soft}), \\
        \breve{M}^{\text{soft},(\uptau)}(z) &: \mathcal{D}(\breve{\Lambda}^{\text{soft},(\uptau)}) \rightarrow \breve{\mathcal{E}}^{(\uptau)}.
    \end{align*}
\end{defn}

\begin{rmk}
    As a consequence of Definition \ref{defn:truncated_dtn_lift}, $\mathcal{D}( \breve{\Lambda}^{\text{soft},(\uptau)}) = \breve{\mathcal{E}}^{(\uptau)}$.
\end{rmk}

We record some properties of the truncated triple in relation to its original counterpart.

\begin{prop}\label{prop:trunc_aux_properties}
    ~
    \begin{enumerate}
        \item $\breve{\Pi}^{\text{soft},(\uptau)} : \breve{\mathcal{E}}^{(\uptau)} \rightarrow \breve{\mathcal{H}}^{\text{soft},(\uptau)}$ and $(\breve{\Pi}^{\text{stiff-int},(\uptau)} \oplus \breve{\Pi}^{\text{stiff-ls},(\uptau)}): \breve{\mathcal{E}}^{(\uptau)} \rightarrow \breve{\mathcal{H}}^{\text{stiff},(\uptau)}$ are both bounded and boundedly invertible.
        
        \item $\Breve{\widehat{A}^{\text{soft},(\uptau)}}$ is densely defined and closed.
        
        \item $\breve{S}^{\text{soft},(\uptau)}(z) = S^{\text{soft},(\uptau)}(z)|_{\breve{\mathcal{E}}}$.
        
        \item $\breve{M}^{\text{soft},(\uptau)}(z) = \mathcal{P}^{(\uptau)} M^{\text{soft},(\uptau)}(z)|_{\breve{\mathcal{E}}}$, that is, $\breve{M}^{\text{soft},(\uptau)}(z)$ is the compression of its original operator.
        
        \item $\breve{\Gamma}_0^{\text{soft},(\uptau)}$ and $\breve{\Gamma}_1^{\text{soft},(\uptau)}$ are surjective mappings from $\mathcal{D}(\Breve{\widehat{A}^{\text{soft},(\uptau)}})$ to $\breve{\mathcal{E}}^{(\uptau)}$. Furthermore, their restrictions to $\mathcal{D}(A_0^{\text{soft},(\uptau)})$ are also surjective.
        
        \item $\breve{\Gamma}_0^{\text{soft},(\uptau)} = \Gamma_0^{\text{soft},(\uptau)} |_{\mathcal{D}(A_0^{\text{soft},(\uptau)}) \dot{+} \breve{\mathcal{H}}^{\text{soft},(\uptau)}}$ and $\breve{\Gamma}_1^{\text{soft},(\uptau)} = \mathcal{P}^{(\uptau)} \Gamma_1^{\text{soft},(\uptau)} |_{\mathcal{D}(A_0^{\text{soft},(\uptau)}) \dot{+} \breve{\mathcal{H}}^{\text{soft},(\uptau)}}$.
    \end{enumerate}
\end{prop}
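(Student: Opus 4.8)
The strategy is to verify the six items one at a time, in the stated order, since each subsequent claim leans on the previous ones; throughout, the central mechanism is that $\mathcal{P}^{(\uptau)}$ is a \emph{finite-rank orthogonal} projection and $\breve{\mathcal{E}}^{(\uptau)} = \mathcal{P}^{(\uptau)}\mathcal{E}$ is finite-dimensional, so the restriction/compression of a bounded (resp.\ closed, resp.\ injective) operator stays bounded (resp.\ closed, resp.\ injective), and invertibility questions reduce to injectivity. For item (1): $\breve{\Pi}^{\text{soft},(\uptau)}$ is bounded because $\Pi^{\text{soft},(\uptau)}$ is (Proposition \ref{prop:lift_properties}), and it is a bijection onto $\breve{\mathcal{H}}^{\text{soft},(\uptau)} := \text{ran}(\breve{\Pi}^{\text{soft},(\uptau)})$ by definition of the codomain; injectivity is inherited from $\text{ker}(\Pi^{(\uptau)}) = \{0\}$. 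Its inverse is then automatically bounded since $\breve{\mathcal{E}}^{(\uptau)}$ is finite-dimensional (any linear bijection between finite-dimensional normed spaces is a homeomorphism) — or, more robustly, because $\breve{\Pi}^{\text{soft},(\uptau)}$ is injective and bounded below on the finite-dimensional $\breve{\mathcal{E}}^{(\uptau)}$. The same argument applies to $\breve{\Pi}^{\text{stiff-int},(\uptau)} \oplus \breve{\Pi}^{\text{stiff-ls},(\uptau)}$, using the orthogonality of the two stiff lifts (noted after the definition of $\Pi^{(\uptau)}$) to handle the direct sum.

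For item (2): $\Breve{\widehat{A}^{\text{soft},(\uptau)}}$ has domain $\mathcal{D}(A_0^{\text{soft},(\uptau)}) \dot{+} \breve{\mathcal{H}}^{\text{soft},(\uptau)}$, which contains $\mathcal{D}(A_0^{\text{soft},(\uptau)})$ and is hence dense. Closedness: $\Breve{\widehat{A}^{\text{soft},(\uptau)}}$ is the restriction of $\widehat{A}^{\text{soft},(\uptau)}$ to a domain which differs from $\mathcal{D}(\widehat{A}^{\text{soft},(\uptau)})$ only by replacing $\text{ran}(\Pi^{\text{soft},(\uptau)})$ with the \emph{finite-dimensional} subspace $\breve{\mathcal{H}}^{\text{soft},(\uptau)}$; since the restriction of a closed operator to the direct sum of its "core" part $\mathcal{D}(A_0)$ (on which it agrees with the closed $A_0$-based piece) and a finite-dimensional subspace on which it is bounded is again closed, $\Breve{\widehat{A}^{\text{soft},(\uptau)}}$ is closed. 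Concretely: take a sequence $u_n = (A_0^{\text{soft},(\uptau)})^{-1} f_n + \breve\Pi^{\text{soft},(\uptau)}\phi_n \to u$ with $\Breve{\widehat{A}^{\text{soft},(\uptau)}} u_n = f_n \to g$; then $\phi_n$ lives in a fixed finite-dimensional space and, using boundedness of $\breve\Pi$ and $(A_0^{\text{soft},(\uptau)})^{-1}$, one extracts convergence $\phi_n \to \phi$, $f_n \to g$, so $u = (A_0^{\text{soft},(\uptau)})^{-1} g + \breve\Pi^{\text{soft},(\uptau)}\phi \in \mathcal{D}(\Breve{\widehat{A}^{\text{soft},(\uptau)}})$ with image $g$.

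Items (3), (4), (6) are the "compatibility with the original triple" statements and follow by unwinding Definition \ref{defn:aux_operators} applied to the truncated triple versus the original one. For (3), $\breve S^{\text{soft},(\uptau)}(z)\phi = (I + z(A_0^{\text{soft},(\uptau)} - z)^{-1})\breve\Pi^{\text{soft},(\uptau)}\phi = (I + z(A_0^{\text{soft},(\uptau)} - z)^{-1})\Pi^{\text{soft},(\uptau)}\phi = S^{\text{soft},(\uptau)}(z)\phi$ for $\phi \in \breve{\mathcal{E}}^{(\uptau)}$, i.e.\ $\breve S^{\text{soft},(\uptau)}(z) = S^{\text{soft},(\uptau)}(z)|_{\breve{\mathcal{E}}}$. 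For (4), $\breve M^{\text{soft},(\uptau)}(z) = \breve\Gamma_1^{\text{soft},(\uptau)} \breve S^{\text{soft},(\uptau)}(z)$; using that $\breve\Gamma_1^{\text{soft},(\uptau)}$ is, by Definition \ref{defn:aux_operators}, the composition of $\Gamma_1^{\text{soft},(\uptau)}$ with the projection onto $\breve{\mathcal{E}}^{(\uptau)}$ on the range side (because $\breve\Lambda^{\text{soft},(\uptau)} = \mathcal{P}^{(\uptau)}\Lambda^{\text{soft},(\uptau)}|_{\breve{\mathcal{E}}}$ and $\breve\Pi^{*}$ correspondingly lands in $\breve{\mathcal{E}}$), together with (3), one gets $\breve M^{\text{soft},(\uptau)}(z) = \mathcal{P}^{(\uptau)} M^{\text{soft},(\uptau)}(z)|_{\breve{\mathcal{E}}}$. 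Item (6) is the same bookkeeping: $\breve\Gamma_0^{\text{soft},(\uptau)}$ acts as $\Gamma_0^{\text{soft},(\uptau)}$ since both extract the "$\phi$" component of a decomposition $(A_0^{\text{soft},(\uptau)})^{-1} g + \Pi^{\text{soft},(\uptau)}\phi$ with $\phi \in \breve{\mathcal{E}}^{(\uptau)} \subset \mathcal{E}$, and $\breve\Gamma_1^{\text{soft},(\uptau)} = \mathcal{P}^{(\uptau)}\Gamma_1^{\text{soft},(\uptau)}$ because $\breve\Gamma_1$'s action $\breve\Pi^{*} g + \breve\Lambda \phi = \mathcal{P}^{(\uptau)}(\Pi^{\text{soft},(\uptau) *} g + \Lambda^{\text{soft},(\uptau)}\phi) = \mathcal{P}^{(\uptau)}\Gamma_1^{\text{soft},(\uptau)}(\cdots)$, using $\breve\Pi^{*} = \mathcal{P}^{(\uptau)}(\Pi^{\text{soft},(\uptau)})^*$. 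Finally, item (5): $\breve\Gamma_0^{\text{soft},(\uptau)}$ is surjective onto $\breve{\mathcal{E}}^{(\uptau)}$ already from its restriction to $\breve{\mathcal{H}}^{\text{soft},(\uptau)}$ since $\breve\Gamma_0^{\text{soft},(\uptau)}\breve\Pi^{\text{soft},(\uptau)} = I_{\breve{\mathcal{E}}^{(\uptau)}}$ (Proposition \ref{prop:auxops_properties}(2) for the truncated triple); surjectivity of $\breve\Gamma_1^{\text{soft},(\uptau)}$ and of both restricted to $\mathcal{D}(A_0^{\text{soft},(\uptau)})$ comes from the observation (recorded in Definition \ref{defn:trunc_triple}) that $\mathcal{D}(\breve\Lambda^{\text{soft},(\uptau)}) = \breve{\mathcal{E}}^{(\uptau)}$ is the whole finite-dimensional boundary space, so $\breve\Gamma_1^{\text{soft},(\uptau)}$ restricted to $\mathcal{D}(A_0^{\text{soft},(\uptau)})$ equals $\breve\Pi^{\text{soft},(\uptau) *}(A_0^{\text{soft},(\uptau)})$ acting on all of $\mathcal{D}(A_0^{\text{soft},(\uptau)})$, whose range is $\text{ran}(\breve\Pi^{\text{soft},(\uptau) *}) = \breve{\mathcal{E}}^{(\uptau)}$ by item (1) (boundedly invertible $\Rightarrow$ adjoint surjective onto the finite-dimensional space).

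The main obstacle I anticipate is item (5), specifically the surjectivity of $\breve\Gamma_1^{\text{soft},(\uptau)}$ and the need to be careful that "surjective onto $\breve{\mathcal{E}}^{(\uptau)}$" is genuinely a statement about the \emph{compressed} Neumann-trace, not the full one — one must use both halves of the characterization of $\Gamma_1$ (namely $\breve\Lambda = \breve\Gamma_1\breve\Pi$ \emph{and} $\breve\Pi^* = \breve\Gamma_1 A_0^{-1}$), and the cleanest route is to show that $\breve\Gamma_1^{\text{soft},(\uptau)}|_{\mathcal{D}(A_0^{\text{soft},(\uptau)})} = \breve\Pi^{\text{soft},(\uptau) *}A_0^{\text{soft},(\uptau)}$ is already surjective (using item (1) to get $\breve\Pi^{\text{soft},(\uptau) *}$ surjective onto the finite-dimensional $\breve{\mathcal{E}}^{(\uptau)}$, since a boundedly invertible operator has surjective adjoint), which then gives surjectivity of the restrictions to $\mathcal{D}(A_0^{\text{soft},(\uptau)})$ for free and a fortiori on the larger domain $\mathcal{D}(\Breve{\widehat{A}^{\text{soft},(\uptau)}})$. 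Everything else is routine unwinding of definitions and exploitation of finite-dimensionality; I would keep the written proof short, treating (3), (4), (6) in one paragraph as "direct computation from Definition \ref{defn:aux_operators}".
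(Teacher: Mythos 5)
Your proposal is correct and follows essentially the same route as the paper's own proof: boundedness and invertibility of the truncated lifts via finite-dimensionality of $\breve{\mathcal{E}}^{(\uptau)}$, closedness of $\Breve{\widehat{A}^{\text{soft},(\uptau)}}$ from the finite-dimensional perturbation of the graph of $A_0^{\text{soft},(\uptau)}$, items (3), (4), (6) by unwinding Definition \ref{defn:aux_operators}, and surjectivity of $\breve{\Gamma}_1^{\text{soft},(\uptau)}|_{\mathcal{D}(A_0^{\text{soft},(\uptau)})}$ from the identity $\breve{\Gamma}_1^{\text{soft},(\uptau)}(A_0^{\text{soft},(\uptau)})^{-1} = (\breve{\Pi}^{\text{soft},(\uptau)})^*$ together with surjectivity of $(\breve{\Pi}^{\text{soft},(\uptau)})^*$ from item (1). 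One small caution: your claim that \emph{both} traces restricted to $\mathcal{D}(A_0^{\text{soft},(\uptau)})$ are surjective cannot hold for $\breve{\Gamma}_0^{\text{soft},(\uptau)}$, since $\ker(\Gamma_0) = \mathcal{D}(A_0)$ forces $\breve{\Gamma}_0^{\text{soft},(\uptau)}|_{\mathcal{D}(A_0^{\text{soft},(\uptau)})} \equiv 0$; the ``furthermore'' clause is meant for (and the paper only proves it for) $\breve{\Gamma}_1^{\text{soft},(\uptau)}$, which is the only case used later.
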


\begin{proof}
    (1) Boundedness and invertibility of $\breve{\Pi}^{\text{soft}}$ have been addressed above. Boundedness of $(\breve{\Pi}^{\text{soft}})^{-1}$ now follows as $\breve{\mathcal{E}}$ is finite dimensional. The same argument holds for $(\breve{\Pi}^{\text{stiff-int},(\uptau)} \oplus \breve{\Pi}^{\text{stiff-ls},(\uptau)})$. (2) Density follows from the assumption that $A_0^{\text{soft}}$ is densely defined. Closedness follows from the observation that the graph of $\Breve{\widehat{A}^{\text{soft},(\uptau)}}$ is the union of the graph of $A_0^{\text{soft}}$ with $\breve{\mathcal{H}}^{\text{soft},(\uptau)} \times \{ 0 \}$, both of which are closed. (3) follows from the formula for $S(z)$ in Proposition \ref{prop:auxops_properties}(4) and the definition $\breve{\Pi}^{\text{soft}} = \Pi^{\text{soft}} |_{\breve{\mathcal{E}}}$. Similarly, (4) follows from Proposition \ref{prop:auxops_properties}(5) and the definitions $\breve{\Pi}^{\text{soft}} = \Pi^{\text{soft}} |_{\breve{\mathcal{E}}}$ and $\breve{\Lambda}^{\text{soft}} = \mathcal{P}^{(\uptau)} \Lambda^{\text{soft}} |_{\breve{\mathcal{E}}}$. (5) Surjectivity of $\breve{\Gamma}_0^{\text{soft}}$ follows from by noting that $\breve{\Gamma}_0^{\text{soft}}$ is defined as the null extension of $(\breve{\Pi}^{\text{soft}})^{-1}$ (left inverse.) But $(\breve{\Pi}^{\text{soft}})^{-1}$ is in fact a two-sided inverse thanks to (1). Surjectivity of $\breve{\Gamma}_1^{\text{soft}}$ follows from: If $f \in L^2(Q_{\text{soft}})$, $\phi \in \breve{\mathcal{E}}$, then
    \begin{align*}
        \breve{\Gamma}_1^{\text{soft}}( (A_0^{\text{soft}})^{-1} f + \breve{\Pi}^{\text{soft}} \phi) 
        = (\breve{\Pi}^{\text{soft}})^* f + \breve{\Lambda}^{\text{soft}} \phi.
    \end{align*}
    Surjectivity of $\breve{\Gamma}_1^{\text{soft}}$ is hence a consequence of surjectivity of $(\breve{\Pi}^{\text{soft}})^*$, which was established in (1). For (6), the claim on  $\breve{\Gamma}_0^{\text{soft}}$ is immediate from the definitions. As for $\breve{\Gamma}_1^{\text{soft}}$, we can continue the computation above, to see that
    \begin{align*}
        \breve{\Gamma}_1^{\text{soft}}( (A_0^{\text{soft}})^{-1} f + \breve{\Pi}^{\text{soft}} \phi) 
        &= \mathcal{P}^{(\uptau)} (\Pi^{\text{soft}})^* f + \mathcal{P}^{(\uptau)} \Lambda^{\text{soft}} \mathcal{P}^{(\uptau)} \phi = \mathcal{P}^{(\uptau)} \left[ (\Pi^{\text{soft}})^* f + \Lambda^{\text{soft}} \mathcal{P}^{(\uptau)} \phi \right].
    \end{align*}
    The latter is precisely the action of $\mathcal{P} \Gamma_1^{\text{soft}}$ on $\mathcal{D}(A_0^{\text{soft}}) \dot{+} \breve{\mathcal{H}}^{\text{soft}}$. This completes the proof.
\end{proof}

\begin{rmk}
    The closedness of $\Breve{\widehat{A}^{\text{soft},(\uptau)}}$ relies crucially on the fact that $\breve{\mathcal{H}}^{\text{soft},(\uptau)} = \text{ran}(\breve{\Pi}^{\text{soft},(\uptau)})$ is finite dimensional. For general triples, $\widehat{A}$ is not necessarily closed nor closable.
\end{rmk}

To conclude the section, let us write down $\mathcal{R}_{\varepsilon,\text{hom}}^{(\uptau)}(z)$ with respect to the truncated objects. We will do this with respect to the decomposition $\mathcal{H} = L^2(Q_\text{soft}) \oplus ( L^2(Q_{\text{stiff-int}}) \oplus L^2(Q_{\text{stiff-ls}}) )$:

\begin{align}\label{defn:r_hom_fullspace_truncated}
    \mathcal{R}_{\varepsilon,\text{hom}}^{(\uptau)}(z) &= \left(
    \begin{matrix}
        R_{\varepsilon,\text{hom}}^{(\uptau)}(z) \\
        \breve{\Pi}^{\text{stiff}, (\uptau)} k^{(\uptau)}(z)
        \left[ R_{\varepsilon,\text{hom}}^{(\uptau)}(z) - (A_0^{\text{soft},(\uptau)} - z)^{-1} \right]
    \end{matrix} \right. \nonumber \\
    &\qquad\qquad\qquad \left. \begin{matrix}
        \left( k^{(\uptau)}(\bar{z}) \left[ R_{\varepsilon,\text{hom}}^{(\uptau)}(\bar{z}) - (A_0^{\text{soft},(\uptau)} - \bar{z})^{-1} \right] \right)^* \breve{\Pi}^{\text{stiff}, (\uptau)*} \\
        \breve{\Pi}^{\text{stiff}, (\uptau)} k^{(\uptau)}(z) \left( k^{(\uptau)}(\bar{z}) \left[ R_{\varepsilon,\text{hom}}^{(\uptau)}(\bar{z}) - (A_0^{\text{soft},(\uptau)} - \bar{z})^{-1} \right] \right)^* \breve{\Pi}^{\text{stiff}, (\uptau)*}
    \end{matrix} \right)
\end{align}
where we recall, $R_{\varepsilon,\text{hom}}^{(\uptau)}(z)$ is defined in Definition \ref{defn:r_hom_soft}, $k^{(\uptau)}(z) = \Gamma_0^{\text{soft},(\uptau)}|_{\mathcal{D}(A_0^{\text{soft},(\uptau)}) \dot{+} \breve{\mathcal{H}}^{\text{soft},(\uptau)}}$ and $\breve{\Pi}^{\text{stiff}, (\uptau)} = \breve{\Pi}^{\text{stiff-int}, (\uptau)} \oplus \breve{\Pi}^{\text{stiff-ls}, (\uptau)}$. With this in hand, we may view $\mathcal{R}_{\varepsilon,\text{hom}}^{(\uptau)}(z)$ as an operator on $L^2(Q_\text{soft}) \oplus \breve{\mathcal{H}}^{\text{stiff},(\uptau)} = L^2(Q_\text{soft}) \oplus \breve{\Pi}^{\text{stiff},(\uptau)} \mathcal{E}$.

\begin{rmk}
    Recall also that $\breve{\Lambda}_{\varepsilon}^{\text{stiff}, (\uptau)} = \breve{\Lambda}_{\varepsilon}^{\text{stiff-int}, (\uptau)} \oplus \breve{\Lambda}_{\varepsilon}^{\text{stiff-ls}, (\uptau)}$. Then by the Krein's formula,
    \begin{align}\label{defn:r_hom_soft_truncated}
        R_{\varepsilon,\text{hom}}^{(\uptau)}(z) &= (A_0^{\text{soft},(\uptau)} - z)^{-1} \nonumber \\
        &\quad - \breve{S}^{\text{soft},(\uptau)}(z) \bigg[ \breve{\Lambda}_\varepsilon^{\text{stiff},(\uptau)} + z \breve{\Pi}^{\text{stiff},(\uptau)*} \breve{\Pi}^{\text{stiff},(\uptau)} + \breve{M}^{\text{soft},(\uptau)}(z) \bigg]^{-1} \left( \breve{S}^{\text{soft},(\uptau)}(\bar{z}) \right)^*.
    \end{align}
    Therefore, $R_{\varepsilon,\text{hom}}^{(\uptau)}(z) = ( \breve{\widehat{A}_{\beta_0,\beta_1}^{\text{soft},(\uptau)}} - z )^{-1}$, where $\beta_1 = I$ and
    \begin{align}\label{eqn:trunc_rhom_beta0}
        \beta_{\varepsilon,0}^{(\uptau)}(z) = \breve{\Lambda}_\varepsilon^{\text{stiff},(\uptau)} + z  \breve{\Pi}^{\text{stiff},(\uptau)*} \breve{\Pi}^{\text{stiff},(\uptau)}.
    \end{align}
    For the validity of the choice $(\beta_0,\beta_1)$, we refer to the proof of Theorem \ref{thm:self_adjoint} below (``top left entry".) Compare this with Definition \ref{defn:r_hom_soft}. We see that we have two different parameterizations of the boundary conditions $(\beta_0,\beta_1)$, arising from two different choices of boundary triples. Formulas (\ref{defn:r_hom_fullspace_truncated}) and (\ref{defn:r_hom_soft_truncated}) will serve as quick reference for the subsequent sections.
\end{rmk}

\subsection{Self-adjointness of \texorpdfstring{$\mathcal{A}_{\varepsilon,\text{hom}}^{(\uptau)}$}{A eps hom tau}}

In this section, will use \cite[Section 4.1]{eff_behavior} and in the process supply further details to the arguments provided. Recall the notations for $\breve{\Pi}^{\text{stiff}, (\uptau)}$ and $\breve{\Lambda}_\varepsilon^{\text{stiff},(\uptau)}$. It will be convenient to set

\begin{defn}
    $\mathcal{B}_\varepsilon^{(\uptau)} := - (\breve{\Pi}^{\text{stiff},(\uptau)*})^{-1} \breve{\Lambda}_\varepsilon^{\text{stiff},(\uptau)} (\breve{\Pi}^{\text{stiff},(\uptau)})^{-1}$.
\end{defn}
Using the truncated ``soft" triple $(A_0^{\text{soft},(\uptau)}, \breve{\Lambda}^{\text{soft},(\uptau)}, \breve{\Pi}^{\text{soft},(\uptau)})$, we define

\begin{defn}\label{defn:ahom}
    Let $\mathcal{A}^{(\uptau)}_{\varepsilon,\text{hom}}$ be the operator on $L^2(Q_\text{soft}) \oplus \breve{\mathcal{H}}^{\text{stiff},(\uptau)}$ defined by
    \begin{align}
        \mathcal{D}(\mathcal{A}^{(\uptau)}_{\varepsilon,\text{hom}}) &:= \Bigg\{  \begin{pmatrix} u \\ \widehat{u} \end{pmatrix} \in L^2(Q_\text{soft}) \oplus \breve{\mathcal{H}}^{\text{stiff},(\uptau)} \text{ $:$ }
        u \in \mathcal{D}(\breve{\widehat{A}^{\text{soft}, (\uptau)}}), \text{ } \widehat{u} = \breve{\Pi}^{\text{stiff},(\uptau)} \breve{\Gamma}_0^{\text{soft},(\uptau)} u
        \Bigg\}, \\
        \mathcal{A}^{(\uptau)}_{\varepsilon,\text{hom}} \begin{pmatrix} u \\ \widehat{u} \end{pmatrix} &:= \begin{pmatrix} \breve{\widehat{A}^{\text{soft}, (\uptau)}} u \\ -  (\breve{\Pi}^{\text{stiff},(\uptau)*})^{-1} \breve{\Gamma}_1^{\text{soft},(\uptau)} u + \mathcal{B}_\varepsilon^{(\uptau)} \widehat{u} \end{pmatrix}.
    \end{align}
    Linearity of the subspace $\mathcal{D}(\mathcal{A}^{(\uptau)}_{\varepsilon,\text{hom}})$ and the operator $\mathcal{A}_{\text{hom}}$ follows from the linearity of all the operators involved. 
\end{defn}

Let us discuss some basic properties of $\mathcal{A}_{\varepsilon,\text{hom}}^{(\uptau)}$ and $\mathcal{B}_\varepsilon^{(\uptau)}$, \textit{for each fixed $\varepsilon$ and $\uptau$} (we will therefore drop $\varepsilon$ and $\uptau$ where convenient).

\begin{lem}
    $\mathcal{A}_{\text{hom}}$ is densely defined.
\end{lem}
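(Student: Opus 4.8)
The statement to prove is that $\mathcal{A}^{(\uptau)}_{\varepsilon,\text{hom}}$ is densely defined on $L^2(Q_\text{soft}) \oplus \breve{\mathcal{H}}^{\text{stiff},(\uptau)}$.

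My plan is the following. Since an element of the domain has the rigid form $(u, \breve{\Pi}^{\text{stiff},(\uptau)} \breve{\Gamma}_0^{\text{soft},(\uptau)} u)$ with $u$ ranging over $\mathcal{D}(\breve{\widehat{A}^{\text{soft},(\uptau)}})$, the second coordinate is completely determined by the first, so the domain is the graph of the bounded operator $u \mapsto \breve{\Pi}^{\text{stiff},(\uptau)} \breve{\Gamma}_0^{\text{soft},(\uptau)} u$ restricted to $\mathcal{D}(\breve{\widehat{A}^{\text{soft},(\uptau)}})$. First I would reduce the claim to two facts: (a) $\mathcal{D}(\breve{\widehat{A}^{\text{soft},(\uptau)}})$ is dense in $L^2(Q_\text{soft})$, which is Proposition \ref{prop:trunc_aux_properties}(2); and (b) given any target $(f,g) \in L^2(Q_\text{soft}) \oplus \breve{\mathcal{H}}^{\text{stiff},(\uptau)}$, one can approximate it by graph elements. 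The subtlety is that approximating $u$ to $f$ in $L^2(Q_\text{soft})$ does not by itself control the second coordinate, since $\breve{\Gamma}_0^{\text{soft},(\uptau)}$ is not $L^2$-continuous. So a naive density argument fails.

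The key step — and the main obstacle — is to handle the second coordinate independently. Here I would exploit the decomposition $\mathcal{D}(\breve{\widehat{A}^{\text{soft},(\uptau)}}) = \mathcal{D}(A_0^{\text{soft},(\uptau)}) \dot{+} \breve{\mathcal{H}}^{\text{soft},(\uptau)}$. On the first summand $\breve{\Gamma}_0^{\text{soft},(\uptau)}$ vanishes, while on the second it acts as the (two-sided, by Proposition \ref{prop:trunc_aux_properties}(1)) inverse of $\breve{\Pi}^{\text{soft},(\uptau)}$; hence $\breve{\Pi}^{\text{stiff},(\uptau)} \breve{\Gamma}_0^{\text{soft},(\uptau)}$ maps $\breve{\mathcal{H}}^{\text{soft},(\uptau)}$ onto $\breve{\mathcal{H}}^{\text{stiff},(\uptau)}$ (surjectivity of $\breve{\Gamma}_0^{\text{soft},(\uptau)}$, Proposition \ref{prop:trunc_aux_properties}(5), composed with the bijection $\breve{\Pi}^{\text{stiff},(\uptau)}$, Proposition \ref{prop:trunc_aux_properties}(1)). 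Concretely: given $(f,g)$, pick $w \in \breve{\mathcal{H}}^{\text{soft},(\uptau)}$ with $\breve{\Pi}^{\text{stiff},(\uptau)}\breve{\Gamma}_0^{\text{soft},(\uptau)} w = g$ (this fixes the whole second coordinate once and for all, and $\breve{\mathcal{H}}^{\text{soft},(\uptau)}$ is finite-dimensional so $w$ is a fixed finite-energy function). Then, using density of $\mathcal{D}(A_0^{\text{soft},(\uptau)})$ in $L^2(Q_\text{soft})$, choose $v_n \in \mathcal{D}(A_0^{\text{soft},(\uptau)})$ with $v_n \to f - w$ in $L^2(Q_\text{soft})$, and set $u_n := v_n + w \in \mathcal{D}(\breve{\widehat{A}^{\text{soft},(\uptau)}})$. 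Since $\breve{\Gamma}_0^{\text{soft},(\uptau)} v_n = 0$, we get $\breve{\Pi}^{\text{stiff},(\uptau)}\breve{\Gamma}_0^{\text{soft},(\uptau)} u_n = \breve{\Pi}^{\text{stiff},(\uptau)}\breve{\Gamma}_0^{\text{soft},(\uptau)} w = g$ exactly, for every $n$, while $u_n \to f$ in $L^2(Q_\text{soft})$. Hence $(u_n, g) \in \mathcal{D}(\mathcal{A}^{(\uptau)}_{\varepsilon,\text{hom}})$ converges to $(f,g)$.

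Thus the plan is: (1) observe that $\breve{\mathcal{H}}^{\text{soft},(\uptau)}$ is a finite-dimensional (hence closed) subspace of $L^2(Q_\text{soft})$ and that $\breve{\Pi}^{\text{stiff},(\uptau)}\breve{\Gamma}_0^{\text{soft},(\uptau)}$ restricted to it is surjective onto $\breve{\mathcal{H}}^{\text{stiff},(\uptau)}$; (2) for arbitrary $(f,g)$ split off a fixed $w\in \breve{\mathcal{H}}^{\text{soft},(\uptau)}$ realizing $g$ in the second coordinate; (3) approximate the residual $f-w$ from within $\mathcal{D}(A_0^{\text{soft},(\uptau)})$, on which $\breve{\Gamma}_0^{\text{soft},(\uptau)}$ is zero, so the second coordinate is undisturbed; (4) conclude. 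I expect step (1) — pinning down that $\breve{\Gamma}_0^{\text{soft},(\uptau)}$ kills $\mathcal{D}(A_0^{\text{soft},(\uptau)})$ and that the direct-sum decomposition of $\mathcal{D}(\breve{\widehat{A}^{\text{soft},(\uptau)}})$ lets us perturb the first coordinate freely without touching the second — to be the only place requiring care; the rest is routine. No genuinely hard analysis is involved because the finite-dimensionality of $\breve{\mathcal{E}}^{(\uptau)}$ (and hence of $\breve{\mathcal{H}}^{\text{soft},(\uptau)}$ and $\breve{\mathcal{H}}^{\text{stiff},(\uptau)}$) trivializes all the topological subtleties that would otherwise arise.
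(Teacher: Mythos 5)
Your proof is correct and follows essentially the same route as the paper: both exploit the direct-sum decomposition $\mathcal{D}(\breve{\widehat{A}^{\text{soft},(\uptau)}}) = \mathcal{D}(A_0^{\text{soft},(\uptau)}) \dot{+} \breve{\mathcal{H}}^{\text{soft},(\uptau)}$, fix the second coordinate exactly by choosing the $\breve{\Pi}^{\text{soft},(\uptau)}\phi$ (your $w$) part, and then use density of $\mathcal{D}(A_0^{\text{soft},(\uptau)})$ in $L^2(Q_\text{soft})$ to approximate the first coordinate without disturbing the second. You have merely written out explicitly the approximating sequence that the paper leaves implicit.
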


\begin{proof}
    $u \in \mathcal{D}(\breve{\widehat{A}^{\text{soft}}})$ can be expressed as $u = (A_0^{\text{soft}})^{-1} f + \breve{\Pi}^{\text{soft}} \phi$, for some $f \in L^2(Q_\text{soft})$ and $\phi \in \breve{\mathcal{E}}$. But recall that $\mathcal{D}(\breve{\widehat{A}^{\text{soft}, (\uptau)}}) = \mathcal{D}(A_0^{\text{soft},(\uptau)}) \dot{+} \breve{\mathcal{H}}^{\text{soft},(\uptau)}$ is a (vector space) direct sum, so we may vary $(A_0^{\text{soft}})^{-1} f$ independently of $\breve{\Pi}^{\text{soft}} \phi$. Since $\mathcal{D}(A_0^{\text{soft},(\uptau)})$ is dense in $L^2(Q_\text{soft})$, and 
    \begin{align*}
        \widehat{u} = \breve{\Pi}^{\text{stiff}} \breve{\Gamma}_0^{\text{soft}} u = \breve{\Pi}^{\text{stiff}} \phi,
    \end{align*}
    ranging through $\phi \in \breve{\mathcal{E}}$ implies that the second component of $\mathcal{D}(\mathcal{A}_{\text{hom}})$ equals (!)~$\breve{\mathcal{H}}^{\text{stiff}}$.
\end{proof}

\begin{lem}\label{lem:ahom_symmetric}
    $\mathcal{A}_{\text{hom}}$ is symmetric if and only if $\mathcal{B}$ is self-adjoint.
\end{lem}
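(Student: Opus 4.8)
\textbf{Proof plan for Lemma \ref{lem:ahom_symmetric}.}

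The plan is to write out the sesquilinear form $\left( \mathcal{A}_{\text{hom}} \binom{u}{\widehat u}, \binom{v}{\widehat v} \right)$ on the space $L^2(Q_\text{soft}) \oplus \breve{\mathcal{H}}^{\text{stiff},(\uptau)}$ and compare it with its conjugate, isolating exactly the term that obstructs symmetry; that term will turn out to be governed by $\mathcal{B}$, so that symmetry of $\mathcal{A}_{\text{hom}}$ becomes equivalent to symmetry of $\mathcal{B}$, and since $\mathcal{B}$ is a bounded operator on the finite-dimensional space $\breve{\mathcal{H}}^{\text{stiff},(\uptau)}$ (being a conjugate of the bounded $\breve{\Lambda}_\varepsilon^{\text{stiff},(\uptau)}$ by the boundedly invertible $\breve{\Pi}^{\text{stiff},(\uptau)}$, Proposition \ref{prop:trunc_aux_properties}(1)), symmetry and self-adjointness of $\mathcal{B}$ coincide. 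First I would take $\binom{u}{\widehat u}, \binom{v}{\widehat v} \in \mathcal{D}(\mathcal{A}_{\text{hom}})$, so that $u, v \in \mathcal{D}(\breve{\widehat{A}^{\text{soft}}})$, $\widehat u = \breve{\Pi}^{\text{stiff}} \breve{\Gamma}_0^{\text{soft}} u$, $\widehat v = \breve{\Pi}^{\text{stiff}} \breve{\Gamma}_0^{\text{soft}} v$, and compute
\begin{align*}
    \left( \mathcal{A}_{\text{hom}} \begin{pmatrix} u \\ \widehat u \end{pmatrix}, \begin{pmatrix} v \\ \widehat v \end{pmatrix} \right)
    &= \left( \breve{\widehat{A}^{\text{soft}}} u, v \right)_{L^2(Q_\text{soft})}
    - \left( (\breve{\Pi}^{\text{stiff}*})^{-1} \breve{\Gamma}_1^{\text{soft}} u, \widehat v \right)_{\breve{\mathcal{H}}^{\text{stiff}}}
    + \left( \mathcal{B} \widehat u, \widehat v \right)_{\breve{\mathcal{H}}^{\text{stiff}}}.
\end{align*}

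The middle term is the key one: using $\widehat v = \breve{\Pi}^{\text{stiff}} \breve{\Gamma}_0^{\text{soft}} v$ and moving $\breve{\Pi}^{\text{stiff}*}$ across the inner product, it collapses to $-\left( \breve{\Gamma}_1^{\text{soft}} u, \breve{\Gamma}_0^{\text{soft}} v \right)_{\breve{\mathcal{E}}}$; and likewise the conjugated expression contributes $-\left( \breve{\Gamma}_0^{\text{soft}} u, \breve{\Gamma}_1^{\text{soft}} v \right)_{\breve{\mathcal{E}}}$. The difference of these two, together with $\left( \breve{\widehat{A}^{\text{soft}}} u, v \right) - \left( u, \breve{\widehat{A}^{\text{soft}}} v \right)$, is exactly the Green's identity of Theorem \ref{thm:greens_id} applied to the truncated soft triple (valid because, by Definition \ref{defn:trunc_triple} and Proposition \ref{prop:trunc_aux_properties}, $\mathcal{D}(\breve{\widehat{A}^{\text{soft}}}) = \mathcal{D}(\breve{\Gamma}_1^{\text{soft}})$ since $\mathcal{D}(\breve{\Lambda}^{\text{soft}}) = \breve{\mathcal{E}}$), hence these terms cancel identically. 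What remains is
\begin{align*}
    \left( \mathcal{A}_{\text{hom}} \begin{pmatrix} u \\ \widehat u \end{pmatrix}, \begin{pmatrix} v \\ \widehat v \end{pmatrix} \right)
    - \left( \begin{pmatrix} u \\ \widehat u \end{pmatrix}, \mathcal{A}_{\text{hom}} \begin{pmatrix} v \\ \widehat v \end{pmatrix} \right)
    = \left( \mathcal{B} \widehat u, \widehat v \right)_{\breve{\mathcal{H}}^{\text{stiff}}} - \left( \widehat u, \mathcal{B} \widehat v \right)_{\breve{\mathcal{H}}^{\text{stiff}}},
\end{align*}
so $\mathcal{A}_{\text{hom}}$ is symmetric if and only if $\mathcal{B}$ is symmetric on the range of $\widehat u = \breve{\Pi}^{\text{stiff}} \breve{\Gamma}_0^{\text{soft}} u$.

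To finish, I would note that as $u$ ranges over $\mathcal{D}(\breve{\widehat{A}^{\text{soft}}}) = \mathcal{D}(A_0^{\text{soft}}) \dot{+} \breve{\mathcal{H}}^{\text{soft}}$, the element $\breve{\Gamma}_0^{\text{soft}} u$ ranges over all of $\breve{\mathcal{E}}$ (by Proposition \ref{prop:trunc_aux_properties}(5)), so $\widehat u = \breve{\Pi}^{\text{stiff}} \breve{\Gamma}_0^{\text{soft}} u$ ranges over all of $\breve{\mathcal{H}}^{\text{stiff}}$; thus the identity above holds for all $\widehat u, \widehat v \in \breve{\mathcal{H}}^{\text{stiff}}$, giving that $\mathcal{A}_{\text{hom}}$ is symmetric iff $\mathcal{B}$ is symmetric. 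Finally, since $\mathcal{B} = \mathcal{B}_\varepsilon^{(\uptau)}$ is a \emph{bounded} operator (indeed everywhere-defined) on the \emph{finite-dimensional} space $\breve{\mathcal{H}}^{\text{stiff},(\uptau)}$, symmetry forces self-adjointness, so the ``if and only if'' with self-adjointness of $\mathcal{B}$ follows. The main obstacle is bookkeeping the various restrictions and adjoints in the middle term — in particular checking that $(\breve{\Pi}^{\text{stiff}*})^{-1}$ is a genuine two-sided inverse (Proposition \ref{prop:trunc_aux_properties}(1)) so that moving it across the $\breve{\mathcal{H}}^{\text{stiff}}$-inner product is legitimate, and confirming that Green's identity really does apply verbatim to the truncated triple; everything else is routine.
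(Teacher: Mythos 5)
Your proposal is correct and follows essentially the same route as the paper: both compute the two inner products, invoke Green's identity (Theorem \ref{thm:greens_id}) for the truncated soft triple to cancel everything except the $\mathcal{B}$-term, and use surjectivity of $\breve{\Gamma}_0^{\text{soft}}$ so that $\widehat{u}$ ranges over all of $\breve{\mathcal{H}}^{\text{stiff}}$. Your extra bookkeeping of the $(\breve{\Pi}^{\text{stiff}*})^{-1}$ manipulation and the explicit remark that symmetry equals self-adjointness for the bounded, everywhere-defined $\mathcal{B}$ are both accurate and only make explicit what the paper leaves implicit.
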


\begin{proof}
    This follows from the Green's identity (Theorem \ref{thm:greens_id}). See \cite[Lemma~4.3]{eff_behavior} for a proof.
\end{proof}

By Proposition \ref{prop:trunc_aux_properties}, $\mathcal{B}$ is a bounded self-adjoint operator on $\breve{\mathcal{H}}^{\text{stiff}}$. As explained in \cite[Section 4.1]{eff_behavior}, the point of singling out the operator $\mathcal{B}$ is because the self-adjointness of $\mathcal{B}$ implies the \textit{self-adjointness} of $\mathcal{A}_{\text{hom}}$:

\begin{thm}\label{thm:self_adjoint}
    Fix $\varepsilon>0$ and $\uptau \in Q'$. Suppose that $\mathcal{B}$ is self-adjoint. Then $\mathcal{A}_{\text{hom}}$ is self-adjoint. Furthermore its resolvent $(\mathcal{A}_{\text{hom}} - z)^{-1}$ is defined for all $z \in \mathbb{C}\setminus\mathbb{R}$ by the following block matrix decomposition with respect to $L^2(Q_\text{soft}) \oplus \breve{\mathcal{H}}^{\text{stiff}}$:
    \begin{align}\label{eqn:ahom_block_matrix}
        &(\mathcal{A}_{\text{hom}} - z)^{-1} \nonumber\\ &= \begin{pmatrix}
            R(z) & 
            \left( k(\bar{z}) \left[ R(\bar{z}) - (A_0^{\text{soft}} - \bar{z})^{-1} \right] \right)^* ( \breve{\Pi}^{\text{stiff}} )^* \\
            \breve{\Pi}^{\text{stiff}} k(z) \left[ R(z) - (A_0^{\text{soft}} - z)^{-1} \right] & 
            \breve{\Pi}^{\text{stiff}} k(z) \left( k(\bar{z}) \left[ R(\bar{z}) - (A_0^{\text{soft}} - \bar{z})^{-1} \right] \right)^* ( \breve{\Pi}^{\text{stiff}} )^*
        \end{pmatrix},
    \end{align}
    where we define $k(z) := \breve{\Gamma}_0^{\text{soft}} \stackrel{\text{Prop \ref{prop:trunc_aux_properties}(6)}}{=} \Gamma_0^{\text{soft}} |_{\mathcal{D}(A_0^{\text{soft},(\uptau)}) \dot{+} \breve{\mathcal{H}}^{\text{soft},(\uptau)}}$ and
    \begin{align}
        R(z) := \left( \breve{\widehat{A}^{\text{soft},(\uptau)}}_{(\breve{\Pi}^{\text{stiff}})^* (\mathcal{B} - z ) \breve{\Pi}^{\text{stiff}} , I} - z \right)^{-1}.
    \end{align}
\end{thm}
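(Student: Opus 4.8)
The strategy is to realize $\mathcal{A}_{\text{hom}}$ as an operator of the form $\widehat{A}_{\beta_0,\beta_1}$ built from the truncated ``soft'' triple $(A_0^{\text{soft},(\uptau)}, \breve{\Lambda}^{\text{soft},(\uptau)}, \breve{\Pi}^{\text{soft},(\uptau)})$ on $L^2(Q_\text{soft})\oplus\breve{\mathcal{H}}^{\text{stiff},(\uptau)}$, and then invoke the self-adjointness criterion of Ryzhov, \cite[Corollary 5.8]{ryzhov2009}. Concretely, I would first identify $\mathcal{A}_{\text{hom}}$ with a block operator acting on the direct sum $L^2(Q_\text{soft})\oplus\breve{\mathcal{H}}^{\text{stiff},(\uptau)}$, where the second component $\widehat u = \breve{\Pi}^{\text{stiff},(\uptau)}\breve{\Gamma}_0^{\text{soft},(\uptau)}u$ is slaved to $u$. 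Using $\breve{\Pi}^{\text{stiff},(\uptau)}$ and its bounded inverse (Proposition \ref{prop:trunc_aux_properties}(1)) to unitarily conjugate the second block from $\breve{\mathcal{H}}^{\text{stiff},(\uptau)}$ back to $\breve{\mathcal{E}}^{(\uptau)}$, the defining relations of $\mathcal{A}_{\text{hom}}$ become: $u\in\mathcal{D}(\breve{\widehat{A}^{\text{soft}}})$, the constraint $\breve{\Gamma}_0^{\text{soft}}u = (\breve{\Pi}^{\text{stiff}})^{-1}\widehat u$, and the action on the boundary component is $-(\breve{\Pi}^{\text{stiff}})^*\breve{\Gamma}_1^{\text{soft}}u + \breve{\Lambda}_\varepsilon^{\text{stiff}}(\breve{\Pi}^{\text{stiff}})^{-1}\widehat u$ up to sign bookkeeping — i.e. exactly the shape of the abstract boundary-condition operator with $\beta_1 = I$ and $\beta_0 = (\breve{\Pi}^{\text{stiff}})^*(\mathcal{B}-0)\breve{\Pi}^{\text{stiff}}$ shifted by $z$. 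This is the same identification carried out in \cite[Section 4.1]{eff_behavior}, and I would cite it for the routine algebra, supplying only the extra care needed because we now have two stiff sub-components.

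Next I would verify the hypotheses that make $R(z)$ a genuine resolvent (``top left entry'' of the claimed block matrix): (a) $\beta_1 = I\in\mathcal{L}(\breve{\mathcal{E}})$ and $\beta_0 = (\breve{\Pi}^{\text{stiff}})^*(\mathcal{B}-z)\breve{\Pi}^{\text{stiff}}$ is a bounded operator on the finite-dimensional space $\breve{\mathcal{E}}$, hence $\beta_0 + \beta_1\breve{M}^{\text{soft}}(z)$ is closed; (b) this operator is boundedly invertible for $z\in\mathbb{C}\setminus\mathbb{R}$. For (b), split $\beta_0 + \breve{M}^{\text{soft}}(z) = (\breve{\Pi}^{\text{stiff}})^*\mathcal{B}\breve{\Pi}^{\text{stiff}} + \text{Re}\,\breve{M}^{\text{soft}}(z) - z(\breve{\Pi}^{\text{stiff}})^*\breve{\Pi}^{\text{stiff}} + i\,\text{Im}\,\breve{M}^{\text{soft}}(z)$; by Proposition \ref{prop:auxops_properties}(7), $\text{Im}\,\breve{M}^{\text{soft}}(z) = (\text{Im }z)(\breve{S}^{\text{soft}}(\bar z))^*\breve{S}^{\text{soft}}(\bar z)$, so for $\text{Im }z\neq 0$ the imaginary part of the whole operator is $(\text{Im }z)\big((\breve{S}^{\text{soft}}(\bar z))^*\breve{S}^{\text{soft}}(\bar z) - (\breve{\Pi}^{\text{stiff}})^*\breve{\Pi}^{\text{stiff}}\big)$ — and here I need $(\breve{\Pi}^{\text{stiff}})^*\breve{\Pi}^{\text{stiff}} \geq c > 0$ (from Proposition \ref{prop:trunc_aux_properties}(1), since $\breve{\Pi}^{\text{stiff}}$ is boundedly invertible on the finite-dimensional $\breve{\mathcal{E}}$), which dominates and gives a strictly definite imaginary part, hence bounded invertibility by the numerical-range argument. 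Then $\mathcal{B}$ self-adjoint makes $(\breve{\Pi}^{\text{stiff}})^*\mathcal{B}\breve{\Pi}^{\text{stiff}}$ self-adjoint and $(\breve{\Pi}^{\text{stiff}})^*\breve{\Pi}^{\text{stiff}}$ self-adjoint and positive, so the hypotheses of \cite[Corollary 5.8]{ryzhov2009} are met, yielding self-adjointness of the abstract operator $\breve{\widehat{A}^{\text{soft}}}_{\beta_0,\beta_1}$ and, after undoing the conjugation, of $\mathcal{A}_{\text{hom}}$.

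Finally I would assemble the resolvent formula \eqref{eqn:ahom_block_matrix}: the $(1,1)$ entry is $R(z)$ by Krein's formula (Theorem \ref{thm:kreins_formula}) applied to the truncated triple, and the remaining three entries are obtained by computing $\widehat u$ from $u$ via $\widehat u = \breve{\Pi}^{\text{stiff}}k(z)[\cdots]$ and taking adjoints using $k(z) = \breve{\Gamma}_0^{\text{soft}} = \Gamma_0^{\text{soft}}|_{\mathcal{D}(A_0^{\text{soft}})\dot{+}\breve{\mathcal{H}}^{\text{soft}}}$ (Proposition \ref{prop:trunc_aux_properties}(6)) and the symmetry $R(\bar z) = R(z)^*$; this is again the computation of \cite[Section 4.1]{eff_behavior} with the only change being that $\breve{\Pi}^{\text{stiff}}$ is now a sum of two lifts into $L^2(Q_{\text{stiff-int}})$ and $L^2(Q_{\text{stiff-ls}})$, which are orthogonal by our setup so no cross terms appear. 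The main obstacle I anticipate is not any single step but rather pinning down the sign and domain conventions so that the three pieces — the abstract $\widehat{A}_{\beta_0,\beta_1}$ machinery, the truncated-triple identities of Proposition \ref{prop:trunc_aux_properties}, and the explicit definition of $\mathcal{A}_{\text{hom}}$ in Definition \ref{defn:ahom} — align exactly; in particular verifying that $\mathcal{D}(\mathcal{A}_{\text{hom}})$ as defined coincides with $\text{ran}((\mathcal{A}_{\text{hom}}-z)^{-1})$ requires the inclusion chain from Theorem \ref{thm:kreins_formula} together with the fact that $\breve{\mathcal{H}}^{\text{soft},(\uptau)}$ is finite-dimensional so that $\breve{\widehat{A}^{\text{soft}}}$ is closed (Proposition \ref{prop:trunc_aux_properties}(2)).
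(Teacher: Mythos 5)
There are two genuine gaps. First, your invertibility argument for $\beta_0+\beta_1\breve{M}^{\text{soft}}(z)$ hinges on a sign that does not survive the bookkeeping you yourself identify as the main obstacle. Unwinding Definition \ref{defn:ahom}, the boundary condition encoded by $\mathcal{A}_{\text{hom}}$ reads $\breve{\Gamma}_1^{\text{soft}}u-(\breve{\Pi}^{\text{stiff}})^*(\mathcal{B}-z)\breve{\Pi}^{\text{stiff}}\breve{\Gamma}_0^{\text{soft}}u=0$, so the operator to invert is $-(\breve{\Pi}^{\text{stiff}})^*(\mathcal{B}-z)\breve{\Pi}^{\text{stiff}}+\breve{M}^{\text{soft}}(z)$ (the subscript displayed in the theorem statement is loose on this sign), whose imaginary part is $(\text{Im}\,z)\big[(\breve{S}^{\text{soft}}(\bar z))^*\breve{S}^{\text{soft}}(\bar z)+(\breve{\Pi}^{\text{stiff}})^*\breve{\Pi}^{\text{stiff}}\big]$ --- a \emph{sum} of two non-negative operators, bounded below by $|\text{Im}\,z|\,\|(\breve{\Pi}^{\text{stiff}})^{-1}\|^{-2}$ because $\breve{\Pi}^{\text{stiff}}$ is boundedly invertible. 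With your sign you obtain the \emph{difference} $(\text{Im}\,z)\big[(\breve{S}^{\text{soft}}(\bar z))^*\breve{S}^{\text{soft}}(\bar z)-(\breve{\Pi}^{\text{stiff}})^*\breve{\Pi}^{\text{stiff}}\big]$, and the claim that the second term ``dominates'' is unjustified: there is no a priori comparison between the lift into $Q_{\text{soft}}$ and the lifts into the stiff components, so this difference can be indefinite and the numerical-range argument collapses.

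Second, the route to self-adjointness does not go through. Invoking \cite[Corollary 5.8]{ryzhov2009} could at best give self-adjointness of $\breve{\widehat{A}^{\text{soft}}}_{\beta_0,\beta_1}$ on $L^2(Q_{\text{soft}})$, and even that fails here because $\beta_0$ depends on $z$: for each $z$ you obtain a \emph{different} operator $\widehat{A}_{\beta_0(z),I}$, so the family $R(z)$ is only a generalized resolvent, not the resolvent of a single self-adjoint operator on $L^2(Q_{\text{soft}})$. Moreover $\mathcal{A}_{\text{hom}}$ acts on the strictly larger space $L^2(Q_{\text{soft}})\oplus\breve{\mathcal{H}}^{\text{stiff}}$; relabelling the second block via $\breve{\Pi}^{\text{stiff}}$ (which is boundedly invertible but not unitary) does not remove it, so ``undoing the conjugation'' cannot transfer self-adjointness from an operator on $L^2(Q_{\text{soft}})$ to $\mathcal{A}_{\text{hom}}$. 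What is actually needed, and what the paper does, is: prove symmetry of $\mathcal{A}_{\text{hom}}$ directly from Green's identity (Lemma \ref{lem:ahom_symmetric}, where the self-adjointness of $\mathcal{B}$ enters), then prove surjectivity of $\mathcal{A}_{\text{hom}}-z$ for every $z\in\mathbb{C}\setminus\mathbb{R}$ by solving $(\mathcal{A}_{\text{hom}}-z)(u,\widehat u)^T=(f,\widehat f)^T$ for arbitrary $(f,\widehat f)$ --- the case $\widehat f\neq 0$ requires an auxiliary $v_{\widehat f}\in\mathcal{D}(A_0^{\text{soft}})$ with $\breve{\Gamma}_1^{\text{soft}}v_{\widehat f}=(\breve{\Pi}^{\text{stiff}})^*\widehat f$ (Proposition \ref{prop:trunc_aux_properties}(5)), a construction your plan omits --- and conclude from ``symmetric plus surjective off the real line implies self-adjoint''. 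The four block entries of (\ref{eqn:ahom_block_matrix}) then fall out of these two solvability computations, much as you describe for the left column.
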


\begin{rmk}
    By Proposition \ref{prop:trunc_aux_properties}(6), $k(z)$ as defined in this theorem coincides with the one in (\ref{eqn:r_eff_fullspace}). Thus (\ref{eqn:ahom_block_matrix}) is precisely $\mathcal{R}_{\varepsilon,\text{hom}}^{(\uptau)}(z)$.
\end{rmk}

\begin{proof}[Proof of Theorem \ref{thm:self_adjoint}.]
    See Appendix \ref{appendix:krein_calc}.
\end{proof}

%\newpage
\section{Homogenization result}\label{sect:homo_result}

This section summarizes the results thus far into a fibre-wise (for each $\uptau$) homogenization result. To begin, we collect the key ingredients of Sections \ref{sect:norm_resolvent_asymp} and \ref{sect:identifying_homo_op} required for stating the result. We have the following spaces
\begin{align}
    \breve{\mathcal{E}}^{(\uptau)} = \mathcal{P}^{(\uptau)}\mathcal{E} = \mathcal{P}_{\text{stiff-int}}^{(\uptau)}\mathcal{E} \oplus \mathcal{P}_{\text{stiff-ls}}^{(\uptau)}\mathcal{E} &= \text{span} \{ \psi_1^{\text{stiff-int},(\uptau)} \} \oplus \text{span} \{ \psi_1^{\text{stiff-ls},(\uptau)} \}, \\
    \breve{\mathcal{H}}^{\text{stiff},(\uptau)} = \breve{\mathcal{H}}^{\text{stiff-int},(\uptau)} \oplus \breve{\mathcal{H}}^{\text{stiff-ls},(\uptau)} &= \text{ran}( \Pi^{\text{stiff-int}, (\uptau)} |_{\breve{\mathcal{E}}^{(\uptau)}} ) \oplus \text{ran}( \Pi^{\text{stiff-ls}, (\uptau)} |_{\breve{\mathcal{E}}^{(\uptau)}} ).
\end{align}
We denote by $\Psi_1$ the lifts of $\psi_1$ into their respective stiff spaces. That is,
\begin{align}
    \Psi_1^{\bigstar,(\uptau)} := \breve{\Pi}^{\bigstar,(\uptau)} \psi_1^{\bigstar,(\uptau)} = \Pi^{\bigstar,(\uptau)} \psi_1^{\bigstar,(\uptau)}, \quad (\bullet, \bigstar) \in \{ (\text{int}, \text{stiff-int}), (\text{ls}, \text{stiff-ls}) \}.
\end{align}
The homogenized operator $\mathcal{A}_{\varepsilon, \text{hom}}^{(\uptau)}$ is defined to have domain
\begin{align}\label{eqn:conclusion_domain}
    \mathcal{D}(\mathcal{A}^{(\uptau)}_{\varepsilon,\text{hom}}) = \Bigg\{  \begin{pmatrix} u \\ \widehat{u} \end{pmatrix} &\in L^2(Q_\text{soft}) \oplus \breve{\mathcal{H}}^{\text{stiff},(\uptau)} \text{ $:$ } \nonumber\\
    &u \in \mathcal{D}(A_0^{\text{soft},(\uptau)}) \text{ }\dot{+}\text{ } \text{ran}(\Pi^{\text{soft},(\uptau)} |_{\breve{\mathcal{E}}^{(\uptau)} }  ),
    \quad \widehat{u} = \breve{\Pi}^{\text{stiff},(\uptau)} \breve{\Gamma}_0^{\text{soft},(\uptau)} u
    \Bigg\}.
\end{align}

\begin{rmk}
    $\mathcal{D}(A_0^{\text{soft},(\uptau)}) = H^2(Q_\text{soft}) \cap H^1_0(Q_\text{soft})$, which is independent of $\uptau$.
\end{rmk}

\begin{defn}\label{defn:amax}
    We write $-(\nabla + i \uptau)^2$ to mean the operator of $\breve{\widehat{A}^{\text{soft},(\uptau)}}$, that is, $-(\nabla + i \uptau)^2$ is the magnetic Laplacian on $Q_\text{soft}$ with (zero) Dirichlet BCs, extended by zero on $\breve{\mathcal{H}}^{\text{stiff},(\uptau)}$.
\end{defn}

For its action, we first note that a typical $u \in \mathcal{D}(-(\nabla + i \uptau)^2)$ may be written as
\begin{align}
    u = (A_0^{\text{soft},(\uptau)})^{-1} f + \Pi^{\text{soft},(\uptau)} (a \psi_1^{\text{stiff-int},(\uptau)} + b \psi_1^{\text{stiff-ls},(\uptau)} ), \quad f \in L^2(Q_\text{soft}), \quad a,b \in \mathbb{C}.
\end{align}
If we further expand $\widehat{u} \in \breve{\mathcal{H}}^{\text{stiff},(\uptau)}$ into $(\widehat{u}_{\text{stiff-int}}, \widehat{u}_{\text{stiff-ls}}) \in \breve{\mathcal{H}}^{\text{stiff-int},(\uptau)} \oplus \breve{\mathcal{H}}^{\text{stiff-ls},(\uptau)}$, then by the definition of $\Gamma_0$, the condition on $\widehat{u}$ in (\ref{eqn:conclusion_domain}) may be written as 
\begin{align*}
    \widehat{u} = \begin{pmatrix}
        \widehat{u}_{\text{stiff-int}} \\
        \widehat{u}_{\text{stiff-ls}}
    \end{pmatrix} = \begin{pmatrix}
        a \Pi^{\text{stiff-int},(\uptau)} \psi_1^{\text{stiff-int},(\uptau)} \\
        b \Pi^{\text{stiff-ls},(\uptau)} \psi_1^{\text{stiff-ls},(\uptau)}
    \end{pmatrix} = \begin{pmatrix}
        a \Psi_1^{\text{stiff-int},(\uptau)} \\
        b \Psi_1^{\text{stiff-ls},(\uptau)}
    \end{pmatrix}.
\end{align*}
Therefore, the action of $\mathcal{A}_{\varepsilon,\text{hom}}^{(\uptau)}$ may be written in with respect to the decomposition $L^2(Q_\text{soft}) \oplus \breve{\mathcal{H}}^{\text{stiff-int},(\uptau)} \oplus \breve{\mathcal{H}}^{\text{stiff-ls},(\uptau)}$ as
\begin{align}\label{eqn:ahom_action}
    \mathcal{A}_{\varepsilon,\text{hom}}^{(\uptau)} \begin{pmatrix}
        u \\
        \widehat{u}_{\text{stiff-int}} \\
        \widehat{u}_{\text{stiff-ls}}
    \end{pmatrix} 
    &= \begin{pmatrix}
        -(\nabla + i \uptau)^2 u \\
        -(\breve{\Pi}^{\text{stiff-int},(\uptau)*})^{-1} \mathcal{P}_{\text{stiff-int}}^{(\uptau)} \left[ \Gamma_1^{\text{soft},(\uptau)} u + \Gamma_{\varepsilon,1}^{\text{stiff-int},(\uptau)} \left( a \Psi_1^{\text{stiff-int},(\uptau)}  \right) \right] \\
        -(\breve{\Pi}^{\text{stiff-ls},(\uptau)*})^{-1} \mathcal{P}_{\text{stiff-ls}}^{(\uptau)} \left[ \Gamma_1^{\text{soft},(\uptau)} u + \Gamma_{\varepsilon,1}^{\text{stiff-ls},(\uptau)} \left( b \Psi_1^{\text{stiff-ls},(\uptau)}  \right) \right]
    \end{pmatrix} \\
    &= \begin{pmatrix}
        -(\nabla + i \uptau)^2 u \\
        -(\breve{\Pi}^{\text{stiff-int},(\uptau)*})^{-1} \mathcal{P}_{\text{stiff-int}}^{(\uptau)} \Gamma_1^{\text{soft},(\uptau)} u \\
        -(\breve{\Pi}^{\text{stiff-ls},(\uptau)*})^{-1} \mathcal{P}_{\text{stiff-ls}}^{(\uptau)} \left[ \Gamma_1^{\text{soft},(\uptau)} u + {\color{blue}\varepsilon^{-2}} \mu_1^{\text{stiff-ls},(\uptau)} \left( b \psi_1^{\text{stiff-ls},(\uptau)}  \right) \right]
    \end{pmatrix}.
\end{align}

To deduce this from Definition \ref{defn:ahom}, we have used: In the first equality, that $\Lambda = \Gamma_1 \Pi$ and $\Pi$ is boundedly invertible. In the second equality, that $\breve{\Lambda}_\varepsilon^{\text{stiff-ls}}$ acts as multiplication by $\varepsilon^{-2} \mu_1^{\text{stiff-ls}, (\uptau)}$, and similarly for $\breve{\Lambda}_\varepsilon^{\text{stiff-int}}$. Recall also that $\mu_1^{\text{stiff-int},(\uptau)}\equiv 0$ (Proposition \ref{prop:dtn_properties}).

\begin{lem}\label{lem:pi_inverse_adjoint}
    The action of $(\breve{\Pi}^{\bigstar,(\uptau)*})^{-1} : \text{span} \{ \psi_1^{\bigstar, (\uptau)} \} \rightarrow \text{ran}(\Pi^{\bigstar,(\uptau)}|_{\text{span} \{ \psi_1^{\bigstar, (\uptau)} \}})$ is given by \begin{align*}
        \psi_1^{\bigstar,(\uptau)} \mapsto \| \Psi_1^{\bigstar,(\uptau)} \|^{-2} \Psi_1^{\bigstar,(\uptau)}.
    \end{align*}
\end{lem}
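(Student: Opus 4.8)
## Proof plan for Lemma \ref{lem:pi_inverse_adjoint}

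The plan is to compute $(\breve{\Pi}^{\bigstar,(\uptau)*})^{-1}$ directly by first understanding the adjoint $\breve{\Pi}^{\bigstar,(\uptau)*}$ on these one-dimensional spaces. First I would fix $(\bullet,\bigstar)$ and drop the superscripts, writing $\psi := \psi_1^{\bigstar,(\uptau)}$ and $\Psi := \Psi_1^{\bigstar,(\uptau)} = \breve{\Pi}^{\bigstar,(\uptau)} \psi$. Recall that $\breve{\Pi}^{\bigstar,(\uptau)}$ is the restriction of the $\uptau$-harmonic lift $\Pi^{\bigstar,(\uptau)}$ to $\breve{\mathcal{E}}^{(\uptau)}$, which on the relevant component is $\text{span}\{\psi\}$, and it maps into $\breve{\mathcal{H}}^{\bigstar,(\uptau)} = \text{ran}(\Pi^{\bigstar,(\uptau)}|_{\text{span}\{\psi\}}) = \text{span}\{\Psi\}$. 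Since $\psi$ is a normalized eigenfunction ($\|\psi\|_{\mathcal{E}} = 1$), every element of the domain space is of the form $c\psi$ and every element of the range space is of the form $d\Psi$.

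Next I would identify $\breve{\Pi}^{\bigstar,(\uptau)*}: \text{span}\{\Psi\} \to \text{span}\{\psi\}$. For $c \in \mathbb{C}$, the defining relation of the adjoint gives, for all $d\in\mathbb{C}$,
\begin{align*}
    \big( \breve{\Pi}^{\bigstar,(\uptau)*}(d\Psi), c\psi \big)_{\mathcal{E}} = \big( d\Psi, \breve{\Pi}^{\bigstar,(\uptau)}(c\psi) \big)_{\mathcal{H}} = \big( d\Psi, c\Psi \big)_{\mathcal{H}} = d\overline{c}\,\|\Psi\|^2.
\end{align*}
Since $\|\psi\|_{\mathcal{E}} = 1$, this forces $\breve{\Pi}^{\bigstar,(\uptau)*}(d\Psi) = d\|\Psi\|^2 \psi$. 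In particular $\breve{\Pi}^{\bigstar,(\uptau)*}$ is the multiplication by $\|\Psi\|^2$ under the identifications $\psi \leftrightarrow 1$, $\Psi \leftrightarrow 1$, and its inverse is multiplication by $\|\Psi\|^{-2}$; that is, $(\breve{\Pi}^{\bigstar,(\uptau)*})^{-1}(\psi) = \|\Psi\|^{-2}\Psi$, which is the claimed formula. (Note $\|\Psi\| > 0$ by injectivity of $\Pi^{(\uptau)}$, as recorded after Proposition \ref{prop:lift_properties}, so the inverse is well-defined.)

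I do not expect any genuine obstacle here — the lemma is essentially a bookkeeping computation made easy by the one-dimensionality of $\breve{\mathcal{E}}^{(\uptau)}$ on each component and by the normalization of $\psi$. The only point requiring a moment of care is making sure the domain and codomain of $(\breve{\Pi}^{\bigstar,(\uptau)*})^{-1}$ are read off correctly: $\breve{\Pi}^{\bigstar,(\uptau)*}$ has domain $\breve{\mathcal{H}}^{\bigstar,(\uptau)} = \text{span}\{\Psi\}$ and range $\breve{\mathcal{E}}^{(\uptau)}$-component $= \text{span}\{\psi\}$, so its inverse goes from $\text{span}\{\psi\}$ to $\text{ran}(\Pi^{\bigstar,(\uptau)}|_{\text{span}\{\psi\}})$, exactly as in the statement. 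I would also remark that this is consistent with Proposition \ref{prop:trunc_aux_properties}(1), which already guarantees $\breve{\Pi}^{\bigstar,(\uptau)}$ and hence $\breve{\Pi}^{\bigstar,(\uptau)*}$ are boundedly invertible.
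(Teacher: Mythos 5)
Your proof is correct and is essentially the paper's argument: both exploit the one-dimensionality of the spaces and the normalization $\|\psi_1\|_{\mathcal E}=1$ to pin down the constant $\|\Psi_1\|^{-2}$ via a single inner-product computation. The only cosmetic difference is that you compute $\breve{\Pi}^{\bigstar,(\uptau)*}$ directly and then invert it, whereas the paper first uses $(\breve{\Pi}^{*})^{-1}=(\breve{\Pi}^{-1})^{*}$ and pairs $(\breve{\Pi}^{-1})^{*}\psi_1$ against $\Psi_1$; the content is identical.
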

\begin{proof}
    We will drop $\bullet$, $\bigstar$, and $\uptau$ where convenient. It suffices to figure out its action on $\psi_1$. Since $(\breve{\Pi}^*)^{-1} = (\breve{\Pi}^{-1})^*$,
    \begin{align*}
        &\left( (\breve{\Pi}^{-1})^* \psi_1, \Psi_1 \right)_{L^2(Q_{\bigstar})}
        = \left( \psi_1, (\breve{\Pi})^{-1} \Psi_1 \right)_{L^2(Q_{\bigstar})} = \left( \psi_1, \psi_1 \right)_{L^2(\Gamma_{\bullet})} = 1.
    \end{align*}
    But $(\breve{\Pi}^{-1})^* \psi_1 \in \text{ran}(\Pi|_{\breve{\mathcal{E}}})$ is a multiple of $\Psi_1$, say $(\breve{\Pi}^{-1})^* \psi_1 = c \Psi_1$. By the above calculation, we must have $c = 1/\| \Psi_1 \|^2 $.
\end{proof}

We are now in the position to state the homogenization result.

%\newpage
\begin{thm}[Fibre-wise homogenization result] \label{thm:fibre_homo_result}
    With the homogenized operator $\mathcal{A}_{\varepsilon, \text{hom}}^{(\uptau)}$ as defined above, we have that:
    \begin{itemize}
        \item $\mathcal{A}_{\varepsilon, \text{hom}}^{(\uptau)}$ is asymptotically close to our main model operator $A_\varepsilon^{(\uptau)}$ in the norm-resolvent sense, with an $O(\varepsilon^2)$ estimate. This estimate is uniform in $z \in K_\sigma$ and $\uptau \in Q'$.
        \item The resolvent $(\mathcal{A}_{\varepsilon, \text{hom}}^{(\uptau)} - z)^{-1}$ is given by  $\mathcal{R}_{\varepsilon,\text{hom}}^{(\uptau)}(z)$ (see Definition \ref{defn:r_hom_soft} or (\ref{defn:r_hom_fullspace_truncated}) or (\ref{eqn:ahom_block_matrix})).
        \item $\mathcal{A}_{\varepsilon, \text{hom}}^{(\uptau)}$ is self-adjoint on $L^2(Q_\text{soft}) \oplus \breve{\mathcal{H}}^{\text{stiff},(\uptau)}$, and its null-extension to the full space $L^2(Q) = L^2(Q_{\text{soft}}) \oplus L^2(Q_{\text{stiff-int}}) \oplus L^2(Q_{\text{stiff-ls}})$, which we will still denote by $\mathcal{A}_{\varepsilon, \text{hom}}^{(\uptau)}$, is symmetric.
        \item $\mathcal{A}_{\varepsilon, \text{hom}}^{(\uptau)}$ depends on $\varepsilon$ only through its action on the third component $\breve{\mathcal{H}}^{\text{stiff-ls},(\uptau)}$.
    \end{itemize}
    
\end{thm}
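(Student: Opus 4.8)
\textbf{Proof proposal for Theorem \ref{thm:fibre_homo_result}.}

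The plan is to assemble the four bullet points from results already established, rather than to prove anything from scratch. The logical backbone is that the operator $\mathcal{A}_{\varepsilon,\text{hom}}^{(\uptau)}$ has been \emph{defined} (Definition \ref{defn:ahom}, and restated via its explicit action in (\ref{eqn:ahom_action})) precisely so that its resolvent is $\mathcal{R}_{\varepsilon,\text{hom}}^{(\uptau)}(z)$, and we have already shown $(A_\varepsilon^{(\uptau)}-z)^{-1} - \mathcal{R}_{\varepsilon,\text{hom}}^{(\uptau)}(z) = O(\varepsilon^2)$ uniformly in $z \in K_\sigma$ and $\uptau \in Q'$ (the last unnamed theorem of Section \ref{sect:identifying_homo_op}). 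So for the first two bullets, I would first verify that $\mathcal{B}_\varepsilon^{(\uptau)}$ is self-adjoint --- this is immediate from Proposition \ref{prop:trunc_aux_properties}(1), since $\breve{\Lambda}_\varepsilon^{\text{stiff},(\uptau)}$ is self-adjoint on the finite-dimensional $\breve{\mathcal{E}}^{(\uptau)}$ (being a spectral restriction of the stiff DtN maps; cf.\ the discussion after Definition \ref{defn:truncated_dtn_lift}) and $\breve{\Pi}^{\text{stiff},(\uptau)}$ is boundedly invertible onto $\breve{\mathcal{H}}^{\text{stiff},(\uptau)}$. Then Theorem \ref{thm:self_adjoint} applies verbatim: it gives self-adjointness of $\mathcal{A}_{\varepsilon,\text{hom}}^{(\uptau)}$ on $L^2(Q_\text{soft}) \oplus \breve{\mathcal{H}}^{\text{stiff},(\uptau)}$ together with the block-matrix formula (\ref{eqn:ahom_block_matrix}) for its resolvent, which by the remark following that theorem equals $\mathcal{R}_{\varepsilon,\text{hom}}^{(\uptau)}(z)$. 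Combining this identification with the $O(\varepsilon^2)$-closeness theorem gives bullets one, two, and the self-adjointness half of bullet three.

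For the symmetry of the null-extension (second half of bullet three), I would argue that extending a self-adjoint operator on a closed subspace $L^2(Q_\text{soft}) \oplus \breve{\mathcal{H}}^{\text{stiff},(\uptau)}$ by zero on its orthogonal complement in $L^2(Q)$ produces a symmetric operator: for $u$ in the extended domain (which is the original domain, viewed inside $L^2(Q)$), $(\mathcal{A}_{\varepsilon,\text{hom}}^{(\uptau)} u, v)_{L^2(Q)} = (\mathcal{A}_{\varepsilon,\text{hom}}^{(\uptau)} u, P v)_{L^2(Q)}$ where $P$ is the projection onto the subspace, and this equals $(P u, \mathcal{A}_{\varepsilon,\text{hom}}^{(\uptau)} P v)$ by self-adjointness on the subspace; since $Pu = u$, symmetry follows. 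This is a short, routine computation and I would present it in one or two lines, perhaps noting that it fails to be self-adjoint exactly because of the non-trivial defect indices coming from the finite-rangeness of $k^{(\uptau)}(z)$, as already flagged in the preamble to Definition \ref{defn:truncated_dtn_lift}.

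For bullet four --- the dependence on $\varepsilon$ only through the third component --- I would read off the action formula (\ref{eqn:ahom_action}). The first component is $-(\nabla + i\uptau)^2 u$, the magnetic Dirichlet Laplacian on $Q_\text{soft}$, which has no $\varepsilon$. The second component involves $-(\breve{\Pi}^{\text{stiff-int},(\uptau)*})^{-1}\mathcal{P}_{\text{stiff-int}}^{(\uptau)}\Gamma_1^{\text{soft},(\uptau)} u$, where $\Gamma_1^{\text{soft},(\uptau)}$, $\Pi^{\text{stiff-int},(\uptau)}$, and the projection are all $\varepsilon$-independent (the soft triple carries no $\varepsilon$, and $\Pi$ and the DtN eigenprojections do not depend on $\varepsilon$); crucially the stiff-interior DtN contribution $\breve{\Lambda}_\varepsilon^{\text{stiff-int},(\uptau)}\widehat{u}_{\text{stiff-int}}$ vanishes because $\mu_1^{\text{stiff-int},(\uptau)} = 0$ for all $\uptau$ by Proposition \ref{prop:dtn_properties}, so the $\varepsilon^{-2}$ weight multiplies zero. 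The third component, by contrast, retains the term $\varepsilon^{-2}\mu_1^{\text{stiff-ls},(\uptau)}(b\psi_1^{\text{stiff-ls},(\uptau)})$, and $\mu_1^{\text{stiff-ls},(\uptau)}$ is nonzero for $\uptau \neq 0$ (again Proposition \ref{prop:dtn_properties}), so genuine $\varepsilon$-dependence survives only there. I do not anticipate a real obstacle in this proof; the only mild care needed is to make sure the citation chain --- self-adjointness of $\mathcal{B}$, then Theorem \ref{thm:self_adjoint}, then the identification with $\mathcal{R}_{\varepsilon,\text{hom}}^{(\uptau)}(z)$ --- is spelled out cleanly, and that the zero-extension symmetry argument is stated rather than left implicit.
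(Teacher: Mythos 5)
Your proposal is correct and follows the same route as the paper: all four bullets are assembled from Theorem \ref{thm:self_adjoint}, the $O(\varepsilon^2)$-closeness result at the end of Section \ref{sect:identifying_homo_op}, and the explicit action formula (\ref{eqn:ahom_action}); your supporting remarks (self-adjointness of $\mathcal{B}_\varepsilon^{(\uptau)}$ via Proposition \ref{prop:trunc_aux_properties}, the one-line symmetry of the null-extension, and the vanishing of $\mu_1^{\text{stiff-int},(\uptau)}$ for bullet four) are all sound and in fact spell out steps the paper leaves implicit. The one point you omit is the only point the paper's proof actually supplies: Theorem \ref{thm:self_adjoint} gives the formula $(\mathcal{A}_{\varepsilon,\text{hom}}^{(\uptau)}-z)^{-1}=\mathcal{R}_{\varepsilon,\text{hom}}^{(\uptau)}(z)$ only for $z\in\mathbb{C}\setminus\mathbb{R}$, whereas the second bullet asserts it on the resolvent set, which may contain real points. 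The paper closes this by analyticity of the resolvent: any $z_0\in\rho(\mathcal{A}_{\varepsilon,\text{hom}}^{(\uptau)})\cap\mathbb{R}$ lies in a ball centred at some non-real $z$ on which both sides are analytic and agree off the real axis, so the identity extends. This is a one-line addition rather than a structural defect, but you should state it.
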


\begin{proof}
    The only point that remains to be shown is why the operator $\mathcal{R}_{\varepsilon,\text{hom}}^{(\uptau)}(z)$, while initially defined for $z \in \mathbb{C}\setminus\mathbb{R}$ (Theorem \ref{thm:self_adjoint}), can be extended to the whole resolvent set $\rho(\mathcal{A}_{\varepsilon, \text{hom}}^{(\uptau)})$. This is due to the analyticity of the resolvent $(\mathcal{A}_{\text{hom}} - z)^{-1}$: Given $z_0 \in \rho(\mathcal{A}_{\varepsilon, \text{hom}}^{(\uptau)}) \cap \mathbb{R}$,  we can always find an open ball $B(z,\eps_z)$, with $z \in \mathbb{C}\setminus\mathbb{R}$, and $z_0 \in B(z,\eps_z)$, such that the formula (\ref{eqn:ahom_block_matrix}) holds.
\end{proof}

\begin{rmk}
    Explicit expressions for $\breve{\Pi}$ are available for the case $Q_{\text{stiff-int}}$ (for all $\uptau$) and for $Q_{\text{stiff-ls}}$ (for $\uptau=0$). See Proposition \ref{prop:dtn_properties} for the formula for the eigenfunction $\psi_1$.
\end{rmk}

%\newpage
\section{A closer look at the homogenized operator}\label{sect:closer_look}

In this section, we study the bottom right entry of $(\mathcal{A}_{\text{hom}} - z )^{-1}$, i.e.
\begin{align}
    P_{\breve{\mathcal{H}}^{\text{stiff},(\uptau)}} (\mathcal{A}_{\text{hom}} - z )^{-1} P_{\breve{\mathcal{H}}^{\text{stiff},(\uptau)}} \stackrel{\text{(\ref{eqn:r_eff_fullspace})}}{=}
    \begin{matrix}
        & \breve{\mathcal{H}}^{\text{stiff-int},(\uptau)} & \breve{\mathcal{H}}^{\text{stiff-ls},(\uptau)} \\
        \breve{\mathcal{H}}^{\text{stiff-int},(\uptau)}  & \tikzmark{a}{$a_{22}$} & \tikzmark{b}{$a_{23}$} \\
        \breve{\mathcal{H}}^{\text{stiff-ls},(\uptau)} & \tikzmark{e}{$a_{32}$} & \tikzmark{d}{$a_{33}$}
    \end{matrix},
\end{align}
\begin{tikzpicture}[remember picture,overlay]
    \draw[thick,decorate,decoration={calligraphic straight parenthesis}] ($(e.south west)+(-.9em,-.5em)$) -- ($(a.north west)+(-.9em,+.5em)$);
    \draw[thick,decorate,decoration={calligraphic straight parenthesis}] ($(b.north east)+(+.9em,+.5em)$) -- ($(d.south east)+(+.9em,-.5em)$);
\end{tikzpicture}

\noindent
with a particular focus on the diagonal entries $a_{22}$ and $a_{33}$. To begin, we first apply an isomorphism $\breve{\mathcal{H}}^{\text{stiff-int},(\uptau)} \oplus \breve{\mathcal{H}}^{\text{stiff-ls},(\uptau)} \cong \mathbb{C}^2$ so that we do not have to deal with a varying space. Let us define:

\begin{defn}
    Set $j_{\text{stiff-int}}^{(\uptau)}: \text{ran}(\Pi^{\text{stiff-int},(\uptau)} |_{\mathcal{P}_{\text{stiff-int}}^{(\uptau)} \mathcal{E} } ) \rightarrow \mathbb{C}$ to be the unitary mapping
    \begin{align*}
        \Psi_1^{\text{stiff-int},(\uptau)} \mapsto \| \Psi_1^{\text{stiff-int},(\uptau)} \|_{L^2(Q_{\text{stiff-int}})}.
    \end{align*}
    (Note: $\breve{\mathcal{H}}^{\text{stiff-int},(\uptau)} = \text{ran}(\Pi^{\text{stiff-int},(\uptau)} |_{\mathcal{P}_{\text{stiff-int}}^{(\uptau)} \mathcal{E} } ) $) And similarly for $j_{\text{stiff-ls}}^{(\uptau)}$. Set $j^{(\uptau)} = j_{\text{stiff-int}}^{(\uptau)} \oplus j_{\text{stiff-ls}}^{(\uptau)}$.
\end{defn}

In this case, the operator $j_{\text{stiff-int}}^{(\uptau)} \Pi^{\text{stiff-int}} |_{\breve{\mathcal{E}}^{(\uptau)}}: \breve{\mathcal{E}} \mapsto \mathbb{C}$ is a mapping taking $\psi_1^{\text{stiff-int},(\uptau)}$ to ($\Psi_1^{\text{stiff-int},(\uptau)}$, and then to) $\|\Psi_1^{\text{stiff-int},(\uptau)} \|$. For the reader's convenience we compute the inverse of its adjoint:

\begin{lem}
    For $\bigstar \in \{ \text{stiff-int}, \text{stiff-ls} \}$, $((j_{\bigstar}^{(\uptau)} \breve{\Pi}^{\bigstar,(\uptau)})^*)^{-1} : \text{span}\{ \psi_1^{\bigstar,(\uptau)} \} \rightarrow \mathbb{C} $ is given by
    \begin{align*}
        \psi_1^{\bigstar,(\uptau)} \mapsto \| \Psi_1^{\bigstar,(\uptau)} \|^{-1}.
    \end{align*}
\end{lem}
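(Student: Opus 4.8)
The plan is to compute the action of $((j_{\bigstar}^{(\uptau)} \breve{\Pi}^{\bigstar,(\uptau)})^*)^{-1}$ by a direct duality computation, exactly mirroring the proof of Lemma \ref{lem:pi_inverse_adjoint}. First I would observe that $j_{\bigstar}^{(\uptau)} \breve{\Pi}^{\bigstar,(\uptau)}$ is a bounded bijection between the one-dimensional spaces $\text{span}\{\psi_1^{\bigstar,(\uptau)}\}$ and $\mathbb{C}$ (its boundedness and invertibility coming from that of $\breve{\Pi}^{\bigstar,(\uptau)}$ — Proposition \ref{prop:trunc_aux_properties}(1) — and the fact that $j_{\bigstar}^{(\uptau)}$ is unitary by definition), so its adjoint is also a bounded bijection $\mathbb{C} \to \text{span}\{\psi_1^{\bigstar,(\uptau)}\}$ and the inverse-of-adjoint is well-defined and bounded. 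Then I would write, as in Lemma \ref{lem:pi_inverse_adjoint}, $((j_{\bigstar}^{(\uptau)} \breve{\Pi}^{\bigstar,(\uptau)})^*)^{-1} = ((j_{\bigstar}^{(\uptau)} \breve{\Pi}^{\bigstar,(\uptau)})^{-1})^*$.

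The key computation is to identify the scalar $c$ such that $((j_{\bigstar}^{(\uptau)} \breve{\Pi}^{\bigstar,(\uptau)})^{-1})^* \psi_1^{\bigstar,(\uptau)} = c \in \mathbb{C}$ (here the codomain $\mathbb{C}$ is viewed as a one-dimensional Hilbert space with its standard inner product). Pairing against $1 \in \mathbb{C}$:
\begin{align*}
    \overline{c} = \left( ((j_{\bigstar}^{(\uptau)} \breve{\Pi}^{\bigstar,(\uptau)})^{-1})^* \psi_1^{\bigstar,(\uptau)}, 1 \right)_{\mathbb{C}} = \left( \psi_1^{\bigstar,(\uptau)}, (j_{\bigstar}^{(\uptau)} \breve{\Pi}^{\bigstar,(\uptau)})^{-1} 1 \right)_{L^2(\Gamma_\bullet)}.
\end{align*}
Since $j_{\bigstar}^{(\uptau)} \breve{\Pi}^{\bigstar,(\uptau)}$ sends $\psi_1^{\bigstar,(\uptau)} \mapsto \|\Psi_1^{\bigstar,(\uptau)}\|$, its inverse sends $1 \mapsto \|\Psi_1^{\bigstar,(\uptau)}\|^{-1} \psi_1^{\bigstar,(\uptau)}$, so the right-hand side equals $\|\Psi_1^{\bigstar,(\uptau)}\|^{-1} (\psi_1^{\bigstar,(\uptau)}, \psi_1^{\bigstar,(\uptau)})_{L^2(\Gamma_\bullet)} = \|\Psi_1^{\bigstar,(\uptau)}\|^{-1}$, using that $\psi_1^{\bigstar,(\uptau)}$ is a normalized eigenfunction (Proposition \ref{prop:dtn_properties}). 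Since this is real, $c = \|\Psi_1^{\bigstar,(\uptau)}\|^{-1}$, which is the claimed action.

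There is no serious obstacle here — the result is a one-line duality calculation, and the only thing to be slightly careful about is book-keeping the identifications ($\breve{\mathcal{E}}^{(\uptau)}$ versus $\text{span}\{\psi_1^{\bigstar,(\uptau)}\}$, and $\mathbb{C}$ as a Hilbert space with complex conjugation in the inner product) and making sure the norms $\|\cdot\|$ are taken in $L^2(Q_{\bigstar})$ throughout. One could alternatively deduce this directly from Lemma \ref{lem:pi_inverse_adjoint} by noting $(j_{\bigstar}^{(\uptau)} \breve{\Pi}^{\bigstar,(\uptau)})^* = (\breve{\Pi}^{\bigstar,(\uptau)})^* (j_{\bigstar}^{(\uptau)})^*$ and $(j_{\bigstar}^{(\uptau)})^* = (j_{\bigstar}^{(\uptau)})^{-1}$ (unitarity), so that $((j_{\bigstar}^{(\uptau)} \breve{\Pi}^{\bigstar,(\uptau)})^*)^{-1} = ((\breve{\Pi}^{\bigstar,(\uptau)})^*)^{-1} j_{\bigstar}^{(\uptau)}$; applying $j_{\bigstar}^{(\uptau)}$ to $\psi_1^{\bigstar,(\uptau)}$... wait, $j_{\bigstar}^{(\uptau)}$ is defined on $\breve{\mathcal{H}}^{\bigstar,(\uptau)}$ not on $\breve{\mathcal{E}}^{(\uptau)}$, so the cleanest route is the direct pairing computation above, and I would present that one.
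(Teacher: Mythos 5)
Your proof is correct, and the direct pairing computation is sound (dropping $\bigstar$ and $\uptau$): $\bigl(((j\breve{\Pi})^{-1})^*\psi_1,\,1\bigr)_{\mathbb{C}} = \bigl(\psi_1,\,(j\breve{\Pi})^{-1}1\bigr)_{L^2(\Gamma_\bullet)} = \|\Psi_1\|^{-1}$, the only quibble being that with the paper's convention (linear in the first slot) this pairing equals $c$ rather than $\overline{c}$ — immaterial here since the value is real. It is worth pointing out that the paper's own proof is exactly the factorization route you considered and abandoned at the end, and that your reason for abandoning it rests on an order-of-composition slip: from $(j\breve{\Pi})^* = \breve{\Pi}^* j^*$ one gets $((j\breve{\Pi})^*)^{-1} = (j^*)^{-1}(\breve{\Pi}^*)^{-1} = j\,(\breve{\Pi}^*)^{-1}$, not $(\breve{\Pi}^*)^{-1} j$. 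So $j$ acts \emph{last}, on $(\breve{\Pi}^*)^{-1}\psi_1 = \|\Psi_1\|^{-2}\Psi_1 \in \breve{\mathcal{H}}^{\bigstar,(\uptau)}$ (Lemma \ref{lem:pi_inverse_adjoint}), which is precisely the space on which $j$ is defined; the domain mismatch you worried about does not arise, and this route gives $j(\|\Psi_1\|^{-2}\Psi_1) = \|\Psi_1\|^{-1}$ immediately. Both arguments are one-liners: yours is self-contained and re-runs the duality computation, the paper's reuses the previous lemma.
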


\begin{proof}
    We drop $\bigstar$ and $\uptau$. We simply have to note that $((j \breve{\Pi})^*)^{-1} = (j^*)^{-1} (\breve{\Pi}^*)^{-1}$. Since $j$ is unitary, $(j^*)^{-1} = j$. The result now follows from Lemma \ref{lem:pi_inverse_adjoint}.
\end{proof}

Under this identification, we may view our homogenized operator as an operator on $L^2(Q_\text{soft}) \oplus \mathbb{C}^2$, which we will still denote by $\mathcal{A}_{\varepsilon,\text{hom}}^{(\uptau)}$. Let us write $\mathbb{C}_{\text{stiff-int}} \oplus \mathbb{C}_{\text{stiff-ls}}$ to distinguish between the copies of $\mathbb{C}$. In that case, our homogenized operator may be written as
\begin{align}
    \mathcal{D}(\mathcal{A}_{\varepsilon,\text{hom}}^{(\uptau)}) &:= \{ (u, \beta_{\text{stiff-int}}, \beta_{\text{stiff-ls}}) \in L^2(Q_{\text{soft}}) \oplus \mathbb{C}_{\text{stiff-int}} \oplus \mathbb{C}_{\text{stiff-ls}} \text{ $:$ } \nonumber\\
    &\qquad u \in \mathcal{D}(A_0^{\text{soft}}) \text{ $\dot{+}$ } \text{ran}(\Pi^{\text{soft},(\uptau)}\mathcal{P}^{(\uptau)} ), \nonumber\\
    &\qquad \beta_{\text{stiff-int}} = j^{(\uptau)}_{\text{stiff-int}} \Pi^{\text{stiff-int},(\uptau)} \Gamma_0^{\text{soft},(\uptau)} u, \quad
    \beta_{\text{stiff-ls}} = j^{(\uptau)}_{\text{stiff-ls}} \Pi^{\text{stiff-ls},(\uptau)} \Gamma_0^{\text{soft},(\uptau)} u \},
\end{align}
\begin{align}
    \mathcal{A}^{(\uptau)}_{\varepsilon,\text{hom}} \begin{pmatrix}
        u \\
        \beta_{\text{stiff-int}} \\
        \beta_{\text{stiff-ls}}
    \end{pmatrix} &= \begin{pmatrix}
        - (\nabla + i \uptau)^2 u \\
        - ((j_{\text{stiff-int}}^{(\uptau)} \breve{\Pi}^{\text{stiff-int},(\uptau)})^*)^{-1} \mathcal{P}^{(\uptau)}_{\text{stiff-int}} \Gamma_1^{\text{soft},(\uptau)} u \\
        - ((j_{\text{stiff-ls}}^{(\uptau)} \breve{\Pi}^{\text{stiff-ls},(\uptau)})^*)^{-1} \mathcal{P}^{(\uptau)}_{\text{stiff-ls}} \left[ \Gamma_1^{\text{soft},(\uptau)} u + \varepsilon^{-2} \mu_1^{\text{stiff-ls},(\uptau)} (j_{\text{stiff-ls}}^{(\uptau)} \breve{\Pi}^{\text{stiff-ls},(\uptau)})^{-1} \beta_{\text{stiff-ls}} \right] 
    \end{pmatrix} \nonumber\\
    &=: \begin{pmatrix}
        - (\nabla + i \uptau)^2 u \\
        T_{\varepsilon,\text{stiff-int}}^{(\uptau)} (u, \beta_{\text{stiff-int}}, \beta_{\text{stiff-ls}})^\top \\
        T_{\varepsilon,\text{stiff-ls}}^{(\uptau)} (u, \beta_{\text{stiff-int}}, \beta_{\text{stiff-ls}})^\top
    \end{pmatrix}.
\end{align}

\subsection{Stiff-interior to stiff-interior}

Let us now figure out the action of $P_{\mathbb{C}_{\text{stiff-int}}} (\mathcal{A}_{\text{hom}} -z )^{-1} P_{\mathbb{C}_{\text{stiff-int}}}$, which is a multiplication by a constant. We will drop $\varepsilon$ and $\uptau$ where convenient. We will also assume that $z\in K_\sigma$.

The operator in question takes $\delta \in \mathbb{C}$, solves the system
\begin{align}\label{eqn:system_dispersion_relation}
    \begin{cases}
        -(\nabla + i \uptau)^2 u - zu = 0, \\
        T_{\text{stiff-int}} (u, \beta_{\text{stiff-int}}, \beta_{\text{stiff-ls}})^\top -z \beta_{\text{stiff-int}} = \delta, \\
        T_{\text{stiff-ls}} (u, \beta_{\text{stiff-int}}, \beta_{\text{stiff-ls}})^\top -z \beta_{\text{stiff-ls}} = 0,
    \end{cases}
\end{align}
and then outputs $\beta_{\text{stiff-int}}$. (Recall Definition \ref{defn:amax} for the notation $-(\nabla + i\uptau)^2$.) Our goal is to write $T_{\text{stiff-int}} (u, \beta_{\text{stiff-int}}, \beta_{\text{stiff-ls}})^T$ as a constant times $\beta_{\text{stiff-int}}$. To assist us, we define

\begin{defn}
    $v := \Pi^{\text{soft},(\uptau)}(\psi_1^{\text{stiff-int},(\uptau)}, 0)$, and $w := \Pi^{\text{soft} ,(\uptau)} (0, \psi_1^{\text{stiff-ls},(\uptau)})$.
\end{defn}

Observe that if $u = (A_0^{\text{soft},(\uptau)})^{-1} f + \Pi^{\text{soft},(\uptau)}(a \psi_1^{\text{stiff-int},(\uptau)} + b \psi_1^{\text{stiff-ls},(\uptau)})$ for some $f \in L^2(Q_{\text{soft}})$ and $a$, $b\in\mathbb{C}$, then $u = (A_0^{\text{soft}})^{-1} f + av + bw$. That means $\widetilde{u} := u - av - bw \in \mathcal{D}(A_0^{\text{soft}})$. So $(-(\nabla + i \uptau)^2 - z) \widetilde{u} = (A_0^{\text{soft}} - z) \widetilde{u}$. In fact,
\begin{align*}
    (-(\nabla + i \uptau)^2 - z) \widetilde{u} = \underbrace{\cancel{(-(\nabla + i\uptau)^2 - z) u}}_{\text{By (\ref{eqn:system_dispersion_relation})}} - (\underbrace{\cancel{-(\nabla + i\uptau)^2}}_{\text{Since $\widehat{A}(\Pi \phi) = 0$} } -z ) av - (\underbrace{\cancel{-(\nabla + i\uptau)^2}}_{\text{Since $\widehat{A}(\Pi \phi) = 0$} } -z ) bw = zav + zbw.
\end{align*}
This implies that 
\begin{align*}
    \widetilde{u} = za (A_0^{\text{soft}} - z)^{-1} v + zb (A_0^{\text{soft}} - z)^{-1} w.
\end{align*}
The key is that $a$ and $b$ are related to $\beta_{\text{stiff-int}}$ and $\beta_{\text{stiff-ls}}$ respectively by
\begin{align*}
    \beta_{\text{stiff-int}} = a \| \Psi_1^{\text{stiff-int}} \|, \quad \beta_{\text{stiff-ls}} = b \| \Psi_1^{\text{stiff-ls}} \|.
\end{align*}
This is a consequence of the computation of $\widehat{u}$ in the previous section, and the definition of the isomorphism $j$. This allows us to write
\begin{align}\label{eqn:dispersion_relation_v1}
    &T_{\text{stiff-int}} \begin{pmatrix}
        u \\
        \beta_{\text{stiff-int}} \\
        \beta_{\text{stiff-ls}}
    \end{pmatrix} = T_{\text{stiff-int}} \begin{pmatrix}
        a z (A_0^{\text{soft}} - z)^{-1} v + b z (A_0^{\text{soft}} - z)^{-1} w + av + bw \\
        \beta_{\text{stiff-int}} \\
        \beta_{\text{stiff-ls}}
    \end{pmatrix} \nonumber\\
    &= T_{\text{stiff-int}} \begin{pmatrix}
        \dfrac{\beta_{\text{stiff-int}}}{\| \Psi_1^{\text{stiff-int}} \|} z (A_0^{\text{soft}} - z)^{-1} v 
        + \dfrac{\beta_{\text{stiff-ls}}}{\| \Psi_1^{\text{stiff-ls}} \|} z (A_0^{\text{soft}} - z)^{-1} w 
        + \dfrac{\beta_{\text{stiff-int}}}{\| \Psi_1^{\text{stiff-int}} \|}v 
        + \dfrac{\beta_{\text{stiff-ls}}}{\| \Psi_1^{\text{stiff-ls}} \|}w \\
        \beta_{\text{stiff-int}} \\
        \beta_{\text{stiff-ls}}
    \end{pmatrix} \nonumber\\
    &= T_{\text{stiff-int}} \begin{pmatrix}
        \dfrac{\beta_{\text{stiff-int}}}{\| \Psi_1^{\text{stiff-int}} \|} z (A_0^{\text{soft}} - z)^{-1} v + \dfrac{\beta_{\text{stiff-int}}}{\| \Psi_1^{\text{stiff-int}} \|}v \\
        \beta_{\text{stiff-int}} \\
        0
    \end{pmatrix} + T_{\text{stiff-int}} \begin{pmatrix}
        \dfrac{\beta_{\text{stiff-ls}}}{\| \Psi_1^{\text{stiff-ls}} \|} z (A_0^{\text{soft}} - z)^{-1} w + \dfrac{\beta_{\text{stiff-ls}}}{\| \Psi_1^{\text{stiff-ls}} \|}w \\
        0 \\
        \beta_{\text{stiff-ls}}
    \end{pmatrix} \nonumber\\
    &= \frac{\beta_{\text{stiff-int}}}{\| \Psi_1^{\text{stiff-int}} \|} T_{\text{stiff-int}} \begin{pmatrix}
         z (A_0^{\text{soft}} - z)^{-1} v + v \\
        \| \Psi_1^{\text{stiff-int}} \| \\
        0
    \end{pmatrix} + \frac{\beta_{\text{stiff-ls}}}{\| \Psi_1^{\text{stiff-ls}} \|} T_{\text{stiff-int}} \begin{pmatrix}
         z (A_0^{\text{soft}} - z)^{-1} w + w \\
        0 \\
        \| \Psi_1^{\text{stiff-ls}} \|
        \end{pmatrix}.
\end{align}
We would like to write $\beta_{\text{stiff-ls}}$ in terms of $\beta_{\text{stiff-int}}$. To do this, we use the third equation of the system (\ref{eqn:system_dispersion_relation}), and (\ref{eqn:dispersion_relation_v1}) with $T_{\text{stiff-int}}$ replaced by $T_{\text{stiff-ls}}$, we obtain
\begin{align}
    &\beta_{\text{stiff-ls}} = \frac{1}{z} T_{\text{stiff-ls}} \begin{pmatrix}
        u \\
        \beta_{\text{stiff-int}} \\
        \beta_{\text{stiff-ls}}
    \end{pmatrix} \nonumber\\
    &= \frac{\beta_{\text{stiff-int}}}{z \| \Psi_1^{\text{stiff-int}} \|} T_{\text{stiff-ls}} \begin{pmatrix}
        z (A_0^{\text{soft}} - z )^{-1} v + v \\
        \| \Psi_1^{\text{stiff-int}} \| \\
        0
    \end{pmatrix} + \frac{\beta_{\text{stiff-ls}}}{z \| \Psi_1^{\text{stiff-ls}} \|} T_{\text{stiff-ls}} \begin{pmatrix}
        z (A_0^{\text{soft}} - z )^{-1} w + w \\
        0 \\
        \| \Psi_1^{\text{stiff-ls}} \|
    \end{pmatrix}.
\end{align}
%
%
%\newpage
Rearranging and plugging this back into (\ref{eqn:dispersion_relation_v1}),
\begin{align}
    &T_{\text{stiff-int}} \begin{pmatrix}
        u \\
        \beta_{\text{stiff-int}} \\
        \beta_{\text{stiff-ls}}
    \end{pmatrix} = \beta_{\text{stiff-int}} \bigg\{ \frac{1}{\| \Psi_1^{\text{stiff-int}} \|} T_{\text{stiff-int}} \begin{pmatrix}
        z (A_0^{\text{soft}} -z )^{-1}v + v \\
        \| \Psi_1^{\text{stiff-int}} \| \\
        0
    \end{pmatrix} \nonumber \\
    &+ \frac{1}{z \| \Psi_1^{\text{stiff-int}} \| \| \Psi_1^{\text{stiff-ls}} \|} T_{\text{stiff-ls}} \begin{pmatrix}
        z (A_0^{\text{soft}} -z )^{-1}v + v \\
        \| \Psi_1^{\text{stiff-int}} \| \\
        0
    \end{pmatrix} T_{\text{stiff-int}} \begin{pmatrix}
        z (A_0^{\text{soft}} -z )^{-1}w + w \\
        0 \\
        \| \Psi_1^{\text{stiff-ls}} \| 
    \end{pmatrix} \times \nonumber\\
    & \qquad\qquad\qquad \times \left( 1 - \frac{1}{z \| \Psi_1^{\text{stiff-ls}}\|} T_{\text{stiff-ls}} \begin{pmatrix}
        z (A_0^{\text{soft}} -z )^{-1}w + w \\
        0 \\
        \| \Psi_1^{\text{stiff-ls}} \| 
    \end{pmatrix} \right)^{-1} \bigg\} \label{eqn:dispersion_relation_divisionstep}\\
    &=: \beta_{\text{stiff-int}} \left\{ K_{a,\text{stiff-int}}(\uptau, z) + K_{b,\text{stiff-int}}(\uptau, z) \right\} \\
    &=: \beta_{\text{stiff-int}} K_{\text{stiff-int}}(\uptau, z). \label{eqn:dispersion_stiffint}
\end{align}

The derivation above suggests the following:

\begin{thm}\label{thm:stiff_int_dispersion_reln}
    For $\eps>0$ small enough, independently of $z \in K_\sigma$ and $\tau \in Q'$, \newline $P_{\C_\stin} (\mathcal{A}_{\eps,\text{hom}}^{(\tau)} -z )^{-1} P_{\C_\stin}$ is the operator on $\mathbb{C}_{\text{stiff-int}}$ of multiplication by the number \newline $(K_\text{stiff-int}(\uptau,z) - z)^{-1}$. In the notation of Section \ref{sect:preliminaries_main}, this means that
    \begin{align}
        P_{\mathbb{C}_{\text{stiff-int}}} (\mathcal{A}^{(\uptau)}_{\eps, \text{hom}} -z )^{-1} P_{\mathbb{C}_{\text{stiff-int}}} = M_{(K_\text{stiff-int}(\uptau,z) - z)^{-1}}.
    \end{align}
\end{thm}

\begin{proof}
    To ensure that $K_\text{stiff-int}(\uptau,z)$ is well-defined, we need to show that the denominator of the second term in (\ref{eqn:dispersion_relation_divisionstep}) is non-zero. We will do this by showing that it has a non-zero imaginary component. This requires us to uncover the action of $T_{\stls}$. First, we observe that
    \begin{align}
        z(A_0^{\text{soft},(\uptau)} - z)^{-1}w + w 
        &= (I + z(A_0^{\text{soft},(\uptau)} -z)^{-1}) w \nonumber\\
        &= (I + z(A_0^{\text{soft},(\uptau)} -z)^{-1}) \Pi^{\text{soft},(\uptau)} (0 + \psi_1^{\text{stiff-ls},(\uptau)}) \nonumber\\
        &= S^{\text{soft},(\uptau)}(z) (0 + \psi_1^{\text{stiff-ls},(\uptau)}).
    \end{align}
    In the action of $T_{\text{stiff-ls}}$, we need to apply to the above, $\Gamma_1^{\text{soft},(\uptau)}$, then $\mathcal{P}^{(\uptau)}_{\text{stiff-ls}}$, and then $((j \breve{\Pi})^*)^{-1}$:
    \begin{align}
        &((j \breve{\Pi})^*)^{-1} \mathcal{P}^{(\uptau)}_{\text{stiff-ls}} \Gamma_1^{\text{soft},(\uptau)} S^{\text{soft},(\uptau)}(z) (0 + \psi_1^{\text{stiff-ls},(\uptau)}) \nonumber\\
        &\qquad = ((j \breve{\Pi})^*)^{-1} \mathcal{P}^{(\uptau)}_{\text{stiff-ls}} M^{\text{soft},(\uptau)}(z) (0 + \psi_1^{\text{stiff-ls},(\uptau)}) \nonumber\\
        &\qquad = ((j \breve{\Pi})^*)^{-1} \left\langle M^{\text{soft},(\uptau)}(z) (0 + \psi_1^{\text{stiff-ls},(\uptau)}), (0 + \psi_1^{\text{stiff-ls},(\uptau)}) \right\rangle_{L^2(\Gamma_\interior) \oplus L^2(\Gamma_\ls)} \psi_1^{\text{stiff-ls},(\uptau)} \nonumber\\
        &\qquad = \left\langle M^{\text{soft},(\uptau)}(z) (0 + \psi_1^{\text{stiff-ls},(\uptau)}), (0 + \psi_1^{\text{stiff-ls},(\uptau)}) \right\rangle_{\mathcal{E}} \frac{1}{\| \Psi_1^{\text{stiff-ls},(\uptau)} \|}.
    \end{align}
    Also we need to apply to $\| \Psi_1^{\text{stiff-ls},(\uptau)} \|$, the operator $(j \breve{\Pi})^{-1}$, then a multiplication by $\varepsilon^{-2} \mu_1^{\text{stiff-ls}, (\uptau)}$, then $\mathcal{P}^{(\uptau)}_{\text{stiff-ls}}$, and then $((j \breve{\Pi})^*)^{-1}$:
    \begin{align}
        &((j \breve{\Pi})^*)^{-1} \mathcal{P}^{(\uptau)}_{\text{stiff-ls}} \varepsilon^{-2} \mu_1^{\text{stiff-ls}, (\uptau)} (j \breve{\Pi})^{-1} \| \Psi_1^{\text{stiff-ls},(\uptau)} \| \nonumber\\
        &\qquad = \varepsilon^{-2} \mu_1^{\text{stiff-ls}, (\uptau)} ((j \breve{\Pi})^*)^{-1} \mathcal{P}^{(\uptau)}_{\text{stiff-ls}} \psi_1^{\text{stiff-ls},(\uptau)} \nonumber\\
        &\qquad = \varepsilon^{-2} \mu_1^{\text{stiff-ls}, (\uptau)} ((j \breve{\Pi})^*)^{-1} \left\langle (0 + \psi_1^{\text{stiff-ls},(\uptau)}), (0 + \psi_1^{\text{stiff-ls},(\uptau)}) \right\rangle_{L^2(\Gamma_\interior) \oplus L^2(\Gamma_\ls)} \psi_1^{\text{stiff-ls},(\uptau)} \nonumber\\
        &\qquad = \varepsilon^{-2} \mu_1^{\text{stiff-ls}, (\uptau)} ((j \breve{\Pi})^*)^{-1} \psi_1^{\text{stiff-ls},(\uptau)} \nonumber\\
        &\qquad = \varepsilon^{-2} \mu_1^{\text{stiff-ls}, (\uptau)} \frac{1}{\| \Psi_1^{\text{stiff-ls},(\uptau)} \|}.
    \end{align}
    Using these two computations, we observe that the denominator of the second term in (\ref{eqn:dispersion_relation_divisionstep}) is
    \begin{align}
        & 1 - \frac{1}{z \| \Psi_1^{\text{stiff-ls},(\uptau)} \|} T_{\varepsilon, \text{stiff-ls}}^{(\uptau)} \begin{pmatrix}
            z (A_0^{\text{soft}} -z )^{-1}w + w \\
            0 \\
            \| \Psi_1^{\text{stiff-ls}} \| 
        \end{pmatrix} \nonumber\\
        &= 1 + \frac{1}{z \| \Psi_1^{\text{stiff-ls},(\uptau)} \|^2} \left[ \varepsilon^{-2} \mu_1^{\text{stiff-ls},(\uptau)} + \left\langle M^{\text{soft},(\uptau)}(z) (0 + \psi_1^{\text{stiff-ls},(\uptau)}), (0 + \psi_1^{\text{stiff-ls},(\uptau)}) \right\rangle \right] \label{eqn:denominiator_a}\\
        &= 1 + \frac{(\text{Re }z) - i (\text{Im }z)}{|z|^2 \| \Psi_1^{\text{stiff-ls},(\uptau)} \|^2} \left[ \varepsilon^{-2} \mu_1^{\text{stiff-ls},(\uptau)} + \left\langle M^{\text{soft},(\uptau)}(z) (0 + \psi_1^{\text{stiff-ls},(\uptau)}), (0 + \psi_1^{\text{stiff-ls},(\uptau)}) \right\rangle \right]. \label{eqn:denominiator_b}
    \end{align}
    Focusing on the imaginary part of (\ref{eqn:denominiator_b}), it suffices to show that the following expression is non-zero for every $\uptau$ (Note that $|\text{Im }z|>0$, since $z \in K_\sigma$.):
    \begin{align}\label{eqn:denominiator_im}
        -i (\text{Im }z) \bigg[ \varepsilon^{-2} \mu_1^{\text{stiff-ls},(\uptau)} 
        &+ \left\langle \text{Re }M^{\text{soft},(\uptau)}(z) (0 + \psi_1^{\text{stiff-ls},(\uptau)}), (0 + \psi_1^{\text{stiff-ls},(\uptau)}) \right\rangle \nonumber\\
        &- (\text{Re }z) \left\langle S^{\text{soft},(\uptau)} (\overline{z})^* S^{\text{soft},(\uptau)}(\overline{z})  (0 + \psi_1^{\text{stiff-ls},(\uptau)}), (0 + \psi_1^{\text{stiff-ls},(\uptau)}) \right\rangle \bigg] \nonumber \\
        = -i (\text{Im }z) \left[ A + B + C \right]
    \end{align}
    where we have used the identity $\text{Im } M(z) = (\text{Im }z) S(\overline{z})^* S(\overline{z})$ (Proposition \ref{prop:auxops_properties}(7)). Recall that $\text{Re }M^{\text{soft},(\uptau)}(z)$ was defined in the beginning of the proof of Theorem \ref{thm:m_inverse_est}. The terms $B$ and $C$ are real, and independent of $\eps$. In fact, they can be bounded uniformly in $\tau$: 
    
    \begin{itemize}
        \item For $B$, use the identity
        $$M(z) = \Lambda + z \Pi^* (I - zA_0^{-1})^{-1} \Pi$$
        (Proposition \ref{prop:auxops_properties}(5)). Now apply Proposition \ref{prop:lift_properties} to $\Pi^{\soft,(\uptau)}$, and apply the arguments of Proposition \ref{prop:m_components_unif_bound} (see (\ref{eqn:m_components_unif_bound_calculation})) to $\langle \Lambda^{\text{soft},(\uptau)}  (0, \psi_1^{\text{stiff-ls}, (\uptau) }), (0, \psi_1^{\text{stiff-ls}, (\uptau)}) \rangle$.
        
        \item For $C$, apply Lemma \ref{lem:soln_op_est} to $S^{\soft,(\tau)}(z)$, followed by Proposition \ref{prop:lift_properties}.
    \end{itemize}
    If $\tau \neq 0$, Proposition \ref{prop:dtn_properties} says that $A = \eps^{-2} \mu_1^{\stls,(\tau)}$ is a negative real number. If $\tau = 0$, we have $A=0$ (Proposition \ref{prop:dtn_properties}). Nonetheless, the expression (\ref{eqn:denominiator_a}) non-zero. Indeed, we first compute
    \begin{align}
        \left\langle M^{\text{soft},(0)}(z) (0 + \psi_1^{\text{stiff-ls},(0)}), (0 + \psi_1^{\text{stiff-ls},(0)}) \right\rangle.
    \end{align}
    To do this, we first write down the BVP that $u := S^{\text{soft},(0)}(z)(0 + \psi_1^{\text{stiff-ls},(0)})$ solves:
    \begin{align}
        \begin{cases}
            -\Delta u = zu \quad &\text{in $Q_\text{soft}$}, \\
            u = 0 &\text{on $\Gamma_{\text{int}}$}, \\
            u = \psi_1^{\text{stiff-ls},(0)} = |\Gamma_\ls|^{-\frac{1}{2}}\mathbf{1}_{\Gamma_\ls} &\text{on $\Gamma_{\text{ls}}$}.
        \end{cases}
    \end{align}
    Then, we compute
    \begin{align}\label{eqn:soft_stiff_test}
        \langle M^{\text{soft},(0)}(z) &(0 + \psi_1^{\text{stiff-ls},(0)}) , (0 + \psi_1^{\text{stiff-ls},(0)}) \rangle \nonumber \\ 
        & = - \underbrace{\int_{\Gamma_{\text{int}}}...}_{=0} ~ - \int_{\Gamma_{\text{ls}}} \frac{\partial u}{ \partial n} \psi_1^{\text{stiff-ls},(0)}
        = \int_{Q_\text{soft}} z |u|^2 - \int_{Q_\text{soft}} |\nabla u|^2.
    \end{align}
    We note that $\| \nabla u \| \neq 0$, or else $u$ will be a constant function on the connected set $Q_\soft$, contradicting the fact that $u$ has different traces on $\Gamma_\interior$ and $\Gamma_\ls$. Since $\text{Im } z \neq 0$ and $\| \nabla u \| \neq 0$, (\ref{eqn:soft_stiff_test}) implies that (\ref{eqn:denominiator_a}) non-zero.
    
    Thus far, we have shown that \textit{for each $\uptau$}, we may pick $\eps$ small enough such that (\ref{eqn:denominiator_a}) is non-zero. This is not enough, as we would like to pick $\eps$ small enough independently of $\tau$. To achieve this, we will have to enhance the above argument argument as follows: Since (\ref{eqn:denominiator_a}) is continuous in $\tau$ and non-zero at $\tau=0$, it must be bounded away from zero in a neighbourhood of $\tau=0$. Furthermore, the expression (\ref{eqn:denominiator_a}) allows us to pick this neighbourhood independently of $\eps$. Now combine these facts with the arguments of the $\tau \neq 0$ case, which says that $\eps>0$ can be chosen small enough, independently of $\tau \in Q'$ outside this neighborhood, such that (\ref{eqn:denominiator_a}) is non-zero.
    
    We have therefore shown that for $\eps>0$ small enough, the mapping $\tau \mapsto K_\stin(\tau,z)$ is well-defined. This concludes the proof.
\end{proof}

\begin{defn}
    We call $K_\text{stiff-int}(\uptau,z)$ the dispersion function with respect to $Q_{\text{stiff-int}}$.
\end{defn}

\begin{rmk}
    To justify $K_\stin$ which describe wave propagation on the stiff-interior region, we have relied crucially on the properties of the stiff-landscape region, namely the eigenvalue $\mu_1^{\stls,(\tau)}$.
\end{rmk}

We conclude the section with a few important observations.

\begin{enumerate}
    \item The function $K_\stin(\tau,z)$ consists of two terms, $K_{a,\text{stiff-int}}(\uptau, z)$ and $K_{b,\text{stiff-int}}(\uptau, z)$.
    
    The first term, $K_{a,\text{stiff-int}}(\uptau, z)$, is what we would have if there were only one stiff component (See \cite[Section 5.3]{eff_behavior}, for Model II). In our case with two stiff components, we have compensated by adding the ``correction" term $K_{b,\text{stiff-int}}(\uptau, z)$.
    
    \item The dependence of $K_\stin(\tau,z)$ on $\varepsilon$ falls solely on the $T_{\text{stiff-ls}}$ terms with a non-zero third component. In particular, we observe that $\eps$ appears in the term `$A$' of (\ref{eqn:denominiator_im}), and nowhere else. So the correction term becomes small as $\eps \rightarrow 0$. To be precise, we have
    
    \begin{cor}
        If $\tau \neq 0$, then $K_{b,\text{stiff-int}}(\uptau, z) = O(\eps^2)$, uniformly in $z \in K_\sigma$. If we assume further that $\tau$ is uniformly bounded away from $0$, then the estimate is also uniform in $\tau$.
    \end{cor}
\end{enumerate}

\subsection{Stiff-landscape to stiff-landscape}

We now turn our attention to $P_{\mathbb{C}_{\text{stiff-ls}}} (\mathcal{A}_{\text{hom}} -z )^{-1} P_{\mathbb{C}_{\text{stiff-ls}}}$. We will use the functions $v$ and $w$ as defined in the previous section. We omit the analogous derivation of $K_\stls(\uptau,z)$, and define:
\begin{defn}
    The dispersion function $K_\stls(\uptau,z)$ with respect to $Q_\stls$ is given by
    \begin{alignat}{2}\label{eqn:dispersion_relation2_divisionstep}
        &K_\stls (\uptau,z) &&:= K_{a,\text{stiff-ls}}(\uptau, z) + K_{b,\text{stiff-ls}}(\uptau, z) \\
        & &&:= \frac{1}{\| \Psi_1^{\text{stiff-ls}} \|} T_{\text{stiff-ls}} \begin{pmatrix}
        z (A_0^{\text{soft}} -z )^{-1}w + w \\
        0 \\
        \| \Psi_1^{\text{stiff-ls}} \|
    \end{pmatrix} \nonumber \\
    &\qquad\qquad + &&\frac{1}{z \| \Psi_1^{\text{stiff-int}} \| \| \Psi_1^{\text{stiff-ls}} \|} T_{\text{stiff-int}} \begin{pmatrix}
        z (A_0^{\text{soft}} -z )^{-1}w + w \\
        0 \\
        \| \Psi_1^{\text{stiff-ls}} \|
    \end{pmatrix} T_{\text{stiff-ls}} \begin{pmatrix}
        z (A_0^{\text{soft}} -z )^{-1}v + v \\
        \| \Psi_1^{\text{stiff-int}} \| \\
        0
    \end{pmatrix} \times \nonumber\\
    & &&\qquad\qquad\qquad \times \left( 1 - \frac{1}{z \| \Psi_1^{\text{stiff-int}}\|} T_{\text{stiff-int}} \begin{pmatrix}
        z (A_0^{\text{soft}} -z )^{-1}v + v \\
        \| \Psi_1^{\text{stiff-int}} \| \\
        0
    \end{pmatrix} \right)^{-1}.
    \end{alignat}
\end{defn}

\begin{thm}\label{thm:stiff_ls_dispersion_reln}
    For $\eps>0$ small enough, independently of $z \in K_\sigma$ and $\tau \in Q'$, \newline $P_{\C_\stls} (\mathcal{A}_{\text{hom}} -z )^{-1} P_{\C_\stls}$ is the operator on $\mathbb{C}_{\text{stiff-ls}}$ of multiplication by the number \newline ${(K_\text{stiff-ls}(\uptau,z) - z)^{-1}}$. i.e.
    \begin{align}
        P_{\mathbb{C}_{\text{stiff-ls}}} (\mathcal{A}^{(\uptau)}_{\eps, \text{hom}} -z )^{-1} P_{\mathbb{C}_{\text{stiff-ls}}} = M_{(K_\text{stiff-ls}(\uptau,z) - z)^{-1}}.
    \end{align}
\end{thm}

\begin{proof}
    The first part of the proof proceeds analogously to the proof of Theorem \ref{thm:stiff_int_dispersion_reln}, so we omit this. In place of (\ref{eqn:denominiator_a}), we now have
    \begin{align}\label{eqn:denominiator_ls}
        1 + \frac{1}{z \| \Psi_1^{\stin,(\tau)} \|^2 } \left\langle M^{\text{soft},(\uptau)}(z) (\psi_1^{\text{stiff-int},(\uptau)} + 0), (\psi_1^{\text{stiff-int},(\uptau)} + 0) \right\rangle,
    \end{align}
    and now we would like to show that the following expression
    \begin{align}\label{eqn:denominiator_im_ls}
        \left\langle M^{\text{soft},(\uptau)}(z) (\psi_1^{\text{stiff-int},(\uptau)} + 0), (\psi_1^{\text{stiff-int},(\uptau)} + 0) \right\rangle
    \end{align}
    is a non-zero constant that does not depend on $\uptau$. The argument is a generalization of the case $\uptau = 0$ in Theorem \ref{thm:stiff_int_dispersion_reln}. To begin, we write down the BVP that $u := S^{\text{soft},(\uptau)}(z)(\psi_1^{\text{stiff-int},(\uptau)} + 0)$ solves (ignoring the normalization constant as it will not affect the arguments):
    \begin{align}
        \begin{cases}
            -(\nabla + i\uptau)^2 u = zu \quad &\text{in $Q_\text{soft}$,} \\
            u = \psi_1^{\text{stiff-int},(\uptau)} = e^{-i \uptau \cdot x} &\text{on $\Gamma_{\text{int}}$,}\\
            u = 0 &\text{on $\Gamma_{\text{ls}}$.}
        \end{cases}
    \end{align}
    Now define $w(x) = e^{i\uptau \cdot x} u(x)$. Then in $Q_{\text{soft}}$, we have that
    \begin{align}
        &-(\nabla + i\uptau)^2 u 
        = -e^{-i\uptau\cdot x} \text{div} \left( e^{i\uptau\cdot x} (\nabla + i\uptau) u \right)
        = -e^{-i\uptau\cdot x} \text{div} \left( e^{i\uptau\cdot x} (\nabla + i\uptau) (e^{-i\uptau\cdot x} w)    \right) \nonumber\\
        &\qquad = -e^{-i\uptau\cdot x} \text{div} \left( e^{i\uptau\cdot x} \left[ e^{-i\uptau\cdot x} \nabla w + \cancel{w(-i\uptau)e^{-i\uptau\cdot x}} + \cancel{(i\uptau) e^{-i\uptau\cdot x} w}   \right] \right)
        = -e^{-i\uptau\cdot x} \Delta w.
    \end{align}
    Since $e^{-i\uptau\cdot x}$ cannot be zero, we deduce that $w$ solves the BVP:
    \begin{align}\label{eqn:BVP_w_stls}
        \begin{cases}
            -\Delta w = z w \quad &\text{in $Q_\text{soft}$,} \\
            w = \mathbf{1}_{\Gamma_{\text{int}}} &\text{on $\Gamma_{\text{int}}$,}\\
            w = 0 &\text{on $\Gamma_{\text{ls}}$.}
        \end{cases}
    \end{align}
    We may now compute
    \begin{align}
        &\langle M^{\text{soft},(\uptau)}(z) (\psi_1^{\text{stiff-int},(\uptau)} + 0) , (\psi_1^{\text{stiff-int},(\uptau)} + 0) \rangle \nonumber \\
        &\qquad = - \int_{\Gamma_{\text{int}}} \left[ \frac{\partial u}{ \partial n} + i(\uptau \cdot n) u \right] \psi_1^{\text{stiff-int},(\uptau)} - \underbrace{\int_{\Gamma_{\text{ls}}}...}_{=0} 
        = \int_{Q_\text{soft}} z |u|^2 - \int_{Q_\text{soft}} |(\nabla + i\uptau) u|^2 \nonumber \\
        &\qquad = \int_{Q_\text{soft}} z | e^{-i\uptau \cdot x} w|^2 - \int_{Q_\text{soft}} | e^{-i\uptau \cdot x} \nabla w|^2
        = \int_{Q_\text{soft}} z |w|^2 - \int_{Q_\text{soft}} |\nabla w|^2. \label{eqn:soft_stiff_test2}
    \end{align}
    Since \eqref{eqn:BVP_w_stls} does not depend on $\tau$, the same is true for $w$, and thus for \eqref{eqn:soft_stiff_test2}. Since $\text{Im } z \neq 0$ and $\|\nabla w \| \neq 0$, (\ref{eqn:soft_stiff_test2}) implies that (\ref{eqn:denominiator_im_ls}) is a non-zero constant.
\end{proof}

Similarly to $K_\stin(\tau,z)$, we make a few important observations for $K_\stls(\tau,z)$.

\begin{enumerate}
    \item Again, $K_\stls(\tau,z)$ consists of two terms, $K_{a,\text{stiff-ls}}(\uptau, z)$ and $K_{b,\text{stiff-ls}}(\uptau, z)$.
    
    The first term, $K_{a,\text{stiff-ls}}(\uptau, z)$, corresponds to the dispersion function for Model I of \cite[Section 5.3]{eff_behavior} (one stiff component). In our case with two stiff components, we have the ``correction" term $K_{b,\text{stiff-ls}}(\uptau, z)$.
    
    \item $K_{a,\text{stiff-ls}}(\uptau, z)$ depends on $\varepsilon$ while $K_{b,\text{stiff-ls}}(\uptau, z)$ does not.
\end{enumerate}

\section{Acknowledgements}
YSL is supported by a scholarship from the EPSRC Centre for Doctoral Training in Statistical Applied Mathematics at Bath (SAMBa), under the project EP/S022945/1.~YSL would like to thank to Prof.~Kirill D.~Cherednichenko for his patience, guidance, and the many helpful discussions throughout this project. YSL would like to thank Dr.~Alexander V.~Kiselev, Prof.~Euan Spence, and Prof.~David Krejčiřík for their careful reading of the manuscript and helpful comments.

% ==================
\appendix
\section{Preparatory results for estimating the first Steklov eigenvalue with respect to the stiff landscape}\label{appendix:aux_steklov}

The following result is a slight modification of \cite[Proposition A.7]{simplified_method}:

\begin{prop}\label{prop:coercive_est}
    There exists a constant $C>0$ such that for every $u \in H_{\text{per}}^1(Q)$, $\tau \in Q'$ we have the following estimates: 
    \begin{equation}\label{eqn:coercive_est1}
        |\tau| \left\| u \right\|_{L^2(Q)} \leq C \left\| \left(\nabla + i\tau\right) u \right\|_{L^2(Q;\C^d)},
    \end{equation}
    \begin{equation}\label{eqn:coercive_est2}
        \left\| \nabla u \right\|_{L^2(Q;\C^d)} \leq C \left\lVert \left(\nabla + i\tau\right)  u \right\|_{L^2(Q;\C^d)},
    \end{equation}
    \begin{equation}\label{eqn:coercive_est3}
        \left\| u - \int_Q u \right\|_{L^2(Q)} \leq C \left\| \left(\nabla + i\tau\right) u \right\|_{L^2(Q;\C^d)}.
    \end{equation}
\end{prop}

\begin{proof}
    Fix $u \in H_{\text{per}}^1(Q)$ and consider its Fourier series decomposition:
    \begin{align*}
        u = \sum_{k \in \Z^d} a_k e^{2\pi i k \cdot y}, \quad  
        \nabla u = \sum_{k\in \Z^d} (2 \pi i k) a_k e^{2\pi i k \cdot y}, \quad 
        u - \int_Q u= \sum_{k \in \Z^d \setminus \{0\}} a_k e^{2\pi i k \cdot y}.
    \end{align*}

    Plancherel's formula yields
    \begin{equation}
        \left\| u \right\|_{L^2(Q)}^2 = \sum_{k\in \Z^d} |a_k|^2, \quad 
        \left\| \nabla u \right\|_{L^2(Q;\C^d)}^2 = \sum_{k\in \Z^d} |2 \pi |^2|a_k |^2| k|^2, \quad \left\| u - \int_Q u\right\|_{L^2(Q)}^2 = \sum_{k\in \Z^d \setminus \{0\}} |a_k|^2.
    \end{equation}

    Furthermore, we have
    \begin{equation}
            (\nabla + i\tau) u = \sum_{k\in \Z^d} \left(2 \pi i k + i\tau \right) a_k e^{2\pi i k \cdot y}.
    \end{equation}

    Now we calculate
    \begin{equation}
        \left\| (\nabla + i\tau) u \right\|_{L^2(Q;\C^d)}^2 
        = \sum_{k\in \Z^d} |a_k \left(2 \pi i k + i\tau \right)|^2 
        = \sum_{k\in \Z^d \setminus \{0\}} |a_k |^2| 2 \pi i k + i\tau |^2 + |a_0 |^2|  \tau |^2.
    \end{equation}

    Since $\tau \in Q' = [-\pi,\pi)^d$, if at least one $(k)_j \geq 1$, it is clear that 
    \begin{equation}
        |2 \pi i k + i \tau|^2 \geq C,
    \end{equation}
    where the constant $C>0$ does not depend on $\tau$ and $k \in \Z^d\setminus \{0\}$. This gives us 
    \begin{equation}
        \begin{split}
            \left\| (\nabla + i\tau) u \right\|_{L^2(Q;\C^d)}^2  
            &= \sum_{k\in \Z^d \setminus \{0\}} |a_k |^2| 2 \pi i k + i\tau |^2 + |a_0 |^2|  \tau |^2 \\
            &\geq \sum_{k\in \Z^d \setminus \{0\}} C|a_k |^2 
            = C\left\| u - \int_Y u\right\|_{L^2(Q)}^2.
        \end{split}
    \end{equation}

    Moreover, we have  
    \begin{equation}
         \left\| (\nabla + i\tau) u \right\|_{L^2(Q;\C^d)}^2 
         =  \sum_{k\in \Z^d \setminus \{0\}} |a_k |^2| 2 \pi i k + i\tau |^2 + |a_0 |^2|  \tau |^2 
         \geq  \sum_{k\in \Z^d} C|\tau|^2|a_k |^2 
         = C|\tau|^2 \left\| u \right\|_{L^2(Q)}^2,
    \end{equation}
    and also
    \begin{equation}
        \begin{split}
            \left\| (\nabla + i\tau) u \right\|_{L^2(Q;\C^d)}^2     
            \geq \sum_{k\in \Z^d} C|a_k |^2 |2 \pi|^2 |k|^2 
            = C\left\| \nabla u\right\|_{L^2(Q;\C^d)}^2 .
        \end{split}
    \end{equation}
    This concludes the proof.
\end{proof}

We are interested in using the above inequalities to estimate the eigenvalue $\mu_1^{\stls,(\tau)}$.

\begin{cor}\label{cor:stls_evalue1}
    There exist constants $C_1, C_2>0$ independent of $\tau$ such that
    \begin{align}\label{eqn:stls_evalue1}
        C_1 |\tau|^2 \leq - \mu_1^{\stls,(\tau)} \leq C_2 |\tau|^2.
    \end{align}
\end{cor}

\begin{proof}
    Fix $u \in H_\text{per}^1(Q_\stls)$, since $\partial Q_\stls = \Gamma_\ls$ is smooth, $u$ may be extended to a function in $H_\text{per}^1(Q)$, which we will still denote as $u$, such that
    \begin{align}
        \| (\nabla + i\tau) u \|_{L^2(Q;\C^d)} \leq C \| (\nabla + i\tau) u \|_{L^2(Q_\stls;\C^d)},
    \end{align}
    where the constant $C>0$ only depends on $Q_\stls$ \cite{extension_thm}. Combining this with \eqref{eqn:coercive_est1} and \eqref{eqn:coercive_est2} gives
    \begin{align}
        |\tau| \|u\|_{H^1(Q_\stls)} \leq C |\tau| \|u\|_{H^1(Q)}
        \leq C \|(\nabla + i\tau)u \|_{L^2(Q;\C^d)}
        \leq C \|(\nabla + i\tau)u \|_{L^2(Q_\stls;\C^d)}.
    \end{align}
    By the Trace theorem, this implies that
    \begin{align}
        |\tau| \| u|_{\Gamma_\ls} \|_{L^2(\Gamma_\ls)} \leq C \| (\nabla + i\tau) u \|_{L^2(Q_\stls;\C^d)},
    \end{align}
    which in turn gives us the lower bound of \eqref{eqn:stls_evalue1} by the min-max principle. The upper bound follows by testing $\mathbf{1}_{Q_\stls}$ in the variational characterization of $\mu_1^{\stls,(\tau)}$.
\end{proof}

\section{Proofs for Section \ref{sect:identifying_homo_op}}\label{appendix:krein_calc}

\begin{proof}[Proof of Proposition \ref{prop:r_eff_estimate_soft}.]
    Consider the generalized resolvent $P_\text{soft} ( \widehat{A}^{(\uptau)}_{\varepsilon, \mathcal{P}_\perp^{(\uptau)}, \mathcal{P}^{(\uptau)} } - z )^{-1} P_\text{soft}$, which we know is $O(\varepsilon^2)$ close to $R_\varepsilon^{(\uptau)}(z) = P_\text{soft}( A_\varepsilon^{(\uptau)} - z)^{-1} P_\text{soft}$ by Theorem \ref{thm:first_asymp_result}. This can be expressed as
    \begin{align}\label{eqn:soft_pperpp_soft}
        &P_\text{soft} ( \widehat{A}^{(\uptau)}_{\varepsilon, \mathcal{P}_\perp^{(\uptau)}, \mathcal{P}^{(\uptau)} } - z )^{-1} P_\text{soft} \nonumber \\
        &= P_\text{soft} (A_{\varepsilon,0}^{(\uptau)} - z )^{-1} P_\text{soft} - P_\text{soft} 
        S_\varepsilon^{(\tau)}(z)
        \left(\overline{\mathcal{P}_\perp^{(\uptau)} + \mathcal{P}^{(\uptau)} M_\varepsilon^{(\uptau)}(z)  } \right)^{-1} \mathcal{P}^{(\uptau)}
        \left( S_\varepsilon^{(\uptau)}(\bar{z}) \right)^* P_\text{soft} \nonumber\\
        &= (A_0^{\text{soft},(\uptau)} - z)^{-1} - S^{\text{soft},(\uptau)}(z)
        \left(\overline{\mathcal{P}_\perp^{(\uptau)} + \mathcal{P}^{(\uptau)} M_\varepsilon^{(\uptau)}(z)  } \right)^{-1} \mathcal{P}^{(\uptau)}
        \left( S^{\text{soft},(\uptau)} (\bar{z}) \right)^* \nonumber\\
        &= (A_0^{\text{soft},(\uptau)} - z)^{-1} - S^{\text{soft},(\uptau)}(z)
        \mathcal{P}^{(\uptau)}
        \left( \mathcal{P}^{(\uptau)} M_\varepsilon^{\text{stiff}, (\uptau)}(z) \mathcal{P}^{(\uptau)} + \mathcal{P}^{(\uptau)} M^{\text{soft}, (\uptau)}(z) \mathcal{P}^{(\uptau)} \right)^{-1}
        \mathcal{P}^{(\uptau)}
        \left( S^{\text{soft},(\uptau)} (\bar{z}) \right)^*.
    \end{align}
    The second equality follows by the same reasoning as Proposition \ref{prop:r_eps_resolvent_alt}. The final equality uses Lemma \ref{lem:useful_b0b1_check}. On the other hand, by the Krein's formula, we have
    \begin{align}\label{eqn:r_eff_resolvent}
        R_{\varepsilon,\text{eff}}^{(\uptau)}(z) &= (A_0^{\text{soft},(\uptau)} - z)^{-1} \nonumber \\ &\quad - S^{\text{soft},(\uptau)}(z)
        \left( \overline{\mathcal{P}_\perp^{(\uptau)} + \mathcal{P}^{(\uptau)} M_\varepsilon^{\text{stiff}, (\uptau)}(z) \mathcal{P}^{(\uptau)} + \mathcal{P}^{(\uptau)} M^{\text{soft},(\uptau)}(z) } \right)^{-1}
        \mathcal{P}^{(\uptau)}
        \left( S^{\text{soft},(\uptau)} (\bar{z}) \right)^* \nonumber \\
        &= (A_0^{\text{soft},(\uptau)} - z)^{-1} \nonumber \\
        &\quad - S^{\text{soft},(\uptau)}(z)
        \mathcal{P}^{(\uptau)}
        \left( \mathcal{P}^{(\uptau)} M_\varepsilon^{\text{stiff}, (\uptau)}(z) \mathcal{P}^{(\uptau)} + \mathcal{P}^{(\uptau)} M^{\text{soft}, (\uptau)}(z) \mathcal{P}^{(\uptau)} \right)^{-1}
        \mathcal{P}^{(\uptau)}
        \left( S^{\text{soft},(\uptau)} (\bar{z}) \right)^*.
    \end{align}
    The second equality follows from the observation made before the proposition.
\end{proof}

\begin{proof}[Proof of Proposition \ref{prop:r_eff_fullspace}.]
    We will verify this entry-wise. The top left entry is done in Proposition \ref{prop:r_eff_estimate_soft}. For the remaining entries, we will compare this with $(\widehat{A}^{(\uptau)}_{\varepsilon, \mathcal{P}_\perp^{(\uptau)}, \mathcal{P}^{(\uptau)}}  - z )^{-1}$ since it is $O(\varepsilon^2)$ close to $(A_\varepsilon^{(\uptau)} - z )^{-1}$ by Theorem \ref{thm:first_asymp_result}. In the computations below, we will use
    \begin{align}
        \text{ran}\left(S^{\text{soft},(\uptau)}(z) \mathcal{P}^{(\uptau)} \right) \subset \mathcal{D}(A_0^{\text{soft},(\uptau)}) \dot{+} \text{ran}(\Pi^{\soft,(\tau)}\mathcal{P}^{(\tau)}),
    \end{align}
    which is a consequence of the identity $S(z) = (I + z(A_0 - z)^{-1}) \Pi$. The argument for $a_{21}$ and $a_{31}$ are the same. For $a_{21}$, we have
    \begin{align*}
        &P_{\text{stiff-int}} (\widehat{A}^{(\uptau)}_{\varepsilon, \mathcal{P}_\perp^{(\uptau)}, \mathcal{P}^{(\uptau)}}  - z )^{-1}  P_{\text{soft}} \\
        &= - S_\varepsilon^{\text{stiff-int}, (\uptau)}(z) \mathcal{P}^{(\uptau)} \left( \mathcal{P}^{(\uptau)} M_\varepsilon^{\text{stiff},(\uptau)}(z) \mathcal{P}^{(\uptau)} + \mathcal{P}^{(\uptau)} M^{\text{soft},(\uptau)}(z) \mathcal{P}^{(\uptau)} \right)^{-1} \mathcal{P}^{(\uptau)} \left( S^{\text{soft},(\uptau)}(\bar{z}) \right)^* \\
        &\qquad\qquad \text{ By a similar argument to (\ref{eqn:soft_pperpp_soft}).} \\
        &= - S_\varepsilon^{\text{stiff-int}, (\uptau)}(z) \Gamma_0^{\text{soft},(\uptau)} S^{\text{soft},(\uptau)}(z) \mathcal{P}^{(\uptau)} \left( \mathcal{P}^{(\uptau)} M_\varepsilon^{\text{stiff},(\uptau)}(z) \mathcal{P}^{(\uptau)} + \mathcal{P}^{(\uptau)} M^{\text{soft},(\uptau)}(z) \mathcal{P}^{(\uptau)} \right)^{-1} \mathcal{P}^{(\uptau)} \left( S^{\text{soft},(\uptau)}(\bar{z}) \right)^* \\
        &\qquad\qquad \text{ By Proposition \ref{prop:auxops_properties}(2), $\Gamma_0^{\text{soft}, (\uptau)} S^{\text{soft},(\uptau)}(z) = I$.} \\
        &= - S_\varepsilon^{\text{stiff-int}, (\uptau)}(z) k^{(\uptau)}(z) \left[ S^{\text{soft},(\uptau)}(z) \mathcal{P}^{(\uptau)} \left( \mathcal{P}^{(\uptau)} M_\varepsilon^{\text{stiff},(\uptau)}(z) \mathcal{P}^{(\uptau)} + \mathcal{P}^{(\uptau)} M^{\text{soft},(\uptau)}(z) \mathcal{P}^{(\uptau)} \right)^{-1} \mathcal{P}^{(\uptau)} \left( S^{\text{soft},(\uptau)}(\bar{z}) \right)^* \right] \\
        &= S_\varepsilon^{\text{stiff-int}, (\uptau)}(z) k^{(\uptau)}(z) \left[ R_{\varepsilon,\text{eff}}^{(\uptau)}(z) - (A_0^{\text{soft}, (\uptau)} - z)^{-1} \right] \\
        &\qquad\qquad \text{ By (\ref{eqn:r_eff_resolvent}).} \\
        &= \Pi^{\text{stiff-int},(\uptau)} k^{(\uptau)}(z) \left[ R_{\varepsilon,\text{eff}}^{(\uptau)}(z) - (A_0^{\text{soft}, (\uptau)} - z)^{-1} \right] + O(\varepsilon^2) \\
        &\qquad\qquad \text{ By Lemma \ref{lem:soln_op_est}. The remaining terms equals $\mathcal{P}^{(\uptau)}\mathbb{A}^{-1}\mathcal{P}^{(\uptau)} \left( S^{\text{soft},(\uptau)}(\bar{z}) \right)^*$, which is $O(1)$.}
    \end{align*}

    The argument for $a_{12}$ and $a_{13}$ are the same. For $a_{12}$, we have
    \begin{align*}
        &P_{\text{soft}} (\widehat{A}^{(\uptau)}_{\varepsilon, \mathcal{P}_\perp^{(\uptau)}, \mathcal{P}^{(\uptau)}}  - z )^{-1}  P_{\text{stiff-int}} \\
        &=  -S^{\text{soft},(\uptau)}(z) \mathcal{P}^{(\uptau)} \left( \mathcal{P}^{(\uptau)} M_\varepsilon^{\text{stiff},(\uptau)}(z) \mathcal{P}^{(\uptau)} + \mathcal{P}^{(\uptau)} M^{\text{soft},(\uptau)}(z) \mathcal{P}^{(\uptau)} \right)^{-1} \mathcal{P}^{(\uptau)} \left( S_\varepsilon^{\text{stiff-int},(\uptau)}(\bar{z}) \right)^* \\
        &\qquad \text{Similarly to (\ref{eqn:soft_pperpp_soft}). The decoupling term $(A_{\varepsilon,0}^{(\uptau)} - z)^{-1}$ vanishes as $L^2(Q_\text{soft})$ and } \\
        &\qquad \text{$L^2(Q_\text{stiff-int})$ are orthogonal, and are invariant subspaces for $A_{\varepsilon,0}^{(\uptau)}$.} \\
        &=\left( - \mathcal{P}^{(\uptau)} \left(\mathcal{P}^{(\uptau)} M_\varepsilon^{(\uptau)}(\bar{z}) \mathcal{P}^{(\uptau)} \right)^{-1} \mathcal{P}^{(\uptau)} \left(S^{\text{soft},(\uptau)}(z)\right)^* \right)^* \left( S_\varepsilon^{\text{stiff-int},(\uptau)}(\bar{z}) \right)^*\\
        &\qquad \text{Take adjoint twice. Use Proposition \ref{prop:auxops_properties}(5).} \\
        &=\left( k^{(\uptau)}(\bar{z}) S^{\text{soft},(\uptau)}(\bar{z}) \mathcal{P}^{(\uptau)} \left(\mathcal{P}^{(\uptau)} M_\varepsilon^{(\uptau)}(\bar{z}) \mathcal{P}^{(\uptau)} \right)^{-1} \mathcal{P}^{(\uptau)} \left(S^{\text{soft},(\uptau)}(z)\right)^* \right)^* \left( S_\varepsilon^{\text{stiff-int},(\uptau)}(\bar{z}) \right)^*\\
        &\qquad \text{ By Proposition \ref{prop:auxops_properties}(2), $\Gamma_0^{\text{soft}, (\uptau)} S^{\text{soft},(\uptau)}(\bar{z}) = I$, and $\Gamma_0^{\text{soft}, (\uptau)}$ may be restricted to $k^{(\uptau)}(\bar{z})$.} \\
        &=\left( k^{(\uptau)}(\bar{z}) \left[ R_{\varepsilon,\text{eff}}^{(\uptau)}(\bar{z}) - (A_0^{\text{soft},(\uptau)} - \bar{z})^{-1} \right] \right)^* \left( S_\varepsilon^{\text{stiff-int},(\uptau)}(\bar{z}) \right)^* \\
        &= \left( k^{(\uptau)}(\bar{z}) \left[ R_{\varepsilon,\text{eff}}^{(\uptau)}(\bar{z}) - (A_0^{\text{soft},(\uptau)} - \bar{z})^{-1} \right] \right)^* \left( \Pi^{\text{stiff-int},(\uptau)} \right)^* + O(\varepsilon^2)
    \end{align*}
    The second last equality follows by (\ref{eqn:r_eff_resolvent}). The final equality follows by Lemma \ref{lem:soln_op_est}.

    The argument for $a_{22}$ is the same as $a_{33}$ are the same. For $a_{22}$, we have
    \begin{align*}
        &P_{\text{stiff-int}} (\widehat{A}^{(\uptau)}_{\varepsilon, \mathcal{P}_\perp^{(\uptau)}, \mathcal{P}^{(\uptau)}}  - z )^{-1}  P_{\text{stiff-int}} \\
        &=(A_{\varepsilon,0}^{\text{stiff-int}, (\uptau)} - z )^{-1} - S_\varepsilon^{\text{stiff-int},(\uptau)}(z) \mathcal{P}^{(\uptau)} \left(\mathcal{P}^{(\uptau)} M_\varepsilon^{(\uptau)}(z) \mathcal{P}^{(\uptau)} \right)^{-1} \mathcal{P}^{(\uptau)} \left( S_\varepsilon^{\text{stiff-int},(\uptau)}(\bar{z}) \right)^* \\
        &\qquad \text{Similarly to (\ref{eqn:soft_pperpp_soft}).} \\
        &=(A_{\varepsilon,0}^{\text{stiff-int}, (\uptau)} - z )^{-1} - S_\varepsilon^{\text{stiff-int},(\uptau)}(z) k^{(\uptau)}(z) 
        S^{\text{soft},(\uptau)}(z) \mathcal{P}^{(\uptau)} \left(\mathcal{P}^{(\uptau)} M_\varepsilon^{(\uptau)}(z) \mathcal{P}^{(\uptau)} \right)^{-1} \mathcal{P}^{(\uptau)} \left( S_\varepsilon^{\text{stiff-int},(\uptau)}(\bar{z}) \right)^* \\
        &\qquad \text{By Proposition \ref{prop:auxops_properties}(2), $\Gamma_0^{\text{soft}, (\uptau)} S^{\text{soft},(\uptau)}(z) = I$, and $\Gamma_0^{\text{soft}, (\uptau)}$ may be restricted to $k^{(\uptau)}(z)$.} \\
        &= (A_{\varepsilon,0}^{\text{stiff-int}, (\uptau)} - z )^{-1} + 
        S_\varepsilon^{\text{stiff-int},(\uptau)}(z) k^{(\uptau)}(z)
        \left( k^{(\uptau)}(\bar{z}) \left[  R_{\varepsilon,\text{eff}}^{(\uptau)}(\bar{z}) - (A_0^{\text{soft},(\uptau)} - \bar{z})^{-1} \right] \right)^* \left( S_\varepsilon^{\text{stiff-int},(\uptau)}(\bar{z}) \right)^* \\
        &\qquad \text{By the arguments of $a_{12}$.} \\
        &= \Pi^{\text{stiff-int},(\uptau)}(z) k^{(\uptau)}(z)
        \left( k^{(\uptau)}(\bar{z}) \left[  R_{\varepsilon,\text{eff}}^{(\uptau)}(\bar{z}) - (A_0^{\text{soft},(\uptau)} - \bar{z})^{-1} \right] \right)^* \left( \Pi^{\text{stiff-int},(\uptau)}(\bar{z}) \right)^* + O(\varepsilon^2) \\
        &\qquad \text{By Lemma \ref{lem:soln_op_est} and Proposition \ref{prop:decoupling_properties}.}
    \end{align*}
    
    Finally, the argument for $a_{23}$ and $a_{32}$ is similar to that of $a_{22}$, the only difference being that the decoupling term $(A_0 - z)^{-1}$ is now absent. This completes the proof.
\end{proof}

The following proof is an extension of the arguments provided in \cite[Theorem 4.4]{eff_behavior}:

\begin{proof}[Proof of Theorem \ref{thm:self_adjoint}.]
    We will start with the \textbf{top left entry} of (\ref{eqn:ahom_block_matrix}). To qualify as a resolvent of $\mathcal{A}_{\text{hom}}$ at $z$, the operator on the top left entry must take any given $f \in L^2(Q_\text{soft})$ to $u$, where $u$ is the first entry of $(u, \widehat{u})^T \in \mathcal{D}(\mathcal{A}_\text{hom})$, and $(u, \widehat{u})$ is the unique solution to the problem
    \begin{align}\label{eqn:ahom_system0}
        \left( \mathcal{A}_\text{hom} - z \right) \begin{pmatrix}
            u \\ \widehat{u}
        \end{pmatrix} = \begin{pmatrix}
            f \\ 0
        \end{pmatrix} \Longleftrightarrow \begin{cases}
            \breve{\widehat{A}^{\text{soft}}} u - zu = f, \\
            -(\breve{\Pi}^{\text{stiff }*})^{-1} \breve{\Gamma}_1^{\text{soft}} u + \mathcal{B} \widehat{u} - z \widehat{u} = 0.
        \end{cases}
    \end{align}
    But we may rearrange the second line of the latter system: 
    \begin{align}\label{eqn:ahom_system}
        &\quad (\breve{\Pi}^{\text{stiff }*})^{-1} \breve{\Gamma}_1^{\text{soft}} u - (\mathcal{B} - z) \widehat{u} = 0 \nonumber\\
        &\Leftrightarrow  \breve{\Gamma}_1^{\text{soft}} u - (\breve{\Pi}^{\text{stiff}})^* (\mathcal{B} - z) \widehat{u} = 0 \nonumber\\
        &\Leftrightarrow \underbrace{I}_{\beta_1 := } \breve{\Gamma}_1^{\text{soft}} u \underbrace{ - (\breve{\Pi}^{\text{stiff}})^* (\mathcal{B} - z) \breve{\Pi}^{\text{stiff}}}_{\beta_0 := } \breve{\Gamma}_0^{\text{soft}} u = 0.
    \end{align}
    That is, the mapping $f \mapsto u$ as described above, is precisely that of $(\breve{\widehat{A}^{\text{soft}}_{\beta_0, \beta_1}} - z)^{-1} \equiv R(z)$, \textit{provided it exist}. This means to check the conditions on $\beta_0 + \beta_1 \breve{M}^{\text{soft}}(z)$ so that Theorem \ref{thm:kreins_formula} applies: The condition on the domains of $\beta_0$ and $\beta_1$ are immediate as these are bounded operators. The boundedness also implies that $\beta_0 + \beta_1 \breve{M}^{\text{soft}}(z)$ (with its maximal domain) is closed. 
    
    As for the boundedness of the inverse, it suffices to check that it is bounded below, since we are working on a finite dimensional space $\breve{\mathcal{H}}^{\text{stiff}}$. Just like in Theorem \ref{thm:m_inverse_est}, it suffices to show that the imaginary part is bounded below: Let $\phi \in \breve{\mathcal{E}}$, then
    \begin{alignat}{2}
        &\left| \left( \text{Im}\left[- (\breve{\Pi}^{\text{stiff}})^* (\mathcal{B} - z) \breve{\Pi}^{\text{stiff}} + \breve{M}^{\text{soft}}(z) \right] \phi, \phi \right)_{\breve{\mathcal{E}}}  \right| \nonumber\\
        &= \left| \left( \text{Im}\left[z (\breve{\Pi}^{\text{stiff}})^* \breve{\Pi}^{\text{stiff}} + \breve{M}^{\text{soft}}(z) \right] \phi, \phi \right)_{\breve{\mathcal{E}}}  \right| 
        &&\text{as $(\breve{\Pi}^{\text{stiff}})^* \mathcal{B} \breve{\Pi}^{\text{stiff}}$ is self-adjoint.} \nonumber\\
        &= \left| \text{Im}z \left( (\breve{S}^{\text{soft}}(\bar{z}))^* \breve{S}^{\text{soft}}(\bar{z}) \phi + (\breve{\Pi}^{\text{stiff}})^* \breve{\Pi}^{\text{stiff}} \phi, \phi  \right)_{\breve{\mathcal{E}}} \right| 
        &&\text{by Proposition \ref{prop:auxops_properties}(7). } \nonumber\\
        &= |\text{Im} z| \left( (\breve{S}^{\text{soft}}(\bar{z}))^* \breve{S}^{\text{soft}}(\bar{z}) \phi, \phi \right)_{\breve{\mathcal{E}}} + |\text{Im} z| \left( (\breve{\Pi}^{\text{stiff}})^* \breve{\Pi}^{\text{stiff}} \phi, \phi \right)_{\breve{\mathcal{E}}} \quad
        &&\text{both operators are positive.} \nonumber\\
        &\geq |\text{Im} z| \text{ } \| \breve{\Pi}^{\text{stiff}} \phi \|_{\breve{\mathcal{H}}^{\text{stiff}}}^2 
        &&\text{both operators are positive.} \nonumber \\
        &\geq |\text{Im} z| \text{ } \| (\breve{\Pi}^{\text{stiff}})^{-1} \|^{-2}_{\breve{\mathcal{H}}^{\text{stiff}} \rightarrow \breve{\mathcal{E}} } \| \phi \|^2_{\breve{\mathcal{E}}}.
    \end{alignat}
    We have shown that $R(z)$ exists, and equals $P_{\text{soft}} (\mathcal{A}_\text{hom} - z)^{-1} P_\text{soft}$. Next, we check the \textbf{bottom left entry} of (\ref{eqn:ahom_block_matrix}). This is the mapping $f \mapsto \widehat{u}$, where $(u,\widehat{u})$ solves the system (\ref{eqn:ahom_system}). But we defined $\widehat{u} = \breve{\Pi}^{\text{stiff}} \breve{\Gamma}_0^{\text{soft}} u$ and have just shown that $u = R(z)f$, therefore
    \begin{align*}
        \widehat{u} &= \breve{\Pi}^{\text{stiff}} \breve{\Gamma}_0^{\text{soft}} R(z) f \\
        &= \breve{\Pi}^{\text{stiff}} \breve{\Gamma}_0^{\text{soft}} \left[R(z) - (A_0^{\text{soft}} - z)^{-1} \right] f
        \quad \text{as $\mathcal{D}(A_0^{\text{soft}}) = \text{ker}(\breve{\Gamma}_0^{\text{soft}})$ by definition.} \\
        &= \breve{\Pi}^{\text{stiff}} k(z) \left[R(z) - (A_0^{\text{soft}} - z)^{-1} \right] f
    \end{align*}
    
    The final equality holds by exactly the same argument as Proposition \ref{prop:r_eff_fullspace} (the term $a_{21}$), since the Krein's formula is now applicable to $R(z)$. We have shown that $P_{\breve{\mathcal{H}}^{\text{stiff}}} (\mathcal{A}_\text{hom} - z)^{-1} P_\text{soft}$ equals the bottom left entry of (\ref{eqn:ahom_block_matrix}).

    Next, we discuss the \textbf{top right entry} of (\ref{eqn:ahom_block_matrix}). Similarly to $R(z)$, this must take any given $\widehat{f} \in \breve{\mathcal{H}}^{\text{stiff}}$ to $u$, where $u$ is the first entry of $(u,\widehat{u})^T \in \mathcal{D}(\mathcal{A}_\text{hom})$, and $(u,\widehat{u})$ is the unique solution to the problem
    \begin{align}\label{eqn:ahom_system2}
        \left( \mathcal{A}_\text{hom} - z \right) \begin{pmatrix}
            u \\ \widehat{u}
        \end{pmatrix} = \begin{pmatrix}
            0 \\ \widehat{f}
        \end{pmatrix} \Longleftrightarrow \begin{cases}
            \breve{\widehat{A}^{\text{soft}}} u - zu = 0, \\
             \underbrace{I}_{\beta_1 = } \breve{\Gamma}_1^{\text{soft}} u = \underbrace{ (\breve{\Pi}^{\text{stiff}})^* (\mathcal{B} - z) \breve{\Pi}^{\text{stiff}}}_{-\beta_0 = } \breve{\Gamma}_0^{\text{soft}} u - (\breve{\Pi}^{\text{stiff}})^* \widehat{f}.
        \end{cases}
    \end{align}
    We would like to put the system into the form (\ref{eqn:abstract_system_b0b1}), forcing us address the term $(\breve{\Pi}^{\text{stiff}})^* \widehat{f}$. Using Proposition \ref{prop:trunc_aux_properties}(5) (``furthermore" part), we find some $v_{\widehat{f}} \in \mathcal{D}(A_0^{\text{soft}})$ satisfying 
    \begin{align*}
        \breve{\Gamma}_1^{\text{soft}} v_{\widehat{f}} = (\breve{\Pi}^{\text{stiff}})^* \widehat{f}.
    \end{align*}
    Now consider the function $v = R(z)(A_0^{\text{soft}} - z) v_{\widehat{f}}$. By applying Theorem \ref{thm:kreins_formula} to $R(z)$, we know that $v \in \mathcal{D}(\breve{\widehat{A}^{\text{soft}, (\uptau)}})$, and furthermore $v$ solves the following system uniquely:
    \begin{align*}
        \begin{cases}
            ( \breve{\widehat{A}^{\text{soft}}} - z ) v = (A_0^{\text{soft}} - z) v_{\widehat{f}}, \\
            \beta_0 \breve{\Gamma}_0^{\text{soft}} v + \beta_1 \breve{\Gamma}_1^{\text{soft}} v = 0.
        \end{cases}
    \end{align*}
    Using the first line of the system, $v_{\widehat{f}} \in \mathcal{D}(A_0^{\text{soft}})$, and $A_0^{\text{soft}} \subset \breve{\widehat{A}^{\text{soft}}}$,
    \begin{align*}
        (\breve{\widehat{A}^{\text{soft}}} - z) (v - v_{\widehat{f}}) = (A_0^{\text{soft}} - z) v_{\widehat{f}} - (\breve{\widehat{A}^{\text{soft}}} - z) v_{\widehat{f}} = 0.
    \end{align*}
    Using the second line of the system, $v_{\widehat{f}} \in \mathcal{D}(A_0^{\text{soft}})$, and $\mathcal{D}(A_0^{\text{soft}}) = \text{ker}(\breve{\Gamma}_0^{\text{soft}})$,
    \begin{align}
        \beta_0 \breve{\Gamma}_0^{\text{soft}} (v - v_{\widehat{f}} ) + \breve{\Gamma}_1^{\text{soft}} (v - v_{\widehat{f}} ) = - \beta_0 \breve{\Gamma}_0^{\text{soft}} v_{\widehat{f}} - \breve{\Gamma}_1^{\text{soft}} v_{\widehat{f}} = - \breve{\Gamma}_1^{\text{soft}} v_{\widehat{f}} = - (\breve{\Pi}^{\text{stiff}})^* \widehat{f}.
    \end{align}
    In other words, if we define $u \in \mathcal{D}(\breve{\widehat{A}^{\text{soft}, (\uptau)}})$ by $u := v - v_{\widehat{f}} = R(z)(A_0^{\text{soft}} - z) v_{\widehat{f}} - v_{\widehat{f}}$, then $u$ is a solution to the system (\ref{eqn:ahom_system2}). To show that the solution is unique, we consider the fully homogeneous case, $f=0$ and $\widehat{f}=0$. But this can be viewed as a special case of (\ref{eqn:ahom_system0}), for which uniqueness has been established by Theorem \ref{thm:kreins_formula}. We further compute
    \begin{alignat}{2}\label{eqn:ahom_btmright}
        u &= \left[ R(z) - (A_0^{\text{soft}} - z)^{-1} \right] (A_0^{\text{soft}} - z) v_{\widehat{f}} \nonumber\\
        &= - S^{\text{soft}}(z) \mathcal{P}^{(\uptau)} \left( \beta_0 + \breve{M}^{\text{soft}}(z) \right)^{-1} \mathcal{P}^{(\uptau)} \left( S^{\text{soft}}(\bar{z}) \right)^* (A_0^{\text{soft}} - z) v_{\widehat{f}}
        \quad &&\text{By Krein's formula.} \nonumber\\
        &= - S^{\text{soft}}(z) \mathcal{P}^{(\uptau)} \left( \beta_0 + \breve{M}^{\text{soft}}(z) \right)^{-1} \mathcal{P}^{(\uptau)} \breve{\Gamma}_1^{\text{soft}} v_{\widehat{f}}
        &&\text{By Proposition \ref{prop:auxops_properties}(4)} \nonumber\\
        &= - S^{\text{soft}}(z) \mathcal{P}^{(\uptau)} \left( \beta_0 + \breve{M}^{\text{soft}}(z) \right)^{-1} \mathcal{P}^{(\uptau)} (\breve{\Pi}^{\text{stiff}})^* \widehat{f}
        &&\text{Definition of $v_{\widehat{f}}$.} \nonumber\\
        &= \left( k(\bar{z}) \left[ R(\bar{z}) - (A_0^{\text{soft}} - \bar{z})^{-1} \right]  \right)^* (\breve{\Pi}^{\text{stiff}})^* \widehat{f}
    \end{alignat}
    Where the last equality is proven in the same way as in Proposition \ref{prop:r_eff_fullspace}(the term $a_{12}$). Therefore the expression for $u$ does not depend on the choice $v_{\widehat{f}}$, and coincides with the top right entry of (\ref{eqn:ahom_block_matrix}).

    Next, we discuss the \textbf{bottom right entry} of (\ref{eqn:ahom_block_matrix}). Similarly to the bottom left entry, we use $\widehat{u} = \breve{\Pi}^{\text{stiff}} \breve{\Gamma}_0^{\text{soft}} u$, and that $u$ is given by (\ref{eqn:ahom_btmright}) to see that 
    \begin{align*}
        \widehat{u} &= \breve{\Pi}^{\text{stiff}} \breve{\Gamma}_0^{\text{soft}} \left( k(\bar{z}) \left[ R(\bar{z}) - (A_0^{\text{soft}} - \bar{z})^{-1} \right]  \right)^* (\breve{\Pi}^{\text{stiff}})^* \widehat{f} \\
        &= \breve{\Pi}^{\text{stiff}} k(z) \left( k(\bar{z}) \left[ R(\bar{z}) - (A_0^{\text{soft}} - \bar{z})^{-1} \right]  \right)^* (\breve{\Pi}^{\text{stiff}})^* \widehat{f}
        \quad \text{as $\mathcal{D}(A_0^{\text{soft}}) = \text{ker}(\breve{\Gamma}_0^{\text{soft}})$.}
    \end{align*}

    Finally the show the \textbf{self-adjointness of $\mathcal{A}_{\text{hom}}$.} Notice that the arguments provided above implies that $(\mathcal{A}_{\text{hom}} - z)$ is surjective for all $z \in \mathbb{C} \setminus \mathbb{R}$. As $\mathcal{A}_{\text{hom}}$ is symmetric by Lemma \ref{lem:ahom_symmetric}, the conclusion follows from \cite[Proposition 3.11]{konrad_book}.
\end{proof}

% ================================================================
% Bibilography (BibLatex)
%\newpage

% list all bib entries without citing
%\nocite{*}

% Code using BibLatex
\renewcommand{\bibname}{References} % "References" instead of "Bibilography"
\printbibliography
\addcontentsline{toc}{section}{\refname}

\end{document}